\newtheorem{cor}[subsubsection]{Corollary}
\newtheorem{lem}[subsubsection]{Lemma}
\newtheorem{prop}[subsubsection]{Proposition}
\newtheorem{conj}[subsubsection]{Conjecture}
\newtheorem{thm}[subsubsection]{Theorem}
\newtheorem{qthm}[subsubsection]{Quasi-Theorem}
\newtheorem{corconj}[subsubsection]{Corollary-of-Conjecture}
\newtheorem{defn}[subsubsection]{Definition}
\theoremstyle{definition}
\theoremstyle{remark}
\newtheorem{rem}[subsubsection]{Remark}
\newcommand{\thmref}[1]{Theorem~\ref{#1}}
\newcommand{\secref}[1]{Sect.~\ref{#1}}
\newcommand{\lemref}[1]{Lemma~\ref{#1}}
\newcommand{\propref}[1]{Proposition~\ref{#1}}
\newcommand{\corref}[1]{Corollary~\ref{#1}}
\newcommand{\conjref}[1]{Conjecture~\ref{#1}}
\numberwithin{equation}{section}
\newcommand{\nc}{\newcommand}
\nc{\renc}{\renewcommand}
\nc{\ssec}{\subsection}
\nc{\sssec}{\subsubsection}
\nc{\on}{\operatorname}
\nc\ol{\overline}
\nc\wt{\widetilde}
\nc\tboxtimes{\wt{\boxtimes}}
\nc\tstar{\wt{\star}}
\nc{\alp}{\alpha}
\nc{\ZZ}{{\mathbb Z}}
\nc{\NN}{{\mathbb N}}
\nc{\OO}{{\mathbb O}}
\renc{\SS}{{\mathbb S}}
\nc{\DD}{{\mathbb D}}
\nc{\GG}{{\mathbb G}}
\nc{\Fq}{{\mathbb F}_q}
\nc{\Fqb}{\ol{{\mathbb F}_q}}
\nc{\Ql}{\ol{{\mathbb Q}_\ell}}
\nc{\id}{\text{id}}
\nc\X{\mathcal X}
\nc{\Hom}{\on{Hom}}
\nc{\Lie}{\on{Lie}}
\nc{\Loc}{\on{Loc}_{\cG,\on{spec}}}
\nc{\Pic}{\on{Pic}}
\nc{\Bun}{\on{Bun}}
\nc{\IC}{\on{IC}}
\nc{\Aut}{\on{Aut}}
\nc{\rk}{\on{rk}}
\nc{\Sh}{\on{Sh}}
\nc{\Perv}{\on{Perv}}
\nc{\pos}{{\on{pos}}}
\nc{\Conv}{\on{Conv}}
\nc{\Sph}{\on{Sph}}
\nc{\Sym}{\on{Sym}}
\nc{\BunBb}{\overline{\Bun}_B}
\nc{\BunNb}{\overline{\Bun}_N}
\nc{\BunTb}{\overline{\Bun}_T}
\nc{\BunBbm}{\overline{\Bun}_{B^-}}
\nc{\BunBbel}{\overline{\Bun}_{B,el}}
\nc{\BunBbmel}{\overline{\Bun}_{B^-,el}}
\nc{\Buno}{\overset{o}{\Bun}}
\nc{\BunPb}{{\overline{\Bun}_P}}
\nc{\BunBM}{\Bun_{B(M)}}
\nc{\BunBMb}{\overline{\Bun}_{B(M)}}
\nc{\BunPbw}{{\widetilde{\Bun}_P}}
\nc{\BunBP}{\widetilde{\Bun}_{B,P}}
\nc{\GUb}{\overline{G/U}}
\nc{\GUPb}{\overline{G/U(P)}}
\nc{\Hhom}{\underline{\on{Hom}}}
\nc\syminfty{\on{Sym}^{\infty}}
\nc\lal{\ol{\lambda}}
\nc\xl{\ol{x}}
\nc\thl{\ol{\theta}}
\nc\nul{\ol{\nu}}
\nc\mul{\ol{\mu}}
\nc{\oX}{\overset{o}{X}{}}
\nc{\hl}{\overset{\leftarrow}h{}}
\nc{\hr}{\overset{\rightarrow}h{}}
\nc{\M}{{\mathcal M}}
\nc{\N}{{\mathcal N}}
\nc{\F}{{\mathcal F}}
\nc{\D}{{\mathcal D}}
\nc{\Q}{{\mathcal Q}}
\nc{\Y}{{\mathcal Y}}
\nc{\G}{{\mathcal G}}
\nc{\E}{{\mathcal E}}
\nc{\CalC}{{\mathcal C}}
\nc\Dh{\widehat{\D}}
\nc{\C}{{\mathcal C}}
\nc{\K}{{\mathcal K}}
\renewcommand{\H}{{\mathcal H}}
\nc{\T}{{\mathcal T}}
\nc{\V}{{\mathcal V}}
\renc{\P}{{\mathcal P}}
\nc{\A}{{\mathcal A}}
\nc{\B}{{\mathcal B}}
\nc{\U}{{\mathcal U}}
\nc{\Gr}{{\on{Gr}}}
\nc{\frn}{{\check{\mathfrak u}(P)}}
\nc{\fC}{\mathfrak C}
\nc{\p}{\mathfrak p}
\nc{\q}{\mathfrak q}
\nc\f{{\mathfrak f}}
\nc{\qo}{{\mathfrak q}}
\nc{\po}{{\mathfrak p}}
\nc{\s}{{\mathfrak s}}
\nc\w{\text{w}}
\renewcommand{\mod}{{\on{-mod}}}
\nc\Spec{\on{Spec}}
\nc\Mod{\on{Mod}}
\nc{\tw}{\widetilde{\mathfrak t}}
\nc{\pw}{\widetilde{\mathfrak p}}
\nc{\qw}{\widetilde{\mathfrak q}}
\nc{\jw}{\widetilde j}
\nc{\grb}{\overline{\Gr}}
\nc{\I}{\mathcal I}
\nc{\lambdach}{{\check\lambda}}
\nc{\Lambdach}{{\check\Lambda}{}}
\nc{\much}{{\check\mu}}
\nc{\omegach}{{\check\omega}}
\nc{\nuch}{{\check\nu}}
\nc{\etach}{{\check\eta}}
\nc{\alphach}{{\check\alpha}}
\nc{\oblvtach}{{\check\oblvta}}
\nc{\rhoch}{{\check\rho}}
\nc{\ch}{{\check h}}
\nc{\Hb}{\overline{\H}}
\nc{\BA}{{\mathbb{A}}}
\nc{\BC}{{\mathbb{C}}}
\nc{\BG}{{\mathbb{G}}}
\nc{\BM}{{\mathbb{M}}}
\nc{\BO}{{\mathbb{O}}}
\nc{\BD}{{\mathbb{D}}}
\nc{\BL}{{\mathbb{L}}}
\nc{\Bl}{{\mathbb{l}}}
\nc{\BN}{{\mathbb{N}}}
\nc{\BP}{{\mathbb{P}}}
\nc{\BQ}{{\mathbb{Q}}}
\nc{\BR}{{\mathbb{R}}}
\nc{\BZ}{{\mathbb{Z}}}
\nc{\BS}{{\mathbb{S}}}
\nc{\CA}{{\mathcal{A}}}
\nc{\CB}{{\mathcal{B}}}
\nc{\CE}{{\mathcal{E}}}
\nc{\CF}{{\mathcal{F}}}
\nc{\CH}{{\mathcal{H}}}
\nc{\CL}{{\mathcal{L}}}
\nc{\CC}{{\mathcal{C}}}
\nc{\CM}{{\mathcal{M}}}
\nc{\CN}{{\mathcal{N}}}
\nc{\CK}{{\mathcal{K}}}
\nc{\CO}{{\mathcal{O}}}
\nc{\CP}{{\mathcal{P}}}
\nc{\CQ}{{\mathcal{Q}}}
\nc{\CR}{{\mathcal{R}}}
\nc{\CS}{{\mathcal{S}}}
\nc{\CT}{{\mathcal{T}}}
\nc{\CU}{{\mathcal{U}}}
\nc{\CV}{{\mathcal{V}}}
\nc{\CW}{{\mathcal{W}}}
\nc{\CX}{{\mathcal{X}}}
\nc{\CY}{{\mathcal{Y}}}
\nc{\CZ}{{\mathcal{Z}}}
\nc{\CI}{{\mathcal{I}}}
\nc{\cD}{{\mathcal{D}}}
\nc{\csM}{{\check{\mathcal A}}{}}
\nc{\oM}{{\overset{\circ}{\mathcal M}}{}}
\nc{\obM}{{\overset{\circ}{\mathbf M}}{}}
\nc{\oCA}{{\overset{\circ}{\mathcal A}}{}}
\nc{\obA}{{\overset{\circ}{\mathbf A}}{}}
\nc{\ooM}{{\overset{\circ}{M}}{}}
\nc{\osM}{{\overset{\circ}{\mathsf M}}{}}
\nc{\vM}{{\overset{\bullet}{\mathcal M}}{}}
\nc{\nM}{{\underset{\bullet}{\mathcal M}}{}}
\nc{\oD}{{\overset{\circ}{\mathcal D}}{}}
\nc{\obD}{{\overset{\circ}{\mathbf D}}{}}
\nc{\oA}{{\overset{\circ}{\mathbb A}}{}}
\nc{\op}{{\overset{\bullet}{\mathbf p}}{}}
\nc{\cp}{{\overset{\circ}{\mathbf p}}{}}
\nc{\oU}{{\overset{\bullet}{\mathcal U}}{}}
\nc{\oZ}{{\overset{\circ}{\mathcal Z}}{}}
\nc{\ofZ}{{\overset{\circ}{\mathfrak Z}}{}}
\nc{\oF}{{\overset{\circ}{\fF}}}
\nc{\fa}{{\mathfrak{a}}}
\nc{\fb}{{\mathfrak{b}}}
\nc{\fc}{{\mathfrak{c}}}
\nc{\fch}{{\mathfrak{ch}}}
\nc{\fd}{{\mathfrak{d}}}
\nc{\ff}{{\mathfrak{f}}}
\nc{\fg}{{\mathfrak{g}}}
\nc{\fgl}{{\mathfrak{gl}}}
\nc{\fh}{{\mathfrak{h}}}
\nc{\fj}{{\mathfrak{j}}}
\nc{\fl}{{\mathfrak{l}}}
\nc{\fm}{{\mathfrak{m}}}
\nc{\fn}{{\mathfrak{n}}}
\nc{\fu}{{\mathfrak{u}}}
\nc{\fp}{{\mathfrak{p}}}
\nc{\fr}{{\mathfrak{r}}}
\nc{\fs}{{\mathfrak{s}}}
\nc{\ft}{{\mathfrak{t}}}
\nc{\fz}{{\mathfrak{z}}}
\nc{\fsl}{{\mathfrak{sl}}}
\nc{\hsl}{{\widehat{\mathfrak{sl}}}}
\nc{\hgl}{{\widehat{\mathfrak{gl}}}}
\nc{\hg}{{\widehat{\mathfrak{g}}}}
\nc{\chg}{{\widehat{\mathfrak{g}}}{}^\vee}
\nc{\hn}{{\widehat{\mathfrak{n}}}}
\nc{\chn}{{\widehat{\mathfrak{n}}}{}^\vee}
\nc{\fA}{{\mathfrak{A}}}
\nc{\fB}{{\mathfrak{B}}}
\nc{\fD}{{\mathfrak{D}}}
\nc{\fE}{{\mathfrak{E}}}
\nc{\fF}{{\mathfrak{F}}}
\nc{\fG}{{\mathfrak{G}}}
\nc{\fK}{{\mathfrak{K}}}
\nc{\fL}{{\mathfrak{L}}}
\nc{\fM}{{\mathfrak{M}}}
\nc{\fN}{{\mathfrak{N}}}
\nc{\fP}{{\mathfrak{P}}}
\nc{\fU}{{\mathfrak{U}}}
\nc{\fV}{{\mathfrak{V}}}
\nc{\fZ}{{\mathfrak{Z}}}
\nc{\bb}{{\mathbf{b}}}
\nc{\bc}{{\mathbf{c}}}
\nc{\bd}{{\mathbf{d}}}
\nc{\bbf}{{\mathbf{f}}}
\nc{\be}{{\mathbf{e}}}
\nc{\bg}{{\mathbf{g}}}
\nc{\bi}{{\mathbf{i}}}
\nc{\bj}{{\mathbf{j}}}
\nc{\bn}{{\mathbf{n}}}
\nc{\bp}{{\mathbf{p}}}
\nc{\bq}{{\mathbf{q}}}
\nc{\bu}{{\mathbf{u}}}
\nc{\bv}{{\mathbf{v}}}
\nc{\bx}{{\mathbf{x}}}
\nc{\bs}{{\mathbf{s}}}
\nc{\by}{{\mathbf{y}}}
\nc{\bw}{{\mathbf{w}}}
\nc{\bA}{{\mathbf{A}}}
\nc{\bK}{{\mathbf{K}}}
\nc{\bB}{{\mathbf{B}}}
\nc{\bC}{{\mathbf{C}}}
\nc{\bG}{{\mathbf{G}}}
\nc{\bD}{{\mathbf{D}}}
\nc{\bH}{{\mathbf{He}}}
\nc{\bM}{{\mathbf{M}}}
\nc{\bN}{{\mathbf{N}}}
\nc{\bO}{{\mathbf{O}}}
\nc{\bV}{{\mathbf{V}}}
\nc{\bW}{{\mathbf{Wh}}}
\nc{\bX}{{\mathbf{X}}}
\nc{\bZ}{{\mathbf{Z}}}
\nc{\bS}{{\mathbf{S}}}
\nc{\bT}{{\mathbf{T}}}
\nc{\sA}{{\mathsf{A}}}
\nc{\sB}{{\mathsf{B}}}
\nc{\sC}{{\mathsf{C}}}
\nc{\sD}{{\mathsf{D}}}
\nc{\sF}{{\mathsf{F}}}
\nc{\sG}{{\mathsf{G}}}
\nc{\sK}{{\mathsf{K}}}
\nc{\sM}{{\mathsf{M}}}
\nc{\sO}{{\mathsf{O}}}
\nc{\sU}{{\mathsf{U}}}
\nc{\sW}{{\mathsf{W}}}
\nc{\sQ}{{\mathsf{Q}}}
\nc{\sP}{{\mathsf{P}}}
\nc{\sZ}{{\mathsf{Z}}}
\nc{\sfp}{{\mathsf{p}}}
\nc{\sfq}{{\mathsf{q}}}
\nc{\sr}{{\mathsf{r}}}
\nc{\sk}{{\mathsf{k}}}
\nc{\su}{{\mathsf{u}}}
\nc{\sv}{{\mathsf{v}}}
\nc{\sg}{{\mathsf{g}}}
\nc{\sff}{{\mathsf{f}}}
\nc{\sfb}{{\mathsf{b}}}
\nc{\sfc}{{\mathsf{c}}}
\nc{\sd}{{\mathsf{d}}}
\nc{\BK}{{\bar{K}}}
\nc{\tA}{{\widetilde{\mathbf{A}}}}
\nc{\tB}{{\widetilde{\mathcal{B}}}}
\nc{\tg}{{\widetilde{\mathfrak{g}}}}
\nc{\tG}{{\widetilde{G}}}
\nc{\TM}{{\widetilde{\mathbb{M}}}{}}
\nc{\tO}{{\widetilde{\mathsf{O}}}{}}
\nc{\tU}{{\widetilde{\mathfrak{U}}}{}}
\nc{\TZ}{{\tilde{Z}}}
\nc{\tx}{{\tilde{x}}}
\nc{\tbv}{{\tilde{\bv}}}
\nc{\tfP}{{\widetilde{\mathfrak{P}}}{}}
\nc{\tz}{{\tilde{\zeta}}}
\nc{\tmu}{{\tilde{\mu}}}
\nc{\urho}{\underline{\rho}}
\nc{\uB}{\underline{B}}
\nc{\uC}{{\underline{\mathbb{C}}}}
\nc{\ui}{\underline{i}}
\nc{\uj}{\underline{j}}
\nc{\ofP}{{\overline{\mathfrak{P}}}}
\nc{\oB}{{\overline{\mathcal{B}}}}
\nc{\og}{{\overline{\mathfrak{g}}}}
\nc{\oI}{{\overline{I}}}
\nc{\eps}{\varepsilon}
\nc{\hrho}{{\hat{\rho}}}
\nc{\one}{{\mathbf{1}}}
\nc{\two}{{\mathbf{t}}}
\nc{\Rep}{{\mathop{\operatorname{\rm Rep}}}}
\nc{\Tot}{{\mathop{\operatorname{\rm Tot}}}}
\nc{\Ker}{{\mathop{\operatorname{\rm Ker}}}}
\nc{\Hilb}{{\mathop{\operatorname{\rm Hilb}}}}
\nc{\End}{{\mathop{\operatorname{\rm End}}}}
\nc{\Ext}{{\mathop{\operatorname{\rm Ext}}}}
\nc{\CHom}{{\mathop{\operatorname{{\mathcal{H}}\it om}}}}
\nc{\GL}{{\mathop{\operatorname{\rm GL}}}}
\nc{\gr}{{\mathop{\operatorname{\rm gr}}}}
\nc{\Id}{{\mathop{\operatorname{\rm Id}}}}
\nc{\de}{{\mathop{\operatorname{\rm def}}}}
\nc{\length}{{\mathop{\operatorname{\rm length}}}}
\nc{\supp}{{\mathop{\operatorname{\rm supp}}}}
\nc{\Cliff}{{\mathsf{Cliff}}}
\nc{\Fl}{\on{Fl}}
\nc{\Fib}{{\mathsf{Fib}}}
\nc{\Coh}{{\on{Coh}}}
\nc{\QCoh}{{\on{QCoh}}}
\nc{\IndCoh}{{\on{IndCoh}}}
\nc{\FCoh}{{\mathsf{FCoh}}}
\nc{\reg}{{\text{\rm reg}}}
\nc{\cplus}{{\mathbf{C}_+}}
\nc{\cminus}{{\mathbf{C}_-}}
\nc{\cthree}{{\mathbf{C}_*}}
\nc{\Qbar}{{\bar{Q}}}
\nc\Eis{{\on{Eis}}}
\nc\Eisb{\ol\Eis{}}
\nc\Eisr{\on{Eis}^{rat}{}}
\nc\wh{\widehat}
\nc{\Def}{\on{Def_{\check{\fb}}(E)}}
\nc{\barZ}{\overline{Z}{}}
\nc{\barbarZ}{\overline{\barZ}{}}
\nc{\barpi}{\overline\pi}
\nc{\barbarpi}{\overline\barpi}
\nc{\barpip}{\overline\pi{}^+}
\nc{\barpim}{\overline\pi{}^-}
\nc{\fq}{\mathfrak q}
\nc{\fqb}{\ol{\fq}{}}
\nc{\fpb}{\ol{\fp}{}}
\nc{\fpr}{{\fp^{rat}}{}}
\nc{\fqr}{{\fq^{rat}}{}}
\nc{\hattimes}{\wh\otimes}
\nc{\bh}{{{\mathbf h}}}
\nc{\bk}{{{\mathbf k}}}
\nc{\bOmega}{{\overline{\Omega(\check \fn)}}}
\nc{\seq}[1]{\stackrel{#1}{\sim}}
\nc{\cT}{{\check{T}}}
\nc{\cG}{{\check{G}}}
\nc{\cM}{{\check{M}}}
\nc{\cB}{{\check{B}}}
\nc{\cP}{{\check{P}}}
\nc{\ct}{{\check{\mathfrak t}}}
\nc{\cg}{{\check{\fg}}}
\nc{\cb}{{\check{\fb}}}
\nc{\cn}{{\check{\fn}}}
\nc{\cLambda}{{\check\Lambda}}
\nc{\cla}{{\check\lambda}}
\nc{\cmu}{{\check\mu}}
\nc{\cnu}{{\check\nu}}
\nc{\ceta}{{\check\eta}}
\nc{\DefbE}{{\on{Def}_{\cB}(E_\cT)}}
\nc{\imathb}{{\ol{\imath}}}
\nc{\rlr}{\overset{\longrightarrow}{\underset{\longrightarrow}\longleftarrow}}
\nc{\oBun}{\overset{\circ}\Bun}
\nc{\LocSys}{\on{LocSys}}
\nc{\BunBbb}{\ol{\ol{Bun}}_B}
\nc{\BunBr}{\Bun_B^{rat}}
\nc{\BunBrp}{\Bun_B^{rat,polar}}
\nc{\BunTrp}{\Bun_T^{rat,polar}}
\nc{\BunNr}{\Bun_N^{rat}}
\nc{\BunNre}{\Bun_N^{enh,rat}}
\nc{\BunTr}{\Bun_T^{rat}}
\nc{\Vect}{\on{Vect}}
\nc{\Whit}{\on{Whit}}
\nc{\CTb}{\ol{\on{CT}}}
\nc{\Ran}{\on{Ran}}
\nc{\CTr}{\on{CT}^{rat}{}}
\nc\jmathr{\jmath^{rat}{}}
\nc{\ux}{\underline{x}}
\nc{\clambda}{{\check\lambda}}
\nc{\calpha}{{\check\alpha}}
\nc{\ind}{{\mathbf{ind}}}
\nc{\oblv}{{\mathbf{oblv}}}
\nc{\coeff}{\on{W-coeff}}
\nc{\Poinc}{\on{Poinc}}
\nc{\Dmod}{\on{D-mod}}
\nc{\dr}{\on{dR}}
\nc{\oCZ}{\overset{\circ}\CZ}
\nc{\KL}{\on{KL}}
\nc{\triv}{{\mathbf{triv}}}
\nc{\dgSch}{\on{DGSch}}
\nc{\Sch}{\on{Sch}}
\nc{\affdgSch}{\on{DGSch}^{\on{aff}}}
\nc{\affSch}{\on{Sch}^{\on{aff}}}
\nc{\Sing}{\on{Sing}}
\nc{\inftygroup}{\infty\on{-Grpd}}
\renc{\dr}{{\on{dr}}}
\nc\Maps{\on{Maps}}
\nc\bMaps{\mathbf{Maps}}
\nc{\ul}{\underline}
\nc{\bNP}{\mathbf{N(P)}}
\nc{\ofc}{\overset{\circ}\fch}
\nc{\ppart}{(\!(t)\!)}
\nc{\qqart}{[\![t]\!]}
\nc{\crit}{\on{crit}}
\nc{\bDelta}{\mathbf{\Delta}}
\nc{\genB}{{\overset{\on{gen}}\to B}}
\nc{\genP}{{\underset{\on{gen}}\longrightarrow P}}
\nc{\genN}{{\underset{\on{gen}}\longrightarrow N}}
\begin{document}

\begin{quote}
{\small ``In jedem Minus steckt ein Plus. Vielleicht habe ich so etwas gesagt, aber man braucht das doch nicht allzu w\"ortlich zu nehmen."}
\hskip1.3cm {\tiny R.~Musil. Der Mann ohne Eigenschaften.}
\end{quote}

\vskip1cm

\title[Geometric Langlands conjecture for $GL_2$]{Outline of the proof of the \\ geometric Langlands conjecture for $GL_2$}

\author{Dennis Gaitsgory} 

\dedicatory{To G\'erard Laumon}

\date{\today}

\maketitle

\section*{Introduction}

\ssec{The goal of this paper}

The goal of this paper is to describe work-in-progress by D.~Arinkin, V.~Drinfeld and the author \footnote{The responsibility
for any deficiency or undesired outcome of this paper lies with the author of this paper.} towards the proof 
of the (categorical) geometric Langlands conjecture. 

\medskip

The contents of the paper can be summarized as follows: we reduce the geometric Langlands conjecture
to a combination of two sets of statements. 

\medskip

The first set is what we call ``quasi-theorems." These are plausible 
(and tractable) statements that involve Langlands duality, but either for proper Levi subgroups, or of local nature,
or both. Hopefully, these quasi-theorems will soon turn into actual theorems.     

\medskip

The second set are two conjectures (namely, Conjectures \ref{c:Whit ext ff} and \ref{c:Op gen}),
both of which are theorems for $GL_n$.  However, these conjectures \emph{do not}
involve Langlands duality: \conjref{c:Whit ext ff} only involves the geometric side of the correspondence, and
and \conjref{c:Op gen} only the spectral side. 

\ssec{Strategy of the proof}  \label{ss:strategy}

In this subsection we will outline the general scheme of the argument. We will be working over
an algebraically closed field $k$ of characteristic $0$. Let $X$ be a smooth and complete curve over $k$,
and $G$ a reductive group. We let $\cG$ denote the Langlands dual group, also viewed as an algebraic
group over $k$. 

\sssec{Formulation of the conjecture}  \label{sss:intr form}

The categorical geometric Langlands conjecture is supposed to compare two triangulated (or rather DG categories).
One is the ``geometric" (or ``automorphic") side that has to do with D-modules on the stack $\Bun_G$ of $G$-bundles on $X$. The other  
is the ``spectral" (or ``Galois") side that has to do with quasi-coherent sheaves on the stack $\LocSys_\cG$ on
$\cG$-local systems on $X$. 

\medskip

In our formulation of the conjecture, the geometric side is taken ``as is." I.e., we consider the DG category
$\Dmod(\Bun_G)$ of D-modules on $\Bun_G$. We refer the reader to \cite{DrGa2} for the definition of this
category and a discussion of its general properties (e.g., this category is \emph{compactly generated}
for non-tautological reasons). 

\medskip

A naive guess for the spectral side is the DG category $\QCoh(\LocSys_\cG)$. However, this guess turns
out to be slightly wrong, whenever $G$ is not a torus. A quick way to see that it is wrong is via the
compatibility of the conjectural geometric Langlands equivalence with the functor of Eisenstein series, see Property $\on{Ei}$
stated in \secref{sss:cond E}. Namely, if $P$ is a parabolic of $G$ with Levi quotient $M$, we have the Eisenstein series functors
$$\Eis_P:\Dmod(\Bun_M)\to \Dmod(\Bun_G) \text{ and }
\Eis_{\cP,\on{spec}}:\QCoh(\LocSys_\cM)\to \QCoh(\LocSys_\cG),$$
that are supposed to match up under the geometric Langlands equivalence (up to a twist by some line bundles).
However, this cannot be the case because the functor $\Eis_P$ preserves compactness (see \cite{DrGa3}), whereas 
$\Eis_{\cP,\on{spec}}$ does not.

\medskip

Our ``fix" for the spectral side is designed to make the above problem with Eisenstein series go away
in a minimal way (see \propref{p:Eis generation}). We observe that the non-preservation of compactness
by the functor $\Eis_{\cP,\on{spec}}$ has to do with the fact that the stack $\LocSys_\cG$ is not smooth.
Namely, it expresses itself in that some coherent complexes on $\LocSys_\cG$ are non-perfect. 

\medskip

Our modified version for the spectral side is the category that we denote $$\IndCoh_{\on{Nilp}^{\on{glob}}_\cG}(\LocSys_\cG),$$
see \secref{sss:define spectral side}. It is a certain enlargement of $\QCoh(\LocSys_\cG)$, whose definition uses the fact that
$\LocSys_\cG$ is a derived locally complete intersection, and the theory of singular support of coherent sheaves
for such stacks developed in \cite{AG}. 

\sssec{Idea of the proof}

The idea of the comparison between the categories $\Dmod(\Bun_G)$ and $\IndCoh_{\on{Nilp}^{\on{glob}}_\cG}(\LocSys_\cG)$
pursued in this paper is the following: we embed each side into a more tractable category and compare the 
essential images. 

\medskip

For the geometric side, the more tractable category in question is the category that we denote
$\Whit^{\on{ext}}(G,G)$, and refer to it as the \emph{extended Whittaker category}; the nature
of this category is explained in \secref{sss:explain ext Whit} below. The functor
$$\Dmod(\Bun_G)\to \Whit^{\on{ext}}(G,G)$$
(which, according to \conjref{c:Whit ext ff}, is supposed to be fully faithful) is that of \emph{extended Whittaker coefficient},
denoted $\on{coeff}_{G,G}^{\on{ext}}$. 

\medskip

For the spectral side, the more tractable category is denoted $\on{Glue}(\cG)_{\on{spec}}$, and the functor
$$\IndCoh_{\on{Nilp}^{\on{glob}}_\cG}(\LocSys_\cG)\to \on{Glue}(\cG)_{\on{spec}}$$
is denoted by $\on{Glue}(\on{CT}^{\on{enh}}_{\on{spec}})$
(this functor is fully faithful by \thmref{t:coLoc ext ff}). The idea of the pair 
$(\on{Glue}(\cG)_{\on{spec}},\on{Glue}(\on{CT}^{\on{enh}}_{\on{spec}}))$ is explained in 
\secref{sss:explain glue}. 

\medskip

We then claim (see Quasi-Theorems \ref{t:glued equiv} and \ref{t:glued functor}) that there exists a canonically
defined fully faithful functor
$$\BL_{G,G}^{\Whit^{\on{ext}}}:\on{Glue}(\cG)_{\on{spec}}\to \Whit^{\on{ext}}(G,G).$$

Thus, we have the following diagram
\begin{equation} \label{e:main diagram}
\CD
\on{Glue}(\cG)_{\on{spec}}  @>{\BL_{G,G}^{\Whit^{\on{ext}}}}>>  \Whit^{\on{ext}}(G,G) \\
@A{\on{Glue}(\on{CT}^{\on{enh}}_{\on{spec}})}AA    @AA{\on{coeff}_{G,G}^{\on{ext}}}A   \\
\IndCoh_{\on{Nilp}^{\on{glob}}_\cG}(\LocSys_\cG) & & \Dmod(\Bun_G),
\endCD
\end{equation}
with all the arrows being fully faithful.

\medskip

Assume that the essential images of the functors 
\begin{equation} \label{e:ess images}
\BL_{G,G}^{\Whit^{\on{ext}}} \circ \on{Glue}(\on{CT}^{\on{enh}}_{\on{spec}}) \text{ and }
\on{coeff}_{G,G}^{\on{ext}}
\end{equation}
coincide. We then obtain that diagram \eqref{e:main diagram} can be (uniquely) completed
to a commutative diagram by means of a functor 
$$\BL_G:\IndCoh_{\on{Nilp}^{\on{glob}}_\cG}(\LocSys_\cG) \to \Dmod(\Bun_G),$$
and, moreover, $\BL_G$ is automatically an equivalence.

\medskip

The required fact about the essential images of the functors \eqref{e:ess images} follows
from \conjref{c:Op gen}.

\sssec{The extended Whittaker category}  \label{sss:explain ext Whit} 
 
The extended Whittaker category $\Whit^{\on{ext}}(G,G)$ is defined as the DG category of D-modules
on a certain space (prestack), by imposing a certain equivariance condition. It may be easiest to explain what
$\Whit^{\on{ext}}(G,G)$ is via an analogy with the classical adelic picture.

\medskip

Assume for simplicity that $G$ has a connected center. Consider the adelic quotient $G(\BA)/G(\BO)$.
Let $\fch(K)$ denote the set of characters of the ad\`ele group $N(\BA)$  (here $N$ is the unipotent
radical of the Borel group $B$) that are trivial on $N(K)\subset N(\BA)$ (here $K$ denotes the global
field corresponding to $X$). The set $\fch(K)$ is naturally
acted on by the Cartan group $T(K)$ by conjugation.

\medskip

The space of functions which is the analog of the category $\Whit^{\on{ext}}(G,G)$ is the subspace
of all functions on the set $$G(\BA)/G(\BO)\times \fch(K)$$ that satisfy the following two conditions:

\medskip

\begin{itemize}

\item $f(t\cdot g,\on{Ad}_t(\chi))=f(g,\chi)$, \quad $t\in T(K)$, \quad $g\in G(\BA)/G(\BO)$, \quad $\chi\in \fch(K)$. 

\medskip

\item $f(n\cdot g,\chi)=\chi(n)\cdot f(g,\chi)$, \quad $g\in G(\BA)/G(\BO)$, \quad $\chi\in \fch(K)$, \quad $n\in N(\BA)$.

\end{itemize}

\medskip

The analog of the functor $\on{coeff}_{G,G}^{\on{ext}}$ is the map from the space of functions on
$$G(K)\backslash G(\BA)/G(\BO)$$ that takes a function $\wt{f}$ to 
$$f(g,\chi):=\underset{N(K)\backslash N(\BA)}\int\, \wt{f}(n\cdot g)\cdot \chi^{-1}(n).$$

\medskip

By construction, the category $\Whit^{\on{ext}}(G,G)$ is glued from the categories that we denote $\Whit(G,P)$
(here $P$ is a parabolic in $G$) and call ``degenerate Whittaker categories." In the function-theoretic analogy,    
for a parabolic $P$, the category $\Whit(G,P)$ corresponds to the subspace of functions supported on those
characters $\chi\in \fch(K)$ that satisfy: 

\medskip

\begin{itemize}

\item $\chi$ is non-zero on any simple root subgroup corresponding to roots inside $M$;

\medskip

\item $\chi$ is zero on any simple root \emph{not} in $M$. 

\end{itemize}

\medskip

One can rewrite this subspace as the space of functions $f$ on the set $G(\BA)/G(\BO)$ that satisfy

\medskip

\begin{itemize}

\item $f$ is invariant with respect to the subgroup $Z_M(K)$;

\medskip

\item $f$ is invariant with respect to $N(P)(\BA)$, where $N(P)$ is the unipotent radical of $P$;

\medskip

\item $f$ is equivariant with respect to $N(M)(\BA)$ against a fixed non-degenerate character,
where $N(M):=N\cap M$ and $M$ is the Levi subgroup of $P$.

\end{itemize}

\medskip

In particular, the ``open stratum" in $\Whit^{\on{ext}}(G,G)$ is a version of the usual Whittaker
category $\Whit(G,G)$  (with the imposed extra condition of equivariance with respect to the
group of rational points of $Z_G$). 

\medskip

The other extreme is the ``closed stratum", which is the principal
series category denoted $\on{I}(G,B)$. The latter is the analog of the space of functions on the double quotient
$$T(K)\cdot N(\BA)\backslash G(\BA)/G(\BO).$$

\medskip

The functor $\on{coeff}_{G,G}^{\on{ext}}$ can be thus thought of as taking for each parabolic 
the corresponding functor of \emph{constant term}, and then taking the non-degenerate Whittaker
coefficients for the Levi. 

\medskip

The category $\Whit^{\on{ext}}(G,G)$ is more tractable than the original category $\Dmod(\Bun_G)$
because it is comprised of the categories $\Whit(G,P)$, each of which is a combination of local
information and that involving a proper Levi subgroup. 

\sssec{The glued category on the spectral side}  \label{sss:explain glue}

The category $\on{Glue}(\cG)_{\on{spec}}$ is defined by explicitly gluing certain categories
$$\sF_{\cP}\mod(\QCoh(\LocSys_{\cP})),$$  
where $\cP$ runs through the poset of parabolic subgroups of $\cP$.

\medskip

Each category $\sF_{\cP}\mod(\QCoh(\LocSys_{\cP}))$ is defined as follows. We consider the map
$$\sfp_{\cP,\on{spec}}:\LocSys_\cP\to \LocSys_\cG,$$
and $\sF_{\cP}\mod(\QCoh(\LocSys_{\cP}))$ is the DG category of quasi-coherent sheaves on 
$\LocSys_\cP$ equipped with a connection along the fibers of the map $\sfp_{\cP,\on{spec}}$.

\medskip

The gluing functors, and the functor $\on{Glue}(\on{CT}^{\on{enh}}_{\on{spec}})$ are defined
naturally via pull-back, see \secref{ss:gluing spec} for details. 

\medskip

To explain the reason why the category $\on{Glue}(\cG)_{\on{spec}}$  is more tractable than the original
category $\IndCoh_{\on{Nilp}^{\on{glob}}_\cG}(\LocSys_\cG)$, let us consider the ``open stratum", i.e., the
category
$$\sF_{\cG}\mod(\QCoh(\LocSys_{\cG}))=\QCoh(\LocSys_\cG).$$

We claim that this category embeds fully faithfully into the ``open stratum" on the geometric side, i.e.,
the category $\Whit(G,G)$. This is shown by combining the following two results: 

\medskip

One is \propref{p:local to global LocSys}
that says that the category $\QCoh(\LocSys_\cG)$ admits a fully faithful functor
$$\on{co-Loc}_{\cG,\on{spec}}:\QCoh(\LocSys_\cG)\to \Rep(\cG)_{\Ran(X)},$$
where $\Rep(\cG)_{\Ran(X)}$ is a version of the category $\Rep(\cG)$ 
\emph{spread over the Ran space of $X$}. \footnote{The notation ``$\on{co-Loc}_{\cG,\on{spec}}$" is not intended to suggest
that this functor is a \emph{co-localization} in the sense of category theory (i.e., admits a fully faithful left adjoint). Rather,
it is the right adjoint to a functor $\on{Loc}_{\cG,\on{spec}}$, which is a localization-type functor in the sense of \cite{BB}.
The latter happens to be a localization in the sense of category theory as its right adjoint, i.e., $\on{co-Loc}_{\cG,\on{spec}}$ is fully faithful.}

\medskip

The second is a geometric version of the Casselman-Shalika formula,
Quasi-Theorem \ref{t:Cass Shal}, that says that $\Whit(G,G)$ is equivalent to a category obtained by slightly modifying 
$\Rep(\cG)_{\Ran(X)}$. 

\sssec{Comparing the essential images}  \label{sss:comparing images}

Finally, let us comment on the last step of the proof, namely, the comparison of the essential images in diagram
\eqref{e:main diagram}. 

\medskip

The idea is to show that there exist two families of objects 
$$\CF_a\in \IndCoh_{\on{Nilp}^{\on{glob}}_\cG}(\LocSys_\cG)  \text{ and }
\CM_a\in \Dmod(\Bun_G),$$
parameterized by the same set $A$, such that

\begin{itemize}

\item The objects $\CF_a$ generate $\IndCoh_{\on{Nilp}^{\on{glob}}_\cG}(\LocSys_\cG)$;

\item The objects $\CM_a$ generate $\Dmod(\Bun_G)$;

\item For each $a\in A$ we have an isomorphism
$$\BL_{G,G}^{\Whit^{\on{ext}}} \circ \on{Glue}(\on{CT}^{\on{enh}}_{\on{spec}})(\CF_a)\simeq 
\on{coeff}_{G,G}^{\on{ext}}(\CM_a).$$

\end{itemize}

We construct the required families $\CF_a$ and $\CM_a$ as follows. By induction on the rank,
we can assume that the geometric Langlands conjecture holds for proper Levi
subgroups of $G$. Then Quasi-Theorem \ref{t:CT and Eis} implies that for a proper parabolic
$P$ with Levi quotient $M$, we have a diagram
$$
\CD
\on{Glue}(\cG)_{\on{spec}}  @>{\BL_{G,G}^{\Whit^{\on{ext}}}}>>   \Whit^{\on{ext}}(G,G) \\
@A{\on{Glue}(\on{CT}^{\on{enh}}_{\on{spec}})}AA    @AA{\on{coeff}_{G,G}^{\on{ext}}}A   \\
\IndCoh_{\on{Nilp}^{\on{glob}}_\cG}(\LocSys_\cG) & & \Dmod(\Bun_G) \\
@A{\Eis_{\cP,\on{spec}}}AA     @AA{\Eis_P}A  \\
\IndCoh_{\on{Nilp}^{\on{glob}}_\cM}(\LocSys_\cM)    @>{\BL_M}>>  \Dmod(\Bun_M) 
\endCD
$$
that commutes up to a (specific) self-equivalence of $\Dmod(\Bun_M)$.  Here $\Eis_{\cP,\on{spec}}$ and $\Eis_P$
are the Eisenstein series functors on the spectral and geometric sides, respectively.

\medskip

However, the essential images of the functor $\Eis_{\cP,\on{spec}}$ (resp., $\Eis_P$) for all proper parabolics
$P$ are not sufficient to generate the category $\IndCoh_{\on{Nilp}^{\on{glob}}_\cG}(\LocSys_\cG)$
(resp., $\Dmod(\Bun_G)$). Namely, on the spectral side we are missing the entire locus of \emph{irreducible}
local systems, and on the geometric side the full subcategory $\Dmod(\Bun_G)_{\on{cusp}}$ corresponding
to cuspidal objects. 

\medskip

Another family of objects is provided by the commutative diagram
\begin{equation} \label{e:pentagon}
\xy 
(-25,25)*+{\on{Glue}(\cG)_{\on{spec}} }="X";
(25,25)*+{\Whit^{\on{ext}}(G,G)}="Y";
(-25,0)*+{\IndCoh_{\on{Nilp}^{\on{glob}}_\cG}(\LocSys_\cG) }="Z";
(25,0)*+{\Dmod(\Bun_G) }="W";
(0,-25)*+{\QCoh(\on{Op}(\cG)^{\on{glob}}_{\lambda^I})}="U";
{\ar@{->}^{\BL_{G,G}^{\Whit^{\on{ext}}}} "X";"Y"};
{\ar@{->}^{\on{Glue}(\on{CT}^{\on{enh}}_{\on{spec}})} "Z";"X"};
{\ar@{->}_{\on{coeff}_{G,G}^{\on{ext}}} "W";"Y"};
{\ar@{->}^{(\sv_{\lambda^I})_*} "U";"Z"};
{\ar@{->}_{\on{q-Hitch}_{\lambda^I}} "U";"W"};
\endxy
\end{equation}

Here $\QCoh(\on{Op}(\cG)^{\on{glob}}_{\lambda^I})$ is the scheme of global \emph{opers}
on the curve $X$ with specified singularities (encoded by the index $\lambda^I$), see \secref{s:opers}.

\medskip

The functor $(\sv_{\lambda^I})_*$ is that of direct image with respect to the natural forgetful map
$$\sv_{\lambda^I}:\QCoh(\on{Op}(\cG)^{\on{glob}}_{\lambda^I})\to \LocSys_\cG.$$

The functor $\on{q-Hitch}_{\lambda^I}$ is obtained by generalizing the construction of
\cite{BD2} that attaches objects in $\Dmod(\Bun_G)$ to quasi-coherent sheaves on the scheme
of opers.

\medskip

Now, the essential images of the functors $\Eis_P$ (for all proper parabolics $P$) and those
of the functors $\on{q-Hitch}_{\lambda^I}$ do generate $\Dmod(\Bun_G)$ by \thmref{t:generate BunG}.

\medskip

The generation of $\IndCoh_{\on{Nilp}^{\on{glob}}_\cG}(\LocSys_\cG)$ by the essential images of the
functors $\Eis_{\cP,\on{spec}}$ and $(\sv_{\lambda^I})_*$ follows from \conjref{c:Op gen}.

\sssec{Summary}

One can summarize the idea of the proof as playing off against each other the operations of taking the (extended)
Whittaker coefficient and the Beilinson-Drinfeld construction of D-modules on $\Bun_G$ via opers, and tracing through
the corresponding operations on the spectral side.

\medskip

We should remark that the compatibility of the two operations on the geometric and spectral sides is the limiting
case of the more general quantum Langlands phenomenon. This idea was present and explored in the papers 
\cite{Fr} (specifically, Sect. 6.4) and \cite{Sto}; these papers record part of the research in this direction, carried 
out by B.~Feigin, E.~Frenkel and A.~Stoyanovsky in the early 90's. 

\ssec{Other approaches to the construction of the functor}

\sssec{The Drinfeld-Laumon approach: the case of an arbitrary reductive group}

Let $G$ be still an arbitrary reductive group. Let 
$$\LocSys_\cG^{\on{irred}}\overset{\jmath}\hookrightarrow \LocSys_\cG$$
be the embedding of the locus of irreducible local systems. By a slight abuse
of notation we shall denote by $\jmath_*$ the functor
\footnote{In fact, the difference
between the two categories  
$\QCoh(\LocSys_\cG)$ and $\IndCoh_{\on{Nilp}^{\on{glob}}_\cG}(\LocSys_\cG)$
disappears once we restrict to $\LocSys_\cG^{\on{irred}}$.}
$$\QCoh(\LocSys_\cG^{\on{irred}})\to \QCoh(\LocSys_\cG)\to \IndCoh_{\on{Nilp}^{\on{glob}}_\cG}(\LocSys_\cG).$$

The resulting functor 
$$\BL_G\circ \jmath_*:\QCoh(\LocSys^{\on{irred}}_\cG)\to \Dmod(\Bun_G)$$
can be described as follows:

\medskip

Starting from $\CF\in \QCoh(\LocSys^{\on{irred}}_\cG)$, we regard it as an object of 
$\on{Glue}(\cG)_{\on{spec}}$ extended by zero from the ``open stratum''
$$\QCoh(\LocSys_\cG)\hookrightarrow \on{Glue}(\cG)_{\on{spec}}.$$

Applying the functor 
$$\BL_{G,G}^{\Whit^{\on{ext}}}:\on{Glue}(\cG)_{\on{spec}}\to \Whit^{\on{ext}}(G,G),$$
we obtain an object extended by zero from the ``open stratum" 
$$\Whit(G,G)\hookrightarrow \Whit^{\on{ext}}(G,G).$$

I.e., we do not need to worry about constant terms and gluing; our sought-for object
of $\Bun_G$ will be cuspidal, and thus will only have non-degenerate Whittaker coefficients. 

\medskip

One can interpret the Drinfeld-Laumon approach (which takes its origin in the classical
theory of automorphic functions) as attempting to prove directly that the above object 
$$\BL_{G,G}^{\Whit^{\on{ext}}}\circ \jmath_*(\CF)\in \Whit^{\on{ext}}(G,G)$$
uniquely descends to an object 
$$\BL_G\circ \jmath_*(\CF)\in \Dmod(\Bun_G).$$

\medskip

The difference between this approach and one in the present paper is that instead of proving
the descent statement mentioned above for an arbitrary $\CF\in \QCoh(\LocSys^{\on{irred}}_\cG)$,
we do it on the set of generators of that category. These are given as direct images
$$\sv_{\lambda^I}(\CF),\quad \CF\in \QCoh(\on{Op}(\cG)^{\on{glob,irred}}_{\lambda^I}).$$

For such objects descent is proved by pinpointing the corresponding object of $\Dmod(\Bun_G)$. Namely,
it is one given by the Beilinson-Drinfeld construction, i.e., $\on{q-Hitch}_{\lambda^I}(\CF)$. 
 
\sssec{The Drinfeld-Laumon approach: the case of $GL_n$}

In reality, the Drinfeld-Laumon approach as it appears in \cite{Dr}, \cite{Lau1} and \cite{Lau2}, and developed 
further in \cite{FGKV} and \cite{FGV2}, is specialized to the case of $G=GL_n$. 

\medskip

The main feature of this special case is that one can replace the space (prestack) on which we realize
$\Whit^{\on{ext}}(G,G)$ by an actual algebraic stack  \footnote{This was crucial at the time of writing of \cite{FGV2},
as it was not clear how to define or deal with D-modules on arbitrary prestacks.},
at the cost of losing fully-faithfulness of the functor
$$\on{coeff}_{G,G}^{\on{ext}}:\Dmod(\Bun_G)\to \Whit^{\on{ext}}(G,G).$$

\medskip

In our notations, the construction of \cite{FGV2} can be interpreted as follows. One 
introduces a certain full subcategory 
$$\Whit(G,G)^{\on{non-polar;ext}}\subset \Whit^{\on{ext}}(G,G),$$
(whose definition only involves usual algebraic stacks). The above inclusion admits a right adjoint, denoted
$$\Upsilon:\Whit^{\on{ext}}(G,G)\to \Whit(G,G)^{\on{non-polar;ext}},$$
and one considers the functor 
$$\on{coeff}_{G,G}^{\on{non-polar;ext}}=\Upsilon\circ \on{coeff}_{G,G}^{\on{ext}},\quad 
\Dmod(\Bun_G)\to \Whit(G,G)^{\on{non-polar;ext}}.$$

\medskip

The functor $\on{coeff}_{G,G}^{\on{non-polar;ext}}$ is no longer fully faithful. However, it has 
the property that it is fully faithful on the subcategory
$$\Dmod(\overset{\circ}\Bun_G)^\heartsuit\subset \Dmod(\Bun_G),$$
where $\overset{\circ}\Bun_G$ is the open substack corresponding to $G$-bundles (i.e., rank $n$ vector bundles) with
vanishing $H^1$, and where the superscript ``$\heartsuit$" denotes the heart of the t-structure. 

\medskip

Starting from an irreducible $n$-dimensional local system $\sigma$ on $X$, one wants to construct the
corresponding object 
$$\CM_\sigma:=\BL_G(k_\sigma)\in \Dmod(\Bun_G),$$
where $k_\sigma$ is the sky-scraper at the point $\sigma\in \LocSys^{\on{irred}}_\cG$. 

\medskip

To $\sigma$ one explicitly associates an object of 
$\Whit(G,G)^{\on{non-polar;ext}}$, which in our notations is 
\begin{equation} \label{e:to descend}
\Upsilon \circ \BL_{G,G}^{\Whit^{\on{ext}}}\circ \on{Glue}(\on{CT}^{\on{enh}}_{\on{spec}}) \circ 
\jmath_*(k_\sigma)\in \Whit(G,G)^{\on{non-polar;ext}},
\end{equation}
or, by slightly abusing the notation and ignoring the contributions of proper parabolics, 
$$\Upsilon  \circ \BL_{G,G}^{\Whit}\circ \on{co-Loc}_{\cG,\on{spec}}(k_\sigma).$$

One constructs the restriction of $\CM_\sigma$ to $\Bun^d_G$  (the connected component of $\Bun_G$
corresponding to vector bundles of degree $d$) for $d\gg 0$ by showing that the direct summand of 
\eqref{e:to descend} living over $\Bun_G^d$ descends to (=canonically comes as the image under 
$\on{coeff}_{G,G}^{\on{non-polar;ext}}$ of) an object of $\Dmod(\Bun^d_G)$ by making heavy use of the t-structures on 
$\Dmod(\Bun_G)$ and $\Whit(G,G)^{\on{non-polar;ext}}$. 

\sssec{The Beilinson-Drinfeld approach via opers}

Extending the construction of \cite{BD2}, one may attempt to define a functor
$$\BL_G|_{\QCoh(\LocSys_\cG)}:\QCoh(\LocSys_\cG)\to \Dmod(\Bun_G)$$
by requiring that the diagram
$$
\xy 
(-25,0)*+{\QCoh(\LocSys_\cG)}="Z";
(25,0)*+{\Dmod(\Bun_G) }="W";
(0,-25)*+{\QCoh(\on{Op}(\cG)^{\on{glob}}_{\lambda^I})}="U";
{\ar@{->}^{\BL_G} "Z";"W"};
{\ar@{->}^{(\sv_{\lambda^I})_*} "U";"Z"};
{\ar@{->}_{\on{q-Hitch}_{\lambda^I}} "U";"W"};
\endxy
$$
be commutative for every parameter $\lambda^I$. 

\medskip

This would be possible if one knew \conjref{c:Op quot}. 

\sssec{Beilinson's spectral projector}

There exists yet one more approach to the construction of the functor 
$$\BL_G|_{\QCoh(\LocSys_\cG)}:\QCoh(\LocSys_\cG)\to \Dmod(\Bun_G).$$

It is based on the idea that the Hecke functors applied at all points of $X$
comprise an action of the symmetric monoidal category $\QCoh(\LocSys_\cG)$
on $\Dmod(\Bun_G)$. A precise statement along these lines is formulated as
\thmref{t:generalized vanishing}. 

\medskip

This does indeed define the restriction of the functor $\BL_G$ to 
$$\QCoh(\LocSys_\cG)\subset \IndCoh_{\on{Nilp}^{\on{glob}}_\cG}(\LocSys_\cG)$$
by applying the above action to
the object of $\Dmod(\Bun_G)$ that is supposed to correspond under $\BL_G$
to $\CO_{\LocSys_\cG}\in \QCoh(\LocSys_\cG)$. This object is identified in 
\secref{sss:1st Whittaker coefficient and Langlands}.

\ssec{What is new in this paper?}

\sssec{Old and new ideas}  \label{sss:old and new}

Many of the ideas present in this paper are not at all new. 

\medskip

The basic initial idea is the same one as in the Drinfeld-Laumon approach. It consists of  accessing the 
category $\Dmod(\Bun_G)$ through the Whittaker model, while the latter can be directly compared to the spectral side. 

\medskip

The next idea was already mentioned above: it consists of playing off the functors
$$\on{coeff}_{G,G}:\Dmod(\Bun_G)\to \Whit(G,G) \text{ and } \on{Loc}_G:\on{KL}(G,\crit)_{\Ran(X)}\to \Dmod(\Bun_G)$$
against each other, and comparing them to their counterparts on the spectral side. 

\medskip

I.e., we want to complete both the Drinfeld-Laumon approach and the Beilinson-Drinfeld approach 
to an equivalence of categories, by comparing them to each other. As was mentioned already, the fruitfulness 
of such a comparison was explored already in \cite{Fr} and \cite{Sto}. 

\medskip

Among the new ideas one could mention the following ones: (a) the modification of the spectral side,
given by $\IndCoh_{\on{Nilp}^{\on{glob}}_\cG}(\LocSys_\cG)$; (b) the idea that one can consider D-modules
on arbitrary prestacks rather than algebraic stacks or ind-algebraic stacks; (c) categories living over the
Ran space and ``local-to-global" constructions they give rise to; (d) contractibility of the space of generically defined 
maps from $X$ to a connected algebraic group.

\medskip

All of these ideas became available as a result of bringing the machinery of derived algebraic geometry and
higher category theory to the paradigm of Geometric Langlands. We learned about these subjects from 
J.~Lurie. 

\sssec{Ind-coherent sheaves and singular support} \hfill

\medskip

The definition of $\IndCoh_{\on{Nilp}^{\on{glob}}_\cG}(\LocSys_\cG)$ is based on the theory of singular support
of coherent sheaves on a scheme (or algebraic stack) which is a \emph{derived locally complete intersection}. 
This theory is developed in \cite{AG} and reviewed in \secref{s:sing supp}. 

\medskip

The idea of singular support is also an old one, and apparently goes back to D.~Quillen. Given 
a triangulated category $\CC$, an object $c\in \CC$ and an evenly graded commutative algebra $A$ 
mapping to the algebra $$\underset{i}\oplus\, \on{Ext}^{2i}_\CC(c,c),$$
it makes sense to say that $c$ is supported over a given Zariski-closed subset of $\Spec(A)$. 

\medskip

When $\CC$ is the derived category of quasi-coherent sheaves on an affine DG scheme $X$, we take $A$
to be the even part of $\on{HH}(X)$, the Hochschild cohomology algebra of $X$. 

\medskip

So, at the end of the day, singular support of a coherent sheaf $\CF$ measures which cohomological operations
$\CF\to \CF[2i]$ vanish when iterated a large number of times. 

\medskip

Further details are given in \secref{s:sing supp}.

\sssec{D-modules on prestacks}

The idea of considering D-modules on prestacks is really an essential one for this paper. Here is a typical example of
a prestack,
considered in \secref{s:Whit} and denoted $\Bun_G^{B\on{-gen}}$, which is used in the definition of the category 
$\Whit^{\on{ext}}(G,G)$. 

\medskip

The prestack $\Bun_G^{B\on{-gen}}$ classifies $G$-bundles on $X$, equipped with a reduction to the Borel subgroup $B$,
\emph{defined generically on $X$}. 

\medskip

The idea to realize $\Whit^{\on{ext}}(G,G)$ on $\Bun_G^{B\on{-gen}}$ defined as above was suggested by J.~Barlev. 

\medskip

Of course, for an arbitrary prestack $\CY$, the category $\Dmod(\CY)$, although well-defined, will be pretty
intractable. In the case of  $\Bun_G^{B\on{-gen}}$, the nice properties of $\Dmod(\Bun_G^{B\on{-gen}})$ are ensured
by \propref{p:Barlev} that says that $\Bun_G^{B\on{-gen}}$ can be realized as a quotient of an algebraic stack by
a schematic and proper equivalence relation. 

\medskip

One feature of $\Dmod(\Bun_G^{B\on{-gen}})$ is that it \emph{does not have} a t-structure with the usual
properties of a t-structure of the category of D-modules on a scheme or algebraic stack. For many people,
including the author, this is one of the reasons why this category has not been considered earlier.

\medskip

Another class of examples of prestacks has to do with the \emph{Ran space} of $X$, denoted $\Ran(X)$, which classifies
\emph{non-empty finite subsets of $X$}. 

\sssec{Local-to-global}  \label{sss:rep over Ran new}

To give an example of a ``local-to-global'' principle employed in this paper, we consider the category $\QCoh(\LocSys_\cG)$.
This is a ``global" object, since the stack $\LocSys_\cG$ itself is of global nature, as it depends on the curve $X$.

\medskip

The corresponding local category is $\Rep(\cG)$ of algebraic representations of $\cG$. We spread it over the Ran space
and obtain the category, denoted $\Rep(\cG)_{\Ran(X)}$, introduced in \secref{ss:rep Ran}. The ``local-to-global'' principle
for this case, stated in \propref{p:local to global LocSys}, says that there is a pair of adjoint functors 
$$\on{Loc}_{\cG,\on{spec}}:\Rep(\cG)_{\Ran(X)}\rightleftarrows \QCoh(\LocSys_\cG):\on{co-Loc}_{\cG,\on{spec}}$$
with the right adjoint $\on{co-Loc}_{\cG,\on{spec}}$ being fully faithful. 

\medskip

Hence, we obtain a fully faithful functor from a ``global" category to a ``local" one, which is what we mean by
a ``local-to-global'' principle. 

\medskip

We will also encounter non-trivial generalizations of the above example in Quasi-Theorems 
\ref{t:principal series} and \ref{t:deg Whit spec}. However, the corresponding ``local-to-global"
principle will not appear in the statement, but rather constitutes one of the steps in the proof, which is
not discussed explicitly in the paper.

\medskip

To explain its flavor, we consider the following example. Consider the natural map
$$\sfp_{\cB,\on{spec}}:\LocSys_\cB\to \LocSys_\cG.$$
We are interested in the category, denoted by  $\sF_{\cB}\mod(\QCoh(\LocSys_{\cB}))$, mentioned
in \secref{sss:explain glue}. It is equipped with a pair of adjoint functors:
$$\ind_{\sF_\cB}:\QCoh(\LocSys_{\cB})\rightleftarrows \sF_{\cB}\mod(\QCoh(\LocSys_{\cB})):\oblv_{\sF_\cB},$$

\medskip

The composition 
$\oblv_{\sF_\cB}\circ \ind_{\sF_\cB}:\QCoh(\LocSys_{\cB})\to \QCoh(\LocSys_{\cB})$
has thus a structure of monad (i.e., algebra object in the monoidal category $\End(\QCoh(\LocSys_{\cB}))$).
We would like to describe this monad in ``local" terms. 

\medskip

The latter turns out to be possible. The answer is given in terms of the Ran version of the \emph{spectral Hecke stack}
(see \secref{sss:spec Ran Hecke}) and is obtained by generalizing the construction of \cite{Ro}.

\sssec{Contractibility}

Finally, the contractibility result mentioned in \secref{sss:old and new} says that if $H$ is a connected
affine algebraic group, then the prestack $\bMaps(X,H)^{\on{gen}}$ that classifies maps from $X$ to $H$,
\emph{defined generically on $X$}, is homologically contractible.  

\medskip

The latter means that the pull-back functor 
$$\Vect=\Dmod(\on{pt})\to \Dmod(\bMaps(X,H)^{\on{gen}})$$
is fully faithful.

\medskip

This result is the reason behind the validity of \thmref{t:Whit for GLn} (fully faithfulness 
of the functor $\on{coeff}_{G,G}^{\on{ext}}$) for $GL_n$.

\medskip

We also note (although this is not used in this paper), that the above-mentioned contractibility
provides a ``local-to-global'' principle on the geometric side. Namely, as is explained in \cite[Sect. 4.1]{Ga2},
it implies that the pull-back functor
$$\Dmod(\Bun_G)\to \Dmod(\Gr_{G,\Ran(X)})$$
is fully faithful, where $\Gr_{G,\Ran(X)}$ is the Ran version of the affine Grassmannian of the group $G$.

\ssec{Notations and conventions}

\sssec{The theory of $\infty$-categories}

Even though the statement of the categorical geometric Langlands conjecture can be perceived 
as an equivalence of two triangulated categories (rather than DG categories), the 
language of $\infty$-categories is essential for this paper. The main reason they appear is the following:

\medskip

Some of the crucial constructions in this paper use fact that we can define the (DG) 
category of D-modules (and quasi-coherent sheaves) on an arbitrary prestack. 
The latter category is, by definition, constructed as a limit \emph{taken in the $\infty$-category
of DG categories}, see \secref{sss:prestack}.  

\medskip

So, essentially all we need is to have the
notion of \emph{diagram of DG categories, parameterized by some index category (which is
typically an ordinary  category)}, and to have the ability to take the limit of such a diagram. 
Now, to have such an ability (and to know some of its basic properties) amounts to including
\cite[Chapters 1-5]{Lu} into our tool kit. 

\medskip

We will not attempt to review the theory of $\infty$-categories here. \footnote{That said, the reader who is
completely new to $\infty$-categories can pretend that the notion of $\infty$-category is an enhancement of
that of ordinary category. The main point of difference is that morphisms between two objects no longer
form a set, but rather an $\infty$-groupoid, i.e., a non-discrete homotopy type.} An excellent review is provided by 
\cite[Chapter 1]{Lu}.  So, our suggestion to the reader is to familiarize
oneself with {\it loc.cit.} and start using the theory pretending having a full knowledge of it
(knowing the proofs from the bulk of \cite{Lu} will not really enhance one's ability to understand 
how the theory is applied in practice). 

\sssec{}

The conventions in this paper regarding $\infty$-categories and DG categories
follow verbatim those adopted in \cite{DrGa1}. The most essential ones are:

\medskip

\noindent(i) When we say ``category" by default we mean ``$(\infty,1)$"-category.  

\smallskip

\noindent(ii) For a category
$\bC$ and objects $\bc_1,\bc_2\in \bC$ we shall denote by $\Maps_\bC(\bc_1,\bc_2)$
the $\infty$-groupoid of maps between them. We shall denote by $\Hom_\bC(\bc_1,\bc_2)$
the \emph{set} $\pi_0(\Maps_\bC(\bc_1,\bc_2))$, i.e., $\Hom$ in the ordinary category
$\on{Ho}(\bC)$. 

\smallskip

\noindent(iii) All DG categories are assumed to be pretriangulated and, unless explicitly stated otherwise, 
cocomplete (that is, they contain arbitrary direct sums). 
All functors between DG categories are assumed to be exact and continuous (that is, commuting with arbitrary 
direct sums, or equivalently, with all colimits). In particular, all subcategories are by default
assumed to be closed under arbitrary direct sums.

\smallskip

\noindent(iv) We let $\Vect$ denote the DG category of complexes of vector spaces;
thus, the usual category of $k$-vector spaces is denoted by $\Vect^\heartsuit$. 

\smallskip

\noindent(v) 
The category of $\infty$-groupoids is denoted by $\inftygroup$.

\sssec{}

Our conventions regarding DG schemes and prestacks follow verbatim those adopted in \cite{DrGa1},
Sect. 0.6.8-0.6.9.

\ssec{Acknowledgements}

Geometric Langlands came into existence as a result of the pioneering papers of 
V.~Drinfeld and G.~Laumon. The author would like to thank them for creating this
field, which provided the main vector of motivation for him as well as numerous other people. 

\medskip

The author is tremendously grateful to D.~Arinkin and V.~Drinfeld for collaboration on this project.

\medskip

The author would like to thank J.~Barlev, D.~Beraldo, G.~Fortuna, S.~Raskin, R.~Reich, N.~Rozenblyum
and S.~Schieder for taking up various aspects of geometric Langlands as their own projects. 

\medskip

The approach to geometric Langlands developed in this paper became possible after J.~Lurie
taught us how to use $\infty$-categories for problems in geometric representation theory. Our
debt to him is huge. 

\medskip

The author is grateful to A.~Beilinson, J.~Bernstein, R.~Bezrukavnikov, A.~Braverman, M.~Finkelberg,
E.~Frenkel, D.~Kazhdan, V.~Lafforgue, J.~Lurie, S.~Lysenko, I.~Mirkovic, K.~Vilonen and E.~Witten, for illuminating discussions related 
to geometric Langlands that we have had over many years. 

\section{A roadmap to the contents}

The structure of the main body of the paper may not make it obvious what role each section plays in the construction
of the geometric Langlands equivalence, so we shall now proceed to describe the contents and
main ideas of each section.

\ssec{Singular support and the statement of the geometric Langlands equivalence}

\sssec{}

In \secref{s:sing supp} we review the theory of singular support of coherent sheaves. This is needed in order
to define the spectral side of geometric Langlands.

\medskip

We first define the notion of quasi-smooth DG scheme (a.k.a. derived locally complete intersection). These are DG
schemes for which the notion of singular support is defined. We then proceed to the definition of singular support
itself via cohomological operations.

\medskip

In the next step we review the theory of ind-coherent sheaves, and define the main player for the spectral
side of geometric Langlands, the category of ind-coherent sheaves with specified singular support. 
We first do it for DG schemes, and then for algebraic stacks.

\medskip

In the process we review the construction of $\QCoh$ on an arbitrary prestack, and of its renormalized version,
denoted $\IndCoh$, on an algebraic stack. 

\sssec{}

In \secref{s:statement} we state the geometric Langlands conjecture according to the point of view taken
in this paper. 

\medskip

We first take a look at the stack $\LocSys_\cG$ and explain why it is quasi-smooth, and describe the corresponding 
stack $\Sing(\LocSys_\cG)$.  Then we introduce
the spectral side of geometric Langlands as the category $\IndCoh_{\on{Nilp}^{\on{glob}}_\cG}(\LocSys_\cG)$. 

\medskip

We state the geometric Langlands conjecture as the existence and uniqueness of an equivalence
$$\BL_G:\IndCoh_{\on{Nilp}^{\on{glob}}_\cG}(\LocSys_\cG)\to \Dmod(\Bun_G)$$
satisfying the property of compatibilty with the extended Whittaker model, denoted
$\on{Wh}^{\on{ext}}$. This property itself will be stated in \secref{s:gluing} after
a good deal of preparations. As part of the statement of the geometric Langlands conjecture we 
include the compatibility with Hecke functors, Eisenstein series, and Kac-Moody localization.

\medskip

Finally, we introduce the full subcategory
$$\Dmod(\Bun_G)_{\on{temp}}\subset \Dmod(\Bun_G)$$
which under the (conjectural) equivalence $\BL_G$ corresponds to
$$\QCoh(\LocSys_\cG)\subset \IndCoh_{\on{Nilp}^{\on{glob}}_\cG}(\LocSys_\cG).$$

\ssec{Hecke action}

\sssec{}

\secref{s:Hecke} is devoted to the discussion of Hecke functors on both the geometric and spectral
sides of geometric Langlands. This is needed in order to formulate the property of the functor
$\BL_G$ which has been traditionally perceived as the main property satisfied by Langlands 
correspondence, and also for one of the crucial steps in the proof of the existence of $\BL_G$
(used in Sects. \ref{sss:irred 1}-\ref{sss:irred 2}).

\medskip

We begin by discussing what we call \emph{the naive geometric Satake}. We consider the category $\Rep(\cG)$,
and consider its version spread over the Ran space, denoted $\Rep(\cG)_{\Ran(X)}$, and the pair
of adjoint functors
$$\on{Loc}_{\cG,\on{spec}}:\Rep(\cG)_{\Ran(X)}\rightleftarrows \QCoh(\LocSys_\cG):\on{co-Loc}_{\cG,\on{spec}},$$
already mentioned in \secref{sss:rep over Ran new}. 

\medskip

By  \propref{p:local to global LocSys}, the functor 
$\on{Loc}_{\cG,\on{spec}}$ realizes $\QCoh(\LocSys_\cG)$ as a monoidal quotient category of 
$\Rep(\cG)_{\Ran(X)}$. 

\medskip

We quote \propref{p:Satake Hecke} which can be regarded as stating the existence of 
the \emph{naive geometric Satake functor}, denoted 
$$\on{Sat}(G)^{\on{naive}}_{\Ran(X)}:\Rep(\cG)_{\Ran(X)}\to \Dmod(\on{Hecke}(G)_{\Ran(X)}).$$

\medskip

The functor $\on{Sat}(G)^{\on{naive}}_{\Ran(X)}$ defines an 
action of the monoidal category $\Rep(\cG)_{\Ran(X)}$ on $\Dmod(\Bun_G)$. We
then proceed to \thmref{t:generalized vanishing}, which says that the above action 
factors through an action of the monoidal category $\QCoh(\LocSys_\cG)$ on $\Dmod(\Bun_G)$.

\medskip

The property of compatibility of the geometric Langlands equivalence with the Hecke action says that $\BL_G$
intertwines the natural action of $\QCoh(\LocSys_\cG)$ on $\IndCoh_{\on{Nilp}^{\on{glob}}_\cG}(\LocSys_\cG)$
(by pointwise tensor product) with the above action of $\QCoh(\LocSys_\cG)$ on $\Dmod(\Bun_G)$.

\sssec{}

Next we indicate (but do not discuss in full detail) the extension of the \emph{naive geometric Satake}
to the \emph{full geometric Satake}. The latter involves an analog of the Hecke stack on the spectral side,
and says that the functor $\on{Sat}(G)^{\on{naive}}_{\Ran(X)}$ can be extended to a monoidal functor 
$$\on{Sat}(G)_{\Ran(X)}:\IndCoh(\on{Hecke}(\cG,\on{spec})^{\on{loc}}_{\Ran(X)}))\to \Dmod(\on{Hecke}(G)_{\Ran(X)}).$$

The functor $\on{Sat}(G)_{\Ran(X)}$ can be used to intrinsically characterise the full subcategory
$$\Dmod(\Bun_G)_{\on{temp}}\subset \Dmod(\Bun_G)$$ mentioned above. 

\ssec{Whittaker and parabolic categories}

Sects. \ref{s:Whit}-\ref{s:gluing} contain the bulk of the geometric constructions in this paper. It is 
these sections that contain the ``quasi-theorems" on which hinges the proof of the geometric Langlands 
conjecture. 

\medskip

These sections deal with the various versions (i.e., genuine, degenerate and extended) of the Whittaker category,
and the parabolic category.  In each case there is a quasi-theorem that describes the corresponding 
category in spectral terms. Some of the quasi-theorems rely on the validity of the geometric Langlands
conjecture for proper Levi subgroups of $G$, and some do not. 
 
\sssec{The genuine Whittaker category}  \label{sss:idea Whit}

\secref{s:Whit} is devoted to the discussion of two versions of the genuine Whittaker category, denoted
$\Whit(G)$ and $\Whit(G,G)$, respectively. 

\medskip

In the function-theoretic analogy, the category $\Whit(G)$ corresponds to the space of functions on
$G(\BA)/G(\BO)\times \fch(K)$ that satisfy 
$$f(n\cdot g,\chi)=\chi(n)\cdot f(g,\chi), \quad g\in G(\BA)/G(\BO), \quad n\in N(\BA),$$
where $\chi$ is a non-degenerate charcater on $N(\BA)$ trivial on $N(K)$. 

\medskip

The category $\Whit(G,G)$ is a full subcategory of $\Whit(G)$, where we impose an extra condition
of invariance with respect to $Z_G(K)$.

\medskip

In what follows, for simplicity, we will discuss $\Whit(G)$. We realize $\Whit(G)$ as the category
of D-modules on a certain prestack satisfying an equivariance condition with respect to a certain
groupoid against a canonically defined character.

\medskip

The prestack in question, denoted $\CQ_G$, is a version of the prestack $\Bun_G^{B\on{-gen}}$ mentioned above.
The difference is that in addition to the data of a generic reduction of our $G$-bundle to $B$,
we specify the data of (generic) identification of the induced $T$-bundle with one
induced by the cocharacter $2\check\rho$ from the line bundle $\omega_X^{\frac{1}{2}}$,
where $\omega_X$ is the canonical line bundle on $X$, and $\omega_X^{\frac{1}{2}}$ is its
(chosen once and for all) square root. Up to generically trivializing $\omega_X^{\frac{1}{2}}$, the prestack $\CQ_G$
identifies with $\Bun_G^{N\on{-gen}}$, and its set of $k$-points
of $\CQ_G$ identifies with the double quotient
$$N(K)\backslash G(\BA)/G(\BO).$$

\sssec{}  \label{sss:tricky groupoid}

The definition of the groupoid involved in the definition of $\Whit(G)$, denoted $\bN$, is trickier. 
At the level of functions, one would like to consider the groupoid
$$
\xy
(-25,0)*+{N(K)\backslash G(\BA)/G(\BO)}="X";
(25,0)*+{N(K)\backslash G(\BA)/G(\BO),}="Y";
(0,25)*+{N(K)\backslash  N(\BA)\overset{N(K)}\times G(\BA)/G(\BO)}="Z";
{\ar@{->} "Z";"X"};
{\ar@{->} "Z";"Y"};
\endxy
$$
where $-\overset{N(K)}\times -$ means the quotient by the diagonal action of $N(K)$. (In the above diagram the left arrow
is the projection on the second factor, and the right arrow is given by the action of $N(\BA)$ on $G(\BA)/G(\BO)$.)

\medskip

However, it is not clear how to implement this idea in algebraic geometry, i.e., how to realize such a 
groupoid as a prestack. Instead we use a certain surrogate, whose idea
is explained in \secref{ss:idea of groupoid}. Here we will just mention that it relies on the phenomenon of 
\emph{strong approximation} for the group $N$. 

\sssec{Whittaker category via the affine Grassmannian}

One could also approach the definition of $\Whit(G)$ slightly differently (and the same applies to 
the degenerate, extended and parabolic versions). Namely, instead of the prestack $\CQ_G$, we can realize our category as a full
subcategory of $\Dmod(\Gr_{G,\Ran(X)})$, where $\Gr_{G,\Ran(X)}$ is the
Ran version of the affine Grassmannian. The point is that $\CQ_G$ is isomorphic to the quotient of $\Gr_{G,\Ran(X)}$
by the action of the group-prestack $\bMaps(X,N)^{\on{gen}}$ of generically defined maps $X\to N$. 

\medskip

This way of realizing
$\CQ_G$ gives rise to a more straightforward way of imposing the equivariance condition needed for the definition
of $\Whit(G)$. \footnote{The price that one has to pay if one uses only this approach in the case of
the parabolic category $\on{I}(G,P)$ is that the definition of the functors of \emph{enhanced Eisenstein series} and 
\emph{constant term} becomes more cumbersome.}  

\medskip

In any 
case, having this other approach to $\Whit(G)$ (and its degenerate, extended and parabolic versions) is necessary in order
to prove its description in spectral terms. 

\sssec{Spectral description of the Whittaker category}  \hfill

\medskip

The spectral description of $\Whit(G)$, given by Quasi-Theorem \ref{t:Cass Shal}, says that it is equivalent to the \emph{unital} version
of the category $\Rep(\cG)_{\Ran(X)}$, denoted $\Rep(\cG)^{\on{unital}}_{\Ran(X)}$. 
This is the first of the quasi-theorems in this paper, and it is expected to follow rather easily from the already known results. 

\medskip

We note that Quasi-Theorem \ref{t:Cass Shal} 
is a geometric version of the Casselman-Shalika formula that describes the unramified Whittaker model in terms of
Satake parameters. 

\ssec{The parabolic category}

In \secref{s:parabolic} we discuss the parabolic category, denoted $\on{I}(G,P)$, for a given parabolic subgroup
$P\subset G$ with Levi quotient $M$. 

\sssec{The idea of the parabolic category}

In terms of the function-theoretic analogy, the category $\on{I}(G,P)$ corresponds to the space of functions on the double quotient
$$M(K)\cdot N(P)(\BA)\backslash G(\BA)/G(\BO).$$

The actual defintion of $\on{I}(G,P)$ uses the prestack $\Bun_G^{P\on{-gen}}$ of $G$-bundles equipped with a generic
reduction to $P$. We define $\on{I}(G,P)$ to be the category of D-modules on $\Bun_G^{P\on{-gen}}$ that are equivariant
with respect to the appropriately defined groupoid. The idea of this groupoid, denoted $\bNP$, is similar to that of 
$\bN$, mentioned in \secref{sss:idea Whit}. As in the case of $\Whit(G)$, we can alternatively define $\on{I}(G,P)$
using the Ran version of the affine Grassmannian. 

\medskip

The forgetful functor 
$$\on{I}(G,P)\to \Dmod(\Bun_G^{P\on{-gen}})$$
is actually fully faithful due to a unipotence property of the groupoid $\bNP$.

\medskip

We note that in addition to the prestack $\Bun_G^{P\on{-gen}}$, we have the usual algebraic stack $\Bun_P$ classifying 
$P$-bundles on $X$. There exists a naturally defined map
$$\imath_P:\Bun_P\to \Bun_G^{P\on{-gen}},$$
which defines a bijection at the level of $k$-points. We can think of $\Bun_G^{P\on{-gen}}$ as decomposed into locally
closed sub-prestacks, with $\Bun_P$ being the disjoint union of the strata. Accordingly, we have a conservative
restriction functor
$$\imath_P^\dagger:\Dmod(\Bun_G^{P\on{-gen}})\to \Dmod(\Bun_P),$$
and we can think of $\Dmod(\Bun_G^{P\on{-gen}})$ as glued from $\Dmod(\Bun_P)$ on the various connected components
of $\Bun_P$ in a highly non-trivial way. For the above restriction functor we have a commutative diagram
\begin{equation} \label{e:parabolic and naive}
\CD
\on{I}(G,P)   @>>>  \Dmod(\Bun_M) \\
@VVV    @VVV    \\
\Dmod(\Bun_G^{P\on{-gen}})   @>{\imath_P^\dagger}>>  \Dmod(\Bun_P),
\endCD
\end{equation}
with fully faithful vertical arrows, which is moreover a \emph{pull-back diagram}. Here the right vertical arrow is the functor of 
pull-back along the projection $\sfq_P:\Bun_P\to \Bun_M$; it is fully faithful, since the map $\sfq_P$ is smooth with contractible
fibers. 

\sssec{Eisenstein and constant term functors}

The category $\on{I}(G,P)$ is related to the category $\Dmod(\Bun_G)$ by a pair of adjoint functors
$$\Eis^{\on{enh}}_P:\on{I}(G,P)\rightleftarrows \Dmod(\Bun_G):\on{CT}^{\on{enh}}_P,$$
that we refer to as \emph{enhanced Eisenstein series and constant term} functors.

\medskip

These functors are closely related (but carry significantly more information) than the corresponding ``usual"
Eisenstein and constant term functors
$$\Eis_P:\on{I}(G,P)\rightleftarrows \Dmod(\Bun_G):\on{CT}_P,$$
defined by pull-push along the diagram
$$
\xy
(-15,0)*+{\Bun_G}="X";
(15,0)*+{\Bun_M.}="Y";
(0,15)*+{\Bun_P}="Z";
{\ar@{->}_{\sfp_P} "Z";"X"};
{\ar@{->}^{\sfq_P}  "Z";"Y"};
\endxy
$$

For example, the functor $\on{CT}_P$ is the composition of the functor $\on{CT}^{\on{enh}}_P$, followed by
the restriction functor (the top horizontal arrow in the diagram \eqref{e:parabolic and naive}).

\sssec{Parabolic category on the spectral side}

We now discuss the spectral counterpart of the above picture. For the ``usual" Eisenstein series functor, the picture
is what one would naively expect. We consider the diagram
$$
\xy
(-15,0)*+{\LocSys_\cG}="X";
(15,0)*+{\LocSys_\cM.}="Y";
(0,15)*+{\LocSys_\cP}="Z";
{\ar@{->}_{\sfp_{\cP,\on{spec}}} "Z";"X"};
{\ar@{->}^{\sfq_{\cP,\on{spec}}}  "Z";"Y"};
\endxy
$$
and the corresponding pull-push functor
$$\Eis_{\cP,\on{spec}}:\IndCoh_{\on{Nilp}^{\on{glob}}_\cM}(\LocSys_\cM)\to \IndCoh_{\on{Nilp}^{\on{glob}}_\cG}(\LocSys_\cG).$$

The geometric Langlands equivalence is supposed to make the following diagram commute:
$$
\CD
\IndCoh_{\on{Nilp}^{\on{glob}}_\cG}(\LocSys_\cG)  @>{\BL_G}>>  \Dmod(\Bun_G)  \\
@A{\Eis_{\cP,\on{spec}}}AA    @AA{\Eis_P}A   \\
\IndCoh_{\on{Nilp}^{\on{glob}}_\cM}(\LocSys_\cM)  @>{\BL_M}>>  \Dmod(\Bun_M), 
\endCD
$$
up to a twist by a (specific) line bundle on $\LocSys_\cM$.

\medskip

The situation with enhanced Eisenstein series is more involved and more interesting. First, we need to give a spectral
description of the category $\on{I}(G,P)$. As is natural to expect, for the latter we need to assume the validity of the
Langlands conjecture for the group $M$. 

\medskip

First, we consider the appropriate modification of $\QCoh(\LocSys_\cP)$ given by the singular support condition.
We denote the resulting category by $\IndCoh_{\on{Nilp}^{\on{glob}}_\cP}(\LocSys_\cP)$. Next, we consider the 
map 
$$\sfp_{\cP,\on{spec}}:\LocSys_\cP\to \LocSys_\cG,$$
and we consider the category of objects of $\IndCoh_{\on{Nilp}^{\on{glob}}_\cP}(\LocSys_\cP)$, endowed with a 
\emph{right action of vector fields on $\LocSys_\cP$ along the (derived) fibers of the map $\sfp_{\cP,\on{spec}}$.}
We denote this category by
$$\sF_\cP\mod(\IndCoh_{\on{Nilp}^{\on{glob}}_\cP}(\LocSys_\cP))$$
(the notation $\sF_\cP$ stands for the monad induced by the action of the vector fields mentioned above). 
We have a naturally defined functor, denoted, and given by pushforward:
$$\Eis^{\on{enh}}_{P,\on{spec}}:\sF_\cP\mod(\IndCoh_{\on{Nilp}^{\on{glob}}_\cP}(\LocSys_\cP))\to 
\IndCoh_{\on{Nilp}^{\on{glob}}_\cG}(\LocSys_\cG).$$

\sssec{Geometric Langlands equivalence for parabolic categories}  \hfill

\medskip

One of the central quasi-theorems in this paper, namely Quasi-Theorem \ref{t:principal series}, says that we have 
a canonically defined equivalence:
$$\BL_P:\sF_\cP\mod(\IndCoh_{\on{Nilp}^{\on{glob}}_\cP}(\LocSys_\cP))\to \on{I}(G,P).$$

\medskip

It is supposed to be related to the geometric Langlands equivalence for $G$ via the following 
commutative diagram
$$
\CD
\IndCoh_{\on{Nilp}^{\on{glob}}_\cG}(\LocSys_\cG)  @>{\BL_G}>>  \Dmod(\Bun_G)  \\
@A{\Eis^{\on{enh}}_{\cP,\on{spec}}}AA    @AA{\Eis^{\on{enh}}_P}A   \\
\sF_\cP\mod(\IndCoh_{\on{Nilp}^{\on{glob}}_\cP}(\LocSys_\cP))  @>{\BL_P}>>  \on{I}(G,P).
\endCD
$$

\ssec{Degenerate and extended Whittaker categories}

\sssec{}

\secref{s:deg Whit} deals with the degenerate Whittaker category, denoted $\Whit(G,P)$, and 
\secref{s:ext Whit} with the extended Whittaker category, denoted $\Whit^{\on{ext}}(G,G)$. The function-theoretic
analogues of these categories were explained in \secref{sss:explain ext Whit}.

\medskip

The relevance of the categories $\Whit(G,P)$ (as $P$ runs through the set of conjugacy classes
of parabolics) is that they constitute buliding blocks of the category $\Whit^{\on{ext}}(G,G)$. The 
category $\Whit^{\on{ext}}(G,G)$ plays a crucial role, being the recipient of the functor
$$\on{coeff}(G,G)^{\on{ext}}:\Dmod(\Bun_G)\to \Whit^{\on{ext}}(G,G).$$

As was mentioned earlier, a crucial conjecture (which is a quasi-theorem for $GL_n$) says that 
the functor $\on{coeff}(G,G)^{\on{ext}}$ is fully faithful. This statement is at the heart of our 
approach to proving the geometric Langlands equivalence.

\medskip

We omit a detailed discussion of the contents of these two sections
as the ideas involved essentially combine those from Sects. \ref{s:Whit} and \ref{s:parabolic}, except 
for the following:

\medskip

Recall the set $\fch(K)$ mentioned in \secref{sss:explain ext Whit}. The definition as given in {\it loc.cit.}
is correct only when $G$ has a connected center. In general, the definition needs to be modified,
see \secref{ss:ch}, and involves a certain canonically defined toric variety acted on by $T$, such
that the stabilizer of each point is the \emph {connected} center of the corresponding Levi subgroup.

\sssec{}

The goal of \secref{s:gluing} is to provide a spectral description of the category $\Whit^{\on{ext}}(G,G)$.
As was explained in \secref{ss:strategy}, this is another crucial step in our approach to proving the
geometric Langlands conjecture. 

\medskip

First, we recall the general pattern of gluing of DG categories, mimicking the procedure of describing
the category of sheaves on a topological space from the knowledge of the corresponding categories
on strata of a given stratification.

\medskip

We observe that, more or less tautologically, the category $\Whit^{\on{ext}}(G,G)$ is glued from the
categories $\Whit(G,P)$. 

\medskip

Next, we explicitly construct the glued category on the spectral side, by taking as building blocks the
categories  
$$\sF_\cP\mod(\QCoh(\LocSys_\cP)).$$

We now arrive at a crucial assertion, Quasi-Theorem \ref{t:glued equiv} that says that the glued category
on the spectral side embeds fully faithfully into $\Whit^{\on{ext}}(G,G)$. 

\medskip

So far, this Quasi-Theorem has
been verified in a particular case (assuming Quasi-Theorem \ref{t:principal series} for $P=B$),
when we want to glue the open stratum (corresponding to $P=G$)
to the closed stratum (corresponding to $P=B$); this case, however, suffices for the group $G=GL_2$. 
The proof of Quasi-Theorem \ref{t:glued equiv} in the above case is a rather illuminating explicit calculation,
which we unfortunately have to omit for reasons of length of this paper. 

\ssec{Kac-Moody localization}

\sssec{}

\secref{s:opers} deals with a construction of objects of $\Dmod(\Bun_G)$ of a nature totally different from
one discussed in Sects. \ref{s:Whit}-\ref{s:gluing}.  

\medskip

The previous sections approach D-modules on $\Bun_G$
geometrically, i.e., by considering various spaces that map to $\Bun_G$ and appying functors of direct
and inverse image. In particular, these constructions make sense not just in the category of D-modules,
but also in that of $\ell$-adic sheaves (modulo the technical issue of the existence of the formalism of $\ell$-adic 
sheaves as a functor of $\infty$-categories).

\medskip

By contrast,  in \secref{s:opers} we construct D-modules on $\Bun_G$ ``by generators and relations." In particular,
we (implicitly) use the forgetful functor $\Dmod(\Bun_G)\to \QCoh(\Bun_G)$ (or, rather, its left adjoint). 
More precisely, the construction that
we use is that of \emph{localization} of modules over the Kac-Moody algebra (at a given level).

\medskip

This construction is needed in order to create the commutative diagram \eqref{e:pentagon}, which is another
crucial ingredient in the proof of the geometric Langands conjecture. 

\sssec{}

Historically, the pattern of localization originated form \cite{BB}. In \cite{BD2} it was extended to the following
situation: if we have a group $H$ acting on a scheme $Y$, and $H'\subset H$ is a subgroup, then we have
a canonical functor of localization
$$(\fh,H')\mod\to \Dmod(H'\backslash Y),$$
where $(\fh,H')\mod$ is the DG category of $H'$-equivariant objects in the DG category $\fh\mod$ 
of $\fh$-modules (also known as the DG category of modules over the Harish-Chandra pair $(\fh,H')$).

\medskip

If one looks at what this construction does in down-to-earth terms, it associates to a $(\fh,H')$-module
a certain quotient of the free D-module, where relations are given by the action of vector fields in $Y$
induced by the action of elements of $\fh$. 

\sssec{}

In \cite{BD2}, this construction was applied to $H$ being the (critical central extension of the) loop group ind-scheme
$\fL(G)=G\ppart$, and $H'$ being the group of arcs $\fL^+(G):=G\qqart$. The corresponding category of
Harish-Chandra modules is denoted $\on{KL}(G,\on{crit})$. 
\footnote{``KL" stands for Kazhdan-Lusztig, who were the first to systematically study this category in the 
negative level case.}

\medskip

The scheme $Y$ in question is
$\Bun_{G,x}$, the moduli space of $G$-bundles on $X$ with a full level structure at a point $x$. Here we
think of $k\qqart$ as the completed local ring of $X$ at $x$. 
We do not review this construction in this paper, but rather refer the reader to \cite{BD2}; we should note, however, 
that modern technology allows to rewrite this construction in a more concise way.

\medskip

In fact, we need an extension of the above construction to the situation, when instead of a fixed point $x\in X$
we have a finite number of points that are allowed to move along $X$. Ultimately, we obtain a functor
$$\on{Loc}_{G,\Ran(X)}:\on{KL}(G,\on{crit})_{\Ran(X)}\to \Dmod(\Bun_G).$$

A  crucial property of the functor $\on{Loc}_G$ is that ``almost all D-modules on $\Bun_G$ lie in its essential image."
The word \emph{almost} is important here. We refer the reader to \propref{p:loc surj} for a precise formulation.

\medskip

We also remark that the functor $\on{Loc}_G$ should be thought of as a \emph{quantum} case of the functor
$$\on{Loc}_{G,\on{spec}}:\Rep(G)_{\Ran(X)}\to \QCoh(\LocSys_G),$$
mentioned earlier. In fact, the two are the special cases of a family whose intermediate values correspond
to the situation of quantum geometric Langlands. 

\sssec{}

In the rest of \secref{s:opers} we review the connection between the category $\on{KL}(G,\on{crit})_{\Ran(X)}$
and the scheme of \emph{local opers}. The key input is a generalization of the result of \cite{BD2} that relates the 
functor $\on{Loc}_G$ to the scheme of \emph{global opers}. All of this is needed in order to form the diagram 
\eqref{e:pentagon}. 

\sssec{}

Finally, in \secref{s:proof}, we assemble the ingredients developed in the previous sections in order to
prove the geometric Langlands conjecture, modulo Conjectures \ref{c:Whit ext ff} and \ref{c:Op gen},
and the Quasi-Theorems.

\medskip

The proof proceeds along the lines indicated in \secref{ss:strategy}, modulo the fact that the last step of the proof,
namely, one described in \secref{sss:comparing images}, is a bit of an oversimplification. For the actual proof,
we break the category $\Dmod(\Bun_G)$ into ``cuspidal" and "Eisenstein" parts, and deal with each separately.  

\section{The theory of singular support}  \label{s:sing supp}

\ssec{Derived locally complete intersections}

The contents of this subsection are a brief review of \cite[Sect. 2]{AG}. We refer the reader to {\it loc.cit.}
for the proofs. 

\medskip

We remind that throughout the paper we will be working with an algebraically closed field $k$ of characteristic $0$. 

\sssec{}

The theory of singular support for coherent sheaves makes substantial use of
derived algebraic geometry. We cannot afford to make a thorough review here,
but let us mention the following few facts, which is all we will need for this paper:

\medskip

\noindent(1) Let $A$ be a CDGA (commutative differential graded algebra) over
$k$, which lives in cohomological degrees $\leq 0$. To $A$ one attaches
the affine DG scheme $\Spec(A)$. If $A\to A'$ is a quasi-isomorphism, then
the corresponding map $\Spec(A')\to \Spec(A)$ is, by definition, an isomorphism
of DG schemes. The underlying topological space of $\Spec(A)$
is the same as that of the classical scheme $\Spec(H^0(A))$. The basic affine
opens of $\Spec(A)$ are of the form $\Spec(A_f)$, where $f\in H^0(A)$
(more generally, it makes sense to take localizations of $A$ with respect to multiplicative
subsets of $H^0(A)$). 

\medskip

\noindent(1') Arbitrary DG schemes are glued from affines in the same sense
as in classical algebraic geometry. 

\medskip

\noindent(2) There exists a fully faithful functor $\Sch\to \dgSch$ from
classical schemes to derived schemes. This functor admits a right adjoint,
which we will refer to as \emph{taking the underlying classical scheme}
and denote by $Y\mapsto {}^{cl}Y$. 
For affine DG schemes the latter functor corresponds to sending $A$ to
$H^0(A)$. In general, it is convenient to have the following analogy in mind
``classical schemes to derived schemes are what reduced classical schemes
are to all schemes."

\medskip

\noindent(3) The DG category of quasi-coherent sheaves on a DG scheme is defined 
so that $$\QCoh(\Spec(A))=A\mod,$$
the latter being the DG category of all $A$-modules 
(i.e., no finiteness assumptions). 

\medskip

\noindent(4) The category of DG schemes admits fiber products: for 
$\Spec(A_1)\to \Spec(A)\leftarrow \Spec(A_2)$, we have
$$\Spec(A_1)\underset{\Spec(A)}\times \Spec(A_2)=\Spec(A_1\underset{A}\otimes A_2),$$
where the tensor product $A_1\underset{A}\otimes A_2$ is understood in the derived sense
(in particular, $A_1\underset{A}\otimes A_2$ may be derived even if $A$, $A_1$ and $A_2$
are classical). 

\medskip

\noindent(4') A basic non-trivial example of a DG scheme is 
$$\on{pt}\underset{V}\times \on{pt},$$
where $V$ is a finite-dimensional vector space (considered as a scheme). The above DG scheme is by definition
$\Spec(\on{Sym}(V^*[1]))$.  Here $\on{pt}:=\Spec(k)$. 

\medskip

\noindent(5) Let 
$$
\CD
Y'_1  @>{g_1}>>  Y_1 \\
@V{f'}VV   @VV{f}V  \\
Y'_2  @>{g_2}>>  Y_2
\endCD
$$
be a Cartesian square of DG schemes with the vertical morphisms quasi-compact
and quasi-separated. Then the base change natural transformation
$$g_2^*\circ f_*\to f'_*\circ g_1^*$$
is an isomorphism. (Note that the corresponding fact is \emph{false} in classical
algebraic geometry: i.e., even if $Y_1$, $Y_2$ and $Y'_2$ are classical, we
need to understand $Y'_1$ is the derived sense.)

\medskip

\noindent(6) One word of warning is necessary: the category $\dgSch$ is not an ordinary
category, but an $\infty$-category, i.e., maps between objects no longer form sets,
but rather $\infty$-groupoids (in the various models of the theory of $\infty$-categories
the latter can be realized as simplicial sets, topological spaces, etc.).

\sssec{}

We shall now define what it means for a DG scheme $Y$ to be a \emph{derived locally complete intersection},
a.k.a. \emph{quasi-smooth}. 

\medskip

The condition is Zariski-local, so we can assume that $Y$ is affine. 

\begin{defn}
We shall say that $Y$ is quasi-smooth if it can be realized as a derived fiber product
\begin{equation} \label{e:q-smooth as fiber product}
\CD
Y @>>>  \CU  \\
@VVV   @VV{f}V   \\
\on{pt}  @>{v}>>  \CV,
\endCD
\end{equation}
where $\CU$ and $\CV$ are smooth classical schemes. 
\end{defn}

More invariantly, one can phrase this definition as follows:

\begin{defn} A DG scheme $Y$ is quasi-smooth if 
it is locally \emph{almost of finite type}\footnote{This means that the underlying classical scheme $^{cl}Y$
is locally of finite type over $k$, and the cohomology sheaves $H^i(\CO_Y)$ are finitely generated over
$H^0(\CO_Y)=\CO_{^{cl}\!Y}$.} and for each $k$-point $y\in Y$, the derived cotangent space
$T^*_y(Y)$ has cohomologies only in degrees $0$ and $-1$. 
\end{defn}

In fact, for $Y$ written as in \eqref{e:q-smooth as fiber product}, the derived cotangent space at $y\in Y$
is canonically isomorphic to the complex
$$T^*_{f(y)}(\CV)\to T^*_y(\CU).$$

\sssec{}

It follows easily from the definitions that a classical scheme which is a locally complete intersection in the
classical sense is such in the derived sense, i.e., quasi-smooth as a derived scheme. 

\ssec{The $\Sing$ space of a quasi-smooth scheme}

\sssec{}

Let $Y$ be a quasi-smooth derived scheme. We are going to attach to it a classical scheme
$\Sing(Y)$ that measures the extent to which $Y$ fails to be smooth.

\medskip

Suppose that $Y$ is locally written as a fiber product \eqref{e:q-smooth as fiber product}. 
Consider the vector bundles $T^*(\CU)|_{^{cl}Y}$ and $T^*(\CV)|_{^{cl}Y}$, considered as 
schemes over $^{cl}Y$. 

\medskip

The differential of $f$ defines a map of classical schemes 
\begin{equation} \label{e:co-differential}
T^*(\CV)|_{^{cl}Y}\to T^*(\CU)|_{^{cl}Y}.
\end{equation}

\medskip

We let $\Sing(Y)$ be the pre-image under the map \eqref{e:co-differential} of the 
zero-section $^{cl}Y\to T^*(\CU)|_{^{cl}Y}$.

\medskip

The scheme $\Sing(Y)$ carries a natural action of the group $\BG_m$ inherited
from one on $T^*(\CV)|_{^{cl}Y}$. 

\sssec{}

Explicitly, one can describe $k$-points if $\Sing(Y)$ as follows. These are pairs 
$(y,\xi)$, where $y$ is a $k$-point of $Y$, and $\xi$ is an element in
$$\on{ker}\left(df:T^*_{v}(\CV)\to T^*_y(\CU)\right).$$

In particular, $f$ is smooth (which is equivalent to $Y$ being a smooth classical scheme) 
if and only if the projection $\Sing(Y)\to Y$ is an isomorphism, i.e., if $\Sing(Y)$ 
consists of the zero-section. 

\sssec{}   \label{sss:Sing via cotang}

More invariantly, one can think of $\xi$ as an element in the vector space
$$H^{-1}(T^*_y(Y)).$$

This implies that  $\Sing(Y)$ is well-defined in the sense that it is independent 
of the presentation of $Y$ as a fiber product as in \eqref{e:q-smooth as fiber product}. 
In particular, we can define $\Sing(Y)$ for $Y$ not necessarily affine.

\ssec{Cohomological operations}

\sssec{}

Let $Y$ be a quasi-smooth DG scheme written as  in \eqref{e:q-smooth as fiber product}. 
Let us denote by $V$ the tangent space of $\CV$ at the point $v$, and let $V^*$ be its
dual, i.e., the cotangent space. 

\medskip

We claim that for every $\CF\in \QCoh(Y)$ there is a canonically define map
of graded algebras
\begin{equation} \label{e:map of exts}
\on{Sym}(V)\to \underset{i}\oplus\, \Hom_{\QCoh(Y)}(\CF,\CF[i]),
\end{equation}
where we set $\on{deg}(V)=2$. 

\medskip

We shall define \eqref{e:map of exts} in the framework of the following geometric construction.

\sssec{}

First, we consider the derived fiber product
$$\on{pt}\underset{\CV}\times \on{pt}.$$

This is a groupoid over $\on{pt}$, i.e., a derived group-scheme (a group object in the category
of derived schemes). 

\medskip

In particular, the category
$$\QCoh(\on{pt}\underset{\CV}\times \on{pt})$$
acquires a monoidal structure given by convolution.

\medskip

The unit in this category is $k_{\on{pt}}$, the direct image of $k\in \QCoh(\on{pt})$ 
under the diagonal morphism
$$\on{pt}\to \on{pt}\underset{\CV}\times \on{pt}.$$

\sssec{}

We claim that the derived group-scheme $\on{pt}\underset{\CV}\times \on{pt}$ canonically acts
on $Y$. This follows from the next diagram
$$
\xy
(-25,0)*+{Y}="X";
(25,0)*+{Y}="Y";
(0,15)*+{Y\underset{\CU}\times Y}="Z";
(-25,-20)*+{\on{pt}}="X_1";
(25,-20)*+{\on{pt}.}="Y_1";
(0,-5)*+{\on{pt}\underset{\CV}\times \on{pt}}="Z_1";
{\ar@{->}"Z";"X"};
{\ar@{->}"Z";"Y"};
{\ar@{->}"Z_1";"X_1"};
{\ar@{->}"Z_1";"Y_1"};
{\ar@{->}"Z";"Z_1"};
{\ar@{->}"X";"X_1"};
{\ar@{->}"Y";"Y_1"};
\endxy
$$
in which both squares are Cartesian. 

\sssec{}

In particular, we obtain that the category $\QCoh(Y)$ acquires an action of the 
monoidal category $\QCoh(\on{pt}\underset{\CV}\times \on{pt})$.

\medskip

Hence, every $\CF\in \QCoh(Y)$ acquires an action of the algebra of the endomorphisms of the unit
object of $\QCoh(\on{pt}\underset{\CV}\times \on{pt})$, i.e., 
we have a canonical map of graded algebras
$$\underset{i}\oplus\, \Hom_{\QCoh(\on{pt}\underset{\CV}\times \on{pt})}(k_{\on{pt}},k_{\on{pt}}[i])\to 
\underset{i}\oplus\, \Hom_{}(\CF,\CF[i]).$$
 
\sssec{}

Finally, to construct the map \eqref{e:map of exts} we notice we have a canonical isomorphism
of graded algebras
$$\on{Sym}(V)\to \underset{i}\oplus\, \Hom_{\QCoh(\on{pt}\underset{\CV}\times \on{pt})}(k_{\on{pt}},k_{\on{pt}}[i]).$$

\ssec{The singular support of a coherent sheaf}

The material of this subsection corresponds the approach to singular support in \cite[Sects. 5.3]{AG}. We refer
the reader to {\it loc.cit.} for the proofs of the statements quoted here. 

\sssec{}

We continue to assume that $Y$ is a quasi-smooth DG shceme written as  \eqref{e:q-smooth as fiber product}. 
Note that by construction, $\Sing(Y)$ is a conical Zariski-closed (=$\BG_m$-invariant) closed subset in
$^{cl}Y\times V^*$. 

\medskip

From \eqref{e:map of exts}, we obtain that for $\CF\in \QCoh(Y)$ we have a map of graded commutative algebras
\begin{equation} \label{e:map from extended Hoch}
\Gamma({}^{cl}Y\times V^*,\CO_{^{cl}Y\times V^*})\simeq 
\Gamma(Y,\CO_{^{cl}Y})\otimes \on{Sym}(V)\to  \underset{i}\oplus\, \Hom_{\QCoh(Y)}(\CF,\CF[i]).
\end{equation}

\medskip

We have the following assertion: 

\begin{lem} \label{l:sing supp matters}
Let $f\in  \Gamma({}^{cl}Y\times V^*,\CO_{^{cl}Y\times V^*})$ be a homogeneous element
that vanishes when restricted to $\Sing(Y)$. Then some power of $f$ belongs to the kernel
of the map \eqref{e:map from extended Hoch}.
\end{lem}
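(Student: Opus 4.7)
My plan is to reduce the statement to a chain-level identity on a local Koszul model of $Y$. The argument has three steps: a Nullstellensatz reduction, an identification of the generators of $I_{\Sing(Y)}$, and a commutator computation.

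\smallskip

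\noindent\emph{Step 1 (Nullstellensatz reduction).} The claim is local on $Y$, so I may assume $Y$ is affine and in particular that ${}^{cl}Y \times V^*$ is Noetherian. Let $I_{\Sing(Y)} \subset \Gamma({}^{cl}Y\times V^*,\CO)$ be the defining ideal of the closed subscheme $\Sing(Y)$. By the classical Hilbert Nullstellensatz, the hypothesis $f|_{\Sing(Y)}=0$ forces $f^N \in I_{\Sing(Y)}$ for some $N\geq 1$. It therefore suffices to prove the stronger assertion that the entire ideal $I_{\Sing(Y)}$ lies in the kernel of \eqref{e:map from extended Hoch}.

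\smallskip

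\noindent\emph{Step 2 (Generators of the ideal).} By the description of $\Sing(Y)$ in \secref{sss:Sing via cotang}, the subscheme $\Sing(Y)\subset {}^{cl}Y\times V^*$ is cut out, as a subscheme of the trivial vector bundle with fiber $V^*$, by the codifferential $df^*:V^*\otimes \CO_{^{cl}Y}\to T^*\CU|_{^{cl}Y}$. Consequently the ideal $I_{\Sing(Y)}$ is generated, as an ideal of $\Sym(V)\otimes \CO_{^{cl}Y}$, by the image of the dual map
$$df : T\CU|_{^{cl}Y}\longrightarrow V\otimes \CO_{^{cl}Y} \subset \Sym(V)\otimes\CO_{^{cl}Y}.$$
It therefore suffices to show that for every local section $w$ of $T\CU|_{^{cl}Y}$, the element $df(w)\in V\otimes\CO_{^{cl}Y}$ maps to the zero class in $\Hom_{\QCoh(Y)}(\CF,\CF[2])$.

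\smallskip

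\noindent\emph{Step 3 (Local commutator identity).} Pick a Zariski-open neighborhood realizing $Y$ as in \eqref{e:q-smooth as fiber product} and choose linear coordinates $t_1,\ldots,t_n$ on $\CV$ near $v$, so that $V=T_v\CV$ has dual basis $dt_i$. Then $\CO_Y$ is modeled by the Koszul CDGA
$$A \;=\; \CO_\CU[\eta_1,\ldots,\eta_n], \quad |\eta_i|=-1, \quad d(\eta_i)= f_i:=f^*(t_i),\quad d|_{\CO_\CU}=0,$$
and $\CF$ can be replaced by a cofibrant DG $A$-module. Any vector field $w$ on $\CU$ extends to a degree-$0$ derivation of the graded algebra $A$ by the rule $w(\eta_i)=0$; this gives a chain-level operation $w:\CF\to \CF$. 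A direct computation of $[d,w]$ on generators yields
$$[d,w] \;=\; -\sum_i w(f_i)\,\partial_{\eta_i}\,,$$
a degree-$+1$ operation on $\CF$. Being a commutator with the differential, it is null-homotopic, hence zero in $\Hom_{\QCoh(Y)}(\CF,\CF[1])$. The final ingredient is the identification, via Koszul duality between $\Sym(V^*[1])=\CO_{\on{pt}\underset{\CV}\times\on{pt}}$ and its Ext algebra $\Sym(V)$ (with $V$ in degree $2$), of the action of $\partial_{t_i}\in V$ on $\CF$ under the convolution action of \secref{ss:cohomological operations}: it is represented, as a cohomology class, by the $\cup$-product of $\partial_{\eta_i}$ with itself in the appropriate curved bar construction, and the identity above exhibits $\sum_i w(f_i)\partial_{t_i}=df(w)$ as a coboundary in the DG endomorphism complex of $\CF$. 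Hence $df(w)$ acts as zero in $\Hom(\CF,\CF[2])$, as required.

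\smallskip

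\noindent\emph{Main obstacle.} Steps 1 and 2 are routine. The essential point is Step 3: the passage from the null-homotopy of the degree-$+1$ operation $[d,w]$ to the vanishing of the degree-$+2$ class $df(w)$. Making this precise requires pinning down how the abstract $\Sym(V)$-action, defined via the groupoid $\on{pt}\underset{\CV}\times\on{pt}$ acting by convolution, is realized at the chain level on a Koszul model of $\CF$. Once this identification is in place the proof is a short explicit verification, but setting up the identification cleanly (with correct signs and shifts) is the nontrivial technical content of the lemma; this is precisely the material worked out in \cite{AG}.
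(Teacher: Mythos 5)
Note first that the paper does not itself prove this lemma: it quotes it from \cite[Sects.\ 4.1 and 5.4]{AG} without proof, so there is no in-paper argument against which to compare. Assessing your sketch on its own terms: Steps 1 and 2 are correct and are the standard reductions. After localizing, the Nullstellensatz gives $f^N\in I_{\Sing(Y)}$, so it suffices to show that $I_{\Sing(Y)}$ lies in the kernel; and by the construction of $\Sing(Y)$ this ideal is generated by the degree-$2$ elements $df(w)\in V\otimes\CO_{^{cl}Y}$ for $w$ a local vector field on $\CU$. Step 3 also correctly sets up the Koszul model, the auxiliary graded derivation $w$ of $A=\CO_\CU[\eta_1,\ldots,\eta_n]$ with $w(\eta_i)=0$, and the identity $[d,w]=-\sum_i w(f_i)\partial_{\eta_i}$.

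The concluding deduction in Step 3 is, however, where the argument genuinely fails, and it is not just bookkeeping. The identity $[d,w]=-\sum_i w(f_i)\partial_{\eta_i}$ holds among graded \emph{derivations} of $A$, not among $A$-linear maps. Since neither $w$ nor $[d,w]$ is $A$-linear, $[d,w]$ does not define a class in $\Hom_{\QCoh(Y)}(\CF,\CF[1])$ at all, and exhibiting it as a commutator in $\End_k(\CF)$ says nothing about vanishing in $\End_A(\CF)$. Moreover, the suggestion that $\partial_{t_i}\in V$ is realized as ``the cup product of $\partial_{\eta_i}$ with itself'' cannot be correct, since $\partial_{\eta_i}^2=0$. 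The actual chain-level picture on a semi-free $P=A\otimes_{\CO_\CU}\bar P$ resolving $\CF$ is the following: put $\sigma_i:=\partial_{\eta_i}\otimes 1$, a degree-$+1$ operator that is not $A$-linear; then $[d_P,\sigma_i]$ \emph{is} $A$-linear of degree $+2$, and it is this operator (not any cup-square) that realizes the action of $\partial_{t_i}\in V$. Extending $w$ to a degree-$0$ graded derivation $\tilde w$ of $P$ (this requires a choice of connection on $\bar P$, another point your sketch glosses over), one checks that neither $[d_P,\tilde w]$ nor $\sum_i w(f_i)\sigma_i$ is $A$-linear, but the sum $h:=[d_P,\tilde w]+\sum_i w(f_i)\sigma_i$ \emph{is} $A$-linear, because the non-$A$-linear parts cancel via the identity $[d_A,w]=-\sum_i w(f_i)\sigma_i$ on $A$. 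Hence
$$\sum_i w(f_i)\,[d_P,\sigma_i]\;=\;\Bigl[d_P,\;\textstyle\sum_i w(f_i)\sigma_i\Bigr]\;=\;[d_P,h]$$
is a coboundary in $\End_A(P)$, i.e.\ the class $df(w)$ vanishes in $\Ext^2_A(\CF,\CF)$, as needed. This is precisely the step you flag as the ``main obstacle''; your diagnosis of where the difficulty lies is accurate, but the mechanism you propose (null-homotopy of the degree-$+1$ operator, plus a cup-product heuristic) does not supply it, and the gap is a real one rather than a missing reference.
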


The above lemma allows to define the notion of singular support of coherent sheaves. 

\sssec{}

Let $\Coh(Y)\subset \QCoh(Y)$ be the full subcategory that consists of \emph{coherent} sheaves. I.e.,
these are objects that have only finitely many non-zero cohomologies, and such that each cohomology
is finitely generated over $\CO_{^{cl}Y}$.

\medskip

\begin{defn}  \label{defn:sing supp}
The singular support of $\CF\in \Coh(Y)$ is the conical 
Zariski-closed subset $$\on{sing.supp.}(\CF)\subset {}^{cl}Y\times V^*,$$ corresponding to the ideal,
given by the kernel of the map \eqref{e:map from extended Hoch}.
\end{defn} 

Note that by \lemref{l:sing supp matters}, we automatically have
$$\on{sing.supp.}(\CF)\subset \Sing(Y),$$
as Zariski-closed subsets of $^{cl}Y\times V^*$.

\sssec{}

Dually, given a conical Zarsiki-closed subset $\CN\subset \Sing(Y)$, we let
$$\Coh_{\CN}(Y)\subset \Coh(Y)$$
be the full subcategory, consisting of objects whose singular support is contained 
in $\CN$.

\medskip

By definition, for $\CF\in \Coh(Y)$, we have $\CF\in \Coh_{\CN}(Y)$ if and only if for every
homogeneous element $f\in \Gamma({}^{cl}Y\times V^*,\CO_{^{cl}Y\times V^*})$ such that
$f|_{\CN}=0$, some power of $f$ lies in the kernel of \eqref{e:map from extended Hoch}.

\sssec{}

We have the following assertion:

\begin{prop}
For $\CF\in \Coh(Y)$, the subset $\on{sing.supp.}(\CF)\subset \Sing(Y)$ is independent 
of the choice of presentation of $Y$ as in \eqref{e:q-smooth as fiber product}. 
\end{prop}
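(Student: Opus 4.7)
The plan is to prove independence by identifying the cohomological action used in \defnref{defn:sing supp} with an intrinsic construction, modulo nilpotents. Since \S\ref{sss:Sing via cotang} already establishes that $\Sing(Y)$ is intrinsic to $Y$, and since \lemref{l:sing supp matters} implies $\on{sing.supp.}(\CF)$ is a closed subscheme of $\Sing(Y)$, it suffices to show that the induced map
$$\Gamma(\Sing(Y),\CO_{\Sing(Y)}) \longrightarrow \Bigl(\underset{i}\oplus\,\Hom(\CF,\CF[i])\Bigr)\big/\sqrt{0}$$
depends only on $Y$ and $\CF$, not on the chosen presentation \eqref{e:q-smooth as fiber product}.

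First I would recast the action of $\Sym(V)$ on $\End^*(\CF)$ through the Hochschild cohomology algebra $\on{HH}^*(Y)$, which is manifestly intrinsic and acts on every coherent sheaf by functoriality. Concretely, I claim that the map \eqref{e:map of exts} canonically factors as
$$\Sym(V[-2]) \longrightarrow \on{HH}^*(Y) \longrightarrow \underset{i}\oplus\,\Hom(\CF,\CF[i]),$$
where the first arrow is a morphism of graded algebras obtained by comparing the action of $\on{pt}\underset{\CV}\times\on{pt}$ on $Y$ with the intrinsic action of the loop space $\mathcal{L}Y=Y\times_{Y\times Y}Y$ (the two convolution groupoids both acting on $Y$ and related by a canonical map), and the second is the tautological Hochschild action on any object of $\QCoh(Y)$.

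Next I would invoke a Hochschild--Kostant--Rosenberg identification for quasi-smooth DG schemes: modulo nilpotents, the even part $\underset{i}\oplus\,\on{HH}^{2i}(Y)$ is canonically isomorphic to $\Gamma(\Sing(Y),\CO_{\Sing(Y)})$, obtained by assembling the local HKR formula together with the description of $\Sing(Y)$ as $\Spec_{{}^{cl}Y}\Sym\bigl(H^{-1}(L_Y)^\vee\bigr)$ implicit in \S\ref{sss:Sing via cotang}. Under this identification the first arrow above becomes exactly the restriction along the surjection $\Gamma({}^{cl}Y\times V^*,\CO)\twoheadrightarrow \Gamma(\Sing(Y),\CO)$ induced by the closed embedding $\Sing(Y)\hookrightarrow {}^{cl}Y\times V^*$, since the fiber over $y\in Y$ is $\ker(V^*\to T^*_y(\CU))=H^{-1}(T^*_y(Y))$. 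It follows that the ideal in $\Gamma(\Sing(Y),\CO_{\Sing(Y)})_{\on{red}}$ cutting out $\on{sing.supp.}(\CF)$ is intrinsic, and hence so is its vanishing locus.

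The hard part will be the compatibility in the second step: verifying that the intrinsic HKR map agrees, modulo nilpotents, with the algebra map $\Sym(V[-2])\to \on{HH}^*(Y)$ produced from the presentation-dependent convolution action of $\on{pt}\underset{\CV}\times\on{pt}$. This is essentially a local Koszul-resolution calculation, comparing the self-Ext algebra of the structure sheaf of the closed point of $\CV$ with the convolution algebra on $\on{pt}\underset{\CV}\times\on{pt}$ after restriction to ${}^{cl}Y$. Once this compatibility is in place, independence is automatic: given two presentations, the two a priori different ideals in $\Gamma(\Sing(Y),\CO_{\Sing(Y)})_{\on{red}}$ both coincide with the kernel of the (intrinsic) map from $\on{HH}^{2*}(Y)/\sqrt{0}$ acting on $\CF$, and hence with each other.
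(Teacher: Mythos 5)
The paper itself does not contain a proof of this proposition: the subsection explicitly states that the material corresponds to \cite[Sects. 4.1 and 5.4]{AG} and that the reader is referred to \emph{loc.cit.} for proofs. So your proposal cannot be checked word-for-word against the paper; what it can be checked against is the philosophy the paper spells out in the introduction, where singular support is explicitly framed as the Quillen support attached to the even part of $\on{HH}(Y)$ acting on $\underset{i}\oplus\,\Hom(\CF,\CF[i])$. Your proposal is precisely an unwinding of that philosophy, and it is the route taken in \cite{AG} as well: the factorization $\Sym(V[-2])\to\on{HH}^*(Y)\to \End^*(\CF)$ comes from the map of group-objects $\CL Y=Y\times_{Y\times Y}Y\to Y\underset{\CU}\times Y$ living over $\on{pt}\to\on{pt}\underset{\CV}\times\on{pt}$, inducing $\QCoh(\on{pt}\underset{\CV}\times\on{pt})\to\End(\QCoh(Y))$ on unit endomorphisms, so the image of $\Sym(V)$ lands inside the intrinsic graded algebra $\on{HH}^*(Y)$.

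Two remarks on the places where you (rightly) flag that care is needed. First, the HKR claim that $\bigl(\underset{i}\oplus\,\on{HH}^{2i}(Y)\bigr)\big/\sqrt{0}\simeq\Gamma(\Sing(Y),\CO_{\Sing(Y)})$ is correct for affine quasi-smooth $Y$ of finite type, but it is worth saying why the contributions of $H^0(T_Y)$ (the ordinary tangent directions) and of higher polyvectors drop out: the $\Sym(T_Y[-1])$ description of $\on{HH}^*$ places $H^0(T_Y)$ in odd internal degree, so its symmetric powers alternate, and on a finite-type scheme any such element is nilpotent; what survives $\sqrt{0}$ is exactly the $\Sym$ on $H^1(T_Y)|_{{}^{cl}Y}$, which is $\Gamma(\Sing(Y),\CO)$ by the description in \secref{sss:Sing via cotang}. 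This also makes clear the restriction to the affine case, which is the only case relevant to the proposition since the presentation \eqref{e:q-smooth as fiber product} is Zariski-local. Second, the compatibility of the HKR map with $\Sym(V)\to\on{HH}^*(Y)$ is indeed a Koszul computation, but it is essentially local over $\CV$ and reduces to the fundamental example $\on{pt}\underset{W}\times\on{pt}=\Spec(\Sym(W^*[1]))$, where both sides are visibly $\Sym(W)$ in degree~$2$; since you have already reduced to a presentation, this step is more routine than you make it sound. Overall the proposal is sound and matches the reference's approach.
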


Thus, the notion of singular support of an object of $\Coh(Y)$ and the category 
$\Coh_{\CN}(Y)$ make sense for any quasi-smooth DG scheme (not necessarily affine).

\medskip

In addition, we have:

\begin{prop}  \label{p:perfect}
For $\CF\in \Coh(Y)$, its singular support is the zero-section $\{0\}\subset \Sing(Y)$
if and only if $\CF$ is \emph{perfect}.
\end{prop}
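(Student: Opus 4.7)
The statement is Zariski-local on $Y$, so the plan is to reduce to the case that $Y$ is affine and presented as in \eqref{e:q-smooth as fiber product} with $\CU,\CV$ smooth affine, and to work with the graded algebra $A := \Gamma({}^{cl}Y,\CO) \otimes \on{Sym}(V)$, where $V$ is placed in cohomological degree $2$. By Definition~\ref{defn:sing supp}, the condition $\on{sing.supp.}(\CF) \subseteq {}^{cl}Y \times \{0\}$ (the zero-section of $\Sing(Y) \to {}^{cl}Y$) is equivalent to the assertion that every element of $V \subset A$ acts nilpotently on the graded $A$-module $\oplus_i \Hom_{\QCoh(Y)}(\CF,\CF[i])$ via the map \eqref{e:map from extended Hoch}. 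The remainder of the proof decouples into an easy and a hard direction under this reformulation.

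The easy direction, that perfectness of $\CF$ implies $\on{sing.supp.}(\CF) \subseteq {}^{cl}Y \times \{0\}$, is then essentially immediate: if $\CF$ is perfect, so is $\CHom_{\QCoh(Y)}(\CF,\CF)$, and affineness of $Y$ makes $\oplus_i \Hom_{\QCoh(Y)}(\CF,\CF[i])$ bounded in $i$. Since any $v \in V$ acts by a map of cohomological degree $+2$, sufficiently high powers $v^n$ must vanish in the target, yielding the required nilpotence.

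The substantial direction, $\on{sing.supp.}(\CF) \subseteq {}^{cl}Y \times \{0\} \Rightarrow \CF$ perfect, I would approach via the following intermediate claim, which is the crux of the whole proposition: for any $\CF \in \Coh(Y)$, the graded $A$-module $\oplus_i \Hom_{\QCoh(Y)}(\CF,\CF[i])$ is \emph{finitely generated}. Granting this, the support of this module inside ${}^{cl}Y \times V^*$ is precisely $\on{sing.supp.}(\CF)$, so the singular-support hypothesis combined with finite generation upgrades elementwise nilpotence to $(V)^N \cdot \oplus_i \Hom(\CF,\CF[i]) = 0$ for some $N$. Using the Koszul model described below one checks that $\CHom(\CF,\CF)$ also has bounded-below cohomology sheaves, so $\Hom(\CF,\CF[i])$ vanishes for all $i$ sufficiently negative as well; combined with the annihilation by $(V)^N$, this forces $\Hom(\CF,\CF[i]) = 0$ for all $|i|$ sufficiently large. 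Bounded self-Ext amplitude of a coherent sheaf on a Noetherian quasi-smooth DG scheme is then equivalent to finite Tor-amplitude, and hence to perfectness.

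The main obstacle is therefore the finite generation claim. The plan for it is explicit and Koszul-theoretic. After choosing \'etale coordinates on $\CV$ centered at $v$, one obtains a small DG-$\CO_\CU$-algebra presentation $\CO_Y \simeq \CO_\CU \otimes_k \Lambda^\bullet(V^*[1])$; for any $\CF \in \Coh(Y)$, represented by a bounded complex $\wt\CF$ of coherent $\CO_\CU$-modules with the appropriate semi-free DG structure over $\Lambda^\bullet(V^*[1])$, one builds a CDGA model for $R\Hom_{\CO_Y}(\CF,\CF)$ whose underlying graded object is a subquotient of $\Hom^\bullet_{\CO_\CU}(\wt\CF,\wt\CF) \otimes_k \on{Sym}(V)$. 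Noetherianness of $\CO_\CU$ combined with coherence of $\wt\CF$ then gives the desired finite generation over $A = \Gamma({}^{cl}Y,\CO) \otimes \on{Sym}(V)$; this is essentially the argument of \cite[Sects.~4.1 and 5.4]{AG}.
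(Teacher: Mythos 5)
Your overall strategy --- reduce to the affine case, reformulate the singular-support hypothesis as nilpotence of elements of $V$ acting on the graded algebra $\oplus_i\Hom_{\QCoh(Y)}(\CF,\CF[i])$, and then use finite generation of that algebra over $A=\Gamma({}^{cl}Y,\CO)\otimes\on{Sym}(V)$ to promote elementwise nilpotence to annihilation by a power of $(V)$ and hence to boundedness of the graded Ext algebra --- is the right one and is essentially the strategy of \cite{AG}. The easy direction is handled correctly, and you correctly single out the finite-generation theorem as the crux. One small point you elide: the identification of $\on{sing.supp.}(\CF)$ with the $A$-support of $\oplus_i\Hom(\CF,\CF[i])$ uses that $A\to\oplus_i\Hom(\CF,\CF[i])$ is a \emph{central} ring homomorphism, so that its kernel coincides with the annihilator of the full Ext algebra, not merely the annihilator of the identity element.

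There is, however, a genuine gap at the last step. The assertion that bounded self-Ext amplitude of a coherent complex on a quasi-smooth DG scheme is equivalent to perfectness is not an elementary passage to finite Tor-amplitude: over a general Noetherian local ring the implication ``$\Ext^i(M,M)=0$ for $i\gg 0$ implies finite projective dimension'' is open (it is a form of the generalized Auslander--Reiten problem), and in the complete-intersection case it is precisely the Avramov--Buchweitz/Gulliksen support-variety theorem, which is essentially \emph{equivalent} to the proposition you are trying to prove --- bounded self-Ext trivially forces singular support into the zero section, so invoking that equivalence as a black box is nearly circular. The fix is to strengthen your finite-generation claim in the obvious way: the same Koszul argument shows that $\oplus_i\Hom_{\QCoh(Y)}(\CF,\CG[i])$ is finitely generated over $A$ for \emph{every} $\CG\in\Coh(Y)$, and since the $\on{Sym}(V)$-action on this module factors through the self-Ext algebra of $\CF$, its $A$-support is contained in $\on{sing.supp.}(\CF)$. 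Applying this with $\CG$ a (shifted) residue field $k(y)$ at each point $y$, the hypothesis gives boundedness of $\Hom_{\QCoh(Y)}(\CF,k(y)[i])$ in $i$ for every $y$; by tensor-Hom adjunction this is the same as boundedness of $\CF\overset{L}\otimes k(y)$, and for coherent $\CF$ on a Noetherian DG scheme that is the standard, unproblematic criterion for finite Tor-amplitude, hence perfectness.
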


\ssec{Ind-coherent sheaves}

The material in this subsection is a summary of \cite[Sect. 1]{Ga3}.

\sssec{}  

Let $Y$ be a quasi-compact DG scheme almost of finite type. We consider the DG category $\IndCoh(Y)$ to be
the ind-completion of $\Coh(Y)$.

\medskip

I.e., this is a cocomplete DG category, equipped with a functor $\Coh(Y)\to \IndCoh(Y)$,
which is universal in the following sense: for a cocomplete DG category $\bC$, a functor 
$$\Coh(Y)\to \bC$$ uniquely extends to a \emph{continuous} functor 
$\IndCoh(Y)\to \bC$. 

\medskip

One shows that the functor $\Coh(Y)\to \IndCoh(Y)$ is fully faithful and that its essential image
compactly generates $\IndCoh(Y)$.

\medskip

By the universal property of $\IndCoh(Y)$, the tautological embedding $\Coh(Y)\hookrightarrow \QCoh(Y)$
canonically extends to a continuous functor
\begin{equation} \label{e:functor Psi}
\Psi_Y:\IndCoh(Y)\to \QCoh(Y). 
\end{equation}
Note however, that the functor \eqref{e:functor Psi} is \emph{no longer} fully faithful.

\medskip

Another crucial piece of structure on $\IndCoh(Y)$ is that we have a canonical action of
$\QCoh(Y)$, regarded as a monoidal category, on $\IndCoh(Y)$. It is obtained by ind-extending
the action of $\QCoh(Y)^{\on{perf}}$ on $\Coh(Y)$ be tensor products. 

\sssec{}

Suppose now that $Y$ is eventually coconnective, which means that its structure
sheaf has finitely many non-zero cohomologies. For example, any quasi-smooth DG
scheme has this property.

\medskip

In this case we have an inclusion $\QCoh(Y)^{\on{perf}}\subset \Coh(Y)$ as full subcategories
of $\QCoh(Y)$. 

\medskip

By the functoriality of the construction of forming the ind-completion, we have a naturally
defined functor
\begin{equation} \label{e:pre functor Xi}
\on{Ind}(\QCoh(Y)^{\on{perf}})\to \IndCoh(Y).
\end{equation}

Note, however, that by the Thomason-Trobaugh theorem (see, e.g., \cite{Ne}),
the natural functor
$$\on{Ind}(\QCoh(Y)^{\on{perf}})\to \QCoh(Y)$$
is an equivalence.

\medskip

Hence, from \eqref{e:pre functor Xi} we obtain a functor
\begin{equation} \label{e:functor Xi}
\Xi_Y:\QCoh(Y)\to \IndCoh(Y). 
\end{equation}

It follows from the construction, that the functor \eqref{e:functor Xi} is fully faithful
and provides a left adjoint of the functor \eqref{e:functor Psi}. 

\medskip

Thus, we obtain that $\QCoh(Y)$ is a \emph{co-localization} of $\IndCoh(Y)$. I.e., 
$\IndCoh(Y)$ is a ``refinenement" of $\QCoh(Y)$. 

\medskip

Of course, if $Y$ is a smooth classical scheme, there is no difference between  
$\QCoh(Y)^{\on{perf}}$ and $\Coh(Y)$, and the functors \eqref{e:functor Psi}
and \eqref{e:functor Xi} are mutually inverse equivalences. 

\ssec{Ind-coherent sheaves with prescribed support}

The material of this subsection corresponds to \cite[Sect. 4.1-4.3]{AG}. 

\sssec{}  \label{sss:varying N}

Assume now that $Y$ is quasi-smooth. In a similar way to the definition of $\IndCoh(Y)$, 
starting from $\Coh_{\CN}(Y)$, we construct the category $\IndCoh_{\CN}(Y)$.

\medskip

As in the case of $\IndCoh(Y)$, we have a canonical monoidal action of $\QCoh(Y)$ 
on $\IndCoh_{\CN}(Y)$. 

\medskip

We recover all of $\IndCoh(Y)$ by setting $\CN=\Sing(Y)$. 

\sssec{}   \label{sss:varying N bis}

Note that by \propref{p:perfect}, for $\CN$ being the zero-section $\{0\}\subset Y$ we have
$$\Coh_{\{0\}}(Y)=\QCoh(Y)^{\on{perf}},$$ so
$$\IndCoh_{\{0\}}(Y)\simeq \on{Ind}(\QCoh(Y)^{\on{perf}})\simeq \QCoh(Y),$$
and we have tautologically defined fully faithful functors
$$\QCoh(Y)\simeq \IndCoh_{\{0\}}(Y)\hookrightarrow \IndCoh_\CN(Y)\hookrightarrow \IndCoh(Y),$$
whose composition is the functor \eqref{e:functor Xi}. 

\medskip

We shall denote the above functor $\QCoh(Y)\to \IndCoh_\CN(Y)$ by $\Xi_{Y,\CN}$, and its right adjoint
(tautologically given by $\Psi_Y|_{\IndCoh_\CN(Y)}$) by $\Psi_{Y,\CN}$. 

\sssec{}

The category $\IndCoh_\CN(Y)$ will be the principal actor on the spectral side of the geometric
Langlands conjecture.

\ssec{$\QCoh$ and $\IndCoh$ on stacks}

This subsection makes a brief review of the material of \cite[Sects. 1.1, 1.2, 2.1 and 5.1]{QCoh} and \cite[Sects. 11]{Ga3} 
relevant for this paper. 

\sssec{}  \label{sss:prestack}

For later use we give the following definition. Let $\CY$ be a prestack, i.e., an arbitrary
functor
$$(\affdgSch)^{\on{op}}\to \inftygroup.$$

We define the category $\QCoh(\CY)$ as
$$\underset{S\to \CY}{\underset{\longleftarrow}{lim}}\, \QCoh(S),$$
where the inverse limit is taken over the category of affine DG schemes over $\CY$.

\medskip

I.e., informally, an object $\CF\in \IndCoh_\CN(\CY)$ is an assignment for every map $S\to \CY$ of an object
$$\CF_S\in  \QCoh(S),$$
and for map $f:S_1\to S_2$ over $\CY$ of an isomorphism
$$f^*(\CF_{S_2})\simeq \CF_{S_1},$$
where these isomorphisms must be equipped with a data of homotopy-coherence for higher
order compositions. 

\medskip

For a map of prestacks $f:\CY_1\to \CY_2$ we have a tautologically defined functor 
$$f^*:\QCoh(\CY_2)\to \QCoh(\CY_1).$$

\medskip

If $f$ is schematic quasi-compact and quasi-separated 
(i.e., its base change by a DG scheme yields a quasi-compact and quasi-separated DG scheme), the functor $f^*$
admits a continuous right adjoint, denoted $f_*$. 

\sssec{}  \label{sss:classical prestacks}

A prestack $\CY$ is said to be \emph{classical} if in the category $\affdgSch_{/\CY}$
of affine DG schemes mapping to $\CY$ the full subcategory $\affSch_{/\CY}$ is cofinal. 
I.e., if any map $S\to \CY$, where $S\in \affdgSch$ can be factored as
$$S\to S'\to \CY,$$
where $S'$ is classical, and the category of such factorizations is \emph{contractible}.

\medskip

If $\CY$ is classical, then the category $\QCoh(\CY)$ can be recovered just from the
knowledge of $\QCoh(S)$ for classical schemes $S$ over $\CY$. Precisely,
the restriction functor
$$\QCoh(\CY):=\underset{S\in (\affdgSch_{/\CY})^{\on{op}}}{\underset{\longleftarrow}{lim}}\, \QCoh(S)\to
\underset{S\in (\affSch_{/\CY})^{\on{op}}}{\underset{\longleftarrow}{lim}}\, \QCoh(S)$$
is an equivalence. 

\sssec{}

Let now $\CY$ be a (derived) algebraic stack (see \cite[Sect. 1.1]{DrGa1} for our conventions regarding algebraic stacks).
In this case, one can rewrite the definition of $\QCoh(\CY)$ as follows:

\medskip

Instead of taking the limit over the category of all affine DG schemes over $\CY$, we can replace
it by a full subcategory 
$$(\affdgSch)_{/\CY,\on{smooth}},$$
where we restrict objects to those $y:S\to \CY$ for which the map $y$ is smooth,
and morphisms to those maps $f:S_1\to S_2$ over $\CY$ for which $f$ is smooth. 

\sssec{}  \label{sss:indcoh on stack}

Suppose that $\CY$ is a (derived) algebraic stack \emph{locally almost of finite type} (i.e., it admits a smooth atlas 
consisting of DG schemes that are almost of finite type). In this case one can define $\IndCoh(\CY)$
as
$$\underset{S\in ((\affdgSch)_{/\CY,\on{smooth}})^{\on{op}}}{\underset{\longleftarrow}{lim}}\, \IndCoh(\CY).$$

\medskip

Informally, an object $\CF\in \IndCoh(\CY)$ is an assignment for every smooth map $S\to \CY$ of an object
$$\CF_S\in  \IndCoh(S),$$
and for every smooth map $f:S_1\to S_2$ over $\CY$ of an isomorphism
$$f^*(\CF_{S_2})\simeq \CF_{S_1},$$
where these isomorphisms must be equipped with a data of homotopy-coherence for higher
order compositions. 

\sssec{}  \label{sss:! pull-back}

If $f:\CY_1\to \CY_2$ is a schematic quasi-compact map of algebraic DG stacks (both assumed 
locally almost of finite type), we have a naturally defined continuous pushforward functor
$$f^{\IndCoh}_*:\IndCoh(\CY_1)\to \IndCoh(\CY_2).$$

In addition, if $f$ is an \emph{arbitrary} map between algebraic DG stacks, there exists a well-defined functor
$$f^!:\IndCoh(\CY_2)\to \IndCoh(\CY_1).$$

\medskip

The functor $f^!$ is the \emph{right adjoint} of $f^{\IndCoh}_*$ if $f$ is schematic and proper, and is the 
\emph{left adjoint} of $f^{\IndCoh}_*$ if $f$ is an open embedding.

\medskip

The functors of pushforward and !-pull-back satisfy a base change property: for a Cartesian square
of algebraic DG stacks almost of finite type
$$
\CD
\CY'_1  @>{g_1}>>  \CY_1  \\
@V{f'}VV    @VV{f}V    \\
\CY'_2  @>{g_2}>>  \CY_2,
\endCD
$$
with the vertical maps being schematic and quasi-compact, 
there is a \emph{canonically defined} isomorphism of functors
\begin{equation} \label{e:base cange}
g_2^!\circ f^{\IndCoh}_*\simeq (f')^{\IndCoh}_*\circ g_1^!.
\end{equation}

Note, however, that unless $f$ is proper or open, there is a priori no map in either
direction in \eqref{e:base cange}.

\medskip

Finally, if $f$ is locally of finite Tor-dimension, we also have a functor
$$f^{\IndCoh,*}:\IndCoh(\CY_2)\to \IndCoh(\CY_1).$$
If $f$ is schematic and quasi-compact then $f^{\IndCoh,*}$ is the left-adjoint of $f^\IndCoh_*$. 
If $f$ is smooth (or more generally, Gorenstein), then the functors $f^!$ and $f^{\IndCoh,*}$
differ by a twist by the relative dualizing line bundle. 

\ssec{Singular support on algebraic stacks}

The material of this subsection corresponds to \cite[Sect. 8]{AG}.

\sssec{}

Let $\CY$ be a (derived) algebraic stack. We shall say that $\CY$ is quasi-smooth if for any DG scheme 
and a smooth map $Y\to \CY$, the DG scheme $Y$ is quasi-smooth. 

\medskip

Equivalently, $\CY$ is quasi-smooth if it admits a smooth atlas consisting of 
quasi-smooth DG schemes.

\medskip

One can also express this in terms of the cotangent complex of $\CY$. Namely, $\CY$ is quasi-smooth
if and only if it is locally almost of finite type, and for any $k$-point $y\in Y$, the derived cotangent space $T^*_y(\CY)$ 
lives in degrees $[-1,1]$.  
(The cohomology in degree $1$ is responsible for the Lie algebra of the algebraic group of automorphisms
of $y$.)

\sssec{}

For a quasi-smooth derived algebraic stack $\CY$, one defines the classical algebraic stack
$$\Sing(\CY)\to \CY$$ using descent:

\medskip

\noindent For a smooth map $Y\to \CY$, where $Y$ is a DG scheme, we have
$$Y\underset{\CY}\times \Sing(\CY)\simeq \Sing(Y).$$

The fact that this is well-defined relies in the following lemma:

\begin{lem}
For a smooth map of quasi-smooth DG schemes $Y_1\to Y_2$, the natural map
$$Y_1\underset{Y_2}\times \Sing(Y_2)\to \Sing(Y_1)$$
is an isomorphism.
\end{lem}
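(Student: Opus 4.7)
The natural map $Y_1 \underset{Y_2}{\times} \Sing(Y_2) \to \Sing(Y_1)$ is built from the intrinsic description of $\Sing$ in \secref{sss:Sing via cotang}: a point of $\Sing(Y)$ is a pair $(y,\xi)$ with $y\in {}^{cl}Y$ and $\xi\in H^{-1}(T^*_y Y)$, and the functoriality of the cotangent complex along $f\colon Y_1\to Y_2$ sends a pair $(y_1,\xi)$ coming from $\Sing(Y_2)$ to $(y_1, f^*\xi)\in \Sing(Y_1)$.

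First I verify that this map is a bijection on $k$-points. For the smooth morphism $f$, the cotangent fiber sequence
$$f^* T^*(Y_2)\to T^*(Y_1)\to T^*(Y_1/Y_2)$$
has its rightmost term a vector bundle concentrated in cohomological degree $0$. Passing to $k$-point stalks and taking the associated long exact sequence in cohomology yields an isomorphism $H^{-1}(f^* T^*(Y_2))\simeq H^{-1}(T^*(Y_1))$, which is precisely the desired bijection on the fibers of the $\Sing$-schemes.

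To upgrade this to a scheme-theoretic isomorphism I argue \'etale-locally. Pick a local presentation $Y_2 = \on{pt}\underset{\CV}{\times}\CU_2$ as in \eqref{e:q-smooth as fiber product}. Since $f$ is smooth of some relative dimension $n$, after replacing $Y_1$ by an \'etale cover I may assume $Y_1 \cong Y_2\times \AA^n$, giving a compatible presentation $Y_1 = \on{pt}\underset{\CV}{\times}(\CU_2\times \AA^n)$. Writing $V := T_v\CV$ and observing that $T^*\CU_2 \hookrightarrow T^*(\CU_2\times \AA^n)$ is split injective (with complementary summand $T^*\AA^n$), the definition of $\Sing$ as the vanishing locus of \eqref{e:co-differential} gives
$$\Sing(Y_1) = \ker\bigl(V^*\otimes \CO_{{}^{cl}Y_1}\to T^*(\CU_2\times \AA^n)|_{{}^{cl}Y_1}\bigr) = \ker\bigl(V^*\otimes \CO_{{}^{cl}Y_1}\to T^*\CU_2|_{{}^{cl}Y_1}\bigr),$$
and the right-hand side is manifestly $Y_1 \underset{Y_2}{\times}\Sing(Y_2)$, since $\Sing(Y_2)$ is a classical scheme over ${}^{cl}Y_2$.

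\textbf{Main obstacle.} Once a compatible presentation is available the calculation itself is essentially immediate; the subtlety is that \emph{a priori} the local argument depends on the chosen presentations $Y_i = \on{pt}\times_{\CV_i}\CU_i$. Canonicity, and hence gluability of the local isomorphisms into a global one, is ensured by the intrinsic characterization of $\Sing$ recalled in \secref{sss:Sing via cotang}, which makes the identification functorial in $f$ and independent of presentation.
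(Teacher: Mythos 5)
Your argument is correct, and it fills in something the paper leaves to the reader: the text does not prove this lemma but cites \cite[Sect.~8]{AG} for the statements of this subsection. The two ingredients you use --- the canonical map coming from the cotangent fiber sequence $f^*L_{Y_2}\to L_{Y_1}\to L_{Y_1/Y_2}$ together with the vanishing of $H^{-1}$ and $H^{-2}$ of the relative term, and the local computation with a presentation shared between $Y_1$ and $Y_2$ --- constitute the expected proof, and the point you single out as the ``main obstacle'' (independence of presentation, hence gluability of the local isomorphisms) is indeed the only conceptual step.

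One small imprecision worth flagging: the local structure theorem for smooth morphisms gives, Zariski-locally on $Y_1$, a factorization $Y_1\overset{e}\to Y_2\times \AA^n\to Y_2$ with $e$ \emph{\'etale}, not an isomorphism $Y_1\simeq Y_2\times\AA^n$ after an \'etale cover of $Y_1$. This does not affect the substance: the \'etale factor contributes nothing to $\Sing$, since $L_{Y_1/(Y_2\times\AA^n)}\simeq 0$ and the same kernel computation applies with an \'etale $\CU_1\to\CU_2\times\AA^n$ replacing $\CU_2\times\AA^n$ in the presentation. Concatenating $\Sing(Y_1)\simeq Y_1\underset{Y_2\times\AA^n}\times \Sing(Y_2\times\AA^n)$ with $\Sing(Y_2\times\AA^n)\simeq (Y_2\times\AA^n)\underset{Y_2}\times\Sing(Y_2)$ then gives the statement without ever needing to descend an isomorphism along an \'etale cover of $Y_1$.
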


More invariantly, $\Sing(Y)$ consists of pairs $(y,\xi)$, where $y\in \CY$, and $\xi\in H^{-1}(T^*_x(\CY))$. 

\sssec{}  \label{sss:category on stack}

Let $\CN\subset \Sing(\CY)$ be a conical Zariski-closed subset. We define the category
$\IndCoh_\CN(\CY)$ to be the full subcategory of $\IndCoh(\CY)$ introduced in \secref{sss:indcoh on stack}
defined by the following condition:

\medskip

An object $\CF\in \IndCoh(\CY)$ belongs to $\IndCoh_\CN(\CY)$ if for every $Y\in \affdgSch$ equipped
with a smooth map $Y\to \CY$  (equivalently, for some atlas of such $Y$'s), the corresponding object
$\CF_Y\in \IndCoh(Y)$ belongs to
$$\IndCoh_{Y\underset{\CY}\times \CN}(Y)\subset \IndCoh(Y).$$

\sssec{}  \label{sss:Xi Psi stacks}

By construction, we have a canonically defined action of the monoidal category $\QCoh(\CY)$ on $\IndCoh_{\CN}(\CY)$. 

\medskip

By \secref{sss:varying N bis} we have an adjoint pair of continuous functors
$$\Xi_{\CY,\CN}:\QCoh(\CY)\rightleftarrows \IndCoh_{\CN}(\CY):\Psi_{\CY,\CN}$$
with $\Xi_{\CY,\CN}$ fully faithful. 

\section{Statement of the categorical geometric Langlands conjecture}  \label{s:statement}

For the rest of the paper, we fix $X$ to be a smooth and complete curve over $k$. 

\ssec{The de Rham functor}

\sssec{}

The following general construction will be useful in the sequel. Ley $\CY$ be an arbitrary prestack,
see \secref{sss:prestack}.

\medskip

We define a new prestack $\CY_\dr$ by 
$$\Maps(S,\CY_\dr)=\Maps(({}^{cl}\!S)_{red},\CY),\quad S\in \affdgSch.$$

In the above formula $({}^{cl}\!S)_{red}$ denotes the reduced classical scheme underlying $S$.

\sssec{}  \label{sss:crystals}

For what follows we define the DG category $\Dmod(\CY)$ of D-modules on $\CY$ by 
$$\Dmod(\CY):=\QCoh(\CY_\dr).$$

We refer the reader to \cite{GR}, where this point of view on the theory of D-modules is developed.

\medskip

If $f:\CY_1\to \CY_2$ is a map of prestacks, we shall denote by
$f^\dagger$ the resulting pull-back functor 
$$f^\dagger:\Dmod(\CY_2)\to \Dmod(\CY_1).$$
I.e., $f^\dagger:=(f_\dr)^*$, where $f_\dr:(\CY_1)_\dr\to (\CY_2)_\dr$. 

\sssec{}  \label{sss:no derived}

The following observation makes life somewhat easier:

\medskip

Let $\CY$ be a prestack, which is \emph{locally almost of finite type} 
(see \cite[Sect. 1.3.9]{Stacks} for the definition\footnote{This is a techincal condition satisfied for the prestacks
of interest to us.}).  In this case we have (see \cite[Proposition 1.3.3]{GR}):

\begin{lem}
The prestack $\CY_\dr$ is classical (see \secref{sss:classical prestacks} for what this means)
and locally of finite type.
\end{lem}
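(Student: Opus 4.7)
The plan is to verify the two assertions separately, both exploiting the fact that by construction $\Maps(S,\CY_\dr) = \Maps(({}^{cl}\!S)_{red},\CY)$ depends on $S$ only through the reduced classical affine scheme $({}^{cl}\!S)_{red}$.

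For classicality, I would show that for every $S\in \affdgSch$ and every map $\phi:S\to \CY_\dr$, the category of factorizations $S\to S'\to \CY_\dr$ through a classical affine $S'$ admits an initial object, and is therefore contractible. The natural candidate is the tautological map $S\to {}^{cl}\!S$, paired with the map $\psi:{}^{cl}\!S\to \CY_\dr$ coming from the same datum $({}^{cl}\!S)_{red}\to\CY$ that defines $\phi$ (using $({}^{cl}({}^{cl}\!S)))_{red}=({}^{cl}\!S)_{red}$); by construction, $\psi$ composed with $S\to {}^{cl}\!S$ recovers $\phi$. To see that $({}^{cl}\!S,\psi)$ is initial: for any classical affine $S'$, the canonical map $\Maps(S,S')\to\Maps({}^{cl}\!S,S')$ is an equivalence, because a classical affine target sees only $H^0$ of the source, and this identification is readily compatible with postcomposition to $\CY_\dr$. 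Having an initial object makes the slice category contractible, proving classicality.

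For local finiteness of type, it suffices to verify that $\CY_\dr$, viewed as a classical prestack via the first part, commutes with cofiltered limits of classical affine schemes. Given such a limit $S=\underset{\alpha}{\lim}\, S_\alpha$, unfolding the definition of $\CY_\dr$ reduces the required assertion to the equivalence
\begin{equation*}
\underset{\alpha}{\on{colim}}\,\Maps((S_\alpha)_{red},\CY)\;\xrightarrow{\sim}\; \Maps(S_{red},\CY).
\end{equation*}
The reduction functor on classical affine schemes preserves cofiltered limits (the nilradical commutes with filtered colimits of commutative rings), so $S_{red}=\underset{\alpha}{\lim}\,(S_\alpha)_{red}$. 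The asserted equivalence is then exactly the content of the hypothesis that $\CY$ is locally almost of finite type, applied to the cofiltered system $\{(S_\alpha)_{red}\}$ of classical affine schemes.

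The main obstacle, insofar as there is one, is essentially administrative: matching the precise formulation of ``locally almost of finite type'' for prestacks with the limit-preservation statement one needs on the subcategory of reduced classical affine schemes. Once this identification is fixed, both parts reduce to formal manipulations of the definition of $\CY_\dr$: the first to an initial-object argument in a slice category, the second to the defining commutation of $\CY$ with cofiltered limits. No deeper difficulty is anticipated.
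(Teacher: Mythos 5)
Your proof of classicality has a genuine gap. You propose $({}^{cl}\!S,\psi)$ as an initial object of the factorization category, with structure map a ``tautological map $S\to {}^{cl}\!S$.'' No such map exists: the functor ${}^{cl}(-)$ is the \emph{right} adjoint to the inclusion $\Sch\hookrightarrow\dgSch$, so the canonical map is the closed immersion ${}^{cl}\!S\to S$, in the opposite direction. The adjunction computes $\Maps(\iota(T),S)\simeq\Maps(T,{}^{cl}\!S)$ for $T$ classical, which gives no handle on maps \emph{out of} a genuinely derived $S$. So the proposed object of the factorization category is not even defined.

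The supporting claim is also false as stated: for $S=\Spec A$ with $A$ a connective CDGA and $S'=\BA^1=\Spec k[x]$, one has $\Maps(S,\BA^1)\simeq\Omega^\infty A$, a space with $\pi_i\simeq H^{-i}(A)$, whereas $\Maps({}^{cl}\!S,\BA^1)\simeq H^0(A)$ is discrete. The restriction $\Maps(S,S')\to\Maps({}^{cl}\!S,S')$ along ${}^{cl}\!S\to S$ is a $\pi_0$-isomorphism but kills all higher homotopy; it is \emph{not} an equivalence whenever $A$ has cohomology in negative degrees. The slogan ``a classical affine target sees only $H^0$ of the source'' confuses the two directions of the adjunction. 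In fact the category of factorizations $S\to S'\to\CY_\dr$ does not have an obvious initial or terminal object, and the $\on{laft}$ hypothesis on $\CY$ --- which your argument for classicality never invokes --- is expected to enter precisely at this point. The paper only cites \cite[Proposition 1.3.3]{GR} for this lemma, and that proof is not an initial-object argument.

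Your treatment of local finiteness of type is in better shape, modulo the first part: the observation that the nilradical, and hence $(-)_{red}$, commutes with filtered colimits of rings is correct and is indeed the mechanism by which $\CY_\dr$ inherits the finiteness condition from $\CY$. But since it presupposes classicality, the overall argument does not go through.
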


The upshot of this lemma is that in order to ``know'' the category $\Dmod(\CY):=\QCoh(\CY_\dr)$,
it is sufficient to consider maps $({}^{cl}\!S)_{red}\to \CY$, where $S$ is classical and of finite type.
In particular, \emph{we do not need derived algebraic geometry} when we study D-modules.

\ssec{The stack of local systems}

The contents of this subsection are a brief digest of \cite[Sect. 10]{AG}. We refer the reader to {\it loc.cit.}
for the proofs of the statements that we quote. 

\sssec{}

Let $G$ be an algebraic group. 
We let $\on{pt}/G$ be the algebraic stack that classifies $G$-torsors. We define the prestacks
$\Bun_G(X)$ and $\LocSys_G(X)$ by
$$\Maps(S,\Bun_G(X))=\Maps(S\times X,\on{pt}/G)$$ and 
$$\Maps(S,\LocSys_G(X))=\Maps(S\times X_\dr,\on{pt}/G).$$

\medskip

Note that we have a natural forgetful map $\LocSys_G(X)\to \Bun_G(X)$
corresponding to the tautological map $X\to X_\dr$. 

\medskip

One shows that $\Bun_G(X)$ is in fact a smooth classical algebraic stack, and that $\LocSys_G(X)$
is a derived algebraic stack. 

\medskip

As $X$ is fixed, we will simply write $\Bun_G$ and $\LocSys_G$, omitting $X$ from the notation. 


\sssec{}

We claim that $\LocSys_G$ is in fact quasi-smooth. Indeed, the cotangent space 
at a point $\sigma\in \LocSys_G$ is canonically isomorphic to
$$\Gamma_\dr(X,\fg^*_\sigma)[1],$$
where $\fg^*_\sigma$ is the local system of vector spaces corresponding to $\sigma$ and the
co-adjoint representation of $G$. 

\medskip

In particular, the complex $\Gamma_\dr(X,\fg^*_\sigma)[1]$ has cohomologies in degrees $[-1,1]$,
as required. 

\sssec{}

The same computation provides a description of the stack $\Sing(\LocSys_G)$:

\begin{cor} \label{c:Arth}
The (classical) stack $\Sing(\LocSys_G)$ is the moduli space of pairs $(\sigma,A)$
where $\sigma\in \LocSys_G$, and $A$ is a horizontal section of the local system
$\fg^*_\sigma$, associated with the co-adjoint representation of $G$.
\end{cor}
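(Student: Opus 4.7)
The plan is to combine the general description of $\Sing$-points via $H^{-1}$ of the cotangent complex (as recorded in \secref{sss:Sing via cotang}) with the cotangent complex formula for $\LocSys_G$ stated just above the corollary, namely $T^*_\sigma(\LocSys_G) \simeq \Gamma_\dr(X, \fg^*_\sigma)[1]$. Taking $H^{-1}$ of both sides yields
$$H^{-1}(T^*_\sigma(\LocSys_G)) \;\simeq\; H^0(\Gamma_\dr(X, \fg^*_\sigma)),$$
which by definition of de Rham cohomology is precisely the space of flat (horizontal) sections of the local system $\fg^*_\sigma$ on $X$. At the level of $k$-points, this already identifies $\Sing(\LocSys_G)$ with the claimed moduli space of pairs $(\sigma, A)$.

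To promote this to an isomorphism of classical stacks, I would globalize the argument in families. Since $\Sing$ of a quasi-smooth derived stack is defined by descent from any smooth atlas (see \secref{sss:category on stack}), it suffices to identify, for every affine test scheme $S$ equipped with a map $S \to \LocSys_G$ classifying a family $\sigma$, the fiber product $S \times_{\LocSys_G} \Sing(\LocSys_G)$ with the relative moduli of horizontal sections of $\fg^*_\sigma$ on $X \times S$ over $S$. This reduces to the relative version of the above cotangent complex formula: the $S$-relative cotangent complex of the map $S \to \LocSys_G$ is the relative de Rham pushforward $\Gamma_\dr(X \times S / S, \fg^*_\sigma)[1]$, and its $H^{-1}$ is exactly the $\CO_S$-module of relative horizontal sections.

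The main bookkeeping point will be to see that the classical stack structure on $\Sing(\LocSys_G)$ produced by the Zariski-local fiber-product presentations of \secref{sss:Sing via cotang} matches the evident classical stack structure on the moduli of pairs $(\sigma, A)$, and that the canonical $\BG_m$-action on $\Sing(\LocSys_G)$ by scaling of cotangent vectors corresponds to the $\BG_m$-action on horizontal sections by scaling. Both are formal consequences of the functorial description $\LocSys_G = \Maps(X_\dr, \on{pt}/G)$, together with the standard computation that gave $T^*_\sigma(\LocSys_G) \simeq \Gamma_\dr(X, \fg^*_\sigma)[1]$ in the first place; no serious new input is needed beyond what the preceding subsection already provides, so the corollary is essentially an unwinding of the computation that established the quasi-smoothness of $\LocSys_G$.
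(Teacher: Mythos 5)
Your proposal is correct and follows exactly the approach implicit in the paper: the paper itself gives no proof beyond the remark that ``the same computation provides a description'' and a citation to \cite[Sect.~10]{AG}, but the intended argument is precisely the one you give, namely applying the invariant description of $\Sing$ via $H^{-1}$ of the cotangent complex from \secref{sss:Sing via cotang} to the identification $T^*_\sigma(\LocSys_G) \simeq \Gamma_\dr(X, \fg^*_\sigma)[1]$, yielding $H^{-1}(T^*_\sigma) \simeq H^0_\dr(X, \fg^*_\sigma)$, the flat sections. Your remarks about globalizing in families via the relative cotangent complex and matching the $\BG_m$-actions are the right bookkeeping and consistent with the descent definition of $\Sing$ on stacks.
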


\sssec{}

The following property of $\LocSys_G$ is shared by any quasi-smooth algebraic stack which
can be globally written as a complete intersection, see \cite[Corollary 9.2.7 and Sect. 10.6]{AG}:

\begin{lem}  
For any conical Zariski-closed subset $\CN\subset \Sing(\LocSys_G)$, the category
$\IndCoh_\CN(\LocSys_G)$ is compactly generated by its subcategory 
$\Coh_\CN(\LocSys_G)$.
\end{lem}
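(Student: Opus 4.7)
The plan is to reduce the statement to a general compact generation theorem for quasi-smooth stacks that admit a global complete intersection presentation, and then to verify that $\LocSys_G$ is of this kind. The cited result (Corollary 9.2.7 of \cite{AG}) asserts that whenever a quasi-smooth algebraic stack $\CY$ is globally realized as a derived Cartesian product
$$\CY \simeq \CU \underset{\CV}\times \on{pt},$$
with $\CU,\CV$ smooth algebraic stacks (locally of finite type), then $\Coh_\CN(\CY)$ consists of compact objects of $\IndCoh_\CN(\CY)$ and generates this category, for \emph{any} conical Zariski-closed $\CN\subset \Sing(\CY)$. So the work is entirely in step one: producing such a global presentation of $\LocSys_G$.

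For the global presentation, the approach is to fix a sufficiently ample effective divisor $D\subset X$ so that, for every $G$-bundle, first cohomology of the adjoint bundle twisted by $\CO(D)$ vanishes. Then $\CU$ can be taken to be the smooth stack parameterizing a $G$-bundle together with a connection with prescribed pole behavior along $D$ (and a corresponding level structure rigidifying automorphisms), and $\CV$ a smooth stack classifying the polar part of the obstruction; the natural map $\on{pt}\to \CV$ picks out the actual connections (no excess polar part) and the Cartesian product recovers $\LocSys_G$. That this is genuinely a quasi-smooth complete intersection presentation is verified on tangent complexes: the cotangent at $\sigma\in \LocSys_G$ is $\Gamma_\dr(X,\fg^*_\sigma)[1]$, in amplitude $[-1,1]$, and the Cartesian diagram yields exactly the expected fiber sequence between tangent spaces of $\CU$, $\CV$ and $\LocSys_G$, with $H^{-1}$ matching the description of $\Sing(\LocSys_G)$ provided by \corref{c:Arth}.

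Granted the presentation, the proof of the general statement in \cite{AG} proceeds via a Koszul-duality description: the cohomological operations of Section 2.4 (applied globally to the Cartesian square above) realize $\IndCoh(\LocSys_G)$ as modules over the monad $\Sym(V[-2])$ for $V$ the tangent space to $\CV$ at the chosen point, and $\IndCoh_\CN(\LocSys_G)$ as the full subcategory cut out by the singular support condition encoded by $\CN\subset {}^{cl}\!\LocSys_G\times V^*$. Compact generation by coherent objects with the prescribed support is then standard, by testing on the generators of $\Coh(\CU)$ pulled back and acted on by the appropriate cut-off of $\Sym(V)$-modules.

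The main obstacle is honestly the global complete intersection presentation. Locally any affine chart of $\LocSys_G$ is visibly a complete intersection, but assembling these into a single Cartesian square with $\CU,\CV$ genuinely smooth, of finite type (rather than just a smooth atlas thereof), requires the finite-dimensionality that becomes available only after trivializing $H^1$ uniformly by means of an ample divisor and a suitable level structure; verifying that the resulting square is Cartesian at the derived level --- not just on classical points and tangent spaces --- is where the actual content of the argument sits. The rest of the proof is formal once this presentation is in hand.
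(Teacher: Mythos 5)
Your overall strategy matches the paper's intent exactly: the paper's entire ``proof'' of this lemma is the citation to \cite[Corollary 9.2.7 and Sect.~10.4]{AG}, where Corollary~9.2.7 is precisely the general compact-generation statement for quasi-smooth stacks admitting a global complete intersection presentation, and Section~10.4 is where such a presentation is produced for $\LocSys_\cG$. So the reduction you set up is the correct one, and your description of the Koszul-duality mechanism behind Corollary~9.2.7 is also accurate.

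However, there is a genuine gap in your sketch of the presentation, at the very first step. You write that one can ``fix a sufficiently ample effective divisor $D\subset X$ so that, for every $G$-bundle, first cohomology of the adjoint bundle twisted by $\CO(D)$ vanishes.'' This is false as stated: $\Bun_G$ is not quasi-compact, and for any fixed $D$ one can produce $G$-bundles of arbitrarily bad Harder--Narasimhan type for which $H^1(X,\fg_\CP(D))\neq 0$, so no single $D$ works uniformly on $\Bun_G$. The indispensable ingredient that makes the divisor trick run --- and which your proposal omits entirely --- is that $\LocSys_G$ is \emph{quasi-compact}: in characteristic~$0$, the $G$-bundles underlying local systems on a complete curve form a bounded family (for $GL_n$ this is Weil's theorem that a bundle admits a connection iff every indecomposable summand has degree~$0$, plus boundedness of indecomposable degree-$0$ bundles; the general reductive case reduces to this). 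Once boundedness is in hand, one restricts to the corresponding quasi-compact open of $\Bun_G$, chooses $D$ uniformly there, and only then does the moduli of connections with poles along $D$ become a smooth stack $\CU$ of finite type. Without this input the argument simply does not get off the ground.

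A secondary imprecision: the object you call ``$\CV$, a smooth stack classifying the polar part of the obstruction,'' together with ``the natural map $\on{pt}\to\CV$,'' is not quite the right shape. The polar part of a connection with poles along $D$ takes values in a vector bundle over $\CU$ (not over a point), so the relevant presentation is as the derived zero locus of a section of a vector bundle on a smooth stack, which is the form actually used in \cite{AG}. Rigidifying this into a map to a fixed vector space requires imposing a level structure along $D$, which replaces $\LocSys_G$ by a cover and then requires a descent argument for compact generation; your parenthetical aside about a level structure gestures at this but does not address it.
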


\ssec{The spectral side of geometric Langlands}

From now on we will assume that $G$ is a connected reductive group.
We let $\cG$ denote the Langlands dual of $G$.

\sssec{}

Consider the stack $\Sing(\LocSys_\cG)$. We will also denote it by $\on{Arth}_\cG$.
This is the stack of Arthur parameters.

\medskip

Let $\on{Nilp}^{\on{glob}}_\cG$ be the conical Zariski-lcosed subset of $\on{Arth}_\cG$
corresponding to those pairs $(\sigma,A)$ (see \corref{c:Arth}) for which $A$
is \emph{nilpotent}, i.e., its value at any (equivalently, some) point of $X$
lies in the cone of nilpotent elements of $\cg^*$. 

\sssec{}  \label{sss:define spectral side}

According to \secref{sss:category on stack}, we have a well-defined DG category
$$\IndCoh_{\on{Nilp}^{\on{glob}}_\cG}(\LocSys_\cG).$$

This is the category that we propose as the spectral (i.e., Galois) side of the
categorical geometric Langlands conjecture. 

\sssec{}

By \secref{sss:Xi Psi stacks}, we have an adjoint pair of functors
\begin{equation} \label{e:embedding QCoh}
\Xi_\cG:\QCoh(\LocSys_\cG)\rightleftarrows \IndCoh_{\on{Nilp}^{\on{glob}}_\cG}(\LocSys_\cG):\Psi_\cG
\end{equation}
with $\Xi_\cG$ fully faithful (we use the subscript ``$\cG$" as a shorthand for ``$\LocSys_\cG,\on{Nilp}^{\on{glob}}_\cG$"). 

\medskip

In other words, the category $\IndCoh_{\on{Nilp}^{\on{glob}}_\cG}(\LocSys_\cG)$ is a modification
of $\QCoh(\LocSys_\cG)$ that has to do with the fact that the derived algebraic stack $\LocSys_\cG$
is not smooth, but only quasi-smooth. 

\medskip

In particular, the functor $\Xi_\cG$ becomes an equivalence 
once we restrict to the open substack of $\LocSys_\cG$ that consists of irreducible local
systems (i.e., ones that do not admit a reduction to a proper parabolic). In fact, the
equivalence takes place over a larger open substack; namely, one corresponding to those local
systems that do not admit a unipotent subgroup of automorphisms. 

\sssec{}

Finally, note that if $G$ (and hence $\cG$) is a torus, then $\on{Nilp}^{\on{glob}}_\cG$ is the zero-section
of $\on{Arth}_\cG$. So, for tori, the spectral side of geometric Langlands is the usual category
$\QCoh(\LocSys_\cG)$. 

\ssec{The geometric side}

\sssec{}

We consider the algebraic stack $\Bun_G$ and the corresponding category $\Dmod(\Bun_G)$
as defined in \secref{sss:crystals}.

\medskip

The categorical geometric Langlands conjecture says:

\begin{conj} \label{c:GL}   \hfill

\smallskip

\noindent{\em(a)}
There exists a uniquely defined equivalence of categories
$$\IndCoh_{\on{Nilp}^{\on{glob}}_\cG}(\LocSys_\cG)\overset{\BL_G}
\longrightarrow \Dmod(\Bun_G),$$
satisfying Property $\on{Wh}^{\on{ext}}$ stated in \secref{sss:cond W ext}.

\smallskip

\noindent{\em(b)} 
The functor $\BL_G$ satisfies Properties $\on{He}^{\on{naive}}$, $\on{Ei}^{\on{enh}}$ and $\on{Km}^{\on{prel}}$, 
stated in Sects. \ref{sss:cond H}, \ref{sss:cond E rat},  and \ref{sss:cond O prel}, respectively.

\end{conj}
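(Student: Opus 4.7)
The plan is to follow the strategy outlined in \secref{ss:strategy}, combining the quasi-theorems with Conjectures \ref{c:Whit ext ff}, \ref{c:coLoc ext ff} and \ref{c:Op gen} into a single argument. I would construct $\BL_G$ by embedding both sides of the conjectural equivalence into the tractable intermediate category $\Whit(G,G)^{\on{ext}}$ and then matching essential images; uniqueness of $\BL_G$ will then follow from the fully faithfulness of the extended Whittaker coefficient functor.

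The first step is to assemble the three fully faithful functors appearing in the main diagram \eqref{e:main diagram}. The right vertical arrow $\on{coeff}_{G,G}^{\on{ext}}$ is fully faithful by \conjref{c:Whit ext ff}, the left vertical arrow $\on{Glue}(\on{CT}^{\on{enh}}_{\on{spec}})$ is fully faithful by \conjref{c:coLoc ext ff}, and the top horizontal arrow $\BL_{G,G}^{\Whit^{\on{ext}}}$ is provided by Quasi-Theorems \ref{t:glued equiv} and \ref{t:glued functor}. Once one shows that the essential images of the two compositions in \eqref{e:ess images} coincide inside $\Whit(G,G)^{\on{ext}}$, the square is uniquely completed by a functor $\BL_G$, which is automatically an equivalence and, by construction, satisfies Property $\on{Wh}^{\on{ext}}$.

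The essential-image comparison is the main obstacle. I would proceed by induction on the semisimple rank of $G$, assuming the geometric Langlands equivalence for all proper Levi subgroups $M$ of $G$. Two families of matching generators then suffice. The first family is Eisenstein: Quasi-Theorem \ref{t:CT and Eis}, combined with the inductive hypothesis applied to $M$, shows that for a proper parabolic $P$ with Levi quotient $M$ and $\CF_M \in \IndCoh_{\on{Nilp}^{\on{glob}}_\cM}(\LocSys_\cM)$, the objects $\Eis_{\cP,\on{spec}}(\CF_M)$ and $\Eis_P(\BL_M(\CF_M))$ have matching images in $\Whit(G,G)^{\on{ext}}$. The second family is oper-theoretic: the pentagon diagram \eqref{e:pentagon} matches $(\sv_{\lambda^I})_*(\CF)$ with $\on{q-Hitch}_{\lambda^I}(\CF)$ for $\CF \in \QCoh(\on{Op}(\cG)^{\on{glob}}_{\lambda^I})$. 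The required generation of $\Dmod(\Bun_G)$ by these two families is \thmref{t:generate BunG}, while the corresponding generation of $\IndCoh_{\on{Nilp}^{\on{glob}}_\cG}(\LocSys_\cG)$ follows from \conjref{c:Op gen}.

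Finally, to establish part (b) I would verify the three compatibilities. Property $\on{He}^{\on{naive}}$ is deduced from \propref{p:Satake Hecke} together with \thmref{t:generalized vanishing} and the uniqueness in part (a); Property $\on{Ei}^{\on{enh}}$ follows from Quasi-Theorem \ref{t:principal series} combined with the inductive construction at the level of proper parabolics; and Property $\on{Km}^{\on{prel}}$ is read off directly from the pentagon diagram \eqref{e:pentagon}, once $\BL_G$ has been pinned down on the oper-generators. In practice, as indicated in \secref{sss:comparing images} and carried out in \secref{s:proof}, the essential-image comparison is organized by separately matching the cuspidal part of $\Dmod(\Bun_G)$ (handled via the oper family and irreducible local systems) and the Eisenstein part (handled by induction), since neither family alone generates the full category.
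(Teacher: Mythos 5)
Your proposal follows the same overall strategy as the paper's actual argument: embed both sides into $\Whit(G,G)^{\on{ext}}$, match essential images on Eisenstein and oper families of generators, and deduce part~(b) by tracing through the resulting commutative squares. However, there is a genuine gap at the heart of the oper step. You cite commutativity of the pentagon \eqref{e:pentagon} as a given input, but establishing that commutativity is precisely the hard part of the argument. What one has unconditionally is only commutativity over the open (non-degenerate) Whittaker stratum, namely \corref{c:Op and Whit}, coming from the Feigin--Frenkel isomorphism via \thmref{t:Op and Whit}. To upgrade this to the required isomorphism \eqref{e:1st containment on loc} in $\Whit(G,G)^{\on{ext}}$, one must show that for $\CF$ supported over irreducible local systems, the canonical map $\bj_!\circ\bj^\bullet\bigl(\on{coeff}^{\on{ext}}_{G,G}\circ \on{q-Hitch}_{\lambda^I}(\CF)\bigr)\to \on{coeff}^{\on{ext}}_{G,G}\circ \on{q-Hitch}_{\lambda^I}(\CF)$ is an isomorphism, i.e.\ that the right-hand side is extended by zero from the open stratum. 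The paper proves this (\secref{sss:irred 1}--\secref{sss:irred 2}) by showing the cone of this map carries a $\Rep(\cG)_{\Ran(X)}$-action that factors \emph{both} through $\QCoh(\LocSys_\cG^{\on{irred}})$ (by \thmref{t:Hitchin}(b)) \emph{and} through the quotient $\QCoh(\LocSys_\cG)_{\LocSys^{\on{red}}_\cG}$ (by tracing through the degenerate strata using Quasi-Theorem~\ref{t:principal series}(b)), and hence vanishes. Without this step the oper generators cannot be matched off the open stratum, and the essential-image comparison is incomplete.

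Two smaller points. For the Eisenstein family, the proof actually invokes Quasi-Theorem~\ref{t:Whit ext and Eis}, which compares the two verticals of \eqref{e:main diagram} directly after composing with $\Eis^{\on{enh}}_{P'}$; Quasi-Theorem~\ref{t:CT and Eis}, which you cite, is the analogous statement with the extended Whittaker arrows replaced by constant-term arrows, and is not the one used in this step. Also, your derivation of Property $\on{Km}^{\on{prel}}$ ``from the pentagon'' is missing an ingredient: one must additionally know that the essential image of $\on{Loc}_G$, equivalently of $\on{q-Hitch}_{\lambda^I}$, lands in $\Dmod(\Bun_G)_{\on{temp}}$, which the paper establishes via the intrinsic derived-Satake characterization of the tempered subcategory (\secref{sss:derived Satake temp} and Remark~\ref{r:localization tempered}).
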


\sssec{}

In the rest of the paper we will show that \conjref{c:GL} can be deduced, modulo a number of 
more tractable results that we call ``quasi-theorems", from two more conjectures, namely 
Conjectures \ref{c:Whit ext ff} and \ref{c:Op gen}, the former pertaining
exclusively to $\Dmod(\Bun_G)$, and the latter exclusively to $\IndCoh_{\on{Nilp}^{\on{glob}}_\cG}(\LocSys_\cG)$. 

\medskip

The ``quasi-theorems" referred to above are very close to being theorems for $G=GL_2$ (and we hope will be soon turned
into ones for general $G$). In addition, Conjectures \ref{c:Whit ext ff} and \ref{c:Op gen} are
also are theorems for $G=GL_n$. So, we obtain that \conjref{c:GL} is 
very close to be a theorem for $GL_2$, and is within reach for $GL_n$. 

\medskip

The case of an arbitrary $G$ remains wide open.  

\ssec{The tempered subcategory}

In this subsection we will assume the validity of \conjref{c:GL}. 

\sssec{}

Recall the fully faithful embedding $\Xi_\cG$ of \eqref{e:embedding QCoh}. We obtain that the DG category $\Dmod(\Bun_G)$
contains a full subcategory that under the equivalence of  \conjref{c:GL} corresponds to
$$\QCoh(\LocSys_\cG)\overset{\Xi_\cG}\hookrightarrow \IndCoh_{\on{Nilp}^{\on{glob}}_\cG}(\LocSys_\cG).$$

We denote this subcategory $\Dmod(\Bun_G)_{\on{temp}}$. We regard it as a geometric analog of
the subspace of automorphic functions corresponding to tempered ones.  

\sssec{}  \label{sss:intrinsic}

It is a natural question to ask whether one can define the subcategory 
$$\Dmod(\Bun_G)_{\on{temp}}\subset \Dmod(\Bun_G)$$
intrinsically, i.e., without appealing to the spectral side of Langlands correspondence.

\medskip

This is indeed possible, using the \emph{derived Satake equivalence}, see \cite[Sect. 12.8]{AG} for a 
precise statement (see also \secref{sss:derived Satake temp} below).

\sssec{}

The equivalence
$$\QCoh(\LocSys_\cG)\simeq \Dmod(\Bun_G)_{\on{temp}}$$
implies, in particular, that to every $k$-point $\sigma\in \LocSys_G$ one can
attach an object $\CM_\sigma\in \Dmod(\Bun_G)_{\on{temp}}$; moreover
$\CM_\sigma$ is acted on by the group of automorphisms of $\sigma$. 

\sssec{}

However, it is not clear (and perhaps not true) that the assignment
$$\sigma\rightsquigarrow \CM_\sigma$$
can be extended to points of $\on{Nilp}^{\on{glob}}_\cG$. Indeed, there is no
obvious way to assign to points of $\on{Nilp}^{\on{glob}}_\cG$ objects of
$\IndCoh_{\on{Nilp}^{\on{glob}}_\cG}$. 
\medskip

I.e., at the moment we see no reason that there should be a way of assigning objects
of $\Dmod(\Bun_G)$ to Arthur parameters. 
Rather, what we have is that for an object $\CM\in \Dmod(\Bun_G)$,
there is a well-defined support, which is a closed subset of $\on{Nilp}^{\on{glob}}_\cG$.

\section{The Hecke action}  \label{s:Hecke}

\ssec{The Ran space}

\sssec{}  \label{sss:Ran}

We define the Ran space $\Ran(X)$ of $X$ to be the following prestack: 

\medskip

For $S\in \affdgSch$, the $\infty$-groupoid
$\Maps(S,\Ran(X))$ is the \emph{set} (i.e., a discrete $\infty$-groupoid) of \emph{non-empty finite subsets} of the set 
$$\Maps(S,X_\dr)=\Maps(({}^{cl}\!S)_{red},X).$$

Note that by construction, the map $\Ran(X)\to \Ran(X)_\dr$ is an isomorpism. 

\sssec{} \label{sss:finite sets}

One can right down $\Ran(X)$ explicitly as a colimit in $\on{PreStk}$:
$$\Ran(X)\simeq \underset{I}{\underset{\longrightarrow}{colim}}\, (X^I)_\dr,$$
where the colimit is taken over the category $(\on{fSet}_{\on{surj}})^{\on{op}}$
opposite to that of non-empty finite sets and surjective maps \footnote{The definition
of the Ran space as a colimit was in fact the original definition in \cite{BD1}. The 
definition from \secref{sss:Ran} was suggested in \cite{Bar}.}. (Here for a surjection of finite sets
$I_2\twoheadrightarrow I_1$, the map $X^{I_1}\to X^{I_2}$ is the corresponding diagonal
embedding.)

\sssec{}

We shall symbolically denote points of $\Ran(X)$ by $\ul{x}$. For each $\ul{x}\in \Maps(S,\Ran(X))$
we let $\Gamma_{\ul{x}}$ be the Zariski-closed \emph{subset} of $S\times X$ equal to 
the union of the graphs of the maps $({}^{cl}\!S)_{red}\to X$ that comprise $\ul{x}$. 

\medskip

In particular, we obtain an open subset 
$$S\times X-\{\ul{x}\}:=S\times X-\Gamma_{\ul{x}}\subset S\times X.$$

\medskip

In addition, we have a well-defined formal scheme $\cD_{\ul{x}}$ obtained as the formal completion of
$S\times X$ along $\Gamma_{\ul{x}}$. This formal scheme should be thought of as the $S$-family of formal
disc in $X$ around the points that comprise $\ul{x}$.

\sssec{}

A crucial piece of structure that exists on $\Ran(X)$ is that of commutative semi-group object in the category of prestacks.
The corresponding operation on $\Maps(S,\Ran(X))$ is that of union of finite sets. We denote the resulting map
$$\Ran(X)\times \Ran(X)\to \Ran(X)$$ by 
$\cup$. 

\sssec{} 

Another fundamental fact about the Ran space is its \emph{contractibility}. We will use it its weaker form,
namely \emph{homological contractibility} (see \cite[Sect. 6]{Ga2} for the proof): 

\begin{prop} \label{p:Ran contractible}
The functor 
$$\Vect=\QCoh(\on{pt})\overset{p^*}\to \QCoh(\Ran(X))$$
is fully faithful, where $p$ denotes the projection $\Ran(X)\to \on{pt}$.
\end{prop}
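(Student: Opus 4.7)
The plan is to reduce fully faithfulness of $p^*$ to the cohomological contractibility statement $R\Gamma(\Ran(X),\CO_{\Ran(X)}) \simeq k$, and then to attack this computation via the colimit presentation from \secref{sss:finite sets} together with the commutative-idempotent semi-group structure on $\Ran(X)$.

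First, the formal reduction. The pullback $p^*: \Vect \to \QCoh(\Ran(X))$ admits a right adjoint $p_*$, which on the structure sheaf is computed by taking global sections. Since $p^*$ is $\Vect$-linear and preserves all colimits, the unit map $V \to p_* p^* V = V \otimes_k \Gamma(\Ran(X),\CO_{\Ran(X)})$ is an equivalence for all $V$ as soon as it is an equivalence for $V = k$. So the whole assertion reduces to showing $\Gamma(\Ran(X),\CO_{\Ran(X)}) \simeq k$.

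Next, using the presentation $\Ran(X) \simeq \underset{I \in (\on{fSet}_{\on{surj}})^{\on{op}}}{\underset{\longrightarrow}{\on{colim}}}\, (X^I)_\dr$ and the fact that $\QCoh$ converts colimits of prestacks into limits of DG categories, we obtain
$$
\Gamma(\Ran(X),\CO_{\Ran(X)}) \;\simeq\; \underset{I \in \on{fSet}_{\on{surj}}}{\underset{\longleftarrow}{\lim}}\, C^*_\dr(X^I) \;\simeq\; \underset{I}{\underset{\longleftarrow}{\lim}}\, V^{\otimes I},
$$
where $V := C^*_\dr(X)$ is a commutative DG algebra, and for a surjection $\phi: I_2 \twoheadrightarrow I_1$ the transition map $V^{\otimes I_2} \to V^{\otimes I_1}$ is the iterated multiplication in $V$ indexed by the fibers of $\phi$. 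Thus the question becomes the explicit computation of this limit over the category of finite sets and surjections.

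To perform this computation, I would exploit the commutative associative operation $\cup: \Ran(X) \times \Ran(X) \to \Ran(X)$ together with the idempotence identity $\cup \circ \Delta_{\Ran(X)} = \on{id}_{\Ran(X)}$, coming from $\ul{x} \cup \ul{x} = \ul{x}$. Setting $A := \Gamma(\Ran(X),\CO_{\Ran(X)})$, this equips $A$ with a cocommutative coassociative comultiplication $\cup^*: A \to A \otimes A$ which is split by the algebra multiplication $m_A: A \otimes A \to A$. The main obstacle, and the delicate step, is to convert this abstract structure into the conclusion $A \simeq k$ in a way that uses the concrete limit description above. I expect the cleanest route to be an induction on the cardinality $|I|$ in the limit diagram: the augmentation $V \to H^0(V) = k$ (available because $X$ is connected) lets one split off the highest-cardinality stratum, and idempotence of $\cup$ provides the cancellation needed to show that only the $|I| = 1$ term survives. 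For the full argument I would refer to \cite[Sect.~6]{Ga2}, where exactly such a homological contractibility statement for the Ran space is established.
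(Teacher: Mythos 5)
Your reduction of full faithfulness of $p^*$ to the single claim $\Gamma(\Ran(X),\CO_{\Ran(X)})\simeq k$ is fine (the projection formula holds for dualizable $V$, and $\Vect$ is compactly generated by such), and the unwinding as $\lim_{I\in\on{fSet}_{\on{surj}}} C^*_{\dr}(X)^{\otimes I}$ with multiplication-along-fibers as transition maps is also correct. Note, though, that the paper offers no proof of this proposition at all and simply cites \cite[Sect.~6]{Ga2}; everything past your first two paragraphs is content living entirely inside the reference you also cite at the end, so a comparison with ``the paper's proof'' is really a comparison with the sketch you supply.

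That sketch has two genuine gaps. First, the step $\Gamma(\Ran(X)\times\Ran(X),\CO)\simeq A\otimes A$ is not free: global sections on a prestack are a limit, and $-\otimes-$ does not commute with limits in $\Vect$, so the K\"unneth map $A\otimes A\to \lim_{(I,J)} V^{\otimes I}\otimes V^{\otimes J}$ is not obviously an equivalence; without it the comultiplication $\cup^*$ does not land in $A\otimes A$ and the bialgebra structure you want on $A$ is unavailable. Second, ``induction on the cardinality $|I|$ \dots\ only the $|I|=1$ term survives'' cannot be the mechanism: if the $|I|=1$ term were all that survived, the limit would be $V=C^*_{\dr}(X)$, not $k$; and $\on{fSet}_{\on{surj}}$ has a terminal but no initial object, so there is no cofinal ``bounded cardinality'' subdiagram and no stratification by $|I|$ to induct along. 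More seriously, the idempotent semigroup structure by itself cannot do the job: for $X$ with two connected components, $\Ran(X)$ carries exactly the same $(\cup,\Delta)$ structure, yet $\Gamma(\Ran(X),\CO)=k^3$. So connectedness has to enter in a more structural way than merely providing an augmentation. If one does grant the K\"unneth, the induction that works is on \emph{cohomological degree}, not cardinality: $A$ is then a coconnective cocommutative coassociative bialgebra with $H^0(A)=k$ and $m_A\circ\cup^*=\on{id}$, and a direct computation in the lowest positive degree forces the simultaneous constraints $2c=1$ (from $m_A\circ\cup^*=\on{id}$) and $c^2=c$ (from coassociativity), which have no common solution in characteristic $0$. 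Equivalently, and closer in spirit to the actual argument in the literature, one shows that $p_1$ and $\cup:\Ran(X)^{\times 2}\rightrightarrows\Ran(X)$ induce the \emph{same} map on de Rham homology --- this is the real content, and does not follow from idempotence --- after which a fixed $k$-point $x_0\in X$ yields $\on{id}_{C_*(\Ran X)}=(\cup_{x_0})_*=(\on{const}_{x_0})_*$ and hence $C_*(\Ran X)\simeq k$.
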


(Note also that the fact that the map $\Ran(X)\to \Ran(X)_\dr$ is an isomorphism implies
that the natural forgetful functor $\Dmod(\Ran(X))\to \QCoh(\Ran(X))$ is an equivalence.)
 
\ssec{Representations spread over the Ran space}  \label{ss:rep Ran}

\sssec{}

We shall now define the Ran version of the category of representations of $\cG$ (here $\cG$ may be
any algebraic group). In fact we are going to start with an arbitrary prestack $\CY$ (in our case
$\CY=\on{pt}/\cG$) and attach to it a new prestack, denoted $\CY_{\Ran(X)}$, equipped with a map
to $\Ran(X)$.

\medskip

Namely, we let an $S$-point of $\CY_{\Ran(X)}$ to be the data of a pair $(\ul{x},y)$, where
$\ul{x}$ is an $S$-point of $\Ran(X)$, and $y$ is a datum of a map
$$(\cD_{\ul{x}})_\dr\underset{S_\dr}\times S\to \CY.$$

\sssec{}

In order to decipher this definition, let us describe explicitly the fiber of $\CY_{\Ran(X)}$
over a given $k$-point $\ul{x}$ of $\Ran(X)$.

\medskip

Let $\ul{x}$ correspond to the finite collection of distinct points $x_1,...,x_n$ of $X$. We claim
that the fiber product
$$\CY_{\Ran(X)}\underset{\Ran(X)}\times \on{pt}$$
identifies with the product of copies of $\CY$, one for each index $i$.

\medskip

This follows from the fact that $\cD_{\ul{x}}$ is the disjoint union of the formal discs $\cD_{x_i}$. Hence, 
the prestack $(\cD_{\ul{x}})_\dr$ identifies with the disjoint union of copies of $\on{pt}$,
one for each $x_i$.

\sssec{} \label{sss:monoidal structure Ran}

We set 
$$\QCoh(\CY)_{\Ran(X)}:=\QCoh(\CY_{\Ran(X)}).$$

We claim that the DG category $\QCoh(\CY)_{\Ran(X)}$ has a naturally defined structure of (non-unital)
symmetric monoidal category. 

\medskip

Namely, consider the fiber product
$$\CY_{\Ran(X)}\underset{\Ran(X)}\times (\Ran(X)\times \Ran(X)),$$
where the map $\Ran(X)\times \Ran(X)\to \Ran(X)$ is $\cup$. 

\medskip

We have a diagram
$$
\CD
\CY_{\Ran(X)}\underset{\Ran(X)}\times (\Ran(X)\times \Ran(X)) @>{\mathsf{res}}>>  \CY_{\Ran(X)}\times \CY_{\Ran(X)} \\
@V{\on{id}\times \cup}VV   \\
\CY_{\Ran(X)},
\endCD
$$
where the map $\mathsf{res}$ corresponds to restricting maps to $\CY$ along
$$\cD_{\ul{x}'}\to \cD_{\ul{x}'\cup \ul{x}''}\leftarrow \cD_{\ul{x}''}.$$

\medskip

We define the functor
$$\QCoh(\CY_{\Ran(X)})\otimes \QCoh(\CY_{\Ran(X)})\to \QCoh(\CY_{\Ran(X)})$$
to be the composition
$$(\on{id}\times \cup)_!\circ (\mathsf{res})^*,$$
where $(\on{id}\times \cup)_!$ is the \emph{left} adjoint \footnote{The fact that this left adjoint exists requires a proof;
in our case this essentially follows from the fact that map $\cup$ is proper.} of the functor $(\on{id}\times \cup)^!$.

\sssec{}

Thus, we set
$$\Rep(\cG)_{\Ran(X)}:=\QCoh(\on{pt}/\cG)_{\Ran(X)}:=\QCoh((\on{pt}/\cG)_{\Ran(X)}).$$
We view it as a (non-unital) symmetric monoidal category. 

\ssec{Relation to the stack of local systems}

\sssec{}

Note that by construction we have the following diagram of prestacks
\begin{equation} \label{e:local to global LocSys}
\CD
\LocSys_\cG \times \Ran(X)  @>{\on{ev}}>>  (\on{pt}/\cG)_{\Ran(X)}  \\
@V{\on{id}\times p}VV   \\
\LocSys_\cG,
\endCD
\end{equation}
where the map $\on{ev}$ corresponds to restriction of a map to the target $\on{pt}/\cG$ along $(\cD_{\ul{x}})_\dr\to X_\dr$.

\medskip

We have a pair of mutually adjoint functors 
\begin{equation} \label{e:local to global LocSys functors}
(\on{id}\times p)_!\circ \on{ev}^*:\QCoh((\on{pt}/\cG)_{\Ran(X)})\rightleftarrows \QCoh(\LocSys_\cG):\on{ev}_*\circ (\on{id}\times p)^*.
\end{equation}
The left adjoint functor (i.e., $(\on{id}\times p)_!\circ \on{ev}^*$) has a natural symmetric monoidal structure, where the symmetric
monoidal structure on $\QCoh(\LocSys_\cG)$ is the usual tensor product. 

\begin{rem}
We note that the diagram \eqref{e:local to global LocSys} and the functors 
\eqref{e:local to global LocSys functors} makes sense more generally, when $\on{pt}/\cG$ is replaced by an arbitrary 
prestack $\CY$. In this case instead of $\LocSys_\cG$ we have the prestack $\bMaps(X_\dr,\CY)$, defined so that
$$\Maps(S,\bMaps(X_\dr,\CY))=\Maps(S\times X_\dr,\CY).$$
\end{rem}

\sssec{}

We denote 
$$\on{Loc}_{\cG,\on{spec}} :=(\on{id}\times p)_!\circ \on{ev}^* \text{ and } \on{co-Loc}_{\cG,\on{spec}}:=\on{ev}_*\circ (\on{id}\times p)^*.$$

\medskip

We have the following result:

\begin{prop}[joint with J.~Lurie, unpublished]   \label{p:local to global LocSys}
The functor 
$$\on{co-Loc}_\cG:\QCoh(\LocSys_\cG)\to \QCoh((\on{pt}/\cG)_{\Ran(X)})=\Rep(\cG)_{\Ran(X)}$$
is fully faithful.
\end{prop}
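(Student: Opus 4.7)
The strategy is to establish that the counit
$$\on{Loc}_{\cG,\on{spec}} \circ \on{co-Loc}_{\cG,\on{spec}} \longrightarrow \on{id}_{\QCoh(\LocSys_\cG)}$$
is an isomorphism, which is equivalent to fully faithfulness of the right adjoint. Unwinding the definitions, this composition is $(\on{id}\times p)_! \circ \on{ev}^* \circ \on{ev}_* \circ (\on{id}\times p)^*$. I would first apply base change to the Cartesian square
$$
\CD
\CZ @>{\pi_1}>> \LocSys_\cG \times \Ran(X) \\
@V{\pi_2}VV @VV{\on{ev}}V \\
\LocSys_\cG \times \Ran(X) @>{\on{ev}}>> (\on{pt}/\cG)_{\Ran(X)},
\endCD
$$
where $\CZ$ classifies quadruples $(\sigma_1, \sigma_2, \ul{x}, \phi)$ with $\sigma_1, \sigma_2 \in \LocSys_\cG$, $\ul{x}\in \Ran(X)$, and an isomorphism $\phi: \sigma_1|_{\cD_{\ul{x}}} \simeq \sigma_2|_{\cD_{\ul{x}}}$. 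This rewrites the composition as a pull--push
$$\CF \longmapsto (\on{id}\times p)_! \, \pi_{2,*} \, \pi_1^*\, (\on{id}\times p)^*(\CF)$$
along the correspondence with both legs $\CZ \to \LocSys_\cG \times \Ran(X) \to \LocSys_\cG$, and the counit map is then induced by the diagonal embedding $\Delta:\LocSys_\cG \times \Ran(X) \hookrightarrow \CZ$ picking out $\sigma_1 = \sigma_2$, $\phi = \on{id}$.

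The heart of the argument is then to show that only this diagonal contributes after the final push-forward to $\LocSys_\cG$: the ``off-diagonal'' locus of $\CZ$, parametrizing genuinely distinct $\cG$-local systems that happen to share a local trivialization on some $\cD_{\ul{x}}$, must vanish cohomologically after integration over $\Ran(X)$. Heuristically, for a fixed off-diagonal pair $(\sigma_1, \sigma_2)$ the locus of admissible $\ul{x}$ is highly constrained, while $\Ran(X)$ is itself homologically contractible by \propref{p:Ran contractible}; the rigorous vanishing should combine this contractibility with the derived-geometric assertion that $X_\dr$ is recovered from its formal sub-prestacks $(\cD_{\ul{x}})_\dr$ in a Ran-coherent way.

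The main obstacle will be turning this heuristic into a rigorous proof. Concretely, one wants a descent statement to the effect that the canonical map
$$\underset{\ul{x} \in \Ran(X)}{\on{colim}}\, (\cD_{\ul{x}})_\dr \longrightarrow X_\dr$$
becomes an equivalence after applying $\bMaps(-, \on{pt}/\cG)$, compatibly with the semigroup structure $\cup$ on $\Ran(X)$. Unlike the scalar version \propref{p:Ran contractible}, this involves the non-linear moduli $\on{pt}/\cG$ as target and does not follow by a soft argument; it requires the $\infty$-categorical descent machinery of \cite{Lu} together with the specific structure of $\LocSys_\cG$ as a derived mapping stack, and appears to be the technical content underlying the unpublished input with J.~Lurie.
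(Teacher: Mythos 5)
The paper states this result with no proof, attributing it to unpublished joint work with Lurie, so there is no "paper proof" to compare against; what can be assessed is whether your proposal could plausibly be completed. The high-level frame is right: reducing to the counit of the $(\on{Loc}_{\cG,\on{spec}},\on{co-Loc}_{\cG,\on{spec}})$ adjunction being an isomorphism, and viewing the statement as a nonlinear upgrade of \propref{p:Ran contractible}, is indeed what a proof should look like. But there are concrete problems in the execution.

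First, the base change you invoke to rewrite $\on{ev}^*\circ\on{ev}_*$ as a pull-push through $\CZ$ is not available for free. The base change recalled in \secref{sss:prestack} is for a map that is schematic, quasi-compact and quasi-separated. The map $\on{ev}:\LocSys_\cG\times\Ran(X)\to(\on{pt}/\cG)_{\Ran(X)}$, restricted over a $k$-point $\ul x=\{x_1,\dots,x_n\}$ of $\Ran(X)$, is the product of restriction maps $\LocSys_\cG\to\on{pt}/\cG$ given by taking the fiber of a local system at $x_i$; the fiber of this map over the base point of $\on{pt}/\cG$ is the moduli of $\cG$-local systems with a trivialized fiber, which is not a quasi-compact scheme. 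So base change is a nontrivial claim that must itself be established (or circumvented), not quoted.

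Second, the proposed descent statement that
$$\underset{\ul x\in\Ran(X)}{\on{colim}}\,(\cD_{\ul x})_\dr\longrightarrow X_\dr$$
becomes an equivalence after $\bMaps(-,\on{pt}/\cG)$ is not the right formulation. Each $(\cD_{\ul x})_\dr$ is a finite disjoint union of copies of $\on{pt}$, so the left-hand colimit is essentially $\Ran(X)$ and bears no direct relation to $X_\dr$; applying $\bMaps(-,\on{pt}/\cG)$ would ask that $\LocSys_\cG$ be an inverse limit of $(\on{pt}/\cG)^n$'s over the Ran diagram, which already fails on $k$-points. Even in the linear case of \propref{p:Ran contractible}, the proof does not pass through a statement that the Ran colimit of points recovers anything like $X$; it exploits the commutative semigroup structure $\cup$ on $\Ran(X)$. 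Any correct formulation of the nonlinear input must be organized around that structure (and the compatible non-unital symmetric monoidal structure of \secref{sss:monoidal structure Ran}), not around a naive covering-by-discs descent.

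Finally, after the set-up, the step that actually carries the whole weight --- that after integrating over $\Ran(X)$ only the diagonal of $\CZ$ contributes --- is exactly the statement you are supposed to prove, and you record the heuristic without offering an argument. So the proposal, while framed correctly, stops where the proof would need to begin; as it stands it does not constitute a proof of the proposition.
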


Thus, \propref{p:local to global LocSys} realizes a ``local-to-global" principle for $\LocSys_\cG$, namely,
it embeds the ``global" category $\QCoh(\LocSys_\cG)$ into a ``local" one, namely, $\Rep(\cG)_{\Ran(X)}$.

\begin{rem}
The assertion of \propref{p:local to global LocSys} is valid more generally, when the stack $\on{pt}/\cG$
is replaced by an arbitrary quasi-compact derived algebraic stack $\CY$ locally almost of finite type
with an affine diagonal. 
\end{rem} 

\ssec{Hecke action}

\sssec{}  \label{sss:Hecke stack}

We define the Ran version of the Hecke stack $\on{Hecke}(G)_{\Ran(X)}$ as follows: its $S$-points
are quadruples $(\ul{x},\CP^1_G,\CP^2_G,\beta)$, where $\ul{x}$ is an $S$-point of $\Ran(X)$,
$\CP^1_G$ and $\CP^2_G$ are two $S$-points of $\Bun_G$, and $\beta$ is the isomorphism
of $G$-bundles
$$\CP^1_G|_{S\times X-\ul{x}}\simeq \CP^2_G|_{S\times X-\ul{x}}.$$ 

We let $\hl$ and $\hr$ denote the two forgetful maps $\on{Hecke}(G)_{\Ran(X)}\to \Bun_G$.

\sssec{}  \label{sss:Hecke action}

We claim that the category $\Dmod(\on{Hecke}(G)_{\Ran(X)})$ has a naturally 
defined (non-unital) monoidal structure, and that the resulting monoidal category acts on 
$\Dmod(\Bun_G)$.

\medskip

These two pieces of structure are constructed by pull-push as in \secref{sss:monoidal structure Ran} using the diagrams

$$
\CD
\on{Hecke}(G)_{\Ran(X)}\underset{\hr,\Bun_G,\hl}\times  \on{Hecke}(G)_{\Ran(X)}   @>>>   
\on{Hecke}(G)_{\Ran(X)}\times \on{Hecke}(G)_{\Ran(X)}  \\
@VVV    \\
\on{Hecke}(G)_{\Ran(X)}\underset{\Ran(X)}\times (\Ran(X)\times \Ran(X))   \\
@V{\on{id}\times \cup}VV  \\
\on{Hecke}(G)_{\Ran(X)}
\endCD
$$ 
and
$$
\CD   
\on{Hecke}(G)_{\Ran(X)}  @>{\on{id}\times \hr}>>  \on{Hecke}(G)_{\Ran(X)}\times \Bun_G  \\
@V{\hl}VV   \\
\Bun_G
\endCD
$$
respectively. 

\sssec{}

We have the following input from the geometric Satake equivalence:

\begin{prop}  \label{p:Satake Hecke}
There exists a canonically defined monoidal functor 
\begin{equation} \label{e:Satake Hecke}
\on{Sat}(G)^{\on{naive}}_{\Ran(X)}:\Rep(\cG)_{\Ran(X)}\to \Dmod(\on{Hecke}(G)_{\Ran(X)}).
\end{equation} 
\end{prop}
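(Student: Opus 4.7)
The plan is to derive this from the factorizable geometric Satake equivalence of Mirkovi\'c--Vilonen/Beilinson--Drinfeld, using the Ran presentation $\Ran(X) = \underset{I}{\underset{\longrightarrow}{colim}}\, (X^I)_\dr$ from \secref{sss:finite sets} to reduce to a compatible system of constructions over $X^I$.

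First I would construct, for each finite set $I$, the Beilinson--Drinfeld affine Grassmannian $\Gr_{G,X^I}\to X^I$ classifying $(\ul{x},\CP_G,\beta)$ with $\beta$ a trivialization of $\CP_G$ off $\ul{x}$. This is a factorization space in the sense of \cite{BD1}: over the disjoint locus its fibers are products of single-point Grassmannians, and there are canonical factorization isomorphisms along the diagonals. The corresponding local Hecke stack is $L^+G_{X^I}\backslash\Gr_{G,X^I}$, where $L^+G_{X^I}$ is the $X^I$-family of arc groups, and the $L^+G$-equivariant part of $\Dmod(\Gr_{G,X^I})$ carries a convolution monoidal structure (using the two-step correspondence $\Gr_{G,X^I}\tilde\times_{L^+G}\Gr_{G,X^I}$). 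Factorizable geometric Satake furnishes a monoidal functor
$$\on{Sat}_{X^I}:\Rep(\cG)_{X^I}\to \Dmod(L^+G_{X^I}\backslash\Gr_{G,X^I}),$$
whose existence for a single point is the classical result of Mirkovi\'c--Vilonen (spreading $V\in\Rep(\cG)$ as its global sections functor into perverse sheaves), and for general $I$ is obtained by fusion together with the factorization constraint on $\Gr_{G,X^I}$.

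Second, I would pass from local to global Hecke. The global Hecke stack $\on{Hecke}(G)_{X^I}$ of \secref{sss:Hecke stack} fits in a natural map
$$\on{Hecke}(G)_{X^I}\to L^+G_{X^I}\backslash\Gr_{G,X^I}$$
(recording only the modification at $\ul{x}$, forgetting the framing at $\CP^1_G$), realizing the former as a $\Bun_G$-twist of the latter. Pulling back along this map produces a monoidal functor
$$\Dmod(L^+G_{X^I}\backslash \Gr_{G,X^I})\to \Dmod(\on{Hecke}(G)_{X^I}),$$
where monoidality reflects the compatibility of convolution with the $\Bun_G$-twist.

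Third, I would assemble the compatibility with respect to surjections $I_2\twoheadrightarrow I_1$. For each such surjection the diagonal $\Delta:X^{I_1}\to X^{I_2}$ lifts to a map of BD Grassmannians and of local/global Hecke stacks; the factorization isomorphisms show that $\on{Sat}_{X^I}$ intertwines $\Delta^\dagger$ on both sides, i.e., the $\on{Sat}_{X^I}$ together assemble into a morphism in the limit/colimit diagram defining the Ran versions in \secref{sss:finite sets}. Taking the colimit then gives the desired functor $\on{Sat}(G)^{\on{naive}}_{\Ran(X)}$, and the monoidal structure is the one described in \secref{sss:monoidal structure Ran} (built from the union map $\cup$), which, under factorization, is precisely convolution on the Hecke side constructed in \secref{sss:Hecke action}.

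The main obstacle will be the higher-categorical bookkeeping: encoding $\on{Sat}_{X^I}$ not as a mere functor but as a symmetric monoidal (indeed factorizable) functor of $\infty$-categories, with all the coherence data for surjections of finite sets, and then checking that passing to the colimit along $(\on{fSet}_{\on{surj}})^{\on{op}}$ preserves the monoidal structure defined via the semigroup operation $\cup$. A secondary point is verifying that the naive version one obtains really is a functor of plain (non-unital) symmetric monoidal categories and not merely of some more elaborate factorization gadget; for the present ``naive'' statement this is fine because one can work inside the perverse heart at each $X^I$ and invoke commutativity of the fusion product (Mirkovi\'c--Vilonen) there, but extending to the full DG category requires the $\infty$-categorical formalism of factorization algebras in the sense of Francis--Gaitsgory.
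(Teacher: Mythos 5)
Your proposal is correct and takes essentially the route the paper indicates: the paper gives no proof of its own, merely noting that the functor ``follows from the naive geometric Satake equivalence and is essentially constructed in \cite{MV},'' and your sketch is the expected unpacking of that remark via the Beilinson--Drinfeld factorizable Grassmannian and the local-to-global passage to $\on{Hecke}(G)_{X^I}$, then assembled over $(\on{fSet}_{\on{surj}})^{\on{op}}$ to produce the Ran version with the $\cup$-convolution monoidal structure of \secref{sss:monoidal structure Ran} and \secref{sss:Hecke action}. The caveats you raise about higher-categorical coherence of the factorization data are the genuine technical content here; they are exactly why the paper defers to the literature rather than spelling out a proof.
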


\noindent The functor $\on{Sat}(G)^{\on{naive}}_{\Ran(X)}$ follows from the \emph{naive} (or ``usual")
geometric Satake equivalence, and is essentialy constructed in \cite{MV}.

\sssec{Compatibility with the Hecke action}  \label{sss:cond H}

We are now able to formulate Property $\on{He}^{\on{naive}}$ (``He" stands for ``Hecke") 
of the geometric Langlands functor $\BL_G$ in \conjref{c:GL}:

\medskip

\noindent{\bf Property $\mathbf{He}^{\mathbf{naive}}$:} {\it 
We shall say that the functor $\BL_G$ satisfies \emph{Property $\on{He}^{\on{naive}}$} if it
intertwines the monoidal actions of $\Rep(\cG)_{\Ran(X)}$ on the categories
$\IndCoh_{\on{Nilp}^{\on{glob}}_\cG}(\LocSys_\cG)$ and $\Dmod(\Bun_G)$, where:

\begin{itemize}

\item
The action of $\Rep(\cG)_{\Ran(X)}$ on $\IndCoh_{\on{Nilp}^{\on{glob}}_\cG}(\LocSys_\cG)$ is
obtained via the the monoidal functor
$$\on{Loc}_{\cG,\on{spec}}:\Rep(\cG)_{\Ran(X)}=\QCoh((\on{pt}/\cG)_{\Ran(X)})\to \QCoh(\LocSys_\cG)$$
and the action of $\QCoh(\LocSys_\cG)$ on $\IndCoh_{\on{Nilp}^{\on{glob}}_\cG}(\LocSys_\cG)$ 
(see \secref{sss:category on stack});

\item 
The action of $\Rep(\cG)_{\Ran(X)}$ on $\Dmod(\Bun_G)$ is obtained via 
the monoidal functor $\on{Sat}(G)^{\on{naive}}_{\Ran(X)}$ and the action of
$\Dmod(\on{Hecke}(G)_{\Ran(X)})$ on $\Dmod(\Bun_G)$ (see \secref{sss:Hecke action}).

\end{itemize}}

\ssec{The vanishing theorem}

\sssec{}

Consider again the action of $\Rep(\cG)_{\Ran(X)}$ on $\Dmod(\Bun_G)$, described above. We claim:

\begin{thm} \label{t:generalized vanishing}
The action of the monoidal ideal
$$\on{ker}\left(\on{Loc}_{\cG,\on{spec}}:\Rep(\cG)_{\Ran(X)}\to \QCoh(\LocSys_\cG)\right)$$
on $\Dmod(\Bun_G)$ is zero. 
\end{thm}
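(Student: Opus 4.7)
The plan is to deduce the theorem by showing that the monoidal action of $\Rep(\cG)_{\Ran(X)}$ on $\Dmod(\Bun_G)$ canonically factors through $\on{Loc}_{\cG,\on{spec}}$, from which the vanishing of the ideal action is immediate. By \propref{p:local to global LocSys}, the right adjoint $\on{co-Loc}_{\cG,\on{spec}}$ is fully faithful, so $\on{Loc}_{\cG,\on{spec}}$ realizes $\QCoh(\LocSys_\cG)$ as a symmetric monoidal (Bousfield) localization of $\Rep(\cG)_{\Ran(X)}$, and its kernel $\CI$ is a two-sided monoidal ideal. Thus, producing an action of $\QCoh(\LocSys_\cG)$ on $\Dmod(\Bun_G)$ that is compatible with the given $\Rep(\cG)_{\Ran(X)}$-action is tautologically equivalent to the statement $\CI\star \Dmod(\Bun_G)=0$.

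The key mechanism is the factorizable (Ran-equivariant) structure of the Hecke action, coupled with the homological contractibility of $\Ran(X)$ (\propref{p:Ran contractible}). The Hecke action of $V\in \Rep(\cG)_{\Ran(X)}$ on $\CM\in \Dmod(\Bun_G)$ is built from pointwise Hecke modifications along the variable support $\ul{x}$; the identity-modification section $\Bun_G\times \Ran(X)\hookrightarrow \on{Hecke}(G)_{\Ran(X)}$ produces a canonical "unit" that makes this pointwise family behave like a family \emph{integrated} over $\Ran(X)$ in a way compatible with the cup product. Comparing with the factorization of $\on{Loc}_{\cG,\on{spec}}=(\on{id}\times p)_!\circ \on{ev}^*$, one sees that this integration produces exactly the pull-back of a universal $\LocSys_\cG$-family of Hecke functors along $\on{ev}$, and then the push-forward along $\on{id}\times p$. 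The homological contractibility of $\Ran(X)$ is what guarantees that this integration does not lose information: pullback from $\LocSys_\cG$ to $\LocSys_\cG\times \Ran(X)$ is fully faithful, so the pointwise actions are controlled by, and descend from, the $\QCoh(\LocSys_\cG)$-action.

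The main obstacle is supplying this factorization with full $\infty$-categorical coherence. Promoting the $\Rep(\cG)_{\Ran(X)}$-module structure on $\Dmod(\Bun_G)$ to a $\QCoh(\LocSys_\cG)$-module structure amounts to constructing a module over the quotient symmetric monoidal $\infty$-category, and the naive pointwise argument must be upgraded to a coherent statement at all levels of the simplicial hierarchy associated to the factorization structure over $\Ran(X)$. Concretely, one must reinterpret $\on{Sat}(G)^{\on{naive}}_{\Ran(X)}$ (\propref{p:Satake Hecke}) as a factorization datum on the spectral side, and descend the resulting module structure along the colocalization $\on{Loc}_{\cG,\on{spec}}$; this requires adapting the Beilinson--Drinfeld factorization/chiral-algebra formalism to the DG/D-module setting and to the derived geometry of $\LocSys_\cG$. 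Once these coherences are in place, the desired factorization of the action is formal, and \thmref{t:generalized vanishing} follows.
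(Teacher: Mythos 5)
Your proposal has a genuine gap, and it lies exactly where you put the word ``tautologically.'' You correctly observe (as the paper does, in \corref{c:generalized vanishing}) that the statement of \thmref{t:generalized vanishing} is \emph{equivalent} to the Hecke action of $\Rep(\cG)_{\Ran(X)}$ on $\Dmod(\Bun_G)$ factoring through $\on{Loc}_{\cG,\on{spec}}$. But then you try to produce that factorization from the factorizable structure of the Hecke action together with homological contractibility of $\Ran(X)$. This cannot work. Contractibility of $\Ran(X)$ (\propref{p:Ran contractible}) controls the constant sheaf, not the kernel of $\on{Loc}_{\cG,\on{spec}}$. The functor $\on{Loc}_{\cG,\on{spec}}=(\on{id}\times p)_!\circ\on{ev}^*$ has a large kernel even after accounting for contractibility of the Ran space; \propref{p:local to global LocSys} identifies $\QCoh(\LocSys_\cG)$ as a proper Bousfield quotient of $\Rep(\cG)_{\Ran(X)}$, and there is no formal reason why an arbitrary factorizable module over $\Rep(\cG)_{\Ran(X)}$ should descend along that quotient. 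If it did, the statement would hold for any such module category, which is clearly too strong. Your ``once these coherences are in place, the factorization is formal'' is in fact the entire content of the theorem, not a bookkeeping matter.

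The paper's actual proof is much more concrete, and it needs substantive geometric input. It first proves \thmref{t:generate BunG}: the objects in the essential images of $\on{Loc}_G$ (Kac--Moody localization) and $\Eis_P$ (for proper parabolics $P$) generate $\Dmod(\Bun_G)$ — this uses reduction theory and \propref{p:loc surj}. Then, for $\CF\in\Rep(\cG)_{\Ran(X)}$ with $\on{Loc}_{\cG,\on{spec}}(\CF)=0$, it checks that $\CF$ annihilates both classes of generators: on the oper/Kac--Moody side this follows from \thmref{t:Hitchin}(b) (compatibility of $\on{q-Hitch}_{\lambda^I}$ with the $\Rep(\cG)_{\Ran(X)}$-action through $\on{Loc}_{\cG,\on{spec}}$) and \propref{p:from Op to KM}(b); on the Eisenstein side it follows from a factorization of $\Eis_P$ through $\QCoh(\LocSys_\cP)\otimes_{\QCoh(\LocSys_\cM)}\Dmod(\Bun_M)$ compatible with the Hecke action, which is a special case of Quasi-Theorem~\ref{t:principal series} but is proved independently along the lines of \cite[Theorem~1.11]{BG}. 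So the proof has real teeth: a generation statement for $\Dmod(\Bun_G)$, plus two nontrivial Hecke-equivariance theorems for the constructions that produce the generators. Your sketch replaces all of this with an appeal to abstract coherence and would not close the argument.
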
 

The proof of this theorem will be sketched in \secref{ss:proof of vanishing}. It uses the same
basic ingredients as the proof of \conjref{c:GL}, but is much simpler. A more detailed exposition
can be found in \cite{GenVan}.

\begin{rem} 
We note that \thmref{t:generalized vanishing} is a generalization of a vanishing theorem proved in \cite{Ga1}
that concerned the case of $G=GL_n$ and a particular object of the category $\Rep(\cG)_{\Ran(X)}$ lying
in the kernel of $\on{Loc}_{\cG,\on{spec}}$. 
\end{rem}

\sssec{}

Combining \propref{p:local to global LocSys} and \thmref{t:generalized vanishing}, we obtain:

\begin{cor} \label{c:generalized vanishing}
The monoidal action of $\Rep(\cG)_{\Ran(X)}$ on $\Dmod(\Bun_G)$ uniquely factors through a monoidal
action of $\QCoh(\LocSys_\cG)$ on $\Dmod(\Bun_G)$.
\end{cor}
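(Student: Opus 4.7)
The plan is to combine the two preceding results in a formal categorical manner. By \propref{p:local to global LocSys}, the right adjoint $\on{co-Loc}_{\cG,\on{spec}}$ to $\on{Loc}_{\cG,\on{spec}}$ is fully faithful, so $\on{Loc}_{\cG,\on{spec}}$ exhibits $\QCoh(\LocSys_\cG)$ as a Bousfield localization of $\Rep(\cG)_{\Ran(X)}$. Concretely, the adjunction counit $\on{Loc}_{\cG,\on{spec}}\circ \on{co-Loc}_{\cG,\on{spec}} \to \on{id}$ is an equivalence, hence $\on{Loc}_{\cG,\on{spec}}$ induces an equivalence
\[
\Rep(\cG)_{\Ran(X)}/\CK \;\xrightarrow{\sim}\; \QCoh(\LocSys_\cG),
\]
where $\CK := \ker(\on{Loc}_{\cG,\on{spec}})$ is the full subcategory of objects sent to zero. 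Moreover, since $\on{Loc}_{\cG,\on{spec}}$ carries a symmetric monoidal structure (as noted around \eqref{e:local to global LocSys functors}), $\CK$ is a monoidal ideal, and the above equivalence is one of (non-unital) symmetric monoidal DG categories.

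Next I would interpret the action of $\Rep(\cG)_{\Ran(X)}$ on $\Dmod(\Bun_G)$ as a monoidal functor
\[
\alpha:\Rep(\cG)_{\Ran(X)} \longrightarrow \End(\Dmod(\Bun_G))
\]
into the monoidal DG category of continuous endofunctors. \thmref{t:generalized vanishing} says precisely that $\alpha|_{\CK} = 0$. By the universal property of the Verdier/Bousfield quotient in the world of (presentable, cocontinuous) monoidal DG categories, any continuous monoidal functor out of $\Rep(\cG)_{\Ran(X)}$ that kills $\CK$ factors uniquely through the quotient $\Rep(\cG)_{\Ran(X)}/\CK \simeq \QCoh(\LocSys_\cG)$. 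Applied to $\alpha$, this yields the desired monoidal action of $\QCoh(\LocSys_\cG)$ on $\Dmod(\Bun_G)$, together with its uniqueness.

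Concretely, the factoring action can be written down: for $\CF\in \QCoh(\LocSys_\cG)$ and $\CM\in \Dmod(\Bun_G)$, one sets
\[
\CF\star \CM \;:=\; \alpha\bigl(\on{co-Loc}_{\cG,\on{spec}}(\CF)\bigr)(\CM),
\]
using that $\on{co-Loc}_{\cG,\on{spec}}$ is a (right adjoint to a monoidal functor, hence) lax monoidal section of $\on{Loc}_{\cG,\on{spec}}$. Fully faithfulness of $\on{co-Loc}_{\cG,\on{spec}}$ and vanishing of $\alpha$ on $\CK$ together imply that the lax-monoidal structure maps are isomorphisms on the image, so this defines a genuine monoidal action, and it agrees with $\alpha$ after pullback along $\on{Loc}_{\cG,\on{spec}}$.

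There is no real hard step here: the content lies entirely in the two inputs \propref{p:local to global LocSys} and \thmref{t:generalized vanishing}. The only mild subtlety to check carefully is that the factorization respects the monoidal structure (not merely the underlying DG structure); this is a matter of invoking the universal property of Bousfield localization in the symmetric monoidal setting, which is standard modulo some care about the $(\infty,1)$-categorical coherences handled by the machinery of \cite{Lu}.
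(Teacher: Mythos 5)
Your proof is correct and is essentially the argument the paper has in mind: the paper states the corollary simply as ``Combining \propref{p:local to global LocSys} and \thmref{t:generalized vanishing}, we obtain:'' and leaves exactly this formal manipulation to the reader. You have correctly identified that $\on{Loc}_{\cG,\on{spec}}$ is a monoidal Bousfield localization (its right adjoint is fully faithful), that the kernel of a monoidal functor is automatically a monoidal ideal, and that \thmref{t:generalized vanishing} supplies exactly the vanishing on the kernel needed to invoke the universal property of the monoidal Verdier quotient. The only minor caveat, which you gesture at but could make explicit, is that $\Rep(\cG)_{\Ran(X)}$ is a \emph{non-unital} symmetric monoidal category, so one should use the version of the localization universal property adapted to that setting; this is a routine adjustment and does not affect the argument.
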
  

\ssec{Derived Satake}

The material of this subsection is not crucial for the understanding of the outline of the proof of \conjref{c:GL}
presented in the rest of the paper.

\sssec{}

Let us fix a $k$-point $x\in X$. We let $\on{Hecke}(\cG,\on{spec})_x$ denote the DG algebraic stack 
whose $S$-points are triples
$$((\CP^1_\cG,\nabla^1),(\CP^2_G,\nabla^2),\beta),$$
where $(\CP^i_\cG,\nabla^i)$ are objects of $\Maps(S,\LocSys_\cG)$ and $\beta$ is an isomorphism
of the resulting two maps $S\times (X-x)_\dr\to \on{pt}/\cG$ obtained from $(\CP^i_\cG,\nabla^i)$ by
restriction along
$$S\times (X-x)_\dr\hookrightarrow S\times X_\dr.$$

\medskip

The two projections $\hl_{\on{spec}},\hr_{\on{spec}}:\on{Hecke}(\cG,\on{spec})_x\to \LocSys_\cG$
define on $\on{Hecke}(\cG,\on{spec})_x$ a structure of groupoid acting on $\LocSys_\cG$. In fact, 
we have a canonicaly defined commutative diagram, in which both sides are Cartesian
\begin{equation} \label{e:Hecke spec}
\xy
(-30,0)*+{\LocSys_\cG}="X";
(30,0)*+{\LocSys_\cG}="Y";
(0,20)*+{\on{Hecke}(\cG,\on{spec})_x}="Z";
(-30,-30)*+{\on{pt}/\cG}="X_1";
(30,-30)*+{\on{pt}/\cG,}="Y_1";
(0,-10)*+{\on{Hecke}(\cG,\on{spec})^{\on{loc}}_x}="Z_1";
{\ar@{->}_{\hl_{\on{spec}}} "Z";"X"};
{\ar@{->}^{\hr_{\on{spec}}} "Z";"Y"};
{\ar@{->}"Z_1";"X_1"};
{\ar@{->}"Z_1";"Y_1"};
{\ar@{->}"Z";"Z_1"};
{\ar@{->}"X";"X_1"};
{\ar@{->}"Y";"Y_1"};
\endxy
\end{equation} 
where
$$\on{Hecke}(\cG,\on{spec})^{\on{loc}}_x:=(\on{pt}\underset{\cg}\times \on{pt})/\cG,$$
see \cite[Sect. 12.7]{AG}.

\sssec{}  \label{sss:Hecke spec action}

The structure of groupoid on $\on{Hecke}(\cG,\on{spec})^{\on{loc}}_x$ defines on 
$\IndCoh(\on{Hecke}(\cG,\on{spec})^{\on{loc}}_x)$ a structure of monoidal category,
where we use the $(\IndCoh,*)$-pushforward and !-pull-back as our pull-push functors.

\medskip

Moreover, the diagram \eqref{e:Hecke spec} defines an action of 
$\IndCoh(\on{Hecke}(\cG,\on{spec})^{\on{loc}}_x)$ on the category $\IndCoh(\LocSys_\cG)$ that
preserves the subcategory 
$$\IndCoh_{\on{Nilp}^{\on{glob}}_\cG}(\LocSys_\cG)\subset \IndCoh(\LocSys_\cG).$$

\medskip

There is a naturally defined monoidal functor
$$\Rep(\cG)=\QCoh(\on{pt}/\cG)\to \IndCoh(\on{Hecke}(\cG,\on{spec})^{\on{loc}}_x)$$
corresponding to the diagonal map
$$\on{pt}/\cG\to (\on{pt}\underset{\cg}\times \on{pt})/\cG=:\on{Hecke}(\cG,\on{spec})^{\on{loc}}_x.$$


\sssec{}

Let $\on{Hecke}(G)_x$ be the fiber of $\on{Hecke}(G)_{\Ran(X)}$ over the point $\{x\}\in \Ran(X)$. 
The restrictions of the projections of $\hl$ and $\hr$ to $\on{Hecke}(G)_x$ define on it a structure
of groupoid acting on $\Bun_G$. Hence, the category $\Dmod(\on{Hecke}(G)_x)$ acquires a monoidal 
structure. Direct image (i.e., the functor \emph{left adjoint} to restriction) defines a monoidal functor
$$\Dmod(\on{Hecke}(G)_x)\to \Dmod(\on{Hecke}(G)_{\Ran(X)}).$$

\medskip

Similarly, we have a naturally defined monoidal functor
$$\Rep(\cG)\to \Rep(\cG)_{\Ran(X)},$$
\emph{left adjoint} to the restriction functor. Part of the construction of the functor $\on{Sat}(G)^{\on{naive}}_{\Ran(X)}$
is that we have a naturally defined monoidal functor
$$\on{Sat}(G)^{\on{naive}}_x:\Rep(\cG)\to \Dmod(\on{Hecke}(G)_x)$$
that makes the diagram
$$
\CD
\Rep(\cG)   @>{\on{Sat}(G)^{\on{naive}}_x}>>  \Dmod(\on{Hecke}(G)_x) \\
@VVV   @VVV   \\
\Rep(\cG)_{\Ran(X)}   @>{\on{Sat}(G)^{\on{naive}}_{\Ran(X)}}>>   \Dmod(\on{Hecke}(G)_{\Ran(X)})
\endCD
$$
commute.

\medskip

We now claim:

\begin{prop} \label{p:derived Satake pt}  
There exists a canonically defined monoidal functor
$$\on{Sat}(G)_x:\IndCoh(\on{Hecke}(\cG,\on{spec})^{\on{loc}}_x)\to \Dmod(\on{Hecke}(G)_x)$$
that makes the diagram
$$
\CD
\Rep(\cG)  @>{\on{Sat}(G)^{\on{naive}}_x}>>  \Dmod(\on{Hecke}(G)_x)  \\
@VVV   @VV{\on{id}}V    \\
\IndCoh(\on{Hecke}(\cG,\on{spec})^{\on{loc}}_x)  @>{\on{Sat}(G)_x}>>  \Dmod(\on{Hecke}(G)_x) 
\endCD
$$
commute. 
\end{prop}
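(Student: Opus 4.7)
The plan is to construct $\on{Sat}(G)_x$ as the composition of two canonical monoidal functors: first, the derived geometric Satake equivalence of Bezrukavnikov--Finkelberg, which identifies $\IndCoh(\on{Hecke}(\cG,\on{spec})^{\on{loc}}_x)$ with the DG spherical Hecke category at the point $x$; and second, a natural monoidal ``globalization'' functor that spreads the spherical Hecke category into $\Dmod(\on{Hecke}(G)_x)$.

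More precisely, let $\on{Sph}_{G,x}$ denote the DG category of $L^+(G)_x$-equivariant D-modules on the affine Grassmannian $\Gr_{G,x}$, equipped with its convolution monoidal structure. The derived Satake equivalence provides a canonical monoidal equivalence
$$\on{Sph}_{G,x}\;\simeq\; \IndCoh(\on{Hecke}(\cG,\on{spec})^{\on{loc}}_x),$$
where the right-hand monoidal structure is the one recalled in \secref{sss:Hecke spec action}. Under this equivalence, the classical (Mirkovic--Vilonen) Satake embedding $\Rep(\cG)\hookrightarrow \on{Sph}_{G,x}$ corresponds to the $(\IndCoh,*)$-pushforward along the diagonal $\on{pt}/\cG\to \on{Hecke}(\cG,\on{spec})^{\on{loc}}_x$ mentioned at the end of \secref{sss:Hecke spec action}.

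To produce the second functor, I use the presentation of the global Hecke stack as the twisted product
$$\on{Hecke}(G)_x \;\simeq\; \Bun_{G,x}\overset{L^+(G)_x}\times \Gr_{G,x},$$
where $\Bun_{G,x}$ classifies $G$-bundles together with a trivialization on the formal disc at $x$. External product with the dualizing D-module on $\Bun_{G,x}$, followed by descent along the free $L^+(G)_x$-action, gives a natural monoidal functor $\on{Sph}_{G,x}\to \Dmod(\on{Hecke}(G)_x)$. By the very definition of $\on{Sat}(G)^{\on{naive}}_x$ as the globalization of the classical Satake embedding $\Rep(\cG)\to \on{Sph}_{G,x}$ via the same twisted-product construction, the precomposition of this functor with the Mirkovic--Vilonen embedding reproduces $\on{Sat}(G)^{\on{naive}}_x$. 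Defining $\on{Sat}(G)_x$ to be the composite of the two functors makes the diagram in the statement commute by construction.

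The main obstacle lies in the first step, i.e.\ the derived Satake equivalence itself. In the formulations available in the work of Arkhipov--Bezrukavnikov--Ginzburg and of Bezrukavnikov--Finkelberg, derived Satake is established at the level of triangulated (monoidal) categories, whereas here we require it as a monoidal equivalence of DG $\infty$-categories, compatible with the full ind-coherent enhancement on the spectral side and with the convolution structure built from $\hl_{\on{spec}}$ and $\hr_{\on{spec}}$. The substantial work is to upgrade the equivalence to this $\infty$-categorical level, including the verification of all higher coherence data for the monoidal structure; once this is in place, the compatibility with $\on{Sat}(G)^{\on{naive}}_x$ and the assembly of $\on{Sat}(G)_x$ become essentially formal.
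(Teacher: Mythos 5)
Your overall strategy mirrors the paper's, which simply cites \cite[Theorem 12.4.3]{AG} (in turn resting on \cite[Theorem 5]{BF}), and your second step — the globalization of the local spherical Hecke category to $\Dmod(\on{Hecke}(G)_x)$ via the twisted product — is the right construction. The point you flag about upgrading derived Satake from triangulated to monoidal $\infty$-categories is indeed the substantive issue, addressed in \cite{AG}.

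However, there is a genuine error in your statement of derived Satake. You assert an equivalence
$$\on{Sph}_{G,x}\;\simeq\;\IndCoh(\on{Hecke}(\cG,\on{spec})^{\on{loc}}_x),$$
but this is false: the derived Satake equivalence identifies $\on{Sph}_{G,x}=\Dmod(\on{Hecke}(G)^{\on{loc}}_x)$ with the \emph{proper full subcategory}
$$\IndCoh_{\on{Nilp}^{\on{loc}}_\cG}(\on{Hecke}(\cG,\on{spec})^{\on{loc}}_x)\subset \IndCoh(\on{Hecke}(\cG,\on{spec})^{\on{loc}}_x)$$
cut out by the singular-support condition along the nilpotent cone $\on{Nilp}^{\on{loc}}_\cG\subset \cg^*/\cG \simeq \Sing(\on{Hecke}(\cG,\on{spec})^{\on{loc}}_x)$. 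For $\cG$ non-abelian these two categories are genuinely different, so no such equivalence of the full ind-coherent category can exist. This is exactly the content of Remark \ref{r:Sat equiv}: the functor $\on{Sat}(G)_x$ factors as the monoidal co-localization $\IndCoh(\on{Hecke}(\cG,\on{spec})^{\on{loc}}_x)\to \IndCoh_{\on{Nilp}^{\on{loc}}_\cG}(\on{Hecke}(\cG,\on{spec})^{\on{loc}}_x)$ (left adjoint to the tautological embedding), followed by the derived Satake equivalence onto $\Dmod(\on{Hecke}(G)^{\on{loc}}_x)$, followed by your globalization. In particular $\on{Sat}(G)_x$ is not fully faithful, which your misstatement would have implied. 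The repair is easy — insert the co-localization — but as written your first step asserts something false.
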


\propref{p:derived Satake pt} follows from the local \emph{full} (or ``derived")
geometric Satake equivalence, see \cite[Theorem 12.5.3]{AG}
(which in turn follows from \cite[Theorem 5]{BF}).  

\begin{rem}  \label{r:Sat equiv}
Note that the stack  $$\on{Hecke}(\cG,\on{spec})^{\on{loc}}_x\simeq (\on{pt}\underset{\cg}\times \on{pt})/\cG$$
is quasi-smooth, and the corresponding classical stack 
$\Sing(\on{Hecke}(\cG,\on{spec})^{\on{loc}}_x)$ identifies canonically with the classical stack $\cg^*/\cG$.
Let $\on{Nilp}^{\on{loc}}_\cG\subset \Sing(\on{Hecke}(\cG,\on{spec})^{\on{loc}}_x)$ be the nilpotent locus,
and consider the corresponding category 
$\IndCoh_{\on{Nilp}^{\on{loc}}_\cG}(\on{Hecke}(\cG,\on{spec})^{\on{loc}}_x)$. 

\medskip

One can show that the functor
$$\on{Sat}(G)_x:\IndCoh(\on{Hecke}(\cG,\on{spec})^{\on{loc}}_x) \to  \Dmod(\on{Hecke}(G)_x)$$
canonically factors as a composition of monoidal functors
\begin{multline}  \label{e:factoring Satake}
\IndCoh(\on{Hecke}(\cG,\on{spec})^{\on{loc}}_x) \to \IndCoh_{\on{Nilp}^{\on{loc}}_\cG}(\on{Hecke}(\cG,\on{spec})^{\on{loc}}_x)\to \\
\to \Dmod(\on{Hecke}(G)^{\on{loc}}_x)\to \Dmod(\on{Hecke}(G)_x),
\end{multline}
where

\begin{itemize} 

\item $\IndCoh(\on{Hecke}(\cG,\on{spec})^{\on{loc}}_x) \to \IndCoh_{\on{Nilp}^{\on{loc}}_\cG}(\on{Hecke}(\cG,\on{spec})^{\on{loc}}_x)$ 
is the co-localization functor, left adjoint to the tautological embedding 
$$\IndCoh_{\on{Nilp}^{\on{loc}}_\cG}(\on{Hecke}(\cG,\on{spec})^{\on{loc}}_x)\hookrightarrow \IndCoh(\on{Hecke}(\cG,\on{spec})^{\on{loc}}_x).$$

\item $\on{Hecke}(G)^{\on{loc}}_x$ is the local version of the Hecke stack, i.e., $G(\wh\CO_x)\backslash G(\wh\CK_x)/G(\wh\CO_x)$,
where $\wh\CO_x$ and $\wh\CK_x$ are the completed local ring and field at the point $x\in X$, respectively. 

\end{itemize}

\medskip

A salient feature of this situation is that the middle functor 
$$\IndCoh_{\on{Nilp}^{\on{loc}}_\cG}(\on{Hecke}(\cG,\on{spec})^{\on{loc}}_x)\to
\Dmod(\on{Hecke}(G)^{\on{loc}}_x)$$ in \eqref{e:factoring Satake}
is an \emph{equivalence} (unlike the version with $\on{Sat}(G)^{\on{naive}}_x$). 

\end{rem}

\sssec{}

We can now formulate the following variant of Property $\on{He}^{\on{naive}}$ of the functor $\BL_G$:

\medskip

\noindent{\bf Property $\mathbf{He}^{x}$:} {\it 
We shall say that the functor $\BL_G$ satisfies \emph{Property $\on{He}^x$} if it
intertwines the monoidal actions of $\IndCoh(\on{Hecke}(\cG,\on{spec})^{\on{loc}}_x)$ on the categories
$\IndCoh_{\on{Nilp}^{\on{glob}}_\cG}(\LocSys_\cG)$ and $\Dmod(\Bun_G)$, where:

\begin{itemize}

\item
The action of $\IndCoh(\on{Hecke}(\cG,\on{spec})^{\on{loc}}_x)$ on $\IndCoh_{\on{Nilp}^{\on{glob}}_\cG}(\LocSys_\cG)$ is
one from \secref{sss:Hecke spec action};

\item 
The action of $\IndCoh(\on{Hecke}(\cG,\on{spec})^{\on{loc}}_x)$ on $\Dmod(\Bun_G)$ is obtained via 
the monoidal functor $\on{Sat}(G)_x$ and the action of
$\Dmod(\on{Hecke}(G)_{\Ran(X)})$ on $\Dmod(\Bun_G)$ (see \secref{sss:Hecke action}).

\end{itemize}}

\medskip

It will follow from the constructions carried out in the rest of the paper that, in the same circumstances under
which we can prove \conjref{c:GL}, the resulting functor $\BL_G$ will also satisfy Property $\on{He}^x$
for any $x\in X$.

\sssec{} \label{sss:derived Satake temp}
The intrinsic characterization of the subcategory
$$\Dmod(\Bun_G)_{\on{temp}} \subset \Dmod(\Bun_G)$$
mentioned in \secref{sss:intrinsic} is formulated in terms of the above action of 
$\IndCoh(\on{Hecke}(\cG,\on{spec})^{\on{loc}}_x)$ on $\Dmod(\Bun_G)$:

\medskip

An object $\CM\in \Dmod(\Bun_G)$ belongs to $\Dmod(\Bun_G)_{\on{temp}}$ if and only
if the functor
$$\CF\mapsto \CF\star \CM,\quad \IndCoh(\on{Hecke}(\cG,\on{spec})^{\on{loc}}_x)\to \Dmod(\Bun_G)$$
factors through the quotient 
$$\IndCoh(\on{Hecke}(\cG,\on{spec})^{\on{loc}}_x) \overset{\Psi_{\on{Hecke}(\cG,\on{spec})^{\on{loc}}_x}}
\twoheadrightarrow \QCoh(\on{Hecke}(\cG,\on{spec})^{\on{loc}}_x)$$
(for any chosen point $x$). In the above formula $-\star-$ denotes the monoidal action of $\IndCoh(\on{Hecke}(\cG,\on{spec})^{\on{loc}}_x)$
on $\Dmod(\Bun_G)$, and we remind that $\Psi_{\on{Hecke}(\cG,\on{spec})^{\on{loc}}_x}$ denotes the functor introduced in 
\secref{sss:Xi Psi stacks}, for the stack $\on{Hecke}(\cG,\on{spec})^{\on{loc}}_x$. 

\ssec{The Ran version of derived Satake}

The material of this subsection will not be used elsewhere in the paper. The reason we include it is
to mention another important piece of structure present in the geometric Langlands picture, and one 
which is crucial for the proofs. \footnote{As of now, the material in this subsection does not have a 
reference in the existing literature.}

\sssec{}  \label{sss:spec Ran Hecke}

Along with the stack $\on{Hecke}(\cG,\on{spec})^{\on{loc}}_x$, one can consider
its Ran version, denoted $\on{Hecke}(\cG,\on{spec})^{\on{loc}}_{\Ran(X)}$ that fits into the Cartesian diagram
$$
\xy
(-30,0)*+{\LocSys_\cG\times \Ran(X)}="X";
(30,0)*+{\LocSys_\cG\times \Ran(X)}="Y";
(0,20)*+{\on{Hecke}(\cG,\on{spec})_{\Ran(X)}}="Z";
(-30,-30)*+{(\on{pt}/\cG)_{\Ran(X)}}="X_1";
(30,-30)*+{(\on{pt}/\cG)_{\Ran(X)},}="Y_1";
(0,-10)*+{\on{Hecke}(\cG,\on{spec})^{\on{loc}}_{\Ran(X)}}="Z_1";
{\ar@{->}_{\hl_{\on{spec}}} "Z";"X"};
{\ar@{->}^{\hr_{\on{spec}}} "Z";"Y"};
{\ar@{->}"Z_1";"X_1"};
{\ar@{->}"Z_1";"Y_1"};
{\ar@{->}"Z";"Z_1"};
{\ar@{->}"X";"X_1"};
{\ar@{->}"Y";"Y_1"};
\endxy
$$

The reason we do not formally give the definition of $\on{Hecke}(\cG,\on{spec})^{\on{loc}}_{\Ran(X)}$ is that it
involves the notion of $\cG$-local system on the \emph{parameterized formal punctured disc}
(as opposed to the parameterized formal non-punctured disc $\cD_{\ul{x}}$), the discussion of which would be 
too lengthy for the intended scope of this paper. 

\medskip

Let us, nonetheless, indicate the formal structure of this piece of the picture: 

\sssec{}  \label{sss:Hecke spec Ran}

Although the prestack $\on{Hecke}(\cG,\on{spec})^{\on{loc}}_{\Ran(X)}$ is not a DG algebraic stack,
the category $\IndCoh(\on{Hecke}(\cG,\on{spec})^{\on{loc}}_{\Ran(X)})$ is well-defined, carries
a monoidal structure, and as such acts on $\IndCoh(\LocSys_\cG)$
preserving $\IndCoh_{\on{Nilp}^{\on{glob}}_\cG}(\LocSys_\cG)$. 

\medskip

We have naturally defined monoidal functors
\begin{equation} \label{e:small to big Hecke}
\Rep(\cG)_{\Ran(X)}\to \IndCoh(\on{Hecke}(\cG,\on{spec})^{\on{loc}}_{\Ran(X)})\leftarrow 
\IndCoh(\on{Hecke}(\cG,\on{spec})^{\on{loc}}_x),
\end{equation}
and a monoidal functor
$$\on{Sat}(G)_{\Ran(X)}:
\IndCoh(\on{Hecke}(\cG,\on{spec})^{\on{loc}}_{\Ran(X)})\to \Dmod(\on{Hecke}(G)_{\Ran(X)})$$
that restricts to the functors $\on{Sat}(G)_x$ and $\on{Sat}(G)^{\on{naive}}_{\Ran(X)}$,
respectively. 

\begin{rem}
As in Remark \ref{r:Sat equiv}, the functor $\on{Sat}(G)_{\Ran(X)}$ factors through an equivalence from 
a co-localization 
$\IndCoh_{\on{Nilp}^{\on{loc}}_{\cG}}(\on{Hecke}(\cG,\on{spec})^{\on{loc}}_{\Ran(X)})$ to the appropriately defined
category $\Dmod(\on{Hecke}(G)^{\on{loc}}_{\Ran(X)})$.
\end{rem}

\sssec{}

The full Hecke comptatibility property reads:

\medskip

\noindent{\bf Property $\mathbf{He}$:} {\it 
We shall say that the functor $\BL_G$ satisfies \emph{Property $\on{He}$} if it
intertwines the monoidal actions of $\IndCoh(\on{Hecke}(\cG,\on{spec})^{\on{loc}}_{\Ran(X)})$ on the categories
$\IndCoh_{\on{Nilp}^{\on{glob}}_\cG}(\LocSys_\cG)$ and $\Dmod(\Bun_G)$, where:

\begin{itemize}

\item
The action of $\IndCoh(\on{Hecke}(\cG,\on{spec})^{\on{loc}}_{\Ran(X)})$ on $\IndCoh_{\on{Nilp}^{\on{glob}}_\cG}(\LocSys_\cG)$ is
as in \secref{sss:Hecke spec Ran}

\item 
The action of $\IndCoh(\on{Hecke}(\cG,\on{spec})^{\on{loc}}_{\Ran(X)})$ on $\Dmod(\Bun_G)$ is obtained via 
the functor $\on{Sat}(G)_{\Ran(X)}$ and the action of
$\Dmod(\on{Hecke}(G)_{\Ran(X)})$ on $\Dmod(\Bun_G)$ (see \secref{sss:Hecke action}).

\end{itemize}}

\medskip

Tautologically, Property $\on{He}$ implies both Properties $\on{He}^{\on{naive}}$ and $\on{He}^x$. 

\medskip

As with Property $\on{He}^x$, it will follow from the constructions carried out in the rest of the paper that, 
in the same circumstances under which we can prove \conjref{c:GL}, the resulting functor $\BL_G$
will satisfy Property $\on{He}$. 

\section{The Whittaker model}  \label{s:Whit}

\ssec{The space of generic reductions to the Borel}

In this subsection we are going to introduce a space (=prestack) $\Bun_G^{B\on{-gen}}$
that will figure prominently in this paper.
This is the space that classifies pairs consisting of a $G$-bundle and its reduction to the Borel subgroup
defined \emph{generically} on $X$. The approach to $\Bun_G^{B\on{-gen}}$ described below was 
developed by J.~Barlev in \cite{Bar}.\footnote{A much more cumbersome treatment, but one which
only uses algebraic stacks or ind-algebraic stacks can be found in \cite[Sects. 5 and 6]{ExtWhit}.}

\medskip

In this subsection, as well as in Sects. \ref{ss:for N}-\ref{ss:Whit coeff}, we will be exclusively dealing 
with D-modules, so derived algebraic geometry will play no role (see \secref{sss:no derived}).

\sssec{}  \label{sss:defn rat}

First, we consider the prestack that attaches to $S\in \affSch$ the groupoid of triples
$$(\CP_G,U,\alpha),$$
where 

\begin{itemize}

\item $\CP_G$ is a $G$-bundle on $S\times X$;

\item $U$ is a Zariski-open subset of $S\times X$, such that for each $k$-point of $S$,
the corresponding open subset 
$$\on{pt}\underset{S}\times U\subset \on{pt}\underset{S}\times (S\times X)\simeq X$$ 
is non-empty (equivalently, dense in $X$);  

\item $\alpha$ is a datum of a reduction of $\CP_G|_U$ to the Borel subgroup $B$. 

\end{itemize}

In what follows we shall denote by $\CP_{B,U}$ the $B$-bundle on $U$ corresponding to $\alpha$. 
We shall denote by $\CP_{T,U}$ the induced $T$-bundle. 

\medskip

We define $\Bun_G^{B\on{-gen}}$ to be the prestack that attaches to $S\in \affSch$
the quotient of the above groupoid of triples $(\CP_G,U,\alpha)$ by the equivalence relation that
identifies $(\CP^1_G,U^1,\alpha^1)$ and $(\CP^2_G,U^2,\alpha^2)$ if
$$\CP^1_G\simeq \CP^2_G$$
and for this identification, the data of $\alpha^1$ and $\alpha^2$ coincide over $U^1\cap U^2$. 

\sssec{}

We have a natural forgetful map $\sfp^{\on{enh}}_B:\Bun_G^{B\on{-gen}}\to \Bun_G$.  However,
the fibers of this map are neither indschemes nor algebraic stacks. 

\medskip

Nonetheless, we have the following assertion established in \cite[Proposition 3.3.2]{Bar}:

\begin{prop}  \label{p:Barlev}
There exists an algebraic stack $\CY_0$, equipped with a proper schematic map
to $\Bun_G$, and a proper schematic equivalence relation 
$$\CY_1\rightrightarrows \CY_0$$
such that $\Bun_G^{B\on{-gen}}$ identifies with the quotient of $\CY_0$ by
$\CY_1$, up to sheafification in the Zariski topology.
\end{prop}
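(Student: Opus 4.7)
The plan is to take $\CY_0$ to be the Drinfeld compactification $\BunBb$ of $\Bun_B$, whose $S$-points are triples $(\CP_G,\CP_T,\kappa)$ consisting of a $G$-bundle $\CP_G$ and a $T$-bundle $\CP_T$ on $S\times X$, together with, for each dominant weight $\lambdach$ of $G$, a nonzero morphism $\kappa_\lambdach\colon \CL^\lambdach_{\CP_T}\to \CV^\lambdach_{\CP_G}$ (where $\CV^\lambdach_{\CP_G}$ is the associated bundle for the highest-weight representation $V^\lambdach$) that is injective as a morphism of coherent sheaves and such that the system $\kappa$ satisfies the Plücker relations. A standard analysis via the Plücker embedding exhibits the fibers of the forgetful map $\BunBb\to\Bun_G$, restricted to a connected component of $\BunBb$ (indexed by the degree of $\CP_T$), as closed subschemes of a product of relative Hilbert schemes of the $\CV^\lambdach_{\CP_G}$, hence schematic and proper on each component.

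First I construct the natural map $\psi\colon \CY_0\to \Bun_G^{B\on{-gen}}$: it sends $(\CP_G,\CP_T,\kappa)$ to $(\CP_G,U_\kappa,\alpha_\kappa)$, where $U_\kappa\subset S\times X$ is the open locus on which all $\kappa_\lambdach$ are subbundle embeddings, and $\alpha_\kappa$ is the resulting honest $B$-reduction on $U_\kappa$. By the fiberwise sheaf-injectivity of the $\kappa_\lambdach$, the locus $U_\kappa$ is dense in each fiber $\{s\}\times X$. The second step is to show that $\psi$ is surjective after a Zariski refinement of $S$: given $(\CP_G,U,\alpha)$, the generic reduction $\alpha$ yields line subbundles $\CL^\lambdach_{\CP_{T,U}}\hookrightarrow \CV^\lambdach_{\CP_G}|_U$, and one forms $\tilde\CF^\lambdach := \CV^\lambdach_{\CP_G}\cap j_*(\CL^\lambdach_{\CP_{T,U}})$ inside $j_*(\CV^\lambdach_{\CP_G}|_U)$ for $j\colon U\hookrightarrow S\times X$; Zariski-locally on $S$ these coherent extensions can be presented as line bundles which, by continuity of the Plücker relations from $U$, assemble into a $T$-bundle and Plücker data extending $\alpha$.

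For the equivalence relation, set $\CY_1 := \CY_0\times_{\Bun_G^{B\on{-gen}}} \CY_0$. Given a first point $(\CP_G,\CP_T^1,\kappa^1)$, the locus of second points $(\CP_G,\CP_T^2,\kappa^2)$ inducing the same generic $B$-reduction is parameterized by an effective divisor on $S\times X$ with values in the monoid of positive coweights, recording the discrepancy between the two defect loci. The stack of such $\Lambdach$-valued effective divisors is a disjoint union of symmetric powers of $X$, which is proper of finite type over $\CY_0$ on each connected component. Hence both projections $\CY_1\rightrightarrows \CY_0$ are schematic and proper componentwise.

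Assembling the ingredients: the map $\CY_0\to \Bun_G^{B\on{-gen}}$ factors through the quotient $\CY_0/\CY_1$ by construction, and after Zariski sheafification the induced map is an isomorphism thanks to the surjectivity above. The main obstacle is precisely that surjectivity statement: when the complement $S\times X\setminus U$ has codimension one, the coherent extensions $\tilde\CF^\lambdach$ are only a priori torsion-free rank-one subsheaves of $\CV^\lambdach_{\CP_G}$, and presenting them as line bundles that jointly satisfy the Plücker relations and come from an actual $T$-bundle requires a careful flattening-stratification argument on $S$ together with control of how the Plücker defects combine across different weights.
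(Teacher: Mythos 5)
Your choice of $\CY_0 = \BunBb$ and $\CY_1 = \BunBb \underset{\Bun_G^{B\on{-gen}}}\times \BunBb$ is exactly the explicit construction indicated in the remark following the proposition; the paper itself defers the verification to Barlev's work. So the approach matches, but the verification has gaps beyond the one you flag.

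For properness of the two projections $\CY_1\rightrightarrows\CY_0$, the description of the fiber as ``parameterized by an effective $\Lambdach^{\on{pos}}$-valued divisor recording the discrepancy between the two defect loci'' does not hold up. Given $(\CP_G,\CP_T^1,\kappa^1)$ and a second structure $(\CP_T^2,\kappa^2)$ inducing the same generic reduction, the two $T$-bundles differ by a $\Lambdach$-valued divisor that need not be effective: neither point dominates the other in general, since each has its own defect relative to the common saturation. Moreover, that saturation exists only pointwise on $S$ and is not a morphism of families (this is the \emph{same} flatness obstruction you note in the surjectivity step), so it cannot serve as a reference point for a family parameterization. A cleaner route to properness is to observe that $(p_1,p_2)\colon\CY_1\to\CY_0\times_{\Bun_G}\CY_0$ is a monomorphism (base change of the diagonal of $\Bun_G^{B\on{-gen}}$), verify the valuative criterion over a DVR (where the saturation \emph{is} available because $X$ is a smooth curve), conclude that $(p_1,p_2)$ is a closed immersion, and then deduce properness of $p_i$ from properness of $\CY_0\to\Bun_G$ by composition and base change.

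The gap you identify in the surjectivity argument is genuine, and I would add that flattening stratification alone produces a locally closed stratification of $S$, not a Zariski cover. To conclude that $\CY_0\to\Bun_G^{B\on{-gen}}$ is surjective after Zariski sheafification one needs a further step converting the torsion-free rank-one extensions $\tilde\CF^{\lambdach}$ into line bundles Zariski-locally on $S$ (for instance by modifying the allowed open $U$ so that the defect is flat over $S$), together with a check that the resulting line subbundles satisfy the Plücker relations and assemble into an actual $T$-bundle. As written, your argument shows lifting over field-valued points but not over an arbitrary affine base.
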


\begin{rem}
The pair $\CY_1\rightrightarrows \CY_0$ is in fact very explicit. Namely, 
$\CY_0$ is the algebraic stack $\BunBb$ (the Drinfeld compactification),
and $\CY_1$ is defined as
$$\BunBb\underset{\Bun_G^{B\on{-gen}}}\times \BunBb.$$
One shows that $\CY_1$ is indeed an algebraic stack, and the two projections
from $\CY_1$ to $\CY_0$ are schematic and proper. 
\end{rem} 

\sssec{}

Consider now the usual stack $\Bun_B$ that classifies $B$-bundles on $X$. We have a tautological
map
$$\imath_B:\Bun_B\to \Bun_G^{B\on{-gen}}.$$

The valuative criterion of properness implies that the map $\imath_B$
induces an isomorphism of groupoids of field-valued points. In particular, the groupoid of $k$-points
of $\Bun_G^{B\on{-gen}}$ identifies canonically with the double quotient
$$B(K)\backslash G(\BA)/G(\BO),$$
where $K$ is the field of rational functions on $X$, $\BA$ denotes the ring of ad\`eles, and $\BO$
is the ring of integral ad\`eles.

\medskip

However, the map $\iota_B$ itself is, of course, not an isomorphism. 
For example, one can show that connected components of $\Bun_G^{B\on{-gen}}$ 
are in bijection with those of $\Bun_G$, i.e., $\pi_1(G)$, whereas connected
components of $\Bun_B$ are in bijection with the coweight lattice $\check\Lambda$ of $G$.

\medskip

One can view $\Bun_G^{B\on{-gen}}$ as equipped with a stratification, while 
the map $\imath_B$ is the map from the disjoint of the strata. 

\sssec{}  \label{sss:Hecke action on BunBrat}

Recall the Hecke stack $\on{Hecke}(G)_{\Ran(X)}$. We claim it naturally lifts to $\Bun_G^{B\on{-gen}}$ 
in the sense that we have a commutative diagram
$$
\xy
(-30,0)*+{\Bun_G^{B\on{-gen}}}="X";
(30,0)*+{\Bun_G^{B\on{-gen}}}="Y";
(0,20)*+{\Bun_G^{B\on{-gen}}\underset{\Bun_G}\times \on{Hecke}(G)_{\Ran(X)}\simeq \on{Hecke}(G)_{\Ran(X)}
\underset{\Bun_G}\times \Bun_G^{B\on{-gen}}}="Z";
(-30,-30)*+{\Bun_G}="X_1";
(30,-30)*+{\Bun_G,}="Y_1";
(0,-10)*+{\on{Hecke}(G)_{\Ran(X)}}="Z_1";
{\ar@{->} "Z";"X"};
{\ar@{->}  "Z";"Y"};
{\ar@{->}_{\hl} "Z_1";"X_1"};
{\ar@{->}^{\hr} "Z_1";"Y_1"};
{\ar@{->}"Z";"Z_1"};
{\ar@{->}"X";"X_1"};
{\ar@{->}"Y";"Y_1"};
\endxy
$$

In particular, we obtain a natural action of the monoidal category $\Dmod(\on{Hecke}(G)_{\Ran(X)})$
on $\Dmod(\Bun_G^{B\on{-gen}})$. 

\medskip

Using the functor $\on{Sat}(G)^{\on{naive}}_{\Ran(X)}$ we thus obtain 
an action of the monoidal category $\Rep(\cG)_{\Ran(X)}$ on $\Dmod(\Bun_G^{B\on{-gen}})$. 

\ssec{Replacing $B$ by its unipotent radical} \label{ss:for N}

In what follows we shall need a few variants of the space $\Bun_G^{B\on{-gen}}$.

\sssec{}

First, we have the prestack $\Bun_G^{N\on{-gen}}$, defined in the same way as 
$\Bun_G^{B\on{-gen}}$, with $B$ replaced by $N$. By construction, we have a natural projection
$$\Bun_G^{N\on{-gen}}\to \Bun_G^{B\on{-gen}}.$$

\medskip

An analog of \propref{p:Barlev} holds with no modifications. The groupoid on $k$-points of $\Bun_G^{N\on{-gen}}$ 
is canonically isomorphic to the double quotient
$$N(K)\backslash G(\BA)/G(\BO).$$

\sssec{}

For any target scheme (or even prestack) $Y$, we define the prestack $\bMaps(X,Y)^{\on{gen}}$ in a way 
analogous to the definition of $\Bun_G^{B\on{-gen}}$. 

\medskip

Namely, the groupoid of $S$-points of $Y$ is the quotient of the set of pairs
$$(U\subset S\times X;y:U\to Y)$$ by the equivalence relation that identifies
$(U^1,y^1)$ with $(U^2,y^2)$ if $y_1|_{U_1\cap U_2}=y_2|_{U_1\cap U_2}$. 

\sssec{}

Consider in particular the group-object in $\on{PreStk}$ given by $\bMaps(X,T)^{\on{gen}}$.

\medskip

We have a natural action of $\bMaps(X,T)^{\on{gen}}$ on $\Bun_G^{N\on{-gen}}$, and the quotient
is easily seen to identify with $\Bun_G^{B\on{-gen}}$.

\sssec{}

We can rewrite the definition of $\Bun_G^{N\on{-gen}}$ as follows. We consider the prestack that assigns to $S\in \affSch$
the groupoid of quadruples 
$$(\CP_G,U,\alpha,\gamma),$$
where $(\CP_G,U,\alpha)$ are as in the definition of $\Bun_G^{B\on{-gen}}$, and $\gamma$ is
a datum of a trivialization of the $T$-bundle $\CP_{T,U}$, see \secref{sss:defn rat} for the notation. 

\medskip

The prestack $\Bun_G^{N\on{-gen}}$ is obtained from the above prestack of quadruples 
by quotienting it by the equivalence relation that identifies 
$(\CP^1_G,U^1,\alpha^1,\gamma^1)$ and $(\CP^2_G,U^2,\alpha^2,\gamma^2)$
if the corresponding points $(\CP^1_G,U^1,\alpha^1)$ and $(\CP^2_G,U^2,\alpha^2)$
are identified, and the resulting isomorphism between $\CP^1_{T,U}$ and $\CP^2_{T,U}$ 
over $U_1\cap U_2$ maps $\gamma^1$ to $\gamma^2$.

\sssec{}

From now on in the paper we are going to fix a square root $\omega^{\frac{1}{2}}_X$ of 
the canonical line bundle on $X$. In particular, we obtain a well-defined $T$-bundle
$$\check\rho(\omega_X):=2\check\rho(\omega^{\frac{1}{2}}_X).$$

\medskip

We define the prestack $\CQ_G:=\Bun_G^{N^\omega\on{-gen}}$ to be a twist of $\Bun_G^{N\on{-gen}}$.
Namely, in the data $(\CP_G,U,\alpha,\gamma)$ we change the meaning of $\gamma$:

\medskip

Instead of being a trivialization of $\CP_{T,U}$ we now let $\gamma$ be
the datum of an isomorphism with $\check\rho(\omega_X)|_U$. 

\medskip

A choice of a generic trivialization of $\omega^{\frac{1}{2}}_X$ identifies $\CQ_G$ with $\Bun_G^{N\on{-gen}}$,
and in particular, the groupoid of its $k$-points with the 
double quotient
$$N(K)\backslash G(\BA)/G(\BO).$$

\sssec{}

Yet another space that we will need is the quotient of $\Bun_{G}^{N\on{-gen}}$
(or, rather, $\Bun_G^{N^\omega\on{-gen}}$) by the action of
$$\bMaps(X,Z_G^0)^{\on{gen}},$$
where $Z_G^0$ is the connected component of the center of $G$. 

\medskip

We denote the resulting prestack by $\CQ_{G,G}$ (its variant $\CQ_{G,P}$,
where $P\subset G$ is a parabolic and $M$ is the Levi quotient of $P$, will be introduced in \secref{s:deg Whit}). 

\medskip

A choice of a generic trivialization of $\omega^{\frac{1}{2}}_X$ identifies the 
groupoid on $k$-points of $\CQ_{G,G}$ with the 
double quotient
$$Z_G^0(K) \cdot N(K)\backslash G(\BA)/G(\BO).$$

\sssec{}

The prestack $\CQ_{G,G}$ can be explicitly described as follows. 

\medskip

We consider
the prestack that assigns to $S\in \affSch$ the groupoid of quadruples
$$(\CP_G,U,\alpha,\gamma),$$
where $(\CP_G,U,\alpha)$ as above, and $\gamma$ is a datum of isomorphism
over $U$ between the bundles with respect to $T/Z_G^0$, one being
induced from $\CP_{T,U}$, and the other from $\check\rho(\omega_X)|_U$. 

\medskip

(Note that when $G$ has a connected center, the data of $\gamma$ amounts to an isomorphism
of line bundles $\alpha_i(\CP_{T,U})\simeq \omega_X|_U$ for each simple root $\alpha_i$ of $G$.
In particular, it is independent of the choice of $\omega^{\frac{1}{2}}_X$.) 

\medskip

The prestack $\CQ_{G,G}$ is obtained from the above prestack of quadruples
by quotienting by the equivalence relation, defined in the same way as in the case of 
$\Bun_G^{N^\omega\on{-gen}}$. 

\sssec{}

We claim:

\begin{prop}  \label{p:pull-back fully faithful}
The pull-back functors 
$$\Dmod(\Bun_G^{B\on{-gen}})\to \Dmod(\CQ_{G,G})\to \Dmod(\CQ_G)$$
are fully faithful.
\end{prop}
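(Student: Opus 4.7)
The strategy I would take is to factor the composition $\CQ_G \to \CQ_{G,G} \to \Bun_G^{B\on{-gen}}$ into two torsors for group-prestacks of the form $\bMaps(X,H)^{\on{gen}}$ with $H$ a connected affine algebraic group, and then invoke the homological contractibility of such prestacks recorded in the introduction.

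First, I would unpack the definitions to identify each arrow. The extra datum in an $S$-point of $\CQ_{G,G}$ beyond an $S$-point of $\Bun_G^{B\on{-gen}}$ is a generic isomorphism of the two $T/Z_G^0$-bundles induced from $\CP_{T,U}$ and from $\check\rho(\omega_X)|_U$; the space of such isomorphisms is a torsor for $\bMaps(X, T/Z_G^0)^{\on{gen}}$, so $\CQ_{G,G} \to \Bun_G^{B\on{-gen}}$ is such a torsor. Likewise, passing from $\CQ_{G,G}$ to $\CQ_G$ amounts to upgrading a trivialization of a $T/Z_G^0$-bundle to one of the underlying $T$-bundle, exhibiting $\CQ_G \to \CQ_{G,G}$ as a torsor for $\bMaps(X, Z_G^0)^{\on{gen}}$. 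Both $Z_G^0$ (the connected center of $G$) and $T/Z_G^0$ are connected tori, hence connected affine algebraic groups.

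The next step is a general principle: if $\CH$ is a group-prestack such that $\Vect \to \Dmod(\CH)$ is fully faithful (``$\CH$ is homologically contractible''), and if $\pi\colon \CY \to \CZ$ is an $\CH$-torsor, then $\pi^\dagger\colon \Dmod(\CZ) \to \Dmod(\CY)$ is fully faithful. Applying this to the two torsors above, together with the contractibility of $\bMaps(X,H)^{\on{gen}}$ for $H$ connected affine algebraic (quoted in the introduction), settles the proposition.

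The main technical obstacle I anticipate is the justification of this general principle, since $\bMaps(X,H)^{\on{gen}}$ is not an honest algebraic group and so one must set up descent by hand. The intended argument presents $\Dmod(\CZ)$ as the totalization of the Cech cosimplicial object of $\pi$, and uses base change combined with the contractibility of $\CH$ to identify this totalization with $\Dmod(\CY)$ itself (equivalently, to show that the unit $\on{id} \to \pi_* \pi^\dagger$ is an isomorphism). D-module descent in this non-algebraic setting can in turn be bootstrapped from \propref{p:Barlev} and its analogues for $\CQ_G$ and $\CQ_{G,G}$, which present these prestacks as quotients of algebraic stacks by schematic proper equivalence relations and reduce the descent statement to its classical counterpart.
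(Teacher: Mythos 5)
Your proof is correct and is essentially the same as the paper's: the paper's proof reads in full ``Follows from the homological contractibility of $\bMaps(X,T)^{\on{gen}}$ and $\bMaps(X,Z_G^0)^{\on{gen}}$,'' which is exactly the argument you give. The one cosmetic difference is that you realize $\CQ_{G,G}\to\Bun_G^{B\text{-gen}}$ directly as a $\bMaps(X,T/Z_G^0)^{\on{gen}}$-torsor, whereas the paper works with the composite $\CQ_G\to\Bun_G^{B\text{-gen}}$ as a $\bMaps(X,T)^{\on{gen}}$-torsor and deduces full faithfulness of the middle pull-back by a two-out-of-three argument; both invocations of contractibility are instances of the same result from \cite{Ga2} for connected tori, so the two routes are interchangeable.
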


\begin{proof}
Follows from the homological contractibility of the prestacks $\bMaps(X,T)^{\on{gen}}$
and $\bMaps(X,Z_G^0)^{\on{gen}}$, see \cite{Ga2}. 
\end{proof}

\sssec{}

As in \secref{sss:Hecke action on BunBrat}, we have a canonical action of the monoidal category 
$\Rep(\cG)_{\Ran(X)}$ on both $\Dmod(\CQ_{G})$ and $\Dmod(\CQ_{G,G})$.

\ssec{The groupoid: function-theoretic analogy}    \label{ss:idea of groupoid}

In order to introduce the Whittaker category, as well as several other categories of primary
interest for this paper, we will need to define a certain groupoid, denoted $\bN$ that
acts on $\Bun_G^{B\on{-gen}}$ and related geometric objects. 

\sssec{}  

We will now explain the idea of the definition of this groupoid through a function-theoretic analogy.

\medskip

As was mentioned above, the category $\Dmod(\Bun_G^{N\on{-gen}})$
is the geometric analog of the space of functions on $N(K)\backslash G(\BA)/G(\BO)$. 
What we want to achieve is to enforce the condition that our function, when considered as a function on
$G(\BA)/G(\BO)$, be invariant with respect to all of $N(\BA)$
(resp., equivariant against a fixed character of $N(\BA)$, which is trivial on $N(K)$ and $N(\BO)$). However, we want to
do this without actually lifting our function on $G(\BA)/G(\BO)$. 

\medskip

Here is how we will do this. The trick explained below stands behind the definition of the corresponding 
verions of the Whittaker category in \cite{FGV1} and \cite{Ga1}. 

\sssec{}

Let $\ul{x}$ be a finite collection of points on $X$, and let 
$\BA_{\ul{x}}$ denote the corresponding product of local fields. Let us say that we want to 
enforce invariance/equivariance with respect to the corresponding subgroup $N(\BA_{\ul{x}})\subset N(\BA)$. 

\medskip

Let $\ul{y}$ be another finite collection of points of $X$, which is \emph{non-empty} and \emph{disjoint} from $\ol{x}$.
Let 
$$N(K)\backslash G(\BA)/G(\BO)_{\text{goot at}\,\ul{y}}\subset N(K)\backslash G(\BA)/G(\BO)$$
be the subset equal to
$$N(K)\backslash \left(G(\BA^{\ul{y}})/G(\BO^{\ul{y}})\times N(\BA_{\ul{y}})/N(\BO_{\ul{y}}) \right),$$
where\footnote{I.e., $N(K)\backslash G(\BA)/G(\BO)_{\text{goot at}\,\ul{y}}$ is different from all of 
$N(K)\backslash G(\BA)/G(\BO)$ is that for $z\in \ul{y}$ we take $N(\wh\CK_z)/N(\wh\CO_z)$ instead of $G(\wh\CK_z)/G(\wh\CO_z)$}
$\BA^{\ul{y}}:=\underset{z\notin \ul{y}}\Pi\, \wh\CK_z$,  $\BO^{\ul{y}}:=\underset{z\notin \ul{y}}\Pi\, \wh\CO_z$.

\medskip

Clearly, the preimage of the subset $N(K)\backslash G(\BA)/G(\BO)_{\text{goot at}\,\ul{y}}$
in $G(\BA)/G(\BO)$ is invariant with respect to $N(\BA_{\ul{x}})$.
Moreover, Iwasawa decomposition implies that all of $N(K)\backslash G(\BA)/G(\BO)$ 
can be covered by subsets of this form for various choices of $\ul{y}$.

\medskip

Hence, it is sufficient to specify the invariance/equivariance condition for a function on
$N(K)\backslash G(\BA)/G(\BO)_{\text{goot at}\,\ul{y}}$. 

\sssec{}

Set
$$^\sim N(K)\backslash G(\BA)/G(\BO)_{\text{goot at}\,\ul{y}}:= 
N(K)\backslash
\left(G(\BA^{\ul{y}})/G(\BO^{\ul{y}})\times N(\BA_{\ul{y}})\right).$$

This set is acted on (by right multiplication) by $N(\BA_{\ul{y}})$, and the resulting action of the subgroup 
$N(\BO_{\ul{y}})\subset N(\BA_{\ul{y}})$
makes the projection
$$^\sim N(K)\backslash G(\BA)/G(\BO)_{\text{goot at}\,\ul{y}}\to N(K)\backslash G(\BA)/G(\BO)_{\text{goot at}\,\ul{y}}$$
into a $N(\BO_{\ul{y}})$-torsor.

\sssec{}

We are now finally ready to explain how we will enforce the sought-for invariance/condition with respect to $N(\BA_{\ul{x}})$
for a function on $N(K)\backslash G(\BA)/G(\BO)_{\text{goot at}\,\ul{y}}$. 
In fact, we will enforce equivariance with respect to all of $N(\BA^{\ul{y}})$. 

\medskip

Namely, we require that the lift of our function to 
$^\sim N(K)\backslash G(\BA)/G(\BO)_{\text{goot at}\,\ul{y}}$ 
be $N(\BA_{\ul{y}})$-invariant/equivariant. 

\medskip

The fact that this is the right thing to do follows from the \emph{strong approximation} for the group $N$, i.e., from the fact that the image 
of the map 
$$N(K)\to N(\BA^{\ul{y}}),$$
given by Taylor expansion, is dense.

\ssec{The groupoid: algebro-geometric definition}

The actual algebro-geometric definition of the groupoid $\bN$, given below, was suggested by J.~Barlev. 

\sssec{}  \label{sss:defn of groupoid}

We define the groupoid $\bN$ is as follows. First, the space (=prestack) that it will act on is not $\Bun_G^{B\on{-gen}}$,
but rather a certain open substack 
$$\left(\Bun_G^{B\on{-gen}}\times \Ran(X)\right){}_{\on{good}}$$ 
of $\Bun_G^{B\on{-gen}}\times \Ran(X)$. 

\medskip

Namely, $\left(\Bun_G^{B\on{-gen}}\times \Ran(X)\right){}_{\on{good}}$ corresponds those quadruples $(\CP_G,U,\alpha,\gamma,\ul{y})$
for which $U$ can be chosen to contain $\ul{y}$. 

\medskip

We consider the prestack that assigns to $S\in \affSch$ the groupoid of the following data:
$$((\CP^1_G,U,\alpha^1),(\CP^2_G,U,\alpha^2),\ul{y},\beta),$$
where $(\CP^1_G,U,\alpha^1)$ and $(\CP^2_G,U,\alpha^2)$ are as in \secref{sss:defn rat}, $\ul{y}\in U$, 
and $\beta$ is a datum is isomorphism of $B$-bundles
$$\CP^1_{B,U}|_{U-\ul{y}}\simeq \CP^2_{B,U}|_{U-\ul{y}},$$
such that the induced isomorphism of $T$-bundles 
$$\CP^1_{T,U}|_{U-\ul{y}}\simeq \CP^2_{T,U}|_{U-\ul{y}}$$
extends to all of $U$, and such that the induced isomorphism of the $G$-bundles
$$\CP^1_G|_{U-\ul{y}}\simeq \CP^2_G|_{U-\ul{y}}$$
extends to all of $S\times X-\ul{y}$. 

\medskip

We let $\Maps(S,\bN)$ be the quotient of the above prestack
by the equivalence relation defined in a way similar to the case of $\Bun_G^{B\on{-gen}}$. 

\medskip

We have the following assertion:

\begin{prop}  \label{p:groupoid contractible}
The fibers of the groupoid $\bN$ are homologically 
contractible, i.e., the functors 
$$p_1^\dagger,p_2^\dagger:\Dmod\left(\left(\Bun_G^{B\on{-gen}}\times \Ran(X)\right){}_{\on{good}}\right)\to \Dmod(\bN)$$
are fully faithful, where $p_1$ and $p_2$ are the two projections 
$\bN\to \left(\Bun_G^{B\on{-gen}}\times \Ran(X)\right){}_{\on{good}}$. 
\end{prop}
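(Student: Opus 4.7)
The plan is to explicitly identify the fibers of $p_1$ (and by symmetry $p_2$) with prestacks of generically defined maps into a connected affine algebraic group, and then invoke the contractibility result mentioned in \secref{sss:old and new}.

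Fix an $S$-point of $\left(\Bun_G^{B\on{-gen}}\times \Ran(X)\right){}_{\on{good}}$ corresponding to $(\CP_G,U,\alpha,\ul{y})$ with $\ul{y}\subset U$. The $p_1$-fiber parameterizes data $((\CP^2_G, U', \alpha^2), \beta)$ satisfying the extension constraints. Since the induced $G$-isomorphism extends over $S\times X - \ul{y}$, we may use it to identify $\CP^2_G|_{X-\ul{y}}$ with $\CP_G|_{X-\ul{y}}$; up to this identification, the remaining data becomes a $B$-reduction of $\CP_G$ on some open $U'\ni \ul{y}$ that agrees with $\alpha|_{U'-\ul{y}}$ via $\beta$ and whose induced $T$-bundle is identified with $\alpha^*\CP_T$ on all of $U'$. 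The difference between two such reductions is controlled by the unipotent radical: the fiber is naturally a torsor for the prestack of generic sections of the twisted group scheme $N_{\CP_B}$ (the $\CP_B$-twist of $N$) which extend as automorphisms of $\CP_G$ over $X-\ul{y}$, i.e., for $\bMaps(X, N_{\CP_B})^{\on{gen}}$.

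By the contractibility result for connected affine algebraic groups, the prestack $\bMaps(X, H)^{\on{gen}}$ is homologically contractible for any such $H$; this applies to $H=N_{\CP_B}$, since $N$ and hence its twist is connected and affine. Thus each fiber of $p_1$ is a homologically contractible prestack, being a torsor for one. The full faithfulness of $p_1^\dagger$ then follows by a base-change/descent argument: it suffices to check that the unit of the adjunction is an isomorphism, which is a fiber-wise statement reducing to the homological contractibility of each fiber (in the sense of \propref{p:Ran contractible}, applied fiberwise). The case of $p_2$ is identical by the symmetry of the definition of $\bN$.

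The main obstacle will be the precise identification of the fiber as a torsor under $\bMaps(X, N_{\CP_B})^{\on{gen}}$: one must carefully combine the two extension constraints on $\beta$ (the $T$-part extends over $U$, the $G$-part over $S\times X-\ul{y}$) and show they cut out exactly the $N$-part of a generic Hecke modification at $\ul{y}$. This is the algebro-geometric incarnation of the strong approximation heuristic for $N$ explained in \secref{ss:idea of groupoid}: the extension of the $G$-iso over $X-\ul{y}$ is precisely what allows rational gauge transformations of $\CP_G$ (with values in $N$) to play the role of rational elements of $N(K)$ in the classical picture, and this is what converts a Hecke-style local modification into a contractible space of generic $N$-valued maps.
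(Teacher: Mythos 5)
Your overall plan (identify the fibers of $p_1,p_2$ with something manifestly homologically contractible) is the right shape, but the identification of the fiber is wrong, and the contractibility result you invoke is not the one that is actually needed.

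The key error is in the step ``up to this identification, the remaining data becomes a $B$-reduction of $\CP_G$ on some open $U'\ni\ul{y}$.'' The constraint on $\beta$ is only that the induced $G$-isomorphism extends over $S\times X-\ul{y}$ (not over all of $S\times X$), so after identifying $\CP^2_G|_{X-\ul{y}}$ with $\CP_G|_{X-\ul{y}}$ the $G$-bundle $\CP^2_G$ is still free to differ from $\CP_G$ at $\ul{y}$: it is a Hecke modification of $\CP_G$ at $\ul{y}$, not $\CP_G$ itself. Hence the remaining data is a local modification of the pair $(\CP_G,\alpha)$ at $\ul{y}$, subject to the $T$-part being trivial — i.e., (after trivializing on the formal disc at $\ul{y}$) a point of the fiber of $\Gr_{B,\ul{y}}\to \Gr_{T,\ul{y}}$ over the unit, which is a twisted, relative version of $\Gr_{N,\ul{y}}=N(\wh\CK_{\ul{y}})/N(\wh\CO_{\ul{y}})$. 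This is a \emph{local} ind-scheme at $\ul{y}$, not a torsor for $\bMaps(X,N_{\CP_B})^{\on{gen}}$; these two objects are quite different, having $k$-points $N(\wh\CK_{\ul{y}})/N(\wh\CO_{\ul{y}})$ and $N(K)$ respectively. The function-theoretic picture in \secref{ss:idea of groupoid} says exactly this: the $N(K)$-gauge freedom has already been used up in passing to $\Bun_G^{B\on{-gen}}$, and the groupoid $\bN$ is the algebro-geometric stand-in for the \emph{local} group $N(\BA_{\ul{y}})$ acting on $^\sim$, with $p_1$-fibers $N(\BA_{\ul{y}})/N(\BO_{\ul{y}})$. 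Your heuristic in the last paragraph inverts this: you are reading the $G$-extension condition over $X-\ul{y}$ as recreating $N(K)$, whereas in fact $N(K)$ lives inside the equivalence relation defining $\Bun_G^{B\on{-gen}}$ and $\bN$, and the fiber of $p_1$ is a purely local object at $\ul{y}$.

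Consequently the contractibility input should not be the (deep, inherently global) theorem that $\bMaps(X,H)^{\on{gen}}$ is homologically contractible for connected affine $H$, but the much more elementary fact that $N$, being a connected unipotent group in characteristic $0$, is isomorphic as a scheme to an affine space, and hence $\Gr_N$ (a nested colimit of affine spaces along closed embeddings) is homologically contractible. This is exactly what the paper's remark ``follows from the fact that the group $N$ is homologically contractible'' points at — it is the group $N$ itself (as a scheme), not the rational-maps prestack, whose contractibility is being used. The reduction from full faithfulness of $p_1^\dagger$ to fiberwise contractibility via a base-change/descent argument, which you sketch at the end, is fine once the fiber is correctly identified.
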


This proposition essentially follows from the fact that the group $N$ is homologically contractible.

\sssec{}

The groupoid $\bN_{\CQ_G}$ (resp., $\bN_{\CQ_{G,G}}$) acting on $\CQ_G$ (resp., $\CQ_{G,G}$)
is defined similarly. 

\medskip

Note that we have a Cartesian diagram
$$
\CD
\left(\CQ_G\times \Ran(X)\right){}_{\on{good}}   @<{p_1}<<  \bN_{\CQ_G}  @>{p_2}>>  \left(\CQ_G\times \Ran(X)\right){}_{\on{good}}   \\
@VVV  @VVV   @VVV   \\
\left(\CQ_{G,G}\times \Ran(X)\right){}_{\on{good}}   @<{p_1}<<  \bN_{\CQ_{G,G}}  @>{p_2}>>  \left(\CQ_{G,G}\times \Ran(X)\right){}_{\on{good}}    \\
@VVV   @VVV   @VVV   \\
\left(\Bun_G^{B\on{-gen}}\times \Ran(X)\right){}_{\on{good}}  @<{p_1}<<  \bN @>{p_2}>>  \left(\Bun_G^{B\on{-gen}}\times \Ran(X)\right){}_{\on{good}}.
\endCD
$$

From \propref{p:groupoid contractible} we obtain the corresponding assertion for
$\bN_{\CQ_G}$ (resp., $\bN_{\CQ_{G,G}}$).



\ssec{The character}    \label{ss:char}

\sssec{}

We now consider the groupoid $\bN_{\CQ_{G,G}}$ and we claim that it admits
a canonically defined homomorphism $\chi$ to $\BG_a$. 

\medskip

In fact, there are homomorphisms $\chi_i$, one for each simple root $\alpha_i$ of $G$, and we will
let $\chi$ be their sum. 

\sssec{}

For a simple root $\alpha_i$, let 
$B_i\simeq \BG_m\ltimes \BG_a$ be the quotient group of $B$ by $N(P_i)$ (the unipotent radical
of the sub-minimal parabolic $P_i$) and $Z_{M_i}$ (the center of the Levi $M_i$ of $P_i$). 
In particular, the map $T\to \BG_m$ is given by the simple root $\alpha_i$.

\medskip

For a point $(\CP_G,U,\alpha,\gamma)$ of $\CQ_{G,G}$ we let $\CP_{B_i,U}$ denote the
induced $B_i$-bundle defined over $U$. Note that the data of $\gamma$ identifies 
the line bundle corresponding to $B_i\to \BG_m$ with $\omega_X|_U$. Hence, we can think
of $\CP_{B_i,U}$ as a short exact sequence of vector bundles
$$0\to \omega_U \to \CF_i\to \CO_{U}\to 0.$$

\sssec{}

A point $$((\CP^1_G,U,\alpha^1,\gamma^1),(\CP^2_G,U,\alpha^2,\gamma^2),\ul{y},\beta)$$
of $\bN_{\CQ_{G,G}}$ defines an isomorphism of short exact sequences
$$
\CD
0 @>>> \omega_X|_{U-\ul{u}} @>>>  \CF^1_i|_{U-\ul{y}}   @>>>  \CO_{U-\ul{y}}  @>>> 0 \\
& & @V\on{id}VV   @V{\beta_i}VV  @VV\on{id}V  & & \\
0 @>>> \omega_X|_{U-\ul{y}} @>>>  \CF^2_i|_{U-\ul{y}}   @>>>  \CO_{U-\ul{y}} @>>> 0,
\endCD
$$
and hence a section of the quasi-coherent sheaf
\begin{equation} \label{e:UV quotient}
\omega_X|_{U-\ul{y}}/\omega_X|_{U}\simeq \omega_X|_{S\times X-\ul{y}}/\omega_X|_{S\times X}\simeq
(\CO_S\boxtimes \omega_X)(\infty\cdot \ul{y})/(\CO_S\boxtimes \omega_X),
\end{equation}
where we think of $\ul{y}$ as a relative Cartier divisor $D\subset S\times X$ over $S$. 

\medskip

Now, the residue map assigns to sections of \eqref{e:UV quotient} a section of $\CO_S$,
i.e., a map $S\to \BG_a$.

\sssec{}

By composing, the above character $\chi$ on $\bN_{\CQ_{G,G}}$ gives rise to one
on $\bN_{\CQ_G}$. We will not distinguish the two notationally. 

\ssec{The Whittaker category}  \label{ss:Whit}

We are finally able to define the main actor for this section, the Whittaker category for $G$.

\sssec{}

First, we consider the equivariant category 
$$\Dmod\left(\left(\CQ_G\times \Ran(X)\right){}_{\on{good}}\right)^{\bN_{\CQ_G},\chi}$$
of $\Dmod\left(\left(\CQ_G\times \Ran(X)\right){}_{\on{good}}\right)$ with respect to the
groupoid $\bN_{\CQ_G}$ against the character $\chi$.

\medskip

In other words, we consider the simplicial object $\bN_{\CQ_G}^{\bDelta}$ of $\on{PreStk}$ corresponding
to the groupoid $\bN_{\CQ_G}$.  We consider the co-simplicial category $\Dmod(\bN_{\CQ_G}^{\bDelta})$, and its twist, denoted 
$$\Dmod(\bN_{\CQ_G}^{\bDelta})^\chi,$$ corresponding to the pull-back by means of $\chi$ of the 
exponential D-module on $\BG_a$. By definition,
$$\Dmod\left(\left(\CQ_G\times \Ran(X)\right){}_{\on{good}}\right)^{\bN_{\CQ_G},\chi}:=
\on{Tot}(\Dmod(\bN_{\CQ_G}^{\bDelta})^\chi).$$

\medskip

The following results from \propref{p:groupoid contractible}: 

\begin{prop}  \label{p:embed Whit ff Ran}
The forgetful functor
$$\Dmod\left(\left(\CQ_G\times \Ran(X)\right){}_{\on{good}}\right)^{\bN_{\CQ_G},\chi}\to
\Dmod\left(\left(\CQ_G\times \Ran(X)\right){}_{\on{good}}\right)$$
is fully faithful.
\end{prop}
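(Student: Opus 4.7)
The plan is to deduce the claim directly from \propref{p:groupoid contractible} by unfolding the cosimplicial definition of the equivariant category. Write $Y := \left(\CQ_G\times \Ran(X)\right){}_{\on{good}}$ and let $\bN_{\CQ_G}^{\bDelta}$ denote the simplicial prestack (nerve) associated to the groupoid $\bN_{\CQ_G}\rightrightarrows Y$, so that by definition
$$\Dmod(Y)^{\bN_{\CQ_G},\chi} \;=\; \on{Tot}\!\left(\Dmod(\bN_{\CQ_G}^{\bDelta})^{\chi}\right),$$
and the forgetful functor is evaluation at cosimplicial degree $0$.

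First, I would verify that for every $n\geq 0$ the structure map $p_n : \bN_{\CQ_G}^{[n]} \to Y$ (say, the projection to the leftmost vertex) has homologically contractible fibers, so that the pullback
$$p_n^{\dagger}:\Dmod(Y) \longrightarrow \Dmod(\bN_{\CQ_G}^{[n]})$$
is fully faithful. The case $n=1$ is exactly \propref{p:groupoid contractible}; the general case follows because $\bN_{\CQ_G}^{[n]}$ is an iterated fiber product of $\bN_{\CQ_G}$ over $Y$, so contractibility of fibers is preserved by base change and composition. The analogous statement holds after twisting by $\chi$: since $\chi$ is a homomorphism of groupoids to $\BG_a$, the pullback of the exponential twist along the unit section $Y\hookrightarrow \bN_{\CQ_G}^{[n]}$ is canonically trivial, so $\Dmod(\bN_{\CQ_G}^{[n]})^{\chi}$ receives a fully faithful pullback from $\Dmod(Y)$.

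Second, for any pair of equivariant objects $\CF_1,\CF_2$ with underlying D-modules $\CG_1,\CG_2\in\Dmod(Y)$, the equivariance data canonically identify the pullbacks of the $\CG_i$ along the various cofacial maps of $\bN_{\CQ_G}^{\bDelta}$. Consequently, by the previous step, the cosimplicial mapping space
$$[n]\mapsto \Maps_{\Dmod(\bN_{\CQ_G}^{[n]})^{\chi}}\!\bigl(p_n^{\dagger}\CG_1,\;p_n^{\dagger}\CG_2\bigr)$$
is essentially constant with value $\Maps_{\Dmod(Y)}(\CG_1,\CG_2)$. Its totalization therefore equals this common value, and we conclude that the forgetful functor is fully faithful.

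The step I expect to require the most care is the interaction of the exponential twist $\chi$ with the contractibility argument: one must verify in each simplicial level that $\chi$ pulls back trivially along the relevant unit/degeneracy sections, and that the proof of \propref{p:groupoid contractible} (which is ultimately grounded in the homological contractibility of the unipotent group $N$) carries through to $\chi$-twisted D-modules. Since contractibility of $N$ is a statement about $\Dmod$-categories of groups that is insensitive to tensoring with an exponential character on a contractible ambient space, this should go through, but the bookkeeping for the iterated fiber products is the main technical content to check.
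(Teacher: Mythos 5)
Your argument follows the route the paper intends --- the only essential input is \propref{p:groupoid contractible}, with the rest being cosimplicial bookkeeping --- but the step you flag as delicate is not the one that needs care, and the one that does need care is not actually justified by what you wrote.

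The $\chi$-twist is benign: the exponential D-module is invertible, so the twist in $\Dmod(\bN_{\CQ_G}^\bDelta)^\chi$ merely tensors the cosimplicial structure maps by invertible objects, which affects neither full faithfulness nor the computation of mapping spaces. The step that requires more justification is the assertion that the cosimplicial mapping space $M^\bullet$ is essentially constant. Writing $Y := \left(\CQ_G\times\Ran(X)\right)_{\on{good}}$ as you do, full faithfulness of $p_n^\dagger$ (for $p_n=\pi_0$, the leftmost vertex projection) gives an equivalence $M^n\simeq\Maps_{\Dmod(Y)}(\CG_1,\CG_2)$ levelwise, but this does not by itself make the cosimplicial structure maps of $M^\bullet$ into equivalences, because not all of them commute with $\pi_0$: the coface induced by $\delta_0$ satisfies $\pi_0\circ\delta_0=\pi_1\neq\pi_0$, so one cannot compare it to the identity through $p_\bullet^\dagger$ directly. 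What one should check instead is that each face map $\delta_i\colon\bN_{\CQ_G}^{[n+1]}\to\bN_{\CQ_G}^{[n]}$ itself has homologically contractible fibers, so that $\delta_i^\dagger$ is fully faithful; this is what makes every coface (and hence every codegeneracy) of $M^\bullet$ an equivalence, after which essential constancy follows because $\bDelta$ is weakly contractible. This verification is easy but should be made: the outer faces $\delta_0,\delta_{n+1}$ are base changes of the two projections $p_1,p_2$ of \propref{p:groupoid contractible}, and the inner faces are base changes of the multiplication map $\bN_{\CQ_G}\underset{Y}\times\bN_{\CQ_G}\to\bN_{\CQ_G}$, which --- precisely because $\bN_{\CQ_G}$ is a groupoid rather than merely a category object --- sits in a Cartesian square exhibiting it as the base change of $p_1$ along $p_1$. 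With this correction the argument goes through.
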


\sssec{}

We define the Whittaker category $\on{Whit}(G)$ to be the full subcategory of $\Dmod(\CQ_G)$
equal to the preimage of
$$\Dmod\left(\left(\CQ_G\times \Ran(X)\right){}_{\on{good}}\right)^{\bN_{\CQ_G},\chi}\subset 
\Dmod\left(\left(\CQ_G\times \Ran(X)\right){}_{\on{good}}\right)$$
under the pull-back functor
$$\Dmod(\CQ_G)\to \Dmod\left(\left(\CQ_G\times \Ran(X)\right){}_{\on{good}}\right).$$

\medskip

In other words,

$$\on{Whit}(G):=\Dmod(\CQ_G)\underset{\Dmod\left(\left(\CQ_G\times \Ran(X)\right){}_{\on{good}}\right)}
\times\Dmod\left(\left(\CQ_G\times \Ran(X)\right){}_{\on{good}}\right)^{\bN_{\CQ_G},\chi}.$$

\sssec{}  \label{sss:whit techn}

Consider the fully faithful embedding
$$\on{Whit}(G)\hookrightarrow \Dmod(\CQ_G).$$

One shows that it admits a right adjoint; we will denote it by $\on{Av}^{\bN,\chi}$.  

\medskip

In addition, one shows:

\begin{prop} \label{p:Hecke preserves Whit}
The action of the monoidal category 
$\Rep(\cG)_{\Ran(X)}$ on $\Dmod(\CQ_{G})$ preserves 
the full subcategory
$$\on{Whit}(G)\subset \Dmod(\CQ_{G})$$
and commutes with the functor $\on{Av}^{\bN,\chi}$.
\end{prop}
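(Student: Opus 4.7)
The plan is to prove both claims in parallel, using the fact that Hecke modifications at a finite subset $\ul{x}\subset X$ and the $\bN$-groupoid action at a disjoint subset $\ul{y}\subset X$ commute, since they are modifications at disjoint divisors. Throughout, recall (see \secref{sss:Hecke action on BunBrat}) that the Hecke action on $\Dmod(\Bun_G)$ lifts canonically to $\Dmod(\Bun_G^{B\on{-gen}})$, and one checks in the same way that it further lifts to actions on $\Dmod(\CQ_G)$ and $\Dmod(\CQ_{G,G})$, since $\Bun_G^{B\on{-gen}}$, $\CQ_G$ and $\CQ_{G,G}$ differ only by data on $U$, away from the Hecke modification point.

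First, the preservation statement. Since the action of $\on{Rep}(\cG)_{\Ran(X)}$ factors through the action of $\Dmod(\on{Hecke}(G)_{\Ran(X)})$, it suffices to show that convolution by any object $K\in\Dmod(\on{Hecke}(G)_{\Ran(X)})$ preserves the Whittaker subcategories. I would work on the doubled Ran space over the open subspace $(\Ran(X)\times\Ran(X))_{\on{disj}}$ parameterizing pairs $(\ul{x},\ul{y})$ of disjoint finite subsets: the variable $\ul{x}$ carries the Hecke modification, while $\ul{y}$ parameterizes the $\bN$-action and is required to lie in the generic open $U$. The Hecke modification at $\ul{x}$ commutes with the groupoid $\bN$ acting at $\ul{y}$ in the literal sense that the two correspondences pull back to canonically isomorphic stacks on $(\CQ_G\times\Ran(X)\times\Ran(X))_{\on{good,disj}}$. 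Moreover, the character $\chi$ is built from residues at $\ul{y}$ of sections of sheaves defined over $S\times X$ (see the discussion around \eqref{e:UV quotient}), so it is unaffected by modifications at $\ul{x}\neq\ul{y}$. Consequently, $K\star\CF$ is $(\bN_{\CQ_G},\chi)$-equivariant after restriction to the disjoint locus.

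Next, one needs to promote equivariance on the disjoint locus to equivariance on all of $\Ran(X)$. This is where I expect to use \propref{p:Ran contractible} and the variant of the Whittaker category from \secref{sss:variant Whit}: the category $\on{Whit}(G)$ is independent of whether one tests equivariance against the full $\bN_{\CQ_G}$ or its restriction to an arbitrary relative Ran space $\Ran(X)_{\ul{x}}$, because the contractibility ensures that pullback along the projection to $\Ran(X)$ is fully faithful. Applying this with $\ul{x}$ being the Hecke support then identifies the equivariance of $K\star\CF$ on the disjoint locus with genuine $\on{Whit}(G)$-membership. The same argument, supplemented by the invariance under $\bMaps(X,Z_G^0)^{\on{gen}}$ built into $\CQ_{G,G}$, handles $\on{Whit}(G,G)$.

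For the commutation with $\on{Av}^{\bN,\chi}$, I would invoke the general principle that if a continuous endofunctor $F$ of a DG category $\bC$ preserves a full cocomplete subcategory $\bD\subset\bC$ and admits a continuous left adjoint $F^L$ that also preserves $\bD$, then $F$ commutes with the right adjoint $R:\bC\to\bD$ to the inclusion, as is seen by the Yoneda computation $\Maps_\bD(d,R(F(c)))=\Maps_\bC(F^L(d),c)=\Maps_\bD(d,R(F(c)))$ combined with its mirror. In our case $F$ is convolution by $K\in\on{Rep}(\cG)_{\Ran(X)}$ and $F^L$ is convolution by the dual $K^\vee$, which is also an object of $\on{Rep}(\cG)_{\Ran(X)}$ and hence preserves $\on{Whit}$ by the first step. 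Thus $\on{Av}^{\bN,\chi}(K\star\CF)\simeq K\star\on{Av}^{\bN,\chi}(\CF)$, as desired.

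The main obstacle is the bookkeeping in the second paragraph: making the ``modifications at disjoint points commute'' argument rigorous at the level of simplicial objects computing $\bN$-equivariant D-modules, and then propagating this commutation from the disjoint locus $(\Ran(X)\times\Ran(X))_{\on{disj}}$ to the full product. This essentially amounts to a compatibility of factorization structures, and is where \propref{p:groupoid contractible} and the non-unital monoidal structure on $\Rep(\cG)_{\Ran(X)}$ from \secref{sss:monoidal structure Ran} must be combined carefully.
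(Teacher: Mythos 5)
Your approach to the preservation statement coincides with the paper's (one-sentence) proof: the key mechanism is \secref{sss:variant Whit}, which allows one to replace the groupoid $\bN$ by a version supported at $\ul{y}$ disjoint from the Hecke support $\ul{x}$, after which the two correspondences literally commute. You have usefully unpacked this into the precise statement about the doubled Ran space and the compatibility of the character $\chi$ with disjoint modifications.

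For the commutation with $\on{Av}^{\bN,\chi}$, you diverge from the paper in a way worth flagging. The paper intends to deduce commutation by the same geometric mechanism: once the Hecke functor is realized at a locus disjoint from the $\bN$-groupoid support, it commutes with \emph{every} arrow in the simplicial diagram $\Dmod(\bN_{\CQ_G}^{\bDelta})^\chi$, hence with its totalization and therefore with the right adjoint $\on{Av}^{\bN,\chi}$ to the inclusion. This is cleaner than your adjoint argument because it does not require any duality input. Your argument, by contrast, hinges on asserting that the left adjoint of convolution by $K\in\Rep(\cG)_{\Ran(X)}$ is again convolution by an object $K^\vee$ of $\Rep(\cG)_{\Ran(X)}$ — a rigidity statement that needs some care since $\Rep(\cG)_{\Ran(X)}$ is only \emph{non-unitally} monoidal (so internal duals in the naive sense are not automatic), and you do not justify it. This is plausible (compact generators of $\Rep(\cG)_{\Ran(X)}$ can be taken in the image of the rigid $\Rep(\cG)^{\otimes I}$-factors), but it is a genuine step. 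Also note that the Yoneda chain you write, $\Maps_\bD(d,R(F(c)))=\Maps_\bC(F^L(d),c)=\Maps_\bD(d,R(F(c)))$, is circular as displayed; you mean the five-term chain
$$\Maps_\bD(d,R(F(c)))\simeq\Maps_\bC(d,F(c))\simeq\Maps_\bC(F^L(d),c)\simeq\Maps_\bD(F^L(d),R(c))\simeq\Maps_\bD(d,F(R(c))),$$
using at the last step that $F^L|_{\bD}$ is left adjoint to $F|_{\bD}$ (which again requires $F^L$ to preserve $\bD$). With those two repairs, your route to the commutation is correct, but the paper's geometric argument remains the more direct one.
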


\sssec{}  \label{sss:vacuum Whit}

The category $\on{Whit}(G)$ contains a distinguished object that we shall denote by $\CW_{\on{vac}}$:

\medskip

Analogously to the map $\imath_B:\Bun_B\to \Bun_G^{B\on{-gen}}$, we have a canonically defined map
$$\imath_{N^\omega}:\Bun_{N^\omega}\to \Bun_G^{N\on{-gen}}.$$

Note that, analogously to \secref{ss:char}, there exists a canonically defined map
$$\Bun_{N^\omega}\to \BG_a.$$

Let $\overset{\circ}\CW_{\on{vac}}\in \Dmod(\Bun_{N^\omega})$ denote the pullback
of the exponential D-module on $\BG_a$ under this map. 

\medskip

The object $\CW_{\on{vac}}\in \Dmod(\Bun_G^{N\on{-gen}})$ is defined by
$$\CW_{\on{vac}}:=(\imath_{N^\omega})_\dagger(\overset{\circ}\CW_{\on{vac}}),$$
where for a morphism $f$ we denote by $f_\dagger$ the (partially defined) left adjoint of $f^\dagger$;
one shows that the (partially defined) functor $(\imath_{N^\omega})_\dagger$ \emph{is} defined
on the object $\overset{\circ}\CW_{\on{vac}}$ due to the holonomicity property of the latter. 

\sssec{A variant}

We define the category $\on{Whit}(G,G)$ in a similar way to $\on{Whit}(G)$, using the prestack $\CQ_{G,G}$ instead of $\CQ_G$. 

\medskip

As in \secref{sss:whit techn}, the fully faithful embedding
$$\on{Whit}(G)\hookrightarrow \Dmod(\CQ_G).$$
it admits a right adjoint, and the analog of \propref{p:Hecke preserves Whit} holds. 

\medskip

Let us remind that the categories $\on{Whit}(G)$ and $\on{Whit}(G,G)$ are geometric counterparts of the spaces of functions on
$G(\BA)/G(\BO)$ and $Z^0_G(K)\backslash G(\BA)/G(\BO)$, respectively, that are equivariant with respect to $N(\BA)$
against the character $\chi$. In particular, we have a naturally defined pullback functor 
$$\on{Whit}(G,G)\to \on{Whit}(G).$$

Now, it follows from \propref{p:pull-back fully faithful} that this functor is fully faithful.

\ssec{The functor of Whittaker coefficient and Poincar\'e series}  \label{ss:Whit coeff}

In this subsection we will relate the Whittaker categories $\on{Whit}(G)$ and $\on{Whit}(G,G)$ to
the main object on the geometric side, the category $\Dmod(\Bun_G)$.

\sssec{}

Let $\fr_G$ (resp., $\fr_{G,G}$) denote the forgetful map $\CQ_G\to \Bun_G$
(resp., $\CQ_{G,G}\to \Bun_G$). In particular, we obtain the functors
$$(\fr_G)^\dagger:\Dmod(\Bun_G)\to \Dmod(\CQ_G) \text{ and }
(\fr_{G,G})^\dagger:\Dmod(\Bun_G)\to \Dmod(\CQ_{G,G}).$$

\sssec{}

We denote the composed functors 
$$\on{Av}^{\bN,\chi}\circ (\fr_G)^\dagger:\Dmod(\Bun_G)\to \on{Whit}(G)$$ and 
$$\on{Av}^{\bN,\chi}\circ (\fr_{G,G})^\dagger:\Dmod(\Bun_G)\to \on{Whit}(G,G)$$
by $\on{coeff}_G$ and $\on{coeff}_{G,G}$, respectively. 

\medskip

These are the two closely related versions
of the \emph{functor of Whittaker coefficient}. 

\sssec{}

By \propref{p:Hecke preserves Whit}, the functor $\on{coeff}_G$ (resp., $\on{coeff}_{G,G}$)
intertwines the actions of $\Rep(\cG)_{\Ran(X)}$ on $\Dmod(\Bun_G)$ and 
$\on{Whit}(G)$ (resp., $\on{Whit}(G,G)$).

\sssec{}   \label{sss:1st Whittaker coefficient}

The functor $(\fr_G)^\dagger:\Dmod(\Bun_G)\to \Dmod(\CQ_G)$ does not in general admit a left adjoint. 
However, one shows that the (partially defined) left adjoint $(\fr_G)_\dagger$ \emph{is defined} on the full subcategory
$\on{Whit}(G)\subset \Dmod(\CQ_G)$.

\medskip

We denote the resulting functor $\on{Whit}(G)\to \Dmod(\Bun_G)$ by $\Poinc_G$, and refer to it as the functor
of Poincar\'e series. By construction, this functor is the left adjoint of the functor $\on{coeff}_G$.

\medskip

In particular, we obatin a canonically defined object
$$\Poinc_G(\CW_{\on{vac}})\in \Dmod(\Bun_G),$$ 
where $\CW_{\on{vac}}$ is as in \secref{sss:vacuum Whit}.

\ssec{Digression: ``unital" categories over the Ran space}

Our next goal is to give a description of the categories $\on{Whit}(G)$ and $\on{Whit}(G,G)$ in spectral terms. 
This subsection contains some preliminaries needed in order to describe the spectral side.  

\medskip

These preliminaries
have to do with the fact that the symmetric monoidal categories $\Dmod(\Ran(X))$ and $\Rep(\cG)_{\Ran(X)}$
are non-unital, and in this subsection we will show how to modify them to make them unital. \footnote{The reader
who is afraid of being overwhelmed by the notation can skip this subsection and return to it when necessary.} 

\sssec{}

We note that a group homomorphism $G_1\to G_2$ gives rise to a symmetric monoidal
functor
$$\Rep(G_2)_{\Ran(X)}\to \Rep(G_1)_{\Ran(X)}.$$

In particular, taking $\cG_1=\cG$ and $\cG_2=\{1\}$, we obtain a symmetric monoidal functor
$$\Dmod(\Ran(X))\simeq \QCoh(\Ran(X))\to \Rep(\cG)_{\Ran(X)}.$$

\medskip

By taking $G_1=\cG$ and $G_2=\cG/[\cG,\cG]$ we obtain a symmetric monoidal functor
$$\Rep(\cG/[\cG,\cG])_{\Ran(X)}\to \Rep(\cG)_{\Ran(X)}.$$

\sssec{}  \label{sss:unit}

Consider the symmetric monoidal functor 
$$\Dmod(\Ran(X))\overset{p_\dagger}\longrightarrow \Dmod(\on{pt})=\Vect$$ 
(the left adjoint to the pull-back functor $p^\dagger$). It can also be viewed as the functor 
$\on{Loc}_{\{1\},\on{spec}}$, where $\{1\}$ is the trivial group. Consider the category
$$\Rep(\cG)_{\Ran(X)}^{\on{unital}}:= \Rep(\cG)_{\Ran(X)}\underset{\Dmod(\Ran(X))}\otimes \Vect.$$

One can show that the functor
$$\Vect\simeq  \Dmod(\Ran(X))\underset{\Dmod(\Ran(X))}\otimes \Vect\to 
\Rep(\cG)_{\Ran(X)}\underset{\Dmod(\Ran(X))}\otimes \Vect=:\Rep(\cG)_{\Ran(X)}^{\on{unital}}$$
defines a \emph{unit} for the symmetric monoidal structure on $\Rep(\cG)_{\Ran(X)}^{\on{unital}}$; we
shall denote the corresponding unit object by 
$${\bf 1}_{\Rep(\cG)_{\Ran(X)}^{\on{unital}}}\in \Rep(\cG)_{\Ran(X)}^{\on{unital}}.$$

I.e., unlike $\Rep(\cG)_{\Ran(X)}$, the symmetric monodical category $\Rep(\cG)_{\Ran(X)}^{\on{unital}}$
is unital.

\sssec{}

It follows from the definition that the symmetric monoidal functor
$$\on{Loc}_{\cG,\on{spec}}:\Rep(\cG)_{\Ran(X)}\to \QCoh(\LocSys_\cG)$$
canonically factors as
\begin{equation} \label{e:factoring locs 1}
\Rep(\cG)_{\Ran(X)}\to \Rep(\cG)_{\Ran(X)}^{\on{unital}} \to
\QCoh(\LocSys_\cG).
\end{equation}

We denote the resulting functor
$$\Rep(\cG)_{\Ran(X)}^{\on{unital}} \to  \QCoh(\LocSys_\cG)$$
by $\on{Loc}^{\on{unital}}_{\cG,\on{spec}}$.

\medskip

Passing to right adjoints in \eqref{e:factoring locs 2}, we obtain the functors
$$\QCoh(\LocSys_\cG)\to  \Rep(\cG)_{\Ran(X)}^{\on{unital}} \to \Rep(\cG)_{\Ran(X)},$$
all of which are fully faithful by  \propref{p:local to global LocSys}.

\medskip

We shall denote the resulting (fully faithful) functor
$$\QCoh(\LocSys_\cG)\to  \Rep(\cG)_{\Ran(X)}^{\on{unital}}$$
by $\on{co-Loc}^{\on{unital}}_{\cG,\on{spec}}$.

\sssec{Variant}

Consider now the symmetric monoidal functor
$$\on{Loc}_{\cG/[\cG,\cG],\on{spec}}:\Rep(\cG/[\cG,\cG])_{\Ran(X)}\to \QCoh(\LocSys_{\cG/[\cG,\cG]}).$$

Consider also the category 
$$\Rep(\cG)_{\Ran(X)}^{\on{unital}_{\cG/[\cG,\cG]}}:=\Rep(\cG)_{\Ran(X)}\underset{\Rep(\cG/[\cG,\cG])_{\Ran(X)}}\otimes 
\QCoh(\LocSys_{\cG/[\cG,\cG]}),$$
and the functor
\begin{multline}  \label{e:rel between 1 and Z}
\Rep(\cG)_{\Ran(X)}^{\on{unital}} \simeq \\
\simeq \Rep(\cG)_{\Ran(X)}\underset{\Rep(\cG/[\cG,\cG])_{\Ran(X)}}\otimes  \Rep(\cG/[\cG,\cG])_{\Ran(X)}
\underset{\Dmod(\Ran(X))}\otimes \Vect \overset{\on{Id}\otimes \on{Loc}^{\on{unital}}_{\cG/[\cG,\cG],\on{spec}}}\longrightarrow \\
\to \Rep(\cG)_{\Ran(X)}\underset{\Rep(\cG/[\cG,\cG])_{\Ran(X)}}\otimes \QCoh(\LocSys_{\cG/[\cG,\cG]})=:
\Rep(\cG)_{\Ran(X)}^{\on{unital}_{\cG/[\cG,\cG]}}.
\end{multline}

\medskip

It follows from the construction that the functor
$$\on{Loc}^{\on{unital}}_{\cG,\on{spec}}:\Rep(\cG)_{\Ran(X)}^{\on{unital}}\to \QCoh(\LocSys_\cG)$$
introduced above canonically factors as
\begin{equation} \label{e:factoring locs 2}
\Rep(\cG)_{\Ran(X)}^{\on{unital}}
\overset{\text{\eqref{e:rel between 1 and Z}}}\longrightarrow 
\Rep(\cG)_{\Ran(X)}^{\on{unital}_{\cG/[\cG,\cG]}}
 \to \QCoh(\LocSys_\cG),
\end{equation}

We denote the resulting functor
$$\Rep(\cG)_{\Ran(X)}^{\on{unital}_{\cG/[\cG,\cG]}}\to \QCoh(\LocSys_\cG)$$
by $\on{Loc}^{\on{unital}_{\cG/[\cG,\cG]}}_{\cG,\on{spec}}$.

\medskip

Passing to right adjoints in \eqref{e:factoring locs 2}, we obtain functors
\begin{equation} 
\QCoh(\LocSys_\cG)\to \Rep(\cG)_{\Ran(X)}^{\on{unital}_{\cG/[\cG,\cG]}}\to 
\Rep(\cG)_{\Ran(X)}^{\on{unital}}
\end{equation}
all of which are fully faithful by \propref{p:local to global LocSys}.

\medskip

We denote the resulting (fully faithful) functor
$$\QCoh(\LocSys_\cG)\to \Rep(\cG)_{\Ran(X)}^{\on{unital}_{\cG/[\cG,\cG]}}$$
by $\on{co-Loc}^{\on{unital}_{\cG/[\cG,\cG]}}_{\cG,\on{spec}}$.

\ssec{Description of the Whittaker category in spectral terms}

A key feature of the Whittaker categories $\on{Whit}(G)$ and $\on{Whit}(G,G)$, and the reason for
why the figure so prominently in geometric Langlands, is that these categories can be directly described
in terms of the spectral side of the correspondence. 

\sssec{}

The following assertion is a geometric version of the Casselman-Shalika formula. 
It expresses the categories $\on{Whit}(G)$ and $\on{Whit}(G,G)$, respectively,
in terms of the Langlands dual group. 

\begin{qthm} \label{t:Cass Shal}   \noindent

\smallskip

\noindent{\em(a)} 
There exists a canonical equivalence
$$\BL^{\on{Whit}}_G:\Rep(\cG)_{\Ran(X)}^{\on{unital}}\to \on{Whit}(G),$$
compatible with the action of the monoidal category $\Rep(\cG)_{\Ran(X)}$. 

\smallskip

\noindent{\em(b)} There is a canonical equivalance
$$\Rep(\cG)_{\Ran(X)}^{\on{unital}_{\cG/[\cG,\cG]}}\to\on{Whit}(G,G),$$
compatible with the action of the monoidal category $\Rep(\cG)_{\Ran(X)}$. 

\smallskip

\noindent{\em(c)} We have a commutative diagram
\begin{equation} \label{e:two coLocs}
\CD
\Rep(\cG)_{\Ran(X)}^{\on{unital}} @>{\BL^{\on{Whit}}_G}>>  \on{Whit}(G)  \\
@AAA     @AAA   \\
\Rep(\cG)_{\Ran(X)}^{\on{unital}_{\cG/[\cG,\cG]}}
@>>>  \on{Whit}(G,G),
\endCD
\end{equation}
where the left vertical arrow is the right adjoint of \eqref{e:rel between 1 and Z}.

\end{qthm}

This quasi-theorem is very close to being a theorem and is being worked out by D.~Beraldo. 

\medskip

We shall denote the composed functor
$$\QCoh(\LocSys_\cG)\overset{\on{co-Loc}^{\on{unital}_{\cG/[\cG,\cG]}}_{\cG,\on{spec}}}\longrightarrow   
\Rep(\cG)_{\Ran(X)}^{\on{unital}_{\cG/[\cG,\cG]}}\overset{\sim}\to \on{Whit}(G,G)$$
by $\BL^{\on{Whit}}_{G,G}$. By the above, $\BL^{\on{Whit}}_{G,G}$ is fully faithful.

\sssec{}

We will now formulate Property $\on{Wh}$ (``Wh" stands for Whittaker) of the geometric Langlands functor $\BL_G$.
It is a particular case of Property $\on{Wh}^{\on{ext}}$, formulated in \secref{sss:cond W ext}:

\medskip

\noindent{\bf Property Wh:} {\it We shall say that the functor $\BL_G$ satisfies \emph{Property $\on{Wh}$} if 
the following diagram is commutative:
\begin{equation} \label{e:cond W}
\CD
\Rep(\cG)_{\Ran(X)}^{\on{unital}}  @>{\BL^{\on{Whit}}_{G}}>>  \on{Whit}(G)  \\
@A{\on{co-Loc}^{\on{unital}}_{\cG,\on{spec}}\circ \Psi_\cG}AA    @AA{\on{coeff}_{G}}A   \\
\IndCoh_{\on{Nilp}^{\on{glob}}_\cG}(\LocSys_\cG)   @>{\BL_G}>>  \Dmod(\Bun_G).
\endCD
\end{equation}}

We remind that the functor $\Psi_\cG$ appearing in the left vertical arrow in \eqref{e:cond W} is the right adjoint
of the fully faithful embedding
$$\QCoh(\LocSys_\cG)\overset{\Xi_\cG} \longrightarrow \IndCoh_{\on{Nilp}^{\on{glob}}_\cG}(\LocSys_\cG).$$

\sssec{}  \label{sss:1st Whittaker coefficient and Langlands}

By passing to left adjoints in the diagram \eqref{e:cond W}, from Property $\on{Wh}$ we obtain a commutative diagram
\begin{equation} \label{e:cond Poinc}
\CD
\Rep(\cG)_{\Ran(X)}^{\on{unital}}  @>{\BL^{\on{Whit}}_{G}}>>  \on{Whit}(G)  \\
@V{\Xi_\cG\circ \on{Loc}^{\on{unital}}_{\cG,\on{spec}}}VV    @VV{\Poinc_{G}}V   \\
\IndCoh_{\on{Nilp}^{\on{glob}}_\cG}(\LocSys_\cG)   @>{\BL_G}>>  \Dmod(\Bun_G).
\endCD
\end{equation}

As part of the construction of the equivalence of Quasi-Theorem \ref{t:Cass Shal}, we have that the object
$\CW_{\on{vac}}\in \on{Whit}(G)$ identifies with
$$\BL^{\on{Whit}}_{G}({\bf 1}_{\Rep(\cG)^{\on{unital}}_{\Ran(X)}}),$$
where we remind that ${\bf 1}_{\Rep(\cG)^{\on{unital}}_{\Ran(X)}}$ is the unit object in the monoidal category $\Rep(\cG)^{\on{unital}}_{\Ran(X)}$, 
see \secref{sss:unit}. 

\sssec{}

In particular, from \eqref{e:cond Poinc}, we obtain:
\begin{equation} \label{e:identify structure sheaf}
\BL_G(\Xi_\cG(\CO_{\LocSys_\cG}))\simeq \Poinc_G(\CW_{\on{vac}}).
\end{equation} 

So, the object on the geometric side that corresponds to 
$$\Xi_\cG(\CO_{\LocSys_\cG})\in \IndCoh_{\on{Nilp}^{\on{glob}}_\cG}(\LocSys_\cG)$$
is
$$\Poinc_G(\CW_{\on{vac}})\in \Dmod(\Bun_G).$$

\sssec{}

Note that since the vertical arrows in the diagram \eqref{e:two coLocs} are fully faithful, we can reformulate Property $\on{Wh}$ 
as the commutativity of the diagram
\begin{equation} \label{e:cond W modified}
\CD
\QCoh(\LocSys_\cG) @>{\BL^{\on{Whit}}_{G,G}}>>  \on{Whit}(G,G)  \\
@A{\Psi_\cG}AA    @AA{\on{coeff}_{G,G}}A   \\
\IndCoh_{\on{Nilp}^{\on{glob}}_\cG}(\LocSys_\cG)   @>{\BL_G}>>  \Dmod(\Bun_G).
\endCD
\end{equation} 

\begin{rem}
Note that if we believe in \conjref{c:GL}, we obtain a commutative diagram
$$
\CD
\QCoh(\LocSys_\cG)  @>{\BL^{\on{Whit}}_{G,G}}>>  \on{Whit}(G,G)  \\
@A{\Psi_\cG}AA    @AA{\on{coeff}_{G,G}}A   \\
\IndCoh_{\on{Nilp}^{\on{glob}}_\cG}(\LocSys_\cG)   @>{\BL_G}>>  \Dmod(\Bun_G) \\
@A{\Xi_\cG}AA   @AAA  \\
\QCoh(\LocSys_\cG)   @>>>  \Dmod(\Bun_G)_{\on{temp}},
\endCD
$$
where the composed left vertical arrow is the identity functor. Hence, 
the composed functor
$$\Dmod(\Bun_G)_{\on{temp}}\hookrightarrow \Dmod(\Bun_G)\overset{\on{coeff}_{G,G}}\to  \on{Whit}(G,G)$$
is fully faithful. I.e., the tempered category is Whittaker non-degenerate in the strong sense that not only does
the functor $\on{coeff}_G$ not annihilate anything, but it is actually fully faithful.
\end{rem}

\section{Parabolic induction}  \label{s:parabolic}

In this subsection we study how the automorphic category $\Dmod(\Bun_G)$ can be related to the
corresponding categories for proper Levi subgroups of $G$, and a similar phenomenon on the spectral
side of Langlands correspondence.

\ssec{The space of generic parabolic reductions}

In this subsection we will introduce the ``parabolically induced" category, denoted $\on{I}(G,P)$. 

\sssec{}

Let $P\subset G$ be a parabolic. Let $M$ denote its Levi quotient. 

\medskip

We define the prestack $\Bun_G^{P\on{-gen}}$ in the
same way as we defined $\Bun_G^{B\on{-gen}}$, substituting $P$ for $B$. 

\medskip

We let $\sfp^{\on{enh}}_P$ denote the natural forgetful map $\Bun_G^{P\on{-gen}}\to \Bun_G$, and by
$\imath_P$ the map 
$$\Bun_P\to \Bun_G^{P\on{-gen}}.$$

As in the case of $P=B$, the map $\imath_P$ defines an isomorphism at the level of groupoids
of field-valued points. In particular, the groupoid of $k$-points of $\Bun_G^{P\on{-gen}}$ identifies
canonically with the double quotient
$$P(K)\backslash G(\BA)/G(\BO).$$

\medskip

From here one deduces: 

\begin{lem}   \label{l:restr to strata}
The forgetful functor
$(\imath_P)^\dagger:\Dmod(\Bun_G^{P\on{-gen}})\to \Dmod(\Bun_P)$
is conservative.
\end{lem}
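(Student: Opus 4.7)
The strategy is to lift the question along the Drinfeld-style compactification $\overline{\Bun}_P$ and then invoke proper descent. First, I would apply the analog of \propref{p:Barlev} for an arbitrary parabolic $P$ (proved identically to the case $P=B$): there is a schematic proper surjection $\overline{\sfp}_P:\overline{\Bun}_P\to \Bun_G^{P\on{-gen}}$ realizing the target, up to Zariski sheafification, as the quotient of the proper schematic equivalence relation $\overline{\Bun}_P\underset{\Bun_G^{P\on{-gen}}}\times \overline{\Bun}_P\rightrightarrows \overline{\Bun}_P$. Proper descent for D-modules then implies that the pullback functor $(\overline{\sfp}_P)^\dagger:\Dmod(\Bun_G^{P\on{-gen}})\to \Dmod(\overline{\Bun}_P)$ is itself conservative and identifies its source with the subcategory of descent data.

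Next, I would factor $\imath_P$ as $\overline{\sfp}_P\circ j$, where $j:\Bun_P\hookrightarrow \overline{\Bun}_P$ is the open embedding onto the non-defective locus of the Drinfeld compactification. Then $(\imath_P)^\dagger\simeq j^\dagger\circ (\overline{\sfp}_P)^\dagger$, and it suffices to show the following: any object $\mathcal{G}\in \Dmod(\overline{\Bun}_P)$ lying in the essential image of $(\overline{\sfp}_P)^\dagger$ and vanishing after restriction to $\Bun_P$ must itself vanish. Such a $\mathcal{G}$ is a priori supported on the closed complement $Z:=\overline{\Bun}_P\setminus \Bun_P$, which admits the standard Drinfeld stratification $Z=\sqcup_{\theta>0}\,Z_\theta$ indexed by nontrivial effective combinations of negative simple coroots.

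The crucial third step is to argue that no nonzero descent datum can be supported on $Z$. The idea is that for each stratum $Z_\theta$, the fiber product $\overline{\Bun}_P\underset{\Bun_G^{P\on{-gen}}}\times \overline{\Bun}_P$ contains explicit correspondences identifying points of $Z_\theta$ with points of $\Bun_P$, coming from the valuative-criterion extension of the underlying generic $P$-reduction (which works in families precisely because $G/P$ is proper, and is the geometric content of the bijection on field-valued points mentioned in the paper). The descent datum structure on $\mathcal{G}$ forces it to be matched, along these correspondences, with its restriction to $\Bun_P$, which is zero by hypothesis. Inducting over the strata in closure order then yields $\mathcal{G}=0$.

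The main obstacle is this last step: it is easy to produce the ``matching'' correspondences at the level of $k$-points, but one must verify that they are actual sub-prestacks of the fiber product $\overline{\Bun}_P\underset{\Bun_G^{P\on{-gen}}}\times \overline{\Bun}_P$ and that the descent condition really pins $\mathcal{G}$ down along them in families. This is precisely the geometric substance of the parabolic analog of \propref{p:Barlev}, and it is what translates the bijection on field-valued points into conservativity at the level of D-modules.
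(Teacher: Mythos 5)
Your reduction via the analog of \propref{p:Barlev} for $P$ and proper descent is exactly the right structural input: both you and the paper need to know that $\Bun_G^{P\on{-gen}}$ is the quotient of the algebraic stack $\BunPb$ by a proper schematic equivalence relation, so that $\Dmod(\Bun_G^{P\on{-gen}})$ identifies with the totalization of $\Dmod$ along the \v{C}ech nerve, and in particular $(\ol{\sfp}_P)^\dagger$ is conservative. The reduction to the statement ``a descent datum $\CG$ on $\BunPb$ with $j^\dagger\CG=0$ vanishes'' is also correct.

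However, your last step is over-engineered, and the ``main obstacle'' you identify dissolves if you argue pointwise rather than stratum-by-stratum. The functors of $\dagger$-restriction to field-valued points are jointly conservative on the locally-finite-type algebraic stack $\BunPb$ (reduce to a scheme by a smooth atlas, then use Noetherian induction on support together with Kashiwara and the fact that localization at the generic point of the support is conservative). Given this, fix a field-valued point $y$ of $\BunPb$. Its image $\ol{\sfp}_P(y)$ in $\Bun_G^{P\on{-gen}}$ is a field-valued point, and the bijection on field-valued points (valuative criterion, using properness of $G/P$) produces a field-valued point $z$ of $\Bun_P\subset\BunPb$ with the same image. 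The pair $(y,z)$ is then a field-valued point of the equivalence relation $\CY_1=\BunPb\underset{\Bun_G^{P\on{-gen}}}\times\BunPb$, and restricting the descent isomorphism $p_1^\dagger\CG\simeq p_2^\dagger\CG$ to this point identifies the fiber of $\CG$ at $y$ with the fiber at $z$, which vanishes by hypothesis. Hence all fibers of $\CG$ vanish and $\CG=0$. There is no need to exhibit ``matching correspondences'' as sub-prestacks or to prove anything in families: a single field-valued point of $\CY_1$ over each $y$ is enough, and such a point exists because of the bijection on field-valued points. Nor is the induction over the Drinfeld strata $Z_\theta$ needed -- the argument treats all points of $Z$ at once. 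This is precisely what the paper means by ``from here one deduces'' immediately after recording that $\imath_P$ is a bijection on groupoids of field-valued points.
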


\sssec{}

Let $N(P)$ denote the unipotent radical of $P$. To it we associate a groupoid that we denote by
$\bNP$ acting on $\left(\Bun_G^{P\on{-gen}}\times \Ran(X)\right){}_{\on{good}}$ in the same way as we defined the groupoid $\bN$
acting on $\left(\Bun_G^{B\on{-gen}}\times \Ran(X)\right){}_{\on{good}}$.

\medskip

We consider the $\bNP$-equivariant category of $\left(\Bun_G^{P\on{-gen}}\times \Ran(X)\right){}_{\on{good}}$, i.e.,
$$\Dmod\left(\left(\Bun_G^{P\on{-gen}}\times \Ran(X)\right){}_{\on{good}}\right)^{\bNP}:=
\on{Tot}\left(\Dmod(\bNP^{\bDelta})\right),$$
where $\bNP^{\bDelta}$ is the simplicial object of $\on{PreStk}$ corresponding to the groupoid $\bNP$. 

\medskip

As in \propref{p:embed Whit ff Ran}, we have:

\begin{prop}
The forgetful functor 
$$\Dmod\left(\left(\Bun_G^{P\on{-gen}}\times \Ran(X)\right){}_{\on{good}}\right)^{\bNP}\to 
\Dmod\left(\left(\Bun_G^{P\on{-gen}}\times \Ran(X)\right){}_{\on{good}}\right)$$
is fully faithful.
\end{prop}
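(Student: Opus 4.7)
The plan is to follow verbatim the scheme of \propref{p:embed Whit ff Ran}: deduce fully faithfulness of the forgetful functor from a contractibility statement for the groupoid $\bNP$ analogous to \propref{p:groupoid contractible}.

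First, I would establish the analog of \propref{p:groupoid contractible} for $\bNP$: namely, that the two projections
$$p_1, p_2: \bNP \to \left(\Bun_G^{P\on{-gen}}\times \Ran(X)\right){}_{\on{good}}$$
induce fully faithful pull-back functors $p_i^\dagger$ on D-modules. The argument for $\bN$ rested on the fact that $N$ is a connected unipotent algebraic group, hence homologically contractible. Since the unipotent radical $N(P)$ is also a connected unipotent algebraic group, the same geometric argument carries through: a fiber of $p_i$ parameterizes modifications of the $P$-reduction along $\ul{y}$ which induce trivial modifications of the associated $M$-bundle and extend to modifications of the underlying $G$-bundle. Such modifications form a space that reduces, via the semidirect-product decomposition $P \simeq M\ltimes N(P)$, to a moduli of generically defined $N(P)$-valued sections, whose contractibility is inherited from that of $N(P)$.

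Second, having established the above, the fully faithfulness of the forgetful functor follows by the standard formal argument: the equivariant category is by definition the totalization
$$\on{Tot}\left(\Dmod(\bNP^{\bDelta})\right),$$
and the forgetful functor is evaluation at cosimplicial degree zero. The contractibility of $p_1, p_2$ --- applied iteratively to the higher simplicial degrees, which are iterated fibered products of $\bNP$ over the base and still have contractible fibers by the same mechanism --- forces the cosimplicial diagram of mapping spaces to degenerate, so that the augmentation from the zeroth term induces a homotopy equivalence on all morphism complexes. This is precisely fully faithfulness.

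The substantive step is the first one. The only thing that needs checking beyond what was done for $\bN$ is that the extra constraints built into the definition of $\bNP$ (that the induced modifications of the $M$-bundle and of the underlying $G$-bundle extend across $\ul{y}$) do not disrupt the contractibility; concretely, that after imposing these conditions the fiber is still an $N(P)$-valued generic-sections space rather than, say, an extension by a non-unipotent factor. This is essentially automatic from $P = M\ltimes N(P)$: the $M$-triviality condition kills all of the Levi direction, leaving only the $N(P)$-direction, and the $G$-bundle extension condition is precisely the assertion that the resulting $N(P)$-section is allowed to have poles only along $\ul{y}$. This is the same geometry that enters the $\bN$ case, and indeed the paper has already flagged this ``unipotence property of the groupoid $\bNP$'' as the source of fully faithfulness for $\on{I}(G,P) \to \Dmod(\Bun_G^{P\on{-gen}})$.
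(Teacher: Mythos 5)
Your proposal is correct and follows the same route the paper takes: the paper's entire proof is the phrase ``As in \propref{p:embed Whit ff Ran},'' which in turn rests on \propref{p:groupoid contractible} and ultimately on the homological contractibility of the connected unipotent group, and you correctly transport this to $N(P)$ and unfold the formal descent step. The only place you are slightly loose is in asserting that fully faithfulness of the coface pullbacks makes the cosimplicial mapping-space diagram ``degenerate''; the precise mechanism is that the cosimplicial identities $s^j d^j = s^j d^{j+1} = \mathrm{id}$ then force the codegeneracies to be equivalences too, so the diagram is essentially constant and its totalization is computed by the degree-zero term --- but this is exactly the standard argument the paper is also implicitly invoking.
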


\sssec{}

We define $\on{I}(G,P)$ as the full subcategory of $\Dmod(\Bun_G^{P\on{-gen}})$ equal to the preimage
of 
$$\Dmod\left(\left(\Bun_G^{P\on{-gen}}\times \Ran(X)\right){}_{\on{good}}\right)^{\bNP}\subset 
\Dmod\left(\left(\Bun_G^{P\on{-gen}}\times \Ran(X)\right){}_{\on{good}}\right)$$
under the pull-back functor
$$\Dmod(\Bun_G^{P\on{-gen}})\to \Dmod\left(\left(\Bun_G^{P\on{-gen}}\times \Ran(X)\right){}_{\on{good}}\right).$$

I.e., 
\begin{multline*}
\on{I}(G,P):= \\
=\Dmod(\Bun_G^{P\on{-gen}}) \underset{\Dmod\left(\left(\Bun_G^{P\on{-gen}}\times \Ran(X)\right){}_{\on{good}}\right)}\times
\Dmod\left(\left(\Bun_G^{P\on{-gen}}\times \Ran(X)\right){}_{\on{good}}\right)^{\bNP}.
\end{multline*}

\begin{rem}
The category $\on{I}(G,P)$ is the geometric counterpart of the space of functions on the double quotient
$$M(K)\cdot N(P)(\BA)\backslash G(\BA)/G(\BO).$$
\end{rem}

\sssec{}  \label{sss:Hecke action on PS}

As in the case of $\Whit(G)$, one shows that the fully faithful embedding
$$\on{I}(G,P)\subset \Dmod(\Bun_G^{P\on{-gen}})$$ 
admits a right adjoint, that we denote by $\on{Av}^{\bNP}$. 

\medskip

As in \secref{sss:Hecke action on BunBrat}, we have a canonically defined action of the monoidal
category $\Rep(\cG)_{\Ran(X)}$ on $\Dmod(\Bun_G^{P\on{-gen}})$, and as in \propref{p:Hecke preserves Whit},
this action preserves the full subcategory 
$$\on{I}(G,P)\subset \Dmod(\Bun_G^{P\on{-gen}})$$
and commutes with the functor $\on{Av}^{\bNP}$. 

\ssec{A strata-wise description of the parabolic category}

One can describe the full subcategory $\on{I}(G,P)\subset \Dmod(\Bun_G^{P\on{-gen}})$ explicitly via the
morphism
$$\iota_P:\Bun_P\to \Bun_G^{P\on{-gen}}.$$

This is the subject of the present subsection. 

\sssec{}

Let $\sfp_P$ and $\sfq_P$ denote the natural forgetful maps from $\Bun_P$
to $\Bun_G$ and $\Bun_M$, respectively. For instance, we have:
$$\sfp_P=\sfp^{\on{enh}}_P\circ \imath_P.$$

Note that the map $\sfq_P$ is smooth. Hence, the functor
$$(\sfq_P)^\bullet:\Dmod(\Bun_M)\to \Dmod(\Bun_P)$$
(the Verdier conjugate of $(\sfq_P)^\dagger$) is well-defined \footnote{Since $\sfq_P$ is smooth, the functors $(\sfq_P)^\bullet$ and
$(\sfq_P)^\dagger$ are in fact isomorphic up to a cohomological shift, which depends on the connected
component of $\Bun_M$.}. Note that the fibers of $\sfq_P$ are contractible, so the functor $(\sfq_P)^\bullet$
is fully faithful.

\sssec{}

We have:

\begin{lem}  \label{l:equiv strata-wise}
The category $\on{I}(G,P)$ fits into a pull-back square:
\begin{equation}  \label{e:equiv strata-wise}
\CD
\on{I}(G,P)  @>>> \Dmod(\Bun_G^{P\on{-gen}}) \\
@VVV @VV{(\imath_P)^\dagger}V   \\
\Dmod(\Bun_M)  @>{(\sfq_P)^\bullet}>>  \Dmod(\Bun_P).
\endCD
\end{equation} 
\end{lem}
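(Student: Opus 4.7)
My plan is to combine conservativity of $(\imath_P)^\dagger$ (\lemref{l:restr to strata}) with a stratum-wise analysis of the groupoid $\bNP$. Since $(\sfq_P)^\bullet$ is fully faithful (as $\sfq_P$ is smooth with contractible fibers), the Cartesian property of \eqref{e:equiv strata-wise} is equivalent to the criterion: an object $\CM \in \Dmod(\Bun_G^{P\on{-gen}})$ belongs to $\on{I}(G,P)$ precisely when $(\imath_P)^\dagger(\CM)$ lies in the essential image of $(\sfq_P)^\bullet$.

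For the ``only if'' direction, I would pull the groupoid $\bNP$ acting on $\left(\Bun_G^{P\on{-gen}} \times \Ran(X)\right){}_{\on{good}}$ back along $\imath_P \times \on{id}_{\Ran(X)}$. In view of the function-theoretic picture of \secref{ss:idea of groupoid} and strong approximation for the unipotent group $N(P)$, this restricted groupoid is precisely the one presenting $\Bun_P \to \Bun_M$ as an iterated $N(P)$-bundle, so equivariance against it coincides with descent along $\sfq_P$. Consequently, any $\CM \in \on{I}(G,P)$ automatically has $(\imath_P)^\dagger(\CM)$ in the essential image of $(\sfq_P)^\bullet$.

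For the ``if'' direction, I would use the adjunction $\on{Av}^{\bNP}$ right adjoint to the inclusion $\on{I}(G,P) \hookrightarrow \Dmod(\Bun_G^{P\on{-gen}})$, for which $\CM$ belongs to $\on{I}(G,P)$ iff the counit $\on{Av}^{\bNP}(\CM) \to \CM$ is an isomorphism. Given $\CM$ with $(\imath_P)^\dagger(\CM) \simeq (\sfq_P)^\bullet(\CN)$, the target of the counit is already equivariant after restriction to $\Bun_P$ by the first step, so it suffices to check that applying $(\imath_P)^\dagger$ to the counit yields an isomorphism, after which conservativity closes the argument. The main obstacle is the base-change assertion that $(\imath_P)^\dagger$ commutes with $\on{Av}^{\bNP}$: this rests on the Barlev presentation of $\Bun_G^{P\on{-gen}}$ supplied by \propref{p:Barlev}, which reduces the question to a schematic setting, together with the unipotence of $\bNP$ that makes the simplicial object computing $\on{Av}^{\bNP}$ restrict well along the closed embedding $\imath_P$.
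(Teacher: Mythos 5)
Your reduction to the criterion ``$\CM \in \on{I}(G,P)$ iff $(\imath_P)^\dagger(\CM)$ lies in the essential image of $(\sfq_P)^\bullet$'' and the plan of checking this via the counit of the $\on{Av}^{\bNP}$-adjunction and conservativity (\lemref{l:restr to strata}) is the right structure. But two points are imprecise in a way that matters, and the key step remains unproved.

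First, $\imath_P$ is not a closed embedding. As the paper points out, it is a bijection on field-valued points but not an isomorphism; it should be viewed as a decomposition into locally closed strata, one for each connected component of $\Bun_P$ (whose component set is $\check\Lambda$, not $\pi_1(G)$). So the formal setup for the base-change argument you sketch needs to be adjusted: you cannot treat $\imath_P$ as a single closed immersion.

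Second, the restriction of $\bNP$ along $\imath_P$ is not ``the groupoid presenting $\Bun_P \to \Bun_M$ as an iterated $N(P)$-bundle.'' It is still a Ran-family of groupoids whose fibers only encode modifications of the $N(P)$-structure at a finite, moving collection of points $\ul y$; this is much smaller than $\Bun_P \underset{\Bun_M}\times \Bun_P$. What is true is that equivariance against this family, worked over $\Ran(X)$, forces descent along $\sfq_P$ — but getting there requires invoking the analogue of \propref{p:groupoid contractible} for $\bNP$ together with strong approximation, and this is precisely the content of the lemma, not a prior fact you can appeal to.

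Finally, the base-change $(\imath_P)^\dagger \circ \on{Av}^{\bNP} \simeq (\sfq_P)^\bullet \circ (\sfq_P)_\bullet \circ (\imath_P)^\dagger$ that your ``if'' direction needs is exactly what the paper records as diagram \eqref{e:av strata-wise}, and that diagram is stated there as a \emph{consequence} of Lemma \ref{l:equiv strata-wise} (obtained by passing to right adjoints in \eqref{e:equiv strata-wise}). So you must establish it independently; saying it ``rests on \propref{p:Barlev} and unipotence'' identifies the right inputs but does not clear the obstacle. Note also that once this base-change is in hand, both directions of the criterion follow at once (applying $(\imath_P)^\dagger$ to the counit and using that $(\sfq_P)^\bullet$ is fully faithful), so your separate ``only if'' step via the restricted groupoid becomes redundant — which is a sign that the groupoid-restriction picture and the base-change claim are really the same hard lemma in two guises.
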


In other words, the above lemma says that an object $\CM\in \Dmod(\Bun_G^{P\on{-gen}})$
belongs to $\on{I}(G,P)$ if and only if $(\imath_P)^\dagger(\CM)\in \Dmod(\Bun_P)$ belongs
to the essential image of the functor $(\sfq_P)^\bullet$. 

\sssec{}  \label{sss:restr PS on strata}

We denote the resulting (conservative) functor $\on{I}(G,P) \to \Dmod(\Bun_M)$ by 
$(\imath_M)^\dagger$.

\medskip

One shows that the square obtained by passing to right adjoints along the
horizontal arrows in \eqref{e:equiv strata-wise} is also commutative:

\begin{equation} \label{e:av strata-wise}
\CD
\on{I}(G,P)  @<{\on{Av}^{\bNP}}<< \Dmod(\Bun_G^{P\on{-gen}}) \\
@V{(\imath_M)^\dagger}VV @VV{(\imath_P)^\dagger}V   \\
\Dmod(\Bun_M)  @<{(\sfq_P)_\bullet}<<  \Dmod(\Bun_P).
\endCD
\end{equation}

\sssec{}

In addition, one shows that the partially defined left adjoint $(\imath_P)_\dagger$ of $(\imath_P)^\dagger$ 
is defined on the essential image of $(\sfq_P)^\bullet$. We denote the resulting functor
$\Dmod(\Bun_M)\to \on{I}(G,P)$ by $(\imath_M)_\dagger$.

\medskip

By passing to left adjoints in \eqref{e:av strata-wise}, we obtain a commutative diagram 
\begin{equation}  \label{e:equiv strata-wise !}
\CD
\on{I}(G,P)  @>>> \Dmod(\Bun_G^{P\on{-gen}}) \\
@A{(\imath_M)_\dagger}AA  @AA{(\imath_P)_\dagger}A   \\
\Dmod(\Bun_M)  @>{(\sfq_P)^\bullet}>>  \Dmod(\Bun_P).
\endCD
\end{equation}

\ssec{The ``enhanced" constant term and Eisenstein functors}

As in the classical theory of automorphic functions, the parabolic category $\on{I}(G,P)$
is related to the automorphic category $\Dmod(\Bun_G)$ by a pair of functors, called
``constant term" and ``Eisenstein series." 

\sssec{}

We define the functor of \emph{enhanced constant term}
$$\on{CT}_P^{\on{enh}}:\Dmod(\Bun_G)\to \on{I}(G,P)$$
as the composition 
$$\on{CT}_P^{\on{enh}}=\on{Av}^{\bNP}\circ (\sfp^{\on{enh}}_P)^\dagger.$$

\sssec{}

We claim that the functor $\on{CT}_P^{\on{enh}}$ admits a left adjoint. This follows
from the next lemma:

\begin{lem}  \label{l:p! defined}
The partially defined left adjoint $(\sfp^{\on{enh}}_P)_\dagger$ of $(\sfp^{\on{enh}}_P)^\dagger$ is defined on the full
subcategory $\on{I}(G,P)\subset \Dmod(\Bun_G^{P\on{-gen}})$.
\end{lem}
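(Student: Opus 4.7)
The plan is to construct $(\sfp^{\on{enh}}_P)_!(\CM)$ explicitly by factoring $\sfp^{\on{enh}}_P$ through Drinfeld's compactification of $\Bun_P$. More precisely, the starting point is an analog of \propref{p:Barlev} for the parabolic $P$: there should exist an algebraic stack $\CY_0 = \ol\Bun_P$ equipped with a proper schematic map $\bar\sfp_P: \ol\Bun_P \to \Bun_G$, a proper schematic equivalence relation $\CY_1 \rightrightarrows \ol\Bun_P$, and a quotient map $\pi: \ol\Bun_P \to \Bun_G^{P\on{-gen}}$ realizing $\Bun_G^{P\on{-gen}}$ as the Zariski-sheafified quotient, so that $\bar\sfp_P = \sfp^{\on{enh}}_P \circ \pi$.

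Since $\bar\sfp_P$ is proper schematic, its pullback $(\bar\sfp_P)^\dagger$ admits a left adjoint $(\bar\sfp_P)_!$ defined on all of $\Dmod(\ol\Bun_P)$. Separately, by proper D-module descent along $\CY_1 \rightrightarrows \ol\Bun_P$, the functor $\pi^\dagger: \Dmod(\Bun_G^{P\on{-gen}}) \to \Dmod(\ol\Bun_P)$ is fully faithful onto the subcategory of $\CY_1$-equivariant objects.

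With these two ingredients, I define $L: \Dmod(\Bun_G^{P\on{-gen}}) \to \Dmod(\Bun_G)$ by $L(\CM) := (\bar\sfp_P)_!(\pi^\dagger(\CM))$. For any $\CN \in \Dmod(\Bun_G)$, using the $(\bar\sfp_P)_! \dashv (\bar\sfp_P)^\dagger$ adjunction, the factorization $\bar\sfp_P = \sfp^{\on{enh}}_P \circ \pi$, and the fully faithfulness of $\pi^\dagger$, one obtains
\begin{align*}
\Hom_{\Dmod(\Bun_G)}(L(\CM), \CN)
&\simeq \Hom_{\Dmod(\ol\Bun_P)}(\pi^\dagger(\CM), \pi^\dagger((\sfp^{\on{enh}}_P)^\dagger(\CN))) \\
&\simeq \Hom_{\Dmod(\Bun_G^{P\on{-gen}})}(\CM, (\sfp^{\on{enh}}_P)^\dagger(\CN)).
\end{align*}
Hence $L$ is the left adjoint of $(\sfp^{\on{enh}}_P)^\dagger$, defined a priori on all of $\Dmod(\Bun_G^{P\on{-gen}})$, and in particular on $\on{I}(G,P)$ as asserted. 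Note that restricting to $\on{I}(G,P)$ in the statement is harmless: the subcategory is all that is needed to define $\Eis^{\on{enh}}_P$ as the left adjoint of $\on{CT}^{\on{enh}}_P = \on{Av}^{\bNP} \circ (\sfp^{\on{enh}}_P)^\dagger$.

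The hard part is the foundational input: establishing the parabolic analog of \propref{p:Barlev}. For $B$ this is due to \cite{Bar}; for general $P$ one takes $\CY_0$ to be a Drinfeld-type compactification $\ol\Bun_P$ adapted to $P$ and $\CY_1 = \ol\Bun_P \times_{\Bun_G^{P\on{-gen}}} \ol\Bun_P$. Properness of $\bar\sfp_P: \ol\Bun_P \to \Bun_G$ reflects the fact that the fibers are built out of projective varieties related to $G/P$, and the schematicness of the equivalence relation requires the standard degeneration analysis. Proper D-module descent is then a standard input from \cite{GR}. Once these pieces are in place, the adjunction check above is purely formal.
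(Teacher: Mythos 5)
Your adjunction calculation hinges on the assertion that $\pi^\dagger: \Dmod(\Bun_G^{P\on{-gen}}) \to \Dmod(\ol\Bun_P)$ is fully faithful, phrased as "fully faithful onto the subcategory of $\CY_1$-equivariant objects." This is the gap. Descent says that $\Dmod(\Bun_G^{P\on{-gen}})$ is \emph{equivalent} to the category of $\CY_1$-equivariant objects in $\Dmod(\ol\Bun_P)$ (the totalization of the cosimplicial diagram attached to the equivalence relation), but that equivariant category is not a \emph{full} subcategory of $\Dmod(\ol\Bun_P)$: an equivariance structure is extra data, not a property. Concretely, $\Hom$ in the equivariant category is the totalization of $n\mapsto\Hom_{\Dmod(\CY_n)}(-,-)$, and the augmentation to the $0$-th term is an equivalence only when the cosimplicial object is split, which fails here. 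A toy example: take $\CY_0 = \BP^1\sqcup\BP^1$ mapping to $\CY = \BP^1$; then $\pi^\dagger$ is the diagonal $\Dmod(\BP^1)\to\Dmod(\BP^1)\times\Dmod(\BP^1)$, which is not fully faithful. In the situation at hand, the fibers of the projections $\CY_1\rightrightarrows\ol\Bun_P$ are built from spaces like $\Sym^d X$ (already visible for $G=SL_2$, $P=B$), which are not homologically contractible, so the unit $\on{id}\to\on{Av}_*\circ\pi^\dagger$ is not an isomorphism and $\pi^\dagger$ is not fully faithful.

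Note also that if your argument went through, it would show $(\sfp^{\on{enh}}_P)_!$ is defined on \emph{all} of $\Dmod(\Bun_G^{P\on{-gen}})$, which is strictly stronger than the Lemma and not what the paper asserts; the careful wording "defined on the full subcategory $\on{I}(G,P)$" is a signal that the restriction is doing real work. Passing through $\ol\Bun_P$ is likely the right geometric idea, but the argument cannot be purely formal: one has to use the specific shape of objects of $\on{I}(G,P)$ — in particular Lemma \ref{l:equiv strata-wise}, which says that $(\imath_P)^\dagger(\CM)$ lies in the image of $(\sfq_P)^\bullet$, i.e.\ is pulled back from $\Bun_M$ — to show that the candidate $(\bar\sfp_P)_{\dr,*}\circ\pi^\dagger(\CM)$ actually co-represents $\Hom(\CM,(\sfp^{\on{enh}}_P)^\dagger(-))$. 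The $\bNP$-equivariance, which your argument never invokes, is precisely the ingredient that must be brought to bear.
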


Thus, the functor
$$\Eis_P^{\on{enh}}:=(\sfp^{\on{enh}}_P)_\dagger|_{\on{I}(G,P)},\quad \on{I}(G,P)\to \Dmod(\Bun_G)$$
is well-defined and provides a left adjoint to $\on{CT}_P^{\on{enh}}$.

\medskip

We will refer to $\Eis_P^{\on{enh}}$ as the functor of \emph{enhanced Eisenstein series}. 

\sssec{}

Consider now the diagram
$$
\xy
(-15,0)*+{\Bun_G}="X";
(15,0)*+{\Bun_M.}="Y";
(0,15)*+{\Bun_P}="Z";
{\ar@{->}_{\sfp_P} "Z";"X"};
{\ar@{->}^{\sfq_P}  "Z";"Y"};
\endxy
$$

We define the \emph{usual} constant term and Eisenstein functors as follows:

$$\on{CT}_P=(\sfq_P)_\bullet \circ (\sfp_P)^\dagger,$$
where $(\sfq_P)_\bullet$ is the right adjoint of the functor $(\sfq_P)^\bullet$ (i.e., $(\sfq_P)_\bullet$ is the functor
of usual direct image for D-modules).

\sssec{}

The functor $\Eis_P$ (called the usual functor of Eisenstein series), left adjoint to $\on{CT}_P$, 
is described as 
$$(\sfp_P)_\dagger\circ (\sfq_P)^\bullet.$$

\medskip

The functor $(\sfp_P)_\dagger$ is the partially defined left adjoint to $(\sfp_P)^\dagger$,
and as in \lemref{l:p! defined} one shows that it is defined on the essential image of $(\sfq_P)^\bullet$.

\sssec{}

From \eqref{e:av strata-wise} we obtain that the functor $\on{CT}_P$ can be expressed through $\on{CT}_P^{\on{enh}}$
as 
$$\on{CT}_P\simeq (\imath_M)^\dagger\circ \on{CT}_P^{\on{enh}}.$$

Similarly, from \eqref{e:equiv strata-wise !}, we obtain that the functor $\Eis_P$ can be expressed through $\Eis_P^{\on{enh}}$ as 
$$\Eis_P\simeq \Eis_P^{\on{enh}}\circ (\imath_M)_\dagger.$$

\ssec{Spectral Eisenstein series}

The functors of constant term and Eisenstein series on the geometric side have their respective counterparts on the
spectral side. In this subsection we will introduce the spectral counterparts of the naive functors $\Eis_P$ and $\on{CT}_P$;
their enhanced versions will be introduced in \secref{ss:spectral parabolic}. 

\sssec{}

Consider the derived stack $\LocSys_\cP$ and the diagram
$$
\xy
(-15,0)*+{\LocSys_\cG}="X";
(15,0)*+{\LocSys_\cM.}="Y";
(0,15)*+{\LocSys_\cP}="Z";
{\ar@{->}_{\sfp_{\cP,\on{spec}}} "Z";"X"};
{\ar@{->}^{\sfq_{\cP,\on{spec}}}  "Z";"Y"};
\endxy
$$

We note that the morphism $\sfq_{\cP,\on{spec}}$ is quasi-smooth (i.e., its geometric fibers are quasi-smooth), 
and in particular of finite Tor dimension. Hence, the functor
$$\sfq_{\cP,\on{spec}}^{\IndCoh,*}:\IndCoh(\LocSys_\cM)\to \IndCoh(\LocSys_\cP),$$
is well-defined, see \secref{sss:! pull-back}.

\medskip

We also note that the morphism $\sfp_{\cP,\on{spec}}$ is schematic and proper. Hence, the functor
$$\sfp_{\cP,\on{spec}}^!:\IndCoh(\LocSys_\cG)\to \IndCoh(\LocSys_\cP),$$
right adjoint to $(\sfp_{\cP,\on{spec}})^{\IndCoh}_*$, is well-defined and is continuous, see again \secref{sss:! pull-back}.

\sssec{}

We let $\on{Nilp}^{\on{glob}}_\cP$ be the conical Zariski-closed subset of $\Sing(\LocSys_\cP)$ that corresponds to
pairs $(\sigma,A)$, where $\sigma$ is a $\cP$-local system, and $A$ is a horizontal section of
$\check\fp^*_\sigma$ that belongs to $\check\fm^*_\sigma\subset \check\fp^*_\sigma$, and is nilpotent as a section of
$\check\fm^*_\sigma$. 

\medskip

We consider the corresponding category 
$$\IndCoh_{\on{Nilp}^{\on{glob}}_\cP}(\LocSys_\cP)\subset \IndCoh(\LocSys_\cP).$$

\medskip

The following is shown in \cite[Propositions 13.2.6]{AG}:

\begin{lem}  \hfill

\smallskip

\noindent{\em(a)} The functor $\sfq_{\cP,\on{spec}}^{\IndCoh,*}:\IndCoh(\LocSys_\cM)\to \IndCoh(\LocSys_\cP)$
sends the subcategory 
$\IndCoh_{\on{Nilp}^{\on{glob}}_\cM}(\LocSys_\cM)$ to the subcategory 
$\IndCoh_{\on{Nilp}^{\on{glob}}_\cP}(\LocSys_\cP)$.

\smallskip

\noindent{\em(b)} The functor $(\sfp_{\cP,\on{spec}})^{\IndCoh}_*:\IndCoh(\LocSys_\cP)\to \IndCoh(\LocSys_\cG)$,
sends the subcategory $\IndCoh_{\on{Nilp}^{\on{glob}}_\cP}(\LocSys_\cP)$
to the subcategory $\IndCoh_{\on{Nilp}^{\on{glob}}_\cG}(\LocSys_\cG)$.

\end{lem}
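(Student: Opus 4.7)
The plan is to reduce both statements to the general functoriality of singular support under pullback and proper pushforward for quasi-smooth algebraic stacks, as developed in \cite[Sect.~7]{AG}. Recall that for a map $f:Y_1\to Y_2$ of quasi-smooth stacks, the codifferential gives a correspondence
$$\Sing(Y_1) \xleftarrow{\pi_1} Y_1 \underset{Y_2}{\times} \Sing(Y_2) \xrightarrow{\pi_2} \Sing(Y_2),$$
and the relevant principles are: $f^{\IndCoh,*}$ sends $\IndCoh_{\CN_2}(Y_2)$ into $\IndCoh_{\pi_1(\pi_2^{-1}(\CN_2))}(Y_1)$ when $f$ is of finite Tor-dimension, while $f^{\IndCoh}_*$ sends $\IndCoh_{\CN_1}(Y_1)$ into $\IndCoh_{\pi_2(\pi_1^{-1}(\CN_1))}(Y_2)$ when $f$ is proper. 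Using the description of $\Sing(\LocSys_\cG)$, $\Sing(\LocSys_\cP)$, $\Sing(\LocSys_\cM)$ from \corref{c:Arth}, the codifferentials are induced on cotangent fibers by the Lie-algebra maps $\cp\twoheadrightarrow\cm$ (dualizing to $\cm^*\hookrightarrow\cp^*$) for $\sfq_{\cP,\on{spec}}$, and $\cp\hookrightarrow\cg$ (dualizing to the restriction $\cg^*\twoheadrightarrow\cp^*$) for $\sfp_{\cP,\on{spec}}$.

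For part (a), $\pi_2^{-1}(\on{Nilp}^{\on{glob}}_\cM)$ consists of tuples $(\sigma_\cP,A)$ with $A$ a horizontal section of $\cm^*_{\sigma_\cM}$ that is pointwise nilpotent; its image under $\pi_1$ consists of such $(\sigma_\cP,A)$ with $A$ now viewed as a horizontal section of $\cp^*_{\sigma_\cP}$ lying in $\cm^*_{\sigma_\cP}$ and still nilpotent. By the definition of $\on{Nilp}^{\on{glob}}_\cP$ recalled just before the Lemma, this is exactly $\on{Nilp}^{\on{glob}}_\cP$, so (a) follows. For part (b), $\pi_1^{-1}(\on{Nilp}^{\on{glob}}_\cP)$ consists of pairs $(\sigma_\cP,A)$ with $A$ horizontal in $\cg^*_{\sigma_\cG}$, whose restriction $A|_{\cp}$ lies in $\cm^*\subset\cp^*$ (equivalently, $A$ vanishes on $\frn$) and is nilpotent as an element of $\cm^*$; the $\pi_2$-image simply forgets $\sigma_\cP$. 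So the content of (b) is that every such $A$ is in fact nilpotent as an element of $\cg^*$, i.e., lies in $\on{Nilp}^{\on{glob}}_\cG$.

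The main, and really only non-formal, step is this last reduction, which by horizontality of $A$ and $\cG$-equivariance of the Killing-form isomorphism $\cg\simeq\cg^*$ is the purely Lie-algebraic assertion: if $x\in\cp$ has nilpotent image $\bar{x}\in\cm$, then $x$ is nilpotent in $\cg$. I would argue via the Jordan--Chevalley decomposition $x=x_s+x_n$: since $\cp$ is an algebraic Lie subalgebra of $\cg$, both $x_s,x_n$ lie in $\cp$, and the algebraic Lie algebra surjection $\cp\twoheadrightarrow\cm$ sends Jordan decompositions to Jordan decompositions. Hence the image of $x_s$ in $\cm$ is the semisimple part of $\bar{x}$, which is $0$ by hypothesis; so $x_s\in\frn$. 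But $\frn$ consists of nilpotent elements while $x_s$ is semisimple, forcing $x_s=0$, and $x=x_n$ is nilpotent. The pointwise ``some $x\in X$'' versus ``all $x\in X$'' equivalence in the definition of nilpotency for horizontal sections is automatic, since horizontal transport is by conjugation and nilpotency is conjugation-invariant.
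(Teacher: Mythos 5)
The paper does not prove this lemma: it quotes it from \cite[Proposition 13.2.6]{AG}, so there is no internal argument to compare against. Your proof is a correct reconstruction of what that proposition establishes. You correctly identify that both parts reduce to functoriality of singular support under $*$-pullback and proper pushforward (\cite[Sect.~7]{AG}), together with the computation of the codifferential on $H^{-1}$ of the cotangent complexes, which on fiber data is induced by $\cm^*\hookrightarrow\cp^*$ for $\sfq_{\cP,\on{spec}}$ and by $\cg^*\twoheadrightarrow\cp^*$ for $\sfp_{\cP,\on{spec}}$. Part (a) then becomes definitional, since $\on{Nilp}^{\on{glob}}_\cP$ is by fiat the locus of horizontal sections landing in $\cm^*\subset\cp^*$ that are nilpotent; and you correctly isolate the one non-formal ingredient in part (b), namely that an element $x\in\cp$ with nilpotent image in $\cm$ is nilpotent in $\cg$.

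The Jordan--Chevalley argument you give for that last point is sound: $\cp$ is the Lie algebra of a closed algebraic subgroup $\cP\subset\cG$ which is self-normalizing, so $x\in\cp$ implies $x_s,x_n\in\cp$; the surjection $\cp\twoheadrightarrow\cm$ is the Lie-algebra map of the quotient by the unipotent radical, so it preserves Jordan decompositions; hence $\bar x_s=0$, so $x_s\in\frn$, and being simultaneously semisimple and contained in a nilpotent ideal forces $x_s=0$. Your remark about horizontal transport preserving nilpotency (so that the definition of the global nilpotent cone is insensitive to the choice of base point) is also the right thing to say. The only cosmetic caveat: the precise statements from \cite[Sect.~7]{AG} that you invoke require hypotheses on $f$ (quasi-smoothness for the pullback estimate, schematic properness for the pushforward estimate), both of which the paper has already recorded for $\sfq_{\cP,\on{spec}}$ and $\sfp_{\cP,\on{spec}}$, so this is fine but worth flagging explicitly.
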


\sssec{}

Hence, we obtain well-defined functors 
$$\sfq_{\cP,\on{spec}}^{\IndCoh,*}:\IndCoh_{\on{Nilp}^{\on{glob}}_\cM}(\LocSys_\cM)\to \IndCoh_{\on{Nilp}^{\on{glob}}_\cP}(\LocSys_\cP)$$ 
and 
$$(\sfp_{\cP,\on{spec}})^{\IndCoh}_*:\IndCoh_{\on{Nilp}^{\on{glob}}_\cP}(\LocSys_\cP)\to \IndCoh_{\on{Nilp}^{\on{glob}}_\cG}(\LocSys_\cG),$$
that admit (continuous) right adjoints
$$\IndCoh_{\on{Nilp}^{\on{glob}}_\cM}(\LocSys_\cM)\leftarrow \IndCoh_{\on{Nilp}^{\on{glob}}_\cP}
(\LocSys_\cP):(\sfq_{\cP,\on{spec}})_*^{\IndCoh}$$
and
$$\IndCoh_{\on{Nilp}^{\on{glob}}_\cP}(\LocSys_\cP)\leftarrow \IndCoh_{\on{Nilp}^{\on{glob}}_\cG}(\LocSys_\cG):\sfp_{\cP,\on{spec}}^!,$$
respectively.

\sssec{}

We define the spectral Eisenstein series functor
$$\Eis_{\cP,\on{spec}}:\IndCoh_{\on{Nilp}^{\on{glob}}_\cM}(\LocSys_\cM)\to \IndCoh_{\on{Nilp}^{\on{glob}}_\cG}(\LocSys_\cG)$$
as
$$\Eis_{\cP,\on{spec}}:=(\sfp_{\cP,\on{spec}})_*^{\IndCoh}\circ \sfq_{\cP,\on{spec}}^{\IndCoh,*}.$$

We introduce the spectral constant term functor 
$$\on{CT}_{\cP,\on{spec}}:\IndCoh_{\on{Nilp}^{\on{glob}}_\cG}(\LocSys_\cG)\to \IndCoh_{\on{Nilp}^{\on{glob}}_\cM}(\LocSys_\cM)$$
as
$$\on{CT}_{\cP,\on{spec}}:=(\sfq_{\cP,\on{spec}})_*^{\IndCoh}\circ \sfp_{\cP,\on{spec}}^!.$$

By construction, $\on{CT}_{\cP,\on{spec}}$ is the right adjoint of $\Eis_{\cP,\on{spec}}$.

\sssec{}

In addition to the adjoint pair
$$\Eis_{\cP,\on{spec}}:\IndCoh_{\on{Nilp}^{\on{glob}}_\cM}(\LocSys_\cM)\rightleftarrows \IndCoh_{\on{Nilp}^{\on{glob}}_\cG}(\LocSys_\cG):
\on{CT}_{\cP,\on{spec}}$$
we shall also consider the corresponding adjoint pair
$$\Eis_{\cP,\on{spec}}\circ \Xi_\cM:
\QCoh(\LocSys_\cM)\rightleftarrows \IndCoh_{\on{Nilp}^{\on{glob}}_\cG}(\LocSys_\cG):
\Psi_\cM\circ \on{CT}_{\cP,\on{spec}}.$$

In a certain sense the above two adjoint pairs end up retaining the same information. More precisely, we have
the following result of \cite[Corollary 13.3.10 and Theorem 13.3.6]{AG}:

\begin{prop}  \label{p:Eis generation}  \hfill

\smallskip

\noindent{\em(a)}
The essential images of the functors 
$$\Eis_{\cP,\on{spec}}\circ \Xi_\cM:\QCoh(\LocSys_\cM)\to \IndCoh_{\on{Nilp}^{\on{glob}}_\cG}(\LocSys_\cG)$$
for all parabolics $P$ (including $P=G$) generate $\IndCoh_{\on{Nilp}^{\on{glob}}_\cG}(\LocSys_\cG)$.

\smallskip

\noindent{\em(b)}
The essential images of the functors 
$$\Eis_{\cP,\on{spec}}\circ \Xi_\cM:\QCoh(\LocSys_\cM)\to \IndCoh_{\on{Nilp}^{\on{glob}}_\cG}(\LocSys_\cG)$$
for all \emph{proper} parabolics generate the full subcategory equal to the kernel of the restriction functor
$$\IndCoh_{\on{Nilp}^{\on{glob}}_\cG}(\LocSys_\cG)\to \IndCoh_{\on{Nilp}^{\on{glob}}_\cG}(\LocSys^{\on{irred}}_\cG)\simeq
\QCoh(\LocSys^{\on{irred}}_\cG).$$

\end{prop}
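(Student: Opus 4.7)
\medskip

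\textbf{Proof plan for \propref{p:Eis generation}.}
My plan is to reduce both parts to a combination of (i) a geometric fact about the nilpotent locus $\on{Nilp}^{\on{glob}}_\cG$, and (ii) a singular-support/compatibility calculation for the functor $\Eis_{\cP,\on{spec}}$. The first observation is that on the locus of irreducible local systems, the fiber of $\on{Nilp}^{\on{glob}}_\cG \subset \on{Arth}_\cG$ collapses to the zero section: indeed, for $\sigma \in \LocSys^{\on{irred}}_\cG$ any horizontal section of $\cg^*_\sigma$ lies in the center of the (reductive) centralizer of $\sigma$, hence is semisimple, and a nilpotent semisimple element is zero. Consequently $\IndCoh_{\on{Nilp}^{\on{glob}}_\cG}(\LocSys^{\on{irred}}_\cG)$ genuinely equals $\QCoh(\LocSys^{\on{irred}}_\cG)$, so part (b) is well-posed, and the restriction to $\LocSys^{\on{irred}}_\cG$ annihilates exactly the objects whose singular support lies off the zero section over the irreducible locus.

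\medskip

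Next I would establish the classical input underlying generation: every nilpotent element of $\cg^*$ lies in $\check{\fn}(\cP)^{\perp} \subset \cg^*$ for some proper parabolic $\cP \subset \cG$ (in fact, one may take $\cP = \cB$). Globalizing via \corref{c:Arth}, this implies that the maps $\Sing(\LocSys_\cP)\to \Sing(\LocSys_\cG)$ associated to $\sfp_{\cP,\on{spec}}$ collectively surject $\bigsqcup_\cP \on{Nilp}^{\on{glob}}_\cP \twoheadrightarrow \on{Nilp}^{\on{glob}}_\cG$, and the contribution of proper parabolics covers precisely $\on{Nilp}^{\on{glob}}_\cG$ minus the zero section. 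With this in hand, the strategy is: (a) $\Xi_\cG(\QCoh(\LocSys_\cG))$ generates the "zero section" part (which is the case $P=G$), and (b) the images of $\Eis_{\cP,\on{spec}}$ for proper $\cP$ fill in the rest.

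\medskip

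Concretely, let $\CC \subset \IndCoh_{\on{Nilp}^{\on{glob}}_\cG}(\LocSys_\cG)$ denote the cocomplete triangulated subcategory generated by $\Eis_{\cP,\on{spec}}\circ \Xi_\cM(\QCoh(\LocSys_\cM))$ for all $P$. Since $\IndCoh_{\on{Nilp}^{\on{glob}}_\cG}(\LocSys_\cG)$ is compactly generated by $\Coh_{\on{Nilp}^{\on{glob}}_\cG}(\LocSys_\cG)$, it suffices to show every $\CF \in \Coh_{\on{Nilp}^{\on{glob}}_\cG}(\LocSys_\cG)$ belongs to $\CC$. I would argue by induction on the codimension of $\on{sing.supp.}(\CF)$ in $\on{Nilp}^{\on{glob}}_\cG$. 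The base case (zero section) is handled by $P=G$ and \propref{p:perfect}. For the inductive step, using the surjectivity above pick a proper $\cP$ whose image contains a generic stratum of $\on{sing.supp.}(\CF)$; one then realizes $\CF$, up to an object of strictly smaller singular support, as a summand of $(\sfp_{\cP,\on{spec}})^{\IndCoh}_* \circ \sfq_{\cP,\on{spec}}^{\IndCoh,*}(\CF')$ for an appropriate $\CF' \in \IndCoh_{\on{Nilp}^{\on{glob}}_\cM}(\LocSys_\cM)$, and then further reduces $\CF'$ to the image of $\Xi_\cM$ by a second induction on the Levi. Part (b) follows from the same argument with $P$ restricted to proper parabolics, combined with the first paragraph's observation about the irreducible locus.

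\medskip

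The main obstacle is the inductive singular-support step: one must show that $(\sfp_{\cP,\on{spec}})^{\IndCoh}_*\circ \sfq_{\cP,\on{spec}}^{\IndCoh,*}$, when applied to objects coming from $\Xi_\cM$ (which on its own only sees the zero section of $\on{Nilp}^{\on{glob}}_\cM$), nevertheless produces objects whose singular supports sweep out all of the corresponding component of $\on{Nilp}^{\on{glob}}_\cG$. The point is that $\sfp_{\cP,\on{spec}}$ is \emph{not} smooth (and $\sfq_{\cP,\on{spec}}$ is only quasi-smooth, with genuinely derived fibers), so the pushforward picks up new singular directions along the co-normal to the fibers --- precisely the directions in $\check{\fn}(\cP)^{\perp}$. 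Making this precise requires a careful computation of the behavior of singular support under the pull-push for these quasi-smooth morphisms, which is the content of the relevant portion of \cite{AG} and is the technical heart of the argument.
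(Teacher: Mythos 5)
The paper does not itself prove this statement; it quotes it as \cite[Corollary 13.3.10 and Theorem 13.3.6]{AG}, so your sketch has to be measured against the argument there. Your two geometric inputs are the right ones: (i) over $\LocSys^{\on{irred}}_\cG$ the global nilpotent cone $\on{Nilp}^{\on{glob}}_\cG$ collapses to the zero section, because $H^0_\dr(X,\cg^*_\sigma)\cong\fz_\cg^*$ for irreducible $\sigma$, and central elements are semisimple; and (ii) the functor $\Eis_{\cP,\on{spec}}\circ\Xi_\cM$ produces objects whose singular support lies along the co-normal directions to the fibers of $\sfp_{\cP,\on{spec}}$, i.e.\ along $(\cg/\check\fp)^*$, and every nonzero nilpotent adjoint orbit meets the nilradical of some proper parabolic (the Jacobson--Morozov one attached to the orbit). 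These are indeed what drive the proof in \cite{AG}. One small quibble: you describe the kernel of the restriction to $\LocSys^{\on{irred}}_\cG$ as the objects whose singular support is off the zero section over the irreducible locus --- but by your own observation there are no such objects; the kernel consists of objects \emph{set-theoretically supported} on the reducible locus.

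The gap you flag at the end is genuine, and it is also where your framing goes astray. You propose to exhibit $\CF$, modulo something of strictly smaller singular support, as a \emph{direct summand} of some $(\sfp_{\cP,\on{spec}})^{\IndCoh}_* \circ \sfq_{\cP,\on{spec}}^{\IndCoh,*}(\CF')$. That is essentially never literally true, and it is not how the argument proceeds. The correct mechanism is the $(\Eis_{\cP,\on{spec}},\on{CT}_{\cP,\on{spec}})$-adjunction: to show generation one shows that the right orthogonal to the generated subcategory vanishes, i.e.\ that an object $\CF$ with $\Psi_\cG(\CF)=0$ and $\on{CT}_{\cP,\on{spec}}(\CF)=0$ for all proper $\cP$ must be zero. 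The singular support estimates you allude to then constrain the singular support of such $\CF$ to the zero section, which together with $\Psi_\cG(\CF)=0$ forces $\CF=0$. Your descending induction on codimension of singular support should be run through this orthogonality mechanism (equivalently, as an exact-triangle filtration built from the counits $\Eis_{\cP,\on{spec}}\on{CT}_{\cP,\on{spec}}(\CF)\to\CF$), not through summands.

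Finally, part (b) is not literally ``the same argument with $P$ restricted to proper parabolics.'' Part (a) allows $P=G$, whose contribution is all of $\Xi_\cG(\QCoh(\LocSys_\cG))$. To deduce (b) you need the extra fact that objects of $\Xi_\cG(\QCoh(\LocSys_\cG))$ set-theoretically supported on the complement of $\LocSys^{\on{irred}}_\cG$ are themselves generated by proper-parabolic Eisenstein images; this is an additional step (essentially the gap between Theorem 13.3.6 and Corollary 13.3.10 in \cite{AG}) and should not be elided.
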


\sssec{}   \label{sss:cond E}

We can now formulate Property $\on{Ei}$  (``Ei'' stands for Eisenstein) of the compatibility of the geometric 
Langlands functor for the group $G$ and its Levi subgroups. It is a particular case of Porperty $\on{Ei}^{\on{enh}}$ 
formulated in \secref{sss:cond E rat}.

\medskip

\noindent{\bf Property $\mathbf{Ei}$:} {\it We shall say that the functor $\BL_G$ satisfies \emph{Porperty $\on{Ei}$} if
the following diagram of functors commutes:
\begin{equation} \label{e:cond E}
\CD
\IndCoh_{\on{Nilp}^{\on{glob}}_\cM}(\LocSys_{\cM})  @>{\BL_M}>>  \Dmod(\Bun_M)  \\
@V{-\otimes \check\fl_{M,G}}VV @VV{-\otimes \fl_{M,G}}V  \\
\IndCoh_{\on{Nilp}^{\on{glob}}_\cM}(\LocSys_{\cM})  & &  \Dmod(\Bun_M)  \\
@V{\Eis_{\cP,\on{spec}}}VV    @VV{\Eis_P}V   \\
\IndCoh_{\on{Nilp}^{\on{glob}}_\cG}(\LocSys_\cG)  @>{\BL_G}>>  \Dmod(\Bun_G)
\endCD
\end{equation}
where $-\otimes \check\fl_{M,G}$ and $-\otimes \fl_{M,G}$ are auto-equivalences defined below.}

\medskip

The functor $-\otimes \check\fl_{M,G}$ appearing in the statement of Property $\on{Ei}$ 
is that given by tensor product by a line bundle $\check\fl_{M,G}$ on $\LocSys_\cM$ equal to
the pull-back under
$$\LocSys_\cM \overset{2\check{\rho}_P}\to \LocSys_{\BG_m}\to \Bun_{\BG_m}=\on{Pic}$$
of the line bundle on $\on{Pic}$ corresponding to the canonical line bundle $\omega^{\frac{1}{2}}_X$ on $X$,
and where $2\check\rho_P$ is the character $\cM\to \BG_m$  correspinding to the
determinant of the adjoint action on $\check\fp$.

\medskip

The functor $-\otimes \fl_{M,G}$ is given by tensor product by the (constant) D-module
on $\Bun_M$, which on the connected component $\Bun_M^{\check\mu}$ corresponding
to $\check\mu:M\to \BG_m$ is given by the cohomological shift by 
$\langle 2\rho_P,2(g-1)\check\rho_P-\check\mu\rangle$. 

\sssec{}

By adjunction, Property $\on{Ei}$ implies that the following diagram of functors commutes as well:
\begin{equation} \label{e:cond CT}
\CD
\IndCoh_{\on{Nilp}^{\on{glob}}_\cM}(\LocSys_{\cM})  @>{\BL_M}>>  \Dmod(\Bun_M)  \\
@A{-\otimes \check\fl_{M,G}^{-1}}AA @AA{-\otimes \fl_{M,G}^{-1}}A  \\
\IndCoh_{\on{Nilp}^{\on{glob}}_\cM}(\LocSys_{\cM})  & &  \Dmod(\Bun_M)  \\
@A{\on{CT}_{\cP,\on{spec}}}AA    @AA{\on{CT}_P}A   \\
\IndCoh_{\on{Nilp}^{\on{glob}}_\cG}(\LocSys_\cG)  @>{\BL_G}>>  \Dmod(\Bun_G)
\endCD
\end{equation}

\ssec{The spectral parabolic category}  \label{ss:spectral parabolic}

The goal of this subsection is to define a spectral counterpart  of the category $\on{I}(G,P)$
and the functors $\Eis_P^{\on{enh}}$ and $\on{CT}_P^{\on{enh}}$.

\sssec{}

Consider the groupoid 
$$
\xy
(-15,0)*+{\LocSys_\cP}="X";
(15,0)*+{\LocSys_\cP}="Y";
(0,15)*+{\LocSys_\cP\underset{\LocSys_\cG}\times \LocSys_\cP}="Z";
{\ar@{->}_{p_1} "Z";"X"};
{\ar@{->}^{p_2}  "Z";"Y"};
\endxy
$$
over $\LocSys_\cP$.

\medskip

Since the map $p_i$ ($i=1,2$) is schematic and proper, we have an adjoint pair of (continuous) functors
$$(p_i)^{\IndCoh}_*: \IndCoh(\LocSys_\cP\underset{\LocSys_\cG}\times \LocSys_\cP)\to \IndCoh(\LocSys_\cP):p_i^!.$$

\sssec{}

We let 
\begin{equation} \label{e:cat on diag}
\IndCoh(\LocSys_\cP\underset{\LocSys_\cG}\times \LocSys_\cP)_\Delta \hookrightarrow
\IndCoh(\LocSys_\cP\underset{\LocSys_\cG}\times \LocSys_\cP)
\end{equation}
denote the full subcategory consisting of objects that are set-theoretically supported on the image
of the diagonal embedding
$$\LocSys_\cP\to \LocSys_\cP\underset{\LocSys_\cG}\times \LocSys_\cP.$$

\medskip

We let $(p_{i,\Delta})^{\IndCoh}_*$ denote the restriction of $(p_i)^{\IndCoh}_*$
to the subcategory \eqref{e:cat on diag}. We let $p_{i,\Delta}^!$ denote the right adjoint of $(p_{i,\Delta})^{\IndCoh}_*$, which is
isomorphic to the composition of $p_i^!$ and the right adjoint to the embedding \eqref{e:cat on diag}.

\sssec{}

The structure of groupoid on
$\LocSys_\cP\underset{\LocSys_\cG}\times \LocSys_\cP$ endows the endo-functor
$$(p_{2,\Delta})^{\IndCoh}_*\circ p_{1,\Delta}^!$$
of $\IndCoh(\LocSys_\cP)$
with a structure of monad. We shall denote this monad by $\sF_{\cP}$. 

\medskip

We have:

\begin{lem} \label{l:monad preserves}
Let $\CN\subset \Sing(\LocSys_\cP)$ be any conical Zariski-closed subset. Then the functor
$\sF_{\cP}$ sends the full subcategory
$$\IndCoh_\CN(\LocSys_\cP)\subset \IndCoh(\LocSys_\cP)$$
to itself. 
\end{lem}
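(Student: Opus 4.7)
My plan is a direct computation using the behavior of singular support under $!$-pullback and proper $*$-pushforward for morphisms of quasi-smooth algebraic stacks, as developed in \cite{AG}. First, I would observe that $W := \LocSys_{\cP}\underset{\LocSys_{\cG}}\times\LocSys_{\cP}$ is quasi-smooth (being a fiber product of three quasi-smooth stacks), and that both projections $p_1, p_2 : W \to \LocSys_{\cP}$ are schematic and proper (they are base changes of the schematic proper map $\sfp_{\cP,\on{spec}}$); this puts the singular-support formalism at our disposal. For $\CF\in \IndCoh_{\CN}(\LocSys_{\cP})$, the singular support of $p_1^!(\CF)$ at a point $(y_1,y_2)\in W$ is contained in the image of the singular codifferential $(p_1^*)_{(y_1,y_2)} : H^{-1}(T^*_{y_1}\LocSys_{\cP})\to H^{-1}(T^*_{(y_1,y_2)}W)$ applied to $\CN|_{y_1}$. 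Passing to the full subcategory of objects set-theoretically supported on the diagonal does not enlarge singular support, so only diagonal points $(y,y)$ remain relevant. Invoking the dual bound for proper pushforward, the singular support of $\sF_{\cP}(\CF) = (p_{2,\Delta})^{\IndCoh}_*(p_{1,\Delta}^!(\CF))$ at $y\in \LocSys_{\cP}$ is contained in
$$\{\eta\in H^{-1}(T^*_y\LocSys_{\cP}) \,\mid\, p_2^*(\eta)\in p_1^*(\CN|_y) \text{ in } H^{-1}(T^*_{(y,y)}W)\}.$$

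The remainder of the argument is purely linear-algebraic. Writing $p := \sfp_{\cP,\on{spec}}$, the fiber-product presentation of the cotangent complex at the diagonal gives
$$T^*_{(y,y)}W \;\simeq\; \on{cofib}\bigl(T^*_{p(y)}\LocSys_{\cG}\xrightarrow{(p^*,\,-p^*)} T^*_y\LocSys_{\cP}\oplus T^*_y\LocSys_{\cP}\bigr).$$
The equation $p_2^*(\eta)=p_1^*(\xi)$ in $H^{-1}(T^*_{(y,y)}W)$ translates to $(-\xi,\eta)$ lying in the image of $(p^*,-p^*)$, which forces $\eta=\xi$ and $\eta\in \on{image}\bigl(p^* : H^{-1}(T^*_{p(y)}\LocSys_{\cG})\to H^{-1}(T^*_y\LocSys_{\cP})\bigr)$. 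Combined with $\xi\in \CN|_y$, this yields $\eta\in\CN|_y$. Hence the singular support of $\sF_{\cP}(\CF)$ is contained in $\CN$, so $\sF_{\cP}(\CF)\in\IndCoh_{\CN}(\LocSys_{\cP})$, as desired.

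The main technical obstacle is marshalling the singular-support formalism on algebraic stacks in the quasi-smooth setting: specifically, the containment statements for $!$-pullback and proper pushforward, and the compatibility of singular support with restriction to full subcategories defined by set-theoretic support conditions. All three are established in \cite{AG} (the third essentially tautologically), so after invoking them the diagonal computation above is elementary. Note that conicity of $\CN$ enters only through the fact that the singular support picture is inherently $\BG_m$-equivariant; no further use of it is required.
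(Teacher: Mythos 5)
The main problem with your proposal is the very first step: the fiber product $W := \LocSys_{\cP}\underset{\LocSys_{\cG}}\times\LocSys_{\cP}$ is \emph{not} quasi-smooth, and quasi-smoothness is not inherited by fiber products of quasi-smooth stacks. Indeed, at a diagonal point $(y,y)$ with $p(y)=z$, the cofiber formula you write down gives
$$H^{-2}\bigl(T^*_{(y,y)}W\bigr)\;\simeq\;\ker\Bigl(p^* : H^{-1}(T^*_z\LocSys_{\cG})\to H^{-1}(T^*_y\LocSys_{\cP})\Bigr),$$
and this kernel is generally nonzero. Concretely, $H^{-1}(T^*_z\LocSys_{\cG})=H^0_{\on{dR}}(X,\cg^*_z)$ and the map to $H^0_{\on{dR}}(X,\check\fp^*_y)$ is induced by the surjection $\cg^*\twoheadrightarrow\check\fp^*$; its kernel is $H^0_{\on{dR}}(X,\check\fp^{\perp}_y)$, which is nonzero already for $y$ the trivial $\cP$-local system (where it equals $\check\fp^{\perp}$ itself). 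Since $\Sing(W)$ is only defined for quasi-smooth stacks, the singular-support bounds you invoke for $p_1^!$ and for proper $*$-pushforward along $p_{2,\Delta}$ are not available, and your computation in $H^{-1}(T^*_{(y,y)}W)$ has no meaning in the framework of \cite{AG}. (For the same reason, $\sfp_{\cP,\on{spec}}$ is not a quasi-smooth morphism: its relative cotangent complex has cohomology in degree $-2$.) This is a genuine gap, not a matter of supplying references: the entire argument lives on $W$, which does not carry a singular-support theory.

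A correct argument avoids $\Sing(W)$ altogether. The category $\IndCoh(W)_\Delta$ identifies with modules over the universal enveloping algebra $U(\CT_{\cP/\cG})$ of the relative tangent Lie algebroid $\CT_{\cP/\cG}$ of $\sfp_{\cP,\on{spec}}$ inside $\IndCoh(\LocSys_{\cP})$, and under this identification $\sF_{\cP}(\CF)\simeq U(\CT_{\cP/\cG})\otimes_{\CO}\CF$. The PBW filtration on $U(\CT_{\cP/\cG})$ has associated graded $\Sym^n(\CT_{\cP/\cG})$, which are perfect complexes on $\LocSys_{\cP}$ because $\sfp_{\cP,\on{spec}}$ is locally almost of finite type. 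Since $\IndCoh_{\CN}(\LocSys_{\cP})$ is a $\QCoh(\LocSys_{\cP})$-module subcategory of $\IndCoh(\LocSys_{\cP})$ closed under colimits, tensoring with each $\Sym^n(\CT_{\cP/\cG})$ preserves it, and therefore so does the filtered colimit $\sF_{\cP}(\CF)$. This argument makes no reference to the geometry of $W$ beyond its formal completion along the diagonal and applies to an arbitrary map of quasi-smooth stacks, in line with the remark following the lemma. Your diagonal linear algebra is correct as far as it goes, and would be the right computation if $W$ were quasi-smooth; but since it isn't, that computation cannot be the basis of the proof.
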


\noindent (The lemma holds more generally when $\LocSys_\cG$ and $\LocSys_\cP$ are replaced by 
arbitrary quasi-smooth algebraic stacks.) 

\sssec{}  \label{sss:monad and Hecke}

By construction, the action of the monad $\sF_{\cP}$ on the category $\IndCoh(\LocSys_\cP)$ 
commutes with the action of the (symmetric) monoidal category $\QCoh(\LocSys_\cG)$, where the latter 
acts on $\IndCoh(\LocSys_\cP)$ via the (symmetric) monoidal functor $$\sfp_{\cP,\on{spec}}^*:\QCoh(\LocSys_\cG)\to
\QCoh(\LocSys_\cP)$$ and the canonical action of $\QCoh(\LocSys_\cP)$ on $\IndCoh(\LocSys_\cP)$.

\sssec{}

We consider the category $$\sF_{\cP}\mod(\IndCoh(\LocSys_\cP))$$ of $\sF_{\cP}$-modules in $\IndCoh(\LocSys_\cP)$.
We let
$$\ind_{\sF_{\cP}}:\IndCoh(\LocSys_\cP)\rightleftarrows \sF_{\cP}\mod(\IndCoh(\LocSys_\cP)):\oblv_{\sF_{\cP}}$$
be the corresponding adjoint pair of forgetful and induction functors.

\medskip

By \lemref{l:monad preserves} we also have well-defined full subcategories
\begin{multline*}
\sF_{\cP}\mod(\QCoh(\LocSys_\cP))\subset \sF_{\cP}\mod(\IndCoh_{\on{Nilp}^{\on{glob}}_\cP}(\LocSys_\cP))\subset \\
\subset \sF_{\cP}\mod(\IndCoh(\LocSys_\cP))
\end{multline*}
and the functors
$$\ind_{\sF_{\cP}}:\QCoh(\LocSys_\cP)\rightleftarrows \sF_{\cP}\mod(\QCoh(\LocSys_\cP)):\oblv_{\sF_{\cP}}$$
and
$$\ind_{\sF_{\cP}}:\IndCoh_{\on{Nilp}^{\on{glob}}_\cP}(\LocSys_\cP)\rightleftarrows 
\sF_{\cP}\mod(\IndCoh_{\on{Nilp}^{\on{glob}}_\cP}(\LocSys_\cP)):\oblv_{\sF_{\cP}}$$
that commute with the corresponding fully faithful embeddings and their right adjoints, denoted
$(\Xi_\cP,\Psi_\cP)$, respectively. 

\medskip

In particular, we have the following commutative daigarms
$$
\CD
\IndCoh_{\on{Nilp}^{\on{glob}}_\cP}(\LocSys_\cP)  @>{\ind_{\sF_{\cP}}}>>
\sF_{\cP}\mod(\IndCoh_{\on{Nilp}^{\on{glob}}_\cP}(\LocSys_\cP))  \\
@A{\Xi_\cP}AA    @AA{\Xi_\cP}A  \\
\QCoh(\LocSys_\cP)  @>{\ind_{\sF_{\cP}}}>>
\sF_{\cP}\mod(\QCoh(\LocSys_\cP)) 
\endCD
$$
and
$$
\CD
\IndCoh_{\on{Nilp}^{\on{glob}}_\cP}(\LocSys_\cP)  @>{\ind_{\sF_{\cP}}}>>
\sF_{\cP}\mod(\IndCoh_{\on{Nilp}^{\on{glob}}_\cP}(\LocSys_\cP))  \\
@V{\Psi_\cP}VV    @VV{\Psi_\cP}V  \\
\QCoh(\LocSys_\cP)  @>{\ind_{\sF_{\cP}}}>>
\sF_{\cP}\mod(\QCoh(\LocSys_\cP)). 
\endCD
$$

\medskip

Finally, it follows from \secref{sss:monad and Hecke} that the category $\sF_{\cP}\mod(\IndCoh_{\on{Nilp}^{\on{glob}}_\cP}(\LocSys_\cP))$
is naturally acted on by the monoidal category $\QCoh(\LocSys_\cG)$, and the functors $\ind_{\sF_{\cP}}$ and $\oblv_{\sF_{\cP}}$
commute with this action.

\sssec{}

Consider again the functors
$$(\sfp_{\cP,\on{spec}})^{\IndCoh}_*:
\IndCoh_{\on{Nilp}^{\on{glob}}_\cP}(\LocSys_\cP)\rightleftarrows 
\IndCoh_{\on{Nilp}^{\on{glob}}_\cG}(\LocSys_\cG): \sfp_{\cP,\on{spec}}^!.$$

It follows from the definitions that the functor $\sfp_{\cP,\on{spec}}^!$
canonically factors as a composition
\begin{multline*}
\IndCoh_{\on{Nilp}^{\on{glob}}_\cG}(\LocSys_\cG)\to
\sF_{\cP}\mod(\IndCoh_{\on{Nilp}^{\on{glob}}_\cP}(\LocSys_\cP))\overset{\oblv_{\sF_{\cP}}}\longrightarrow \\
\to \IndCoh_{\on{Nilp}^{\on{glob}}_\cP}(\LocSys_\cP).
\end{multline*}

We denote the resulting functor
$$\IndCoh_{\on{Nilp}^{\on{glob}}_\cG}(\LocSys_\cG)\to
\sF_{\cP}\mod(\IndCoh_{\on{Nilp}^{\on{glob}}_\cP}(\LocSys_\cP))$$
by $\on{CT}^{\on{enh}}_{\cP,\on{spec}}$.

\sssec{}

It follows formally from the Barr-Beck-Lurie theorem (see \cite[Proposition 3.1.1]{DG}) that there exists a canonically defined functor,
to be denoted $\Eis^{\on{enh}}_{\cP,\on{spec}}$, 
$$\sF_{\cP}\mod(\IndCoh_{\on{Nilp}^{\on{glob}}_\cP}(\LocSys_{\cP}))\to \IndCoh_{\on{Nilp}^{\on{glob}}_\cG}(\LocSys_\cG)$$
equipped with an isomorphism
$$\Eis^{\on{enh}}_{\cP,\on{spec}}\circ \ind_{\sF_{\cP}}\simeq 
(\sfp_{\cP,\on{spec}})^{\IndCoh}_*.$$

Furthermore, the functor $\Eis^{\on{enh}}_{\cP,\on{spec}}$ is the \emph{left} adjoint of
$\on{CT}^{\on{enh}}_{\cP,\on{spec}}$.

\sssec{}

By construction, the functors $\on{CT}^{\on{enh}}_{\cP,\on{spec}}$ and $\Eis^{\on{enh}}_{\cP,\on{spec}}$
intertwine the monoidal actions of $\QCoh(\LocSys_\cG)$ on $\sF_{\cP}\mod(\IndCoh_{\on{Nilp}^{\on{glob}}_\cP}(\LocSys_{\cP}))$
and $\IndCoh_{\on{Nilp}^{\on{glob}}_\cG}(\LocSys_\cG)$, respectively.

\sssec{}

We proclaim the category $\sF_{\cP}\mod(\IndCoh_{\on{Nilp}^{\on{glob}}_\cP}(\LocSys_{\cP}))$, equipped with the adjoint functors
$$\Eis^{\on{enh}}_{\cP,\on{spec}}:\sF_{\cP}\mod(\IndCoh_{\on{Nilp}^{\on{glob}}_\cP}(\LocSys_{\cP}))\rightleftarrows
\IndCoh_{\on{Nilp}^{\on{glob}}_\cG}(\LocSys_\cG):\on{CT}^{\on{enh}}_{\cP,\on{spec}}$$
to be the spectral counterpart of the category $\on{I}(G,P)$ equipped with the adjoint functors
$$\Eis^{\on{enh}}:\on{I}(G,P)\rightleftarrows \Dmod(\Bun_G):\on{CT}_P^{\on{enh}}.$$

\ssec{Compatibility of Langlands correspondence with parabolic induction}

For the duration of this subsection we will assume the validity of \conjref{c:GL} 
for the reductive group $M$. In particular, this is unconditional for $P=B$, in which case $M$ is a torus,
and \conjref{c:GL} amounts to Fourier-Mukai transform.

\medskip

The key observation is that although the categories $$\on{I}(G,P) \text{ and }
\sF_{\cP}\mod(\IndCoh_{\on{Nilp}^{\on{glob}}_\cP}(\LocSys_\cP))$$ cannot be recovered purely in terms of the reductive
group $M$ (i.e., we need to know how it is realized as a Levi of $G$), this additional $G$-information is manageable, and so we
can relate these categories by just knowing Langlands correspondence for $M$. 

\sssec{}

We have:


\begin{qthm}  \label{t:principal series}  \hfill

\smallskip

\noindent{\em(a)}
There exists a canonically defined equivalence of categories 
$$\BL_P:\sF_{\cP}\mod(\IndCoh_{\on{Nilp}^{\on{glob}}_\cP}(\LocSys_\cP))\to \on{I}(G,P)$$
that makes the following diagram commute:
\begin{equation} \label{e:principal series}
\CD
\IndCoh_{\on{Nilp}^{\on{glob}}_\cM}(\LocSys_\cM) @>{\BL_M}>> \Dmod(\Bun_M) \\
@V{-\otimes \check\fl_{M,G}}VV  @VV{-\otimes \fl_{M,G}}V   \\
\IndCoh_{\on{Nilp}^{\on{glob}}_\cM}(\LocSys_\cM) & & \Dmod(\Bun_M) \\
@V{\ind_{\sF_{\cP}}\circ (\sfq_{\cP,\on{spec}})^*}VV   @VV{(\imath_M)_\dagger}V  \\
\sF_{\cP}\mod(\IndCoh_{\on{Nilp}^{\on{glob}}_\cP}(\LocSys_\cP))  @>{\BL_P}>> \on{I}(G,P),
\endCD
\end{equation}
where $-\otimes \check\fl_{M,G}$ and $-\otimes \fl_{M,G}$ are the auto-equivalences defined in \secref{sss:cond E}.

\smallskip

\noindent{\em(b)} The equivalence $\BL_P$ is compatible with the action of the category $\Rep(\cG)_{\Ran(X)}$,
where

\begin{itemize}

\item $\Rep(\cG)_{\Ran(X)}$ acts on $\sF_{\cP}\mod(\IndCoh_{\on{Nilp}^{\on{glob}}_\cP}(\LocSys_\cP))$ via
the symmetric monoidal functor
$$\on{Loc}_{\cG,\on{spec}}:\Rep(\cG)_{\Ran(X)}\to \QCoh(\LocSys_\cG);$$

\item $\Rep(\cG)_{\Ran(X)}$ acts on $\on{I}(G,P)$ as in \secref{sss:Hecke action on PS}.

\end{itemize}

\end{qthm}

\sssec{}

In the case of $P=B$, Quasi-Theorem \ref{t:principal series} is work-in-progress by S.~Raskin. The idea of the proof,
applicable to any $P$, is the following: 

\medskip

The composite functors 
\begin{equation} \label{e:parabolic monad 1}
\IndCoh_{\on{Nilp}^{\on{glob}}_\cM}(\LocSys_\cM)\to \sF_{\cP}\mod(\IndCoh_{\on{Nilp}^{\on{glob}}_\cP}(\LocSys_\cP))
\end{equation}
and 
\begin{equation} \label{e:parabolic monad 2}
\Dmod(\Bun_M)\to  \on{I}(G,P)
\end{equation}
appearing in \eqref{e:principal series} admit continuous and consertavative right adjoints, which, up to
twists by line bundles, are given by
$$(\sfq_{\cP,\on{spec}})^{\IndCoh}_* \circ \oblv_{\sF(P)} \text{ and } \iota_M^\dagger,$$
respectively. Hence,
by the Barr-Beck-Lurie theorem, the statement of Quasi-Theorem  \ref{t:principal series} amounts to comparing the
monads corresponding to the composition of the functors in \eqref{e:parabolic monad 1} and \eqref{e:parabolic monad 2}, 
and their respective right adjoints. 

\medskip

One shows that the monad on the geometric side, i.e., $\Dmod(\Bun_M)$, is given by the action of an algebra object
in the monoidal category $\Dmod(\on{Hecke}(M)_{\Ran(X)})$ that comes via the functor 
$\on{Sat}(G)_{\Ran(X)}$ from a canonically defined algebra object of the monoidal category 
$\IndCoh(\on{Hecke}(\cG,\on{spec})^{\on{loc}}_{\Ran(X)})$, see \secref{sss:Hecke spec Ran}.
\footnote{We emphasize that the above algebra object of $\IndCoh(\on{Hecke}(\cG,\on{spec})^{\on{loc}}_{\Ran(X)})$ 
\emph{does not} come from an algebra object of $\Rep(\cG)_{\Ran(X)}$ via the functor $\to$ in \eqref{e:small to big Hecke},
so here one really needs to use the full derived Satake equivalence for the group $M$.}

\medskip

One then uses Bezrukavnikov's theory of \cite{Bez} that describes various
categories of D-modules on the affine Grassmannian in terms of the Langlands dual group to
match the resulting monad with one appearing on the spectral side.

\sssec{}    \label{sss:cond E rat}

We can now state Property $\on{Ei}^{\on{enh}}$ of the geometric Langlands functor $\BL_G$ in \conjref{c:GL}.

\medskip

\noindent{\bf Property $\mathbf{Ei}^{\mathbf{enh}}$:} {\it We shall say that the functor $\BL_G$ satisfies
\emph{Property $\on{Ei}^{\on{enh}}$} if the following diagram of functors
commutes:
\begin{equation} \label{e:cond E rat}
\CD
\sF_{\cP}\mod(\IndCoh_{\on{Nilp}^{\on{glob}}_\cP}(\LocSys_\cP)) @>{\BL_P}>>  \on{I}(G,P)   \\
@V{\Eis^{\on{enh}}_{\cP,\on{spec}}}VV    @VV{\Eis_P^{\on{enh}}}V   \\
\IndCoh_{\on{Nilp}^{\on{glob}}_\cG}(\LocSys_\cG)  @>{\BL_G}>>  \Dmod(\Bun_G).
\endCD
\end{equation}}

\sssec{}

Note that by passing to right adjoint functors in \eqref{e:cond E rat} we obtain the following commutative diagram

\begin{equation} \label{e:CT spec rat}
\CD
\sF_{\cP}\mod(\IndCoh_{\on{Nilp}^{\on{glob}}_\cP}(\LocSys_\cP)) @>{\BL_P}>>  \on{I}(G,P)   \\
@A{\on{CT}^{\on{enh}}_{\cP,\on{spec}}}AA    @AA{\on{CT}_P^{\on{enh}}}A   \\
\IndCoh_{\on{Nilp}^{\on{glob}}_\cG}(\LocSys_\cG)  @>{\BL_G}>>  \Dmod(\Bun_G).
\endCD
\end{equation}

\sssec{}

Finally, we note that Property $\on{Ei}$ stated in \secref{sss:cond E} is a formal consequence of
Property $\on{Ei}^{\on{enh}}$: the commutative diagram \eqref{e:cond E} is obtained by
concatenating \eqref{e:cond E rat} and \eqref{e:principal series}.

\ssec{Eisenstein and constant term compatibility}

Let now $P$ and $P'$ be two parabolic subgroups, and let us assume the validity of \conjref{c:GL} for
the Levi quotient $M'$ as well. 

\sssec{}

By concatenating diagrams \eqref{e:cond E rat} (for $P'$) and 
\eqref{e:CT spec rat} (for$P$) we obtain the following commutative diagram:

\begin{equation} \label{e:CT and Eis}
\CD
\sF_{\cP}\mod(\IndCoh_{\on{Nilp}^{\on{glob}}_\cP}(\LocSys_\cP))  @>{\BL_P}>>  \on{I}(G,P)   \\
@A{\on{CT}^{\on{enh}}_{\cP,\on{spec}}}AA      @AA{\on{CT}_P^{\on{enh}}}A    \\
\IndCoh_{\on{Nilp}^{\on{glob}}_\cG}(\LocSys_\cG)    & &   \Dmod(\Bun_G) \\
@A{\Eis^{\on{enh}}_{\cP',\on{spec}}}AA    @AA{\Eis_{P'}^{\on{enh}}}A   \\
\sF_{\cP'}\mod(\IndCoh_{\on{Nilp}_{\on{glob},\cP'}}(\LocSys_{\cP'}))  @>{\BL_{P'}}>>  \on{I}(G,P')
\endCD
\end{equation}

\medskip

We have (again, assuming the validity of \conjref{c:GL} for $M$ and $M'$):

\begin{qthm} \label{t:CT and Eis}
The diagram \eqref{e:CT and Eis} commutes unconditionally (i.e., without assuming the validity of
\conjref{c:GL} for $G$).
\end{qthm}


\begin{rem}

One shows that both functors in \eqref{e:CT and Eis} corresponding to the 
vertical arrows admit natural filtrations indexed by the poset
$$W_M\backslash W/W_{M'},$$
where $W$ is the Weyl group of $G$, and $W_M$ and $W_{M'}$ are the Weyl groups of
$M$ and $M'$, respectively. 

\medskip

In order to prove Quasi-Theorem \ref{t:CT and Eis}, one needs to identify the corresponding subquotients
on both sides (via the equivalence of Quasi-Theorem \ref{t:principal series} for $M$ and $M'$), and then
show that these subquotients glue in the same way on both sides. 

\medskip

For $G=GL_2$ (and when $M=M'=T$) the first step follows easily from Quasi-Theorem \ref{t:principal series},
and the second step is an explicit calculation of a class in an appropriate $\Ext^1$ group. 

\end{rem}
 
\section{The degenerate Whittaker model}     \label{s:deg Whit}

This section develops a variant of the category $\Whit(G,G)$, denoted $\Whit(G,P)$ (for a fixed parabolic $P$), 
where we impose an equivariance condition with respect to a character of $N$, which is no longer non-degenerate, but 
is trivial on $N(P)$ and non-degenerate on $N(M)$, where $N(M)=N\cap M$ is the unipotent radical of the Borel subgroup
of $M$.  

\medskip

The reason that $\Whit(G,P)$ is necessary to consider is that these categories, when $P$ runs through the poset of
standard parabolics, comprise the extended Whittaker category introduced in the next section, and which will be of central importance
for the proof of \conjref{c:GL}.

\medskip

That said, we should remark that the present section does not contain any substantially new ideas. Furthermore, 
the material discussed here is relevant only for groups of semi-simple rank $>1$, because the case $P=G$ is covered by
\secref{s:Whit}, and in the case $P=B$ we have $\Whit(G,P)=\on{I}(G,B)$. So the reader may prefer to skip this section 
on the first pass. 

\ssec{Degenerate Whittaker categories}

The degenerate Whittaker category $\on{Whit}(G,P)$ defined in this subsection is the geometric counterpart of the space 
of functions on the double quotient $$Z^0_M(K)\backslash G(\BA)/G(\BO)$$ that are equivariant with respect to $N(\BA)$
against a character that factors via the surjection $N(\BA)\to N(M)(\BA)$ and a non-degenerate
character of $N(M)(\BA)$, trivial on $N(M)(K)$. 

\sssec{}

We define the prestack $\CQ_{G,P}$ in a way similar to $\CQ_{G,G}$. It classifies the data of
$$(\CP_G,U,\alpha,\gamma),$$
where $(\CP_G,U,\alpha)$ has the same meaning as for $\CQ_{G,G}$ (i.e., it defines a point of
$\Bun_G^{B\on{-gen}}$), but $\gamma$ is now an identification of bundles with respect to the torus
$T/Z^0_M$, one bundle being induced from $\CP_{T,U}$, and the other from $\check\rho(\omega_X)$.

\medskip

Equivalently, $\CQ_{G,P}$ is the quotient of $\CQ_Q$ by the action of $\bMaps(X,Z^0_M)^{\on{gen}}$.

\medskip

(Note that when $G$ has a connected center, the data of $\gamma$ amounts to an isomorphism
$\alpha_i(\CP_{T,U})\simeq \omega_X$ for every simple root $\alpha_i$ of $M$.)

\medskip

A choice of a generic trivialization of $\omega^{\frac{1}{2}}_X$ identifies the 
groupoid on $k$-points of $\CQ_{G,P}$ with the 
double quotient
$$Z_M^0(K)\cdot N(K)\backslash G(\BA)/G(\BO).$$

\medskip

By construction, if $P=G$, we have $\CQ_{G,P}=\CQ_{G,G}$ (so the notation is consistent). When
$P=B$, we have $\CQ_{G,P}=\Bun_G^{B\on{-gen}}$. 

\medskip

We let $\fr_{G,P}$ denote the forgetful map  $\CQ_{G,P}\to \Bun_G$. 

\sssec{}

The groupoid $\bN$ acting $\left(\Bun_G^{B\on{-gen}}\times \Ran(X)\right){}_{\on{good}}$ gives rise to a groupoid that 
we denote $\bN_{\CQ_{G,P}}$ over $\left(\CQ_{G,P}\times \Ran(X)\right){}_{\on{good}}$ so that the diagram
$$
\CD
\left(\CQ_{G,P}\times \Ran(X)\right){}_{\on{good}} @<{p_1}<<  \bN_{\CQ_{G,P}} @>{p_2}>> \left(\CQ_{G,P}\times \Ran(X)\right){}_{\on{good}}  \\
@VVV   @VVV  @VVV  \\
\left(\Bun_G^{B\on{-gen}}\times \Ran(X)\right){}_{\on{good}} @<{p_1}<<  \bN @>{p_2}>> \left(\Bun_G^{B\on{-gen}}\times \Ran(X)\right){}_{\on{good}}
\endCD
$$
is Cartesian. 

\medskip

The groupoid $\bN_{\CQ_{G,P}}$ is endowed with a canonically defined character that we denote
$\chi_P$. The definition of $\chi_P$ is similar to that of $\chi$ with the difference that we only use
the simple roots that lie in $M$.

\sssec{}

We consider the twisted $\bN_{\CQ_{G,P}}$-equivariant category of $\Dmod\left(\left(\CQ_{G,P}\times \Ran(X)\right){}_{\on{good}}\right)$, denoted 
$\Dmod\left(\left(\CQ_{G,P}\times \Ran(X)\right){}_{\on{good}}\right)^{\bN_{\CQ_{G,P}},\chi_P}$.

\medskip

As in \propref{p:embed Whit ff Ran}, the forgetful functor 
$$\Dmod\left(\left(\CQ_{G,P}\times \Ran(X)\right){}_{\on{good}}\right)^{\bN_{\CQ_{G,P}},\chi_P}\to
\Dmod\left(\left(\CQ_{G,P}\times \Ran(X)\right){}_{\on{good}}\right)$$
is fully faithful.

\sssec{}

We define the \emph{degenerate Whittaker category} $\on{Whit}(G,P)$ to be the full subcategory of $\Dmod(\CQ_{G,P})$
equal to the preimage of
$$\Dmod\left(\left(\CQ_{G,P}\times \Ran(X)\right){}_{\on{good}}\right)^{\bN_{\CQ_{G,P}},\chi_P}\subset
\Dmod\left(\left(\CQ_{G,P}\times \Ran(X)\right){}_{\on{good}}\right)$$
under the pull-back functor
$$\Dmod(\CQ_{G,P})\to \Dmod\left(\left(\CQ_{G,P}\times \Ran(X)\right){}_{\on{good}}\right).$$

In other words,
\begin{multline*}
\on{Whit}(G,P):=\\
=\Dmod(\CQ_{G,P})\underset{\Dmod\left(\left(\CQ_{G,P}\times \Ran(X)\right){}_{\on{good}}\right)}\times
\Dmod\left(\left(\CQ_{G,P}\times \Ran(X)\right){}_{\on{good}}\right)^{\bN_{\CQ_{G,P}},\chi_P}.
\end{multline*}

\medskip

Note that for $P=G$ we recover the category $\on{Whit}(G,G)$; for $P=B$, we recover the category
$\on{I}(G,B)$. 

\sssec{}

As in the case of $\on{Whit}(G,G)$, the (fully faithful) forgetful functor 
$$\on{Whit}(G,P)\to \Dmod(\CQ_{G,P})$$
admits a right adjoint that we denote $\on{Av}^{\bN,\chi_P}$. 

\medskip

As in the case of $\on{Whit}(G,G)$, we have a canonical action of the monoidal
category $\Rep(\cG)_{\Ran(X)}$ on $\Dmod(\CQ_{G,P})$, and this action
preserves the full subcategory
$$\on{Whit}(G,P)\subset \Dmod(\CQ_{G,P}).$$

Furthermore, the functor $\on{Av}^{\bN,\chi_P}$ commutes with the $\Rep(\cG)_{\Ran(X)}$-action. 

\sssec{}

We define the functor of \emph{degenerate Whittaker coefficient} 
$$\on{coeff}_{G,P}:\Dmod(\Bun_G)\to \on{Whit}(G,P)$$
by
$$\on{coeff}_{G,P}:=\on{Av}^{\bN,\chi_P}\circ (\fr_{G,P})^\dagger.$$

\ssec{Relation between constant term and degenerate Whittaker coefficient functors}

In this subsection we will show how to express the functor $\on{coeff}_{G,P}$, introduced above, via
the functor of enhanced constant term $\on{CT}_P^{\on{enh}}$, introduced in the previous section. 

\sssec{}

Note that we have a naturally defined forgetful map
$$\fr_{P,M}:\CQ_{G,P}\to \Bun_G^{P\on{-gen}},$$
so that
$$\sfp^{\on{enh}}_P\circ {}\fr_{P,M}=\fr_{G,P}.$$

\medskip

In addition to the groupoid $\bN_{\CQ_{G,P}}$ over $\left(\CQ_{G,P}\times \Ran(X)\right){}_{\on{good}}$, there exists a
canonically defined groupoid $\bNP{}_{\CQ_{G,P}}$ that fits into a Cartesian diagram
$$
\CD
\left(\CQ_{G,P}\times \Ran(X)\right){}_{\on{good}}  @<{p_1}<<  \bNP{}_{\CQ_{G,P}} @>{p_2}>> \left(\CQ_{G,P}\times 
\Ran(X)\right){}_{\on{good}}   \\
@V{\fr_{G,P}}VV   @VVV  @VV{\fr_{G,P}}V  \\
\left(\Bun_G^{P\on{-gen}}\times \Ran(X)\right){}_{\on{good}}  @<{p_1}<<  \bNP @>{p_2}>> \left(\Bun_G^{P\on{-gen}}\times \Ran(X)\right){}_{\on{good}} ,
\endCD
$$

\sssec{}

By a slight abuse of notation, let us denote by 
$$\Dmod(\CQ_{G,P})^{\bNP{}_{\CQ_{G,P}}}\subset \Dmod(\CQ_{G,P})$$
the full subcategory, defined in the same way as $\on{Whit}(G,P)$, when instead of the groupoid 
$\bN_{\CQ_{G,P}}$, we use $\bNP{}_{\CQ_{G,P}}$. We shall denote by the 
$$\on{Av}^{\bNP}:\Dmod(\CQ_{G,P})\to \Dmod(\CQ_{G,P})^{\bNP{}_{\CQ_{G,P}}}$$
the right adjoint to the embedding. 

\begin{rem}
The category $\Dmod(\CQ_{G,P})^{\bNP{}_{\CQ_{G,P}}}$ is the geometric counterpart of the space of functions on
$$Z^0_M(K)\cdot N(P)(\BA)\backslash G(\BA)/G(\BO).$$
\end{rem}

\sssec{}

We have a commutative diagram of functors
\begin{equation} \label{e:rgp}
\CD
 \Dmod(\CQ_{G,P})^{\bNP{}_{\CQ_{G,P}}}  @>>> \Dmod(\CQ_{G,P})   \\
@AAA   @A{(\fr_{G,P})^\dagger}AA     \\
\on{I}(G,P) @>>> \Dmod(\Bun_G^{P\on{-gen}}).
\endCD
\end{equation}

By a slight abuse of notation, we shall denote the resulting functor
$$\on{I}(G,P)\to \Dmod(\CQ_{G,P})^{\bNP{}_{\CQ_{G,P}}}$$
by $(\fr_{G,P})^\dagger$.

\begin{lem}
The functor $(\fr_{G,P})^\dagger:\Dmod(\Bun_G^{P\on{-gen}}) \to \Dmod(\CQ_{G,P})$ is fully faithful.
\end{lem}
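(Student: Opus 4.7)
The plan is to extend the proof of \propref{p:pull-back fully faithful}, which established the analogous statement in the case $P=G$ (as well as $P=B$), to an arbitrary parabolic $P$. The guiding principle is the same as in \emph{loc.\ cit.}: pullback along a torsor for a group-prestack of the form $\bMaps(X,H)^{\on{gen}}$ with $H$ a connected affine algebraic group is fully faithful, since $\bMaps(X,H)^{\on{gen}}$ is homologically contractible by \cite{Ga2}.

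First I factor $\fr_{P,M}\colon \CQ_{G,P}\to \Bun_G^{P\on{-gen}}$ as $\CQ_{G,P}\overset{f}\to \Bun_G^{B\on{-gen}}\overset{g}\to \Bun_G^{P\on{-gen}}$, where $f$ forgets the datum $\gamma$ and $g$ sends a generic $B$-reduction to its induced generic $P$-reduction. Using the presentation $\CQ_{G,P}=\CQ_G/\bMaps(X,Z^0_M)^{\on{gen}}$ recorded in the paragraph introducing $\CQ_{G,P}$, together with the fact that $\CQ_G\to \Bun_G^{B\on{-gen}}$ is a $\bMaps(X,T)^{\on{gen}}$-torsor, one sees that $f$ is a torsor for $\bMaps(X,T/Z^0_M)^{\on{gen}}$. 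Since $T/Z^0_M$ is a torus, the pullback $f^\dagger\colon \Dmod(\Bun_G^{B\on{-gen}})\to \Dmod(\CQ_{G,P})$ is fully faithful.

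The substantive content lies in treating the composite $\fr_{P,M}=g\circ f$. The pullback $g^\dagger$ is \emph{not} fully faithful in general, because the fibers of $g$ classify generic sections of the associated $M/B_M$-flag bundle, which is projective. The point, however, is that the $\gamma$-datum cuts down the $B$-refinements appearing in $\CQ_{G,P}$ in such a way that the combined data $(\alpha_B,\gamma)$ extending a given $\alpha_P$ is equivalent to a generic reduction of the induced $M$-bundle to the subgroup $H:=Z^0_M\cdot N_M\subset B_M$ (suitably twisted by $\check\rho(\omega_X)$), where $N_M:=N\cap M$ is the unipotent radical of the Borel of $M$. Both $Z^0_M$ and $N_M$ are connected affine algebraic, so $H$ is too, and consequently $\bMaps(X,H)^{\on{gen}}$ is homologically contractible. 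Combining this with Step~1 yields full faithfulness of $\fr_{P,M}^\dagger$.

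The cleanest way to implement this is via a base-change identification of the form $\CQ_{G,P}\simeq \Bun_G^{P\on{-gen}}\times_{\CY_M}\CQ_{M,M}$ over an appropriate target prestack $\CY_M$ encoding generically-defined $M$-bundles, reducing the lemma for $(G,P)$ to the ``principal'' case $(M,M)$; this case is in turn handled by a torsor decomposition along the lines of \propref{p:pull-back fully faithful}, using a Barlev-style presentation (\propref{p:Barlev}) of $\CQ_{M,M}$ to make sense of descent. The main obstacle will be making this base change precise: because a generic $P$-reduction produces the induced $M$-bundle only on an open of $X$, the target $\CY_M$ cannot literally be $\Bun_M$, and verifying that the Barlev presentations of all relevant prestacks are compatible through the base change is the key technical point.
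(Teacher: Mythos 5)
Your description of the fiber of $\fr_{P,M}$ — that the combined data $(\alpha_B,\gamma)$ refining a given generic $P$-reduction amounts to a generic reduction of the induced generic $M$-bundle to the connected affine group $H=Z^0_M\cdot N_M$ — is correct, and it is the same idea behind the paper's one-line citation of the homological contractibility of $\bMaps(X,M/Z^0_M)^{\on{gen}}$ (the group that, together with $N_M$, controls the quotient $\bMaps(X,M)^{\on{gen}}/\bMaps(X,Z^0_M\cdot N_M)^{\on{gen}}$).

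However, the logical architecture of your argument is internally inconsistent. You factor $\fr_{P,M}=g\circ f$, correctly establish that $f^\dagger$ is fully faithful (Step 1, since $f$ is a $\bMaps(X,T/Z^0_M)^{\on{gen}}$-torsor), assert that $g^\dagger$ is \emph{not} fully faithful, and then want the composite $\fr_{P,M}^\dagger=f^\dagger\circ g^\dagger$ to be fully faithful. These three statements cannot all hold: if $v$ is fully faithful then $v\circ u$ is fully faithful if and only if $u$ is. Since Step 1 and the lemma itself are both correct, it follows that $g^\dagger$ \emph{is} fully faithful, and your claim to the contrary — via projectivity of $M/B_M$ — is where the argument goes astray. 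The fiber of $g$ is a prestack of \emph{generic} $B$-refinements of a generic $P$-reduction, not the scheme of global sections of a flag bundle, and the familiar obstruction coming from $H^*(M/B_M)$ does not apply; this is precisely the point of working with $\Bun_G^{B\on{-gen}}$, $\Bun_G^{P\on{-gen}}$ rather than with $\Bun_B$, $\Bun_P$. (Both $\Dmod(\Bun_G^{B\on{-gen}})$ and $\Dmod(\Bun_G^{P\on{-gen}})$ embed fully faithfully into $\Dmod(\Gr_{G,\Ran(X)})$ via the contractibility of $\bMaps(X,B)^{\on{gen}}$ and $\bMaps(X,P)^{\on{gen}}$, and $g^\dagger$ is the induced inclusion.) Once this is corrected, the remaining machinery in your last two paragraphs is unnecessary, and in any case the proposed base change $\CQ_{G,P}\simeq \Bun_G^{P\on{-gen}}\times_{\CY_M}\CQ_{M,M}$ cannot hold as written: the right-hand side carries the extraneous datum of a globally defined $M$-bundle, which $\CQ_{G,P}$ does not see.
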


\begin{proof}
Follows from the homological contractibility of $\bMaps(X,M/Z^0_M)^{\on{gen}}$. 
\end{proof} 

Hence, we obtain that the above functor
$$(\fr_{G,P})^\dagger:\on{I}(G,P)\to \Dmod(\CQ_{G,P})^{\bNP{}_{\CQ_{G,P}}}$$
is also fully faithful.

\medskip

Finally, we note that the diagram 
\begin{equation} \label{e:rgp adj}
\CD
\Dmod(\CQ_{G,P})^{\bNP{}_{\CQ_{G,P}}}  @<{\on{Av}^{\bNP}}<<  \Dmod(\CQ_{G,P})   \\
@AAA   @A{(\fr_{G,P})^\dagger}AA     \\
\on{I}(G,P)  @<{\on{Av}^{\bNP}}<<   \Dmod(\Bun_G^{P\on{-gen}}),  
\endCD
\end{equation} 
obtained from \eqref{e:rgp} by passing to right adjoints along the horizontal arrows,
is also commutative. 

\sssec{}

There is a canonical map of groupoids $\bNP{}_{\CQ_{G,P}}\to \bN_{\CQ_{G,P}}$, and the restriction
of the character $\chi_P$ under this map is trivial. Hence, we obtain an inclusion of full subcategories of
$\Dmod(\CQ_{G,P})$:
$$\on{Whit}(G,P)\hookrightarrow \Dmod(\CQ_{G,P})^{\bNP{}_{\CQ_{G,P}}}.$$

This inclusion admits a right adjoint obtained by restricting the functor $\on{Av}^{\bN,\chi_P}$ to
$\Dmod(\CQ_{G,P})^{\bNP{}_{\CQ_{G,P}}}$. 

\sssec{}

Hence, we obtain a functor 
$$\on{coeff}_{P,M}:\on{I}(G,P)\to \on{Whit}(G,P),$$
defined as
$$\on{coeff}_{P,M}:=\on{Av}^{\bN,\chi_P}\circ (\fr_{G,P})^\dagger.$$

By construction, the functor $\on{coeff}_{P,M}$ respects the action of the monoidal category
$\Rep(\cG)_{\Ran(X)}$.

\begin{rem}
The functor $\on{coeff}_{P,M}$ is \emph{not} fully faithful. However, it follows from Quasi-Theorem 
\ref{t:deg Whit spec} formulated below that its retsriction to the full
subcategory 
$$\on{I}(G,P)_{\on{temp}}\subset \on{I}(G,P)$$
\emph{is} fully faithful, where $\on{I}(G,P)_{\on{temp}}$ is defined via the pull-back square
$$
\CD 
\on{I}(G,P)_{\on{temp}}  @>>>  \on{I}(G,P)  \\
@VVV    @VV{(\imath_M)^\dagger}V  \\
\Dmod(\Bun_M)_{\on{temp}}  @>>>  \Dmod(\Bun_M)
\endCD
$$
\end{rem}

\begin{rem}
The analog of the functor $\on{coeff}_{P,M}$ at the level of functions takes a function on 
$$M(K)\cdot N(P)(\BA)\backslash G(\BA)/G(\BO)$$ and averages it on the left with respect to 
$N(M)(\BA)/N(M)(K)$ against the character $\chi$. 
\end{rem}

\sssec{}

From \eqref{e:rgp adj} we obtain that there exists a canonical isomorphism of functors
$$\Dmod(\Bun_G)\to \on{Whit}(G,P),$$
namely
\begin{equation} \label{e:deg Whit via CT}
\on{coeff}_{G,P}\simeq \on{coeff}_{P,M}\circ \on{CT}_P^{\on{enh}}.
\end{equation}

An intuitive picture behind the functor $\on{coeff}_{G,P}$ will be suggested
in Remark \ref{r:partial Whit}.

\ssec{A strata-wise description}

\sssec{}

Set
$$\CQ_{P,M}:=\CQ_{G,P}\underset{\Bun_G^{P\on{-gen}}}\times \Bun_P,$$
where the map $\Bun_P\to \Bun_G^{P\on{-gen}}$ is $\imath_P$. Denote the resulting map
$$\CQ_{P,M}\to \Bun_P$$ by ${}'\fr_{P,M}$, and the map $\CQ_{P,M}\to \CQ_{G,P}$
by $'\imath_P$. I.e., we have a Cartesian diagram
$$
\CD
\CQ_{G,P}  @<{'\imath_P}<<  \CQ_{P,M}   \\
@V{\fr_{P,M}}VV    @VV{{}'\fr_{P,M}}V   \\
\Bun_G^{P\on{-gen}} @<{\imath_P}<< \Bun_P.
\endCD
$$

\medskip

We have:

\begin{lem}
There exists a canonically defined Cartesian square:
$$
\CD
\CQ_{P,M}  @>{'\sfq_M}>> \CQ_{M,M}  \\
@V{{}'\fr_{P,M}}VV    @VV{\fr_{M,M}}V \\
\Bun_P  @>{\sfq_M}>> \Bun_M
\endCD
$$
\end{lem}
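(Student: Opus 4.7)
The plan is to construct the morphism ${}'\sfq_M$ explicitly on $S$-points and then verify the Cartesian property by exhibiting a manifest inverse. The key algebraic input is the canonical isomorphism $P/B \simeq M/B_M$, where $B_M := B/N(P)$ is the Borel of $M$ with unipotent radical $N(M) = N \cap M$. Equivalently, for any $P$-bundle $\CP_P$ on a base, a reduction to $B$ of the underlying $G$-bundle $\CP_P \times^P G$ compatible with the $P$-structure is the same datum as a reduction to $B_M$ of the induced $M$-bundle $\CP_P \times^P M$. Moreover, the $T$-bundle obtained by inducing along $B \to T$ coincides with the $T$-bundle induced along $B_M \to T$ from the corresponding $B_M$-reduction, since $B \to T$ factors as $B \twoheadrightarrow B_M \twoheadrightarrow T$.

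First I will describe ${}'\sfq_M$ on $S$-points. Unwinding definitions, an $S$-point of $\CQ_{P,M}$ is a collection $(\CP_G, U, \alpha, \gamma, \alpha_P)$, where $(\CP_G, U, \alpha, \gamma)$ is an $S$-point of $\CQ_{G,P}$ and $\alpha_P$ is a global $P$-reduction of $\CP_G$ (giving a $P$-bundle $\CP_P \in \Bun_P(S)$) whose induced generic $P$-reduction agrees with that of $\alpha$ on $U$ after possibly shrinking $U$. Setting $\CP_M := \CP_P \times^P M$, the identification $P/B \simeq M/B_M$ converts $\alpha$ into a generic $B_M$-reduction $\alpha_M$ of $\CP_M$ defined over $U$. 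The induced generic $T$-bundle on $U$ is unchanged, so the datum $\gamma$ transports verbatim to an identification of $T/Z^0_M$-bundles between the one induced from $\alpha_M$ and the one induced from $\check\rho(\omega_X)|_U$. This assembles into an $S$-point $(\CP_M, U, \alpha_M, \gamma)$ of $\CQ_{M,M}$, and commutativity of the square with the forgetful maps to $\Bun_P$ and $\Bun_M$ is built into the construction.

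To see that the square is Cartesian, I will exhibit an inverse. Given compatible data consisting of $\CP_P \in \Bun_P(S)$, a point $(\CP'_M, U, \alpha_M, \gamma) \in \CQ_{M,M}(S)$, and an identification $\CP_P \times^P M \simeq \CP'_M$, I set $\CP_G := \CP_P \times^P G$ and apply $P/B \simeq M/B_M$ in reverse: the pair $(\alpha_P|_U, \alpha_M)$ assembles into a generic $B$-reduction $\alpha$ of $\CP_G$ defined over $U$, automatically compatible with $\alpha_P$. The $\gamma$-datum is transported along the same isomorphism. This produces an $S$-point of $\CQ_{P,M}$ mapping to the prescribed data, and the two constructions are manifestly mutually inverse.

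The only subtlety concerns compatibility with the equivalence relations built into the definitions of $\CQ_{G,P}$, $\CQ_{M,M}$ and $\Bun_G^{P\on{-gen}}$. I expect this to be routine rather than a genuine obstacle, since those equivalence relations are generated by shrinkings of the open $U$ and by isomorphisms of the bundles involved, and both operations are transported consistently by the group-theoretic identification $P/B \simeq M/B_M$. In the end, the lemma reduces to this elementary group-theoretic identity, applied parametrically over $S \times X$.
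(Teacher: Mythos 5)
Your proof is correct and takes the natural route: the whole content of the lemma is the group-theoretic identity $P/B \simeq M/B_M$, applied parametrically over $S \times X$ to convert generic $B$-reductions of $\CP_P\times^P G$ refining the given $P$-structure into generic $B_M$-reductions of $\CP_M = \CP_P\times^P M$, with the induced generic $T$-bundle unchanged because $B \to T$ factors through $B_M$. The paper states this lemma without proof, so there is no written argument to compare against, but what you wrote is exactly the content that makes it true. One minor point worth being conscious of, which you elide with the phrase ``transports verbatim'': the twist in the definition of $\CQ_{G,P}$ uses $\check\rho_G(\omega_X)$, while the analogous construction of $\CQ_{M,M}$ for the group $M$ would naturally use $\check\rho_M(\omega_X)$. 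These two $T$-bundles differ, but since $\langle \alpha_i, 2\check\rho_G - 2\check\rho_M\rangle = 0$ for every simple root $\alpha_i$ of $M$, the cocharacter $2\check\rho_G - 2\check\rho_M$ lands in $Z^0_M$, so the induced $T/Z^0_M$-bundles agree. This is why the $\gamma$-datum does indeed transport. Your handling of the equivalence relations as routine is also fine; they are generated by shrinking $U$ and by bundle isomorphisms, both of which are preserved.
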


\sssec{}

Consider the stack $\Bun_P$. The groupoid $\bNP$ gives rise to a groupoid $\bNP{}_{\Bun_P}$ acting on 
$\left(\Bun_P\times \Ran(X)\right)_{\on{good}}$. Consider the corresponding full subcategory 
$$\Dmod(\Bun_P)^{\bNP{}_{\Bun_P}}\subset \Dmod(\Bun_P).$$

\medskip

The groupoid $\bN$ gives rise to a groupoid $\bN_{\CQ_{P,M}}$ acting on 
$\left(\CQ_{P,M}\times \Ran(X)\right)_{\on{good}}$. We let 
$$\Dmod(\CQ_{P,M})^{\bN_{\CQ_{P,M}},\chi_P}\subset \Dmod(\CQ_{P,M})$$
denote the resulting full subcategory. 

\medskip

Consider again the map
$$'\sfq_M:\CQ_{P,M}\to \CQ_{M,M}.$$

This map is smooth and has contractible fibers, and we consider the corresponding fully
faithful functor
$$({}'\sfq_M)^\bullet:\Dmod(\CQ_{M,M})\to \Dmod(\CQ_{P,M}).$$

We have:
\begin{lem} \hfill

\smallskip

\noindent{\em(a)} The functor $(\sfq_M)^\bullet$ defines an equivalence
$$\Dmod(\Bun_M)\to \Dmod(\Bun_P)^{\bNP{}_{\Bun_P}}.$$

\smallskip

\noindent{\em(b)} The functor $({}'\sfq_M)^\bullet$ defines an equivalence
$$\Whit(M,M)\to \Dmod(\CQ_{P,M})^{\bN_{\CQ_{P,M}},\chi_P}.$$

\end{lem}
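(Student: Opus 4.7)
The plan is to treat parts (a) and (b) in parallel, since they share an identical formal structure: in each case pullback along a smooth morphism with contractible fibers must be identified with an equivariant subcategory for a groupoid whose fibers are comparable in size to those of the morphism.

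For part (a), I would first observe that $(\sfq_M)^\bullet$ factors through the equivariant subcategory. The groupoid $\bNP{}_{\Bun_P}$ acts on $\Bun_P$ by generically defined $N(P)$-valued Hecke modifications of the $P$-bundle, and such modifications manifestly preserve the induced $M$-bundle; hence $\sfq_M$ intertwines $\bNP{}_{\Bun_P}$ with the trivial groupoid on $\Bun_M$, and pullbacks from $\Bun_M$ acquire a canonical $\bNP{}_{\Bun_P}$-equivariant structure. Full faithfulness of the lifted functor reduces to full faithfulness of $(\sfq_M)^\bullet$ itself, which is already available: $\sfq_M$ is smooth with homologically contractible fibers, since the fiber over $\CP_M$ classifies extensions of $\CP_M$ by $\fn(P)_{\CP_M}$ and is therefore a vector bundle stack.

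The essential surjectivity is the substantive content. I would construct a right adjoint $R$ to the lifted pullback and verify that the counit $(\sfq_M)^\bullet \circ R \to \Id$ is an isomorphism. This amounts to a ``homological $\bNP{}$-transitivity'' statement on the fibers of $\sfq_M$: generically defined $N(P)$-gauge transformations act densely on $H^1(X,\fn(P)_{\CP_M})$, so any $\bNP{}_{\Bun_P}$-equivariant D-module is forced to be essentially constant along fibers of $\sfq_M$ and hence descends to $\Bun_M$. The precise form is obtained by adapting \propref{p:groupoid contractible} and the strong-approximation argument of \secref{ss:idea of groupoid} to the unipotent group $N(P)$, with the Barlev-type sheafification model of \propref{p:Barlev} providing a concrete algebro-geometric platform on which to run the descent.

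Part (b) then follows in parallel. The map ${}'\sfq_M$ is smooth with contractible fibers by base change from $\sfq_M$, so $({}'\sfq_M)^\bullet$ is fully faithful. The remaining input is a decomposition of $(\bN_{\CQ_{P,M}}, \chi_P)$-equivariance. By the definition of $\chi_P$, the character restricts to the non-degenerate Whittaker character on the ``$N(M)$-part'' of $\bN$ and is trivial on the ``$N(P)$-part''. Hence $(\bN_{\CQ_{P,M}}, \chi_P)$-equivariance splits as the conjunction of (i) invariance under the $N(P)$-part, which by part (a) applied fiberwise over the $\CQ$-direction is equivalent to descent along ${}'\sfq_M$ to $\CQ_{M,M}$, and (ii) Whittaker equivariance of the descended object, i.e.\ membership in $\Whit(M,M)$. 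The main obstacle will be making the ``homological $\bNP{}$-transitivity'' step in part (a) entirely precise: the homological contractibility of $N(P)$ itself is available from \cite{Ga2}, but promoting it to the required statement for the full Ran-indexed groupoid $\bNP{}_{\Bun_P}$, compatibly with the ``good locus'' conditions, requires the same sheafification bookkeeping that underlies \propref{p:groupoid contractible}; once this is in place, part (b) becomes essentially formal.
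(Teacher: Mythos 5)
The paper states this lemma without proof, consistent with its treatment of several other lemmas in Sects.~\ref{s:parabolic} and~\ref{s:deg Whit} of similar formal character. Your two-step strategy --- full faithfulness from contractibility of the fibers of $\sfq_M$ (which the paper already notes for $(\sfq_P)^\bullet$, the same map under the notation of Sect.~\ref{s:parabolic}), followed by essential surjectivity via descent --- is the natural one, and the reduction of part (b) to part (a) via the Cartesian square and the decomposition $N = N(P) \rtimes N(M)$ is correct in outline.

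Two points deserve flagging. First, you invoke \propref{p:groupoid contractible} as the source of the ``homological $\bNP$-transitivity'' needed for essential surjectivity, but that proposition concerns contractibility of the fibers of the projections $p_1, p_2$ from the groupoid; its role is to guarantee full faithfulness of the forgetful functor out of the equivariant category (as in \propref{p:embed Whit ff Ran}), not the complementary descent statement. For essential surjectivity you need a different assertion: that the Ran version of $\bNP{}_{\Bun_P}$ maps homologically cofinally onto the coarse equivalence relation $\Bun_P \times_{\Bun_M} \Bun_P$, equivalently that the group-prestack of generically defined maps to $N(P)_{\CP_M}$ acts homologically transitively on the fiber of $\sfq_M$ over $\CP_M$. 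This does follow from the contractibility machinery of \cite{Ga2}, but it is a distinct input from \propref{p:groupoid contractible}, and the counit isomorphism $(\sfq_M)^\bullet \circ (\sfq_M)_\bullet \to \Id$ on equivariant objects must be built from it directly. Second, for part (b), since $N$ is only a semidirect product, the claimed splitting of $(\bN_{\CQ_{P,M}},\chi_P)$-equivariance into $N(P)$-invariance plus $N(M)$-equivariance against $\chi_M$ is not a product decomposition of the groupoid; it is a two-stage reduction at the level of the simplicial object $\bN_{\CQ_{P,M}}^{\bDelta}$, which works because $N(P)$ is normal in $N$ and $\chi_P$ is pulled back along $N \twoheadrightarrow N(M)$, but this should be spelled out rather than asserted. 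Neither point is a fatal gap; the strategy is correct.
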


\sssec{}

The functor 
$$({}'\imath_P)^\dagger:\Dmod(\CQ_{G,P})\to \Dmod(\CQ_{P,M})$$
gives rise to a (conservative) functor
$$\Whit(G,P)\to  \Dmod(\CQ_{P,M})^{\bN_{\CQ_{P,M}},\chi_P}.$$

\medskip

We denote the resulting functor
$$\Whit(G,P)\to  \Dmod(\CQ_{P,M})^{\bN_{\CQ_{P,M}},\chi_P}\simeq \Whit(M,M)$$
by $({}'\imath_M)^\dagger$.

\medskip

We have a canonical isomorphism of functors
\begin{equation} \label{e:Whit of CT}
({}'\imath_M)^\dagger\circ \on{coeff}_{G,P}\simeq \on{coeff}_{M,M}\circ \on{CT}_P.
\end{equation} 

\begin{rem}  \label{r:partial Whit}
From \eqref{e:Whit of CT} we obtain the following way of thinking about 
the functor $\on{coeff}_{G,P}$: 

\medskip

In the same way as
the functor $\on{CT}^{\on{enh}}_P$ captures more information than the usual
functor $\on{CT}_P$, the functor $\on{coeff}_{G,P}$ captures more information
than the composition 
$$\on{coeff}_{M,M}\circ \on{CT}_P:\Dmod(\Bun_G)\to \Whit(M,M).$$
\end{rem}

\ssec{Spectral description of the degenerate Whittaker category}

\sssec{}

We have the following assertion: 

\begin{qthm}  \label{t:deg Whit spec} 
There exists a canonically defined fully faithful functor
$$\BL^{\on{Whit}}_{G,P}: \sF_{\cP}\mod(\QCoh(\LocSys_{\cP}))\to \Whit(G,P),$$
compatible
with the actions of the monoidal category $\Rep(\cG)_{\Ran(X)}$. 

\end{qthm}

We note that Quasi-Theorem \ref{t:deg Whit spec} \emph{does not} assume \conjref{c:GL}
for $M$; in particular it includes the case of $P=M=G$. 

\medskip

Note that for $P=G$, the corresponding functor $\BL^{\on{Whit}}_{G,P}$ 
is the functor that we had earlier denoted $\BL^{\on{Whit}}_{G,G}$, and it
is fully faithful by Quasi-Theorem \ref{t:Cass Shal}(b).

\medskip

Note also that in the other extreme case, namely when $P=B$, the assertion
of Quasi-Theorem \ref{t:deg Whit spec} coincides with that of 
Quasi-Theorem \ref{t:principal series}.

\sssec{}

The proof of Quasi-Theorem \ref{t:deg Whit spec} is parallel but simpler than that of 
Quasi-Theorem \ref{t:principal series}. 

\medskip

Namely, we embed both sides into the
corresponding local categories (i.e., ones living over $\Ran(X)$) and use Bezrukavnikov's
theory to relate the resulting category of D-modules on the affine Grassmannian to
the Langlands dual group. 

\sssec{}

From now on, until the end of this subsection, we will assume that \conjref{c:GL}
holds for $M$, and will relate Quasi-Theorem \ref{t:deg Whit spec} to Quasi-Theorem \ref{t:principal series}.

\medskip

The following assertion comes along with the proof:

\begin{prop} \label{p:spec Whit of CT}
We have a commutative diagram of functors:
$$
\CD
\sF_{\cP}\mod(\QCoh(\LocSys_{\cP}))  @>{\BL^{\on{Whit}}_{G,P}}>>   \Whit(G,P)  \\
@A{\Psi_\cP}AA        @AA{\on{coeff}_{P,M}}A   \\
\sF_{\cP}\mod(\IndCoh_{\on{Nilp}^{\on{glob}}_\cP}(\LocSys_\cP))    @>{\BL_P}>>   \on{I}(G,P).
\endCD
$$
\end{prop}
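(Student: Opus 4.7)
My plan is to reduce the commutativity of the square to Property $\on{Wh}$ for the Levi $M$, which is part of $\conjref{c:GL}$ for $M$. Both compositions $\BL^{\on{Whit}}_{G,P}\circ \Psi_\cP$ and $\on{coeff}_{P,M}\circ \BL_P$ are continuous, and by construction of their factors (Quasi-Theorems \ref{t:principal series} and \ref{t:deg Whit spec}, together with \propref{p:Hecke preserves Whit}) they both intertwine the natural $\Rep(\cG)_{\Ran(X)}$-actions on source and target. By Barr--Beck applied to the monad $\sF_{\cP}$, the category $\sF_{\cP}\mod(\IndCoh_{\on{Nilp}^{\on{glob}}_\cP}(\LocSys_\cP))$ is generated under colimits by the image of the induction functor $\ind_{\sF_\cP}$. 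So it suffices to exhibit a canonical isomorphism on objects of the form $\ind_{\sF_\cP}(\sfq_{\cP,\on{spec}}^{\IndCoh,*}(\CF))$ with $\CF\in \IndCoh_{\on{Nilp}^{\on{glob}}_\cM}(\LocSys_\cM)$; together with the $\Rep(\cG)_{\Ran(X)}$-action, these generate the full spectral source.

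On the geometric side, Quasi-Theorem \ref{t:principal series} identifies
$$\BL_P\bigl(\ind_{\sF_\cP}(\sfq_{\cP,\on{spec}}^{\IndCoh,*}(\CF))\bigr)\simeq (\imath_M)_!\bigl(\BL_M(\CF)\otimes \fl_{M,G}\bigr),$$
up to replacing $\CF$ by $\CF\otimes \check\fl_{M,G}$. Using the Cartesian square in \secref{ss:for N} relating $\CQ_{P,M}$, $\Bun_P$, $\CQ_{M,M}$ and $\Bun_M$, together with \eqref{e:Whit of CT} and \eqref{e:rgp adj}, I can compute $\on{coeff}_{P,M}$ of the right-hand side by base change; it is determined by its restriction $({}'\imath_M)^\dagger$ to the $M$-stratum, which equals $\on{coeff}_{M,M}(\BL_M(\CF))$. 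By Property $\on{Wh}$ for $M$ --- namely the commutativity of \eqref{e:cond W modified} for the group $M$, which is part of $\conjref{c:GL}$ for $M$ --- this matches $\BL^{\on{Whit}}_{M,M}(\Psi_\cM(\CF))$.

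On the spectral side, the parallel identification for $\BL^{\on{Whit}}_{G,P}\circ \Psi_\cP\bigl(\ind_{\sF_\cP}(\sfq_{\cP,\on{spec}}^{\IndCoh,*}(\CF))\bigr)$ is built into the construction of $\BL^{\on{Whit}}_{G,P}$ outlined in Quasi-Theorem \ref{t:deg Whit spec}: that construction proceeds by embedding $\sF_{\cP}\mod(\QCoh(\LocSys_\cP))$ into a local category over $\Ran(X)$ and invoking Bezrukavnikov's theory, and on the $M$-stratum of $\CQ_{G,P}$ it reduces to precisely the equivalence used to define $\BL^{\on{Whit}}_{M,M}$. So on this generating family the composition factors through $\BL^{\on{Whit}}_{M,M}(\Psi_\cM(\CF))$ lifted back to $\Whit(G,P)$ via the same canonical functor $\Whit(M,M)\to \Whit(G,P)$ coming from $({}'\sfq_M)^\bullet$ and $({}'\imath_P)_!$ that appeared on the geometric side. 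Hence the two outputs agree.

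The main obstacle will be to make these identifications genuinely canonical rather than existential. One must check that the line-bundle twists $\check\fl_{M,G}$ and $\fl_{M,G}$ from \eqref{e:principal series} cancel correctly against those implicit in the reduction of $\on{coeff}_{P,M}\circ (\imath_M)_!$ to the Whittaker coefficient for $M$, and that the Bezrukavnikov-type equivalence used in the construction of $\BL_P$ (via Quasi-Theorem \ref{t:principal series}) is compatible, with matching normalizations, with the one used to define $\BL^{\on{Whit}}_{G,P}$ (via Quasi-Theorem \ref{t:deg Whit spec}), so that both $\Rep(\cG)_{\Ran(X)}$-equivariant factorizations through $\BL^{\on{Whit}}_{M,M}\circ \Psi_\cM$ yield the \emph{same} natural transformation. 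This bookkeeping is essentially mechanical, but it is the place where the accumulated twists on each side of the square must be made to match.
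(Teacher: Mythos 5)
The paper gives no proof of \propref{p:spec Whit of CT}: it is introduced by the sentence ``The following assertion comes along with the proof'' of Quasi-Theorem \ref{t:deg Whit spec}, and that proof is itself only sketched (embed both sides in categories over $\Ran(X)$, invoke Bezrukavnikov's theory). So any comparison is against an unwritten argument. That said, your proposal takes a genuinely different route --- reducing to a generating family rather than working through the local picture over $\Ran(X)$ --- and it has gaps that are not ``mechanical bookkeeping'' but the actual content of the assertion.

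The central issue: reducing to the image of $\ind_{\sF_\cP}\circ\sfq_{\cP,\on{spec}}^{\IndCoh,*}$ (which does generate, because the paper asserts this functor has a conservative continuous right adjoint, making $\sF_\cP\mod(\IndCoh_{\on{Nilp}^{\on{glob}}_\cP}(\LocSys_\cP))$ monadic over $\IndCoh_{\on{Nilp}^{\on{glob}}_\cM}(\LocSys_\cM)$) lets you check that a \emph{given} natural transformation is an isomorphism. It does not produce the natural transformation, nor does it let you conclude that two continuous functors that agree objectwise on generators are isomorphic as functors. To extend an identification along the Barr--Beck equivalence one must provide the compatibility with the monad structure (and its higher homotopies), which in this picture means matching the two monads coming from $\IndCoh(\on{Hecke}(\cG,\on{spec})^{\on{loc}}_{\Ran(X)})$ on the spectral and geometric sides. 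That matching is precisely what the proofs of Quasi-Theorems \ref{t:principal series} and \ref{t:deg Whit spec} are about, and what the paper means by the proposition ``coming along with the proof.'' You flag this under ``the main obstacle'' but frame it as a normalization check on line-bundle twists, which understates it.

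A second concrete gap: you assert that $\on{coeff}_{P,M}\circ(\imath_M)_!(\BL_M(\CF))$ ``is determined by its restriction $({}'\imath_M)^\dagger$ to the $M$-stratum.'' What the paper actually gives is that $({}'\imath_M)^\dagger:\Whit(G,P)\to\Whit(M,M)$ is \emph{conservative} --- it detects isomorphisms between morphisms that already exist. It does not determine objects, and the averaging $\on{Av}^{\bN,\chi_P}$ in $\on{coeff}_{P,M}$ genuinely spreads the $!$-extension from $\CQ_{P,M}$ off the $M$-stratum, so the restriction loses information. Relatedly, the functor $({}'\imath_P)_!$ you invoke to go from $\Whit(M,M)$ back to $\Whit(G,P)$ is not shown in the paper to be defined on the Whittaker subcategory (the analogue, $(\imath_M)_!:\Dmod(\Bun_M)\to\on{I}(G,P)$, required a nontrivial check); if you want to use it, you would have to prove that the partially defined left adjoint $({}'\imath_P)_!$ exists on $\Dmod(\CQ_{P,M})^{\bN_{\CQ_{P,M}},\chi_P}$ and lands in $\Whit(G,P)$.

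What you get right: the standing assumption of \conjref{c:GL} for $M$ (explicitly in force for this subsection) does make Property $\on{Wh}$ for $M$ available; and identifying $\BL^{\on{Whit}}_{M,M}\circ\Psi_\cM$ as the common restriction-to-$M$ of both composites is the correct intermediate target. But to turn this observation into a proof you need the monadic/local ingredients, not just an objectwise match on a generating family.
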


\sssec{}

We can now formulate the following property of the geometric Langlands functor $\BL_G$ that contains Property
$\on{Wh}$ as a particular case for $P=G$:

\medskip

\noindent{\bf Property $\mathbf{Wh}^{\mathbf{deg}}$:} {\it We shall say that the functor $\BL_G$ satisfies
\emph{Property $\on{Wh}^{\on{deg}}$} for the parabolic $P$ if the following diagram is commutative:
\begin{equation} \label{e:cond W deg}
\CD
\sF_{\cP}\mod(\QCoh(\LocSys_{\cP}))    @>{\BL^{\on{Whit}}_{G,P}}>>   \Whit(G,P) \\
@A{\Psi_\cP}AA    & & \\
\sF_{\cP}\mod(\IndCoh_{\on{Nilp}^{\on{glob}}_\cP}(\LocSys_\cP)) & &  @AA{\on{coeff}_{G,P}}A  \\
@A{\on{CT}^{\on{enh}}_{\cP,\on{spec}}}AA  & &  \\
\IndCoh_{\on{Nilp}^{\on{glob}}_\cG}(\LocSys_{\cG})   @>{\BL_G}>> \Dmod(\Bun_G).
\endCD
\end{equation}}

Note, however, that Property $\on{Wh}^{\on{deg}}$ is a formal consequence of Property $\on{Ei}^{\on{enh}}$
and \propref{p:spec Whit of CT}. 

\ssec{(Degenerate) Whittaker coefficients and Eisenstein series}

Let $P'\subset G$ be another parabolic. In this subsection we will assume that \conjref{c:GL} holds for its
Levi quotient $M'$. However, we will \emph{not} be assuming that \conjref{c:GL} holds for $M$. 

\sssec{}

By concatenating the commutative diagrams \eqref{e:cond W deg} and \eqref{e:CT spec rat} we obtain a commutative diagram
\begin{equation} \label{e:Whit and Eis}
\CD
\sF_{\cP}\mod(\QCoh(\LocSys_{\cP}))  @>{\BL^{\on{Whit}}_{G,P}}>>  \Whit(G,P)    \\
@A{\Psi_\cP}AA   & &  \\
\sF_{\cP}\mod(\IndCoh_{\on{Nilp}^{\on{glob}}_\cP}(\LocSys_{\cP})) & & @AA{\on{coeff}_{G,P}}A   \\
@A{\on{CT}^{\on{enh}}_{\cP,\on{spec}}}AA   & &  \\ 
\IndCoh_{\on{Nilp}^{\on{glob}}_\cG}(\LocSys_{\cG})  & &  \Dmod(\Bun_G)  \\
@A{\Eis^{\on{enh}}_{\cP',\on{spec}}}AA    @AA{\Eis_{P'}^{\on{enh}}}A   \\
\sF_{\cP'}\mod(\IndCoh_{\on{Nilp}_{\on{glob},\cP'}}(\LocSys_{\cP'}))  @>{\BL_{P'}}>>  \on{I}(G,P')
\endCD
\end{equation}

\sssec{}

We have:

\begin{qthm} \label{t:Whit and Eis}
The diagram \eqref{e:Whit and Eis} commutes unconditionally (i.e., without assuming the validity of \conjref{c:GL} for $G$).
\end{qthm}

\begin{rem}
Note that if we do assume that \conjref{c:GL} holds for $M$, then in this case the assertion of
Quasi-Theorem \ref{t:Whit and Eis} follows from Quasi-Theorem \ref{t:CT and Eis} and 
\propref{p:spec Whit of CT}.
\end{rem}

\sssec{}

Finally, we remark that for $P=G$, the assertion of Quasi-Theorem \ref{t:Whit and Eis} is built in the proof of
Quasi-Theorem \ref{t:deg Whit spec} (for $P'$). 

\section{The extended Whittaker model}  \label{s:ext Whit}

In this section we will introduce a crucial player for our approach to the geometric Langlands conjecture,
the extended Whittaker category, denoted $\Whit^{\on{ext}}(G,G)$. The idea is that, on the one hand, according to \conjref{c:Whit ext ff}, 
discussed below, the category $\Whit^{\on{ext}}(G,G)$ receives a fully faithful functor from the automorphic category $\Dmod(\Bun_G)$,
and on the other hand, it can be related to the spectral side. How the latter is done will be the subject of \secref{s:gluing}.

\ssec{The variety of characters}  \label{ss:ch}

When defining degenerate Whittaker categories, we had to consider characters of the group $N$, whose degeneracies
varied with the parabolic. In this subsection we will combine all these categories into one family. 

\sssec{}

Let $\ft_{\on{adj}}$ denote the Lie algebra of the torus $T/Z^0_G$. In this subsection we will introduce
a certain toric variety $\fch(G)$ endowed with a finite map 
\begin{equation} \label{e:finite map}
\fch(G)\to \ft_{\on{adj}}.
\end{equation}

The map \eqref{e:finite map} will be an isomorphism when $G$ has a connected center.

\sssec{}

Let $\Lambda$ denote the weight lattice of $G$; let 
$$\Lambda^{\on{pos}}\subset \Lambda \text{ and } 
\Lambda^{\on{pos},\BQ}\subset \Lambda^\BQ:=\Lambda\underset{\BZ}\otimes \BQ$$
be the sub-monoids of weights that can be expressed as integral (resp., rational) non-negative combinations 
of simple roots. Let $\Lambda^{\on{pos},\on{sat}_G}$
be the saturation of $\Lambda^{\on{pos}}$, i.e.,
$$\Lambda^{\on{pos},\on{sat}_G}:=\Lambda\cap \Lambda^{\on{pos},\BQ}.$$

Note that the inclusion $\Lambda^{\on{pos}}\hookrightarrow \Lambda^{\on{pos},\on{sat}_G}$
is an equality if $G$ has a connected center. 

\sssec{}

We define
$$\fch(G):=\Spec(k[\Lambda^{\on{pos},\on{sat}_G}]).$$
I.e., $\fch(G)$ classifies maps of monoids $\Lambda^{\on{pos},\on{sat}_G}\to \BA^1$, where $\BA^1$ is a monoid
with respect to the operation of multiplication. 

\medskip

The group $T/Z^0_G$, which can be thought of that classifying maps of monoids 
$\Lambda^{\on{pos},\on{sat}_G}\to \BG_m$, acts on $\fch(G)$.

\medskip

Let $\ofc(G)\subset \fch(G)$ be the open subscheme corresponding to maps $\Lambda^{\on{pos},\on{sat}_G}\to (\BA^1-0)=\BG_m$.
It is clear that the action of $T/Z^0_G$ on $\ofc(G)$ is simply transitive. 

\sssec{}

Let $P\subset G$ be a parabolic, with Levi quotient $M$. Consider the closed subscheme of $\fch(G)$ that
corresponds to maps $\Lambda^{\on{pos},\on{sat}_G}\to \BA^1$ that vanish on any element $\mu$ with 
$$\mu\in \Lambda^{\on{pos},\on{sat}_G}- \Lambda^{\on{pos},\on{sat}_M}.$$

It is easy to see that this subscheme identifies with the corresponding scheme $\fch(M)$, in a way compatible
with the actions of 
$$T/Z^0_G\twoheadrightarrow T/Z^0_M.$$

\medskip

Furthermore, it is clear that $\fch(G)$ decomposes as a union of locally closed subschemes
$$\fch(G)\simeq \underset{P}\sqcup\,\, \ofc(M).$$

\ssec{The extended Whittaker category}

In this subsection we will finally define the extended Whittaker category $\Whit^{\on{ext}}(G,G)$. The definition
will follow the same pattern as in the case of $\Whit(G)$, $\on{I}(G,P)$ and $\Whit(G,P)$.

\sssec{}

We define the prestack $\CQ_{G,G}^{\on{ext}}$ as follows. The definition repeats that of $\CQ_{G,G}$ with
the following difference: when considering quadruples $(\CP_G,U,\alpha,\gamma)$, we let $\gamma$ be
a section over $U$ of the scheme $\fch(G)_{\check\rho(\omega_X)|_U\otimes \CP^{-1}_T}$. 

\medskip

In other words, the datum of $\gamma$ assigns to every $\mu\in \Lambda^{\on{pos},\on{sat}_G}$ a map
of line bundles over $U$:
\begin{equation} \label{e:gamma mu}
\gamma(\mu):\mu(\CP_T)\to (\omega_X^{\frac{1}{2}})^{\otimes \langle \mu,2\check\rho\rangle}|_U.
\end{equation}

\medskip

(Note that when $G$ has a connected center, the datum of $\gamma$ amounts to a map
$\alpha_i(\CP_T)\to \omega_X|_U$ for every simple root $\alpha_i$ of $G$.)

\medskip

We let $\fr_{G,G}^{\on{ext}}$ denote the forgetful map $\CQ^{\on{ext}}_{G,G}\to \Bun_G$.

\sssec{}

The groupoid of $k$-points of $\CQ^{\on{ext}}_{G,G}$ identifies with the quotient
$$T(K)\backslash \Bigl(N(K)\backslash G(\BA)/G(\BO)\times \fch(G)(K)\Bigr),$$
where $T$ acts on $\fch$ via the projection $T\twoheadrightarrow T/Z^0_G$. 

\sssec{}

We let $\bN_{\CQ^{\on{ext}}_{G,G}}$ the groupoid on $\left(\CQ^{\on{ext}}_{G,G}\times \Ran(X)\right)_{\on{good}}$ obtained by lifting the
groupoid $\bN$ on $\left(\Bun_G^{B\on{-gen}}\times \Ran(X)\right)_{\on{good}}$. 

\medskip

As in the case of $\CQ_{G,G}$, the groupoid $\bN_{\CQ^{\on{ext}}_{G,G}}$ is endowed with a canonical
character $\chi^{\on{ext}}$ with values in $\BG_a$. 

\sssec{}

We consider the twisted $\bN_{\CQ^{\on{ext}}_{G,G}}$-equivariant category of 
$\Dmod\left(\left(\CQ^{\on{ext}}_{G,G}\times \Ran(X)\right)_{\on{good}}\right)$,
and as in \propref{p:embed Whit ff Ran}, the forgetful functor 
$$\Dmod\left(\left(\CQ^{\on{ext}}_{G,G}\times \Ran(X)\right)_{\on{good}}\right)^{\bN_{\CQ^{\on{ext}}_{G,G}},\chi^{\on{ext}}}\to
\Dmod\left(\left(\CQ^{\on{ext}}_{G,G}\times \Ran(X)\right)_{\on{good}}\right)$$
is fully faithful.

\sssec{}

We define the \emph{extended Whittaker category} $\Whit^{\on{ext}}(G,G)$ as the preimage of
$$\Dmod\left(\left(\CQ^{\on{ext}}_{G,G}\times \Ran(X)\right)_{\on{good}}\right)^{\bN_{\CQ^{\on{ext}}_{G,G}},\chi^{\on{ext}}}\subset
\Dmod\left(\left(\CQ^{\on{ext}}_{G,G}\times \Ran(X)\right)_{\on{good}}\right)$$
under the pull-back functor
$$\Dmod(\CQ^{\on{ext}}_{G,G})\to \Dmod\left(\left(\CQ^{\on{ext}}_{G,G}\times \Ran(X)\right)_{\on{good}}\right).$$

I.e.,
\begin{multline*}
\Whit^{\on{ext}}(G,G):=\\
=\Dmod(\CQ^{\on{ext}}_{G,G})\underset{\Dmod\left(\left(\CQ^{\on{ext}}_{G,G}\times \Ran(X)\right)_{\on{good}}\right)}\times
\Dmod\left(\left(\CQ^{\on{ext}}_{G,G}\times \Ran(X)\right)_{\on{good}}\right)^{\bN_{\CQ^{\on{ext}}_{G,G}},\chi^{\on{ext}}}.
\end{multline*}

\sssec{}

As in the case of $\Whit(G,G)$, the (fully faithful) forgetful functor 
$$\Whit^{\on{ext}}(G,G)\to \Dmod(\CQ^{\on{ext}})$$
admits a right adjoint, that we denote by $\on{Av}^{\bN,\chi^{\on{ext}}}$.

\medskip

We observe that as in \propref{p:Hecke preserves Whit}, we have a canonical action of the monoidal category 
$\Rep(\cG)_{\Ran(X)}$ on $\Dmod(\CQ^{\on{ext}})$ that preserves the full subcategory
$$\Whit^{\on{ext}}(G,G)\subset \Dmod(\CQ^{\on{ext}})$$
and commutes with the functor $\on{Av}^{\bN,\chi^{\on{ext}}}$.

\sssec{}

We introduce the functor of extended Whittaker coefficient
$$\on{coeff}^{\on{ext}}_{G,G}:\Dmod(\Bun_G)\to \Whit^{\on{ext}}(G,G)$$
to be
$$\on{coeff}^{\on{ext}}_{G,G}:=\on{Av}^{\bN,\chi^{\on{ext}}}\circ (\fr^{\on{ext}}_{G,G})^\dagger.$$

By construction, the functor $\on{coeff}^{\on{ext}}_{G,G}$ is compatible with the action of the
monoidal category $\Rep(\cG)_{\Ran(X)}$.

\sssec{}

We propose the following crucial conjecture:

\begin{conj}  \label{c:Whit ext ff}
The functor $\on{coeff}^{\on{ext}}_{G,G}$ is fully faithful.
\end{conj}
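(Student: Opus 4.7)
The plan is to reduce \conjref{c:Whit ext ff} to the non-degenerate Whittaker fully faithfulness statement for Levi subgroups, via the stratification of $\fch(G) = \bigsqcup_P \ofc(M)$ by standard parabolics. This induces a stratification of $\CQ^{\on{ext}}_{G,G}$ and, correspondingly, a filtration on $\Whit^{\on{ext}}(G,G)$ whose successive subquotients are the degenerate Whittaker categories $\Whit(G,P)$ (open stratum: $P=G$ giving $\Whit(G,G)$; closed stratum: $P=B$ giving the principal series $\on{I}(G,B)$). The gluing data for $\Whit^{\on{ext}}(G,G)$ in terms of the $\Whit(G,P)$ is the content of \secref{s:gluing}, and the functor $\on{coeff}^{\on{ext}}_{G,G}$ is by construction compatible with it: the component of $\on{coeff}^{\on{ext}}_{G,G}$ along the stratum indexed by $P$ is exactly the degenerate Whittaker coefficient $\on{coeff}_{G,P}$.

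First I would translate full faithfulness of $\on{coeff}^{\on{ext}}_{G,G}$ into a strata-wise statement plus a gluing compatibility. Concretely, one expresses the unit map $\id \to \on{coeff}^{\on{ext},R}_{G,G}\circ \on{coeff}^{\on{ext}}_{G,G}$ as the limit of a Cousin-type tower whose associated graded is indexed by the parabolic poset. Using formula \eqref{e:deg Whit via CT}, each $P$-piece of this tower is controlled by the composition
\[
\Dmod(\Bun_G) \xrightarrow{\on{CT}^{\on{enh}}_P} \on{I}(G,P) \xrightarrow{\on{coeff}_{P,M}} \Whit(G,P),
\]
and the $P$-associated graded of the unit map is obtained by pairing this with its right adjoint $\Eis^{\on{enh}}_P \circ \on{coeff}_{P,M}^R$. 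Thus the conjecture reduces to two assertions: (i) the collection $\{\on{CT}^{\on{enh}}_P\}_{P}$ paired with $\{\Eis^{\on{enh}}_P\}_P$ reconstructs $\CM \in \Dmod(\Bun_G)$ via the cuspidal/Eisenstein recollement; and (ii) for each fixed parabolic $P$ with Levi $M$, the functor $\on{coeff}_{P,M}$ is fully faithful on the essential image of $\on{CT}^{\on{enh}}_P$.

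Second I would handle (ii) by pulling it back to a pure statement on the Levi. Using the Cartesian diagram from \lemref{l:equiv strata-wise} and the identity \eqref{e:Whit of CT}, fully faithfulness of $\on{coeff}_{P,M}$ on objects of the form $\on{CT}^{\on{enh}}_P(\CM)$ is governed, after applying $({}'\imath_M)^\dagger$, by fully faithfulness of the usual non-degenerate Whittaker coefficient $\on{coeff}_{M,M}$ on the cuspidal (more precisely, tempered) part of $\Dmod(\Bun_M)$. For $G=GL_n$, every Levi is a product of general linear groups, and the assertion for each factor follows from the contractibility of $\bMaps(X,N_M)^{\on{gen}}$ recalled in \secref{sss:old and new}, together with the fact from \cite[Sect. 4.1]{Ga2} that this contractibility makes the pullback $\Dmod(\Bun_M)\to \Dmod(\Gr_{M,\Ran(X)})$ fully faithful, so that $\on{coeff}_{M,M}$ realizes $\Dmod(\Bun_M)$ as a full subcategory of a Whittaker category on the Ran affine Grassmannian where the averaging is transparently faithful.

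The main obstacle lies in assertion (ii) for arbitrary $G$: the contractibility tool applies cleanly only when $N$ has no non-trivial characters other than those of the simple root subgroups that are ``hit'' by $\chi$, which is automatic for $GL_n$ but fails for groups with non-simply-laced root data in subtle ways that affect the behavior of $\on{coeff}_{M,M}$ on non-cuspidal constituents. The gluing compatibility in (i) is tautological once the strata-wise statement is in place, since the gluing morphisms on both sides are manufactured from the same $\on{CT}^{\on{enh}}_P$'s; so the content of the conjecture really is the non-degenerate Levi-level Whittaker fully faithfulness, and pushing this beyond $GL_n$ is the step that currently blocks the argument for general reductive $G$.
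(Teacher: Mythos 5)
This statement is one of the three irreducible conjectural inputs to the paper (alongside Conjectures \ref{c:coLoc ext ff} and \ref{c:Op gen}); the paper does not prove it for general $G$. For $G=GL_n$ the paper states Quasi-Theorem \ref{t:Whit for GLn}, whose proof sketch is entirely different from yours: it ``uses the mirabolic subgroup and the classical strategy of expressing the functor $\on{coeff}^{\on{ext}}_{G,G}$ as a composition of $n-1$ Fourier transform functors,'' i.e.\ one iterates a Fourier--Deligne transform along the mirabolic chain, using the contractibility of $\bMaps(X,H)^{\on{gen}}$ at each stage.

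Your proposed reduction has a genuine gap, and it is circular at its core. Your assertion (i)---that the family $\{\on{CT}^{\on{enh}}_P(\CM)\}_P$ (together with the cuspidal piece) ``reconstructs'' $\CM$ via a Cousin tower matching the stratification of $\Whit(G,G)^{\on{ext}}$---is not tautological; it is essentially an equivalent form of the conjecture. The Eisenstein--cuspidal decomposition of $\Dmod(\Bun_G)$ does exist (this is used in \secref{ss:proof of main}), but what is not known is that the resulting gluing data on $\Dmod(\Bun_G)$ is reflected by $\on{coeff}^{\on{ext}}_{G,G}$ into the lax-limit gluing of $\Whit(G,G)^{\on{ext}}$. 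Full faithfulness into a lax limit $\on{Glue}(\fC)$ is not a stratum-by-stratum property: the mapping spaces in the lax limit are computed by a complex involving all the transition functors $(\bi_{P_1})^\dagger \circ (\bi_{P_2})_!$, and the whole difficulty is that those transition functors encode interactions between strata that your Cousin tower quietly assumes to be controlled.

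Your assertion (ii) is also not correct as stated. The nondegenerate Whittaker coefficient $\on{coeff}_{M,M}$ is \emph{not} fully faithful on all of $\Dmod(\Bun_M)$; the remark after \eqref{e:cond W modified} makes clear that full faithfulness holds only on $\Dmod(\Bun_M)_{\on{temp}}$, and the remark following the definition of $\on{coeff}_{P,M}$ in \secref{s:deg Whit} says explicitly that $\on{coeff}_{P,M}$ is not fully faithful, only its restriction to $\on{I}(G,P)_{\on{temp}}$ is (and even that is conditional on Quasi-Theorem \ref{t:deg Whit spec}). But the image of $\on{CT}^{\on{enh}}_P$, after restricting along $(\imath_M)^\dagger$, is all of $\Dmod(\Bun_M)$, not its tempered part. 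The contractibility of $\bMaps(X,N_M)^{\on{gen}}$ gives full faithfulness of $\Dmod(\Bun_M)\to \Dmod(\Gr_{M,\Ran(X)})$, but $\on{coeff}_{M,M}$ involves an averaging $\on{Av}^{\bN,\chi}$ after that pull-back, and that averaging genuinely loses information on the non-tempered part. So the Levi-level reduction does not close up.
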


\medskip

We have:

\begin{thm}  \label{t:Whit for GLn}
Conjecture \ref{c:Whit ext ff} holds for $G=GL_n$.
\end{thm}

This theorem has been recently established by D.~Beraldo. The proof uses the mirabolic subgroup and the
classical strategy of expressing the functor $\on{coeff}^{\on{ext}}_{G,G}$ as a composition
of $n-1$ Fourier transform functors.

\ssec{Extended vs. degenerate Whittaker models}

\sssec{}

Let $P$ be a parabolic in $G$ with Levi quotient $M$. Note that we have a canonically defined locally closed embedding
of prestacks:
$$\bi_P:\CQ_{G,P}\to \CQ^{\on{ext}}_{G,G}.$$

Namely, it corresponds to the locally closed subscheme $\ofc(M)\subset \fch(G)$. In other words, 
$\Maps(S,\CQ_{G,P})$ is a subgroupoid of $\Maps(S,\CQ^{\on{ext}}_{G,G})$, corresponding to those
$(\CP_G,U,\alpha,\gamma)$, for which the maps $\gamma(\mu)$ of \eqref{e:gamma mu} satisfy:

\smallskip

\begin{itemize}

\item For $\mu\notin \Lambda^{\on{pos},\on{sat}_M}$, we have $\gamma(\mu)=0$.

\item For $\mu\in  \Lambda^{\on{pos},\on{sat}_M}$, the map $\gamma(\mu)$ is an isomorphism
(possibly, after shrinking the open subset $U$). 

\end{itemize} 

\sssec{}

For $P=G$ we will sometimes use the notation $\bj$ instead of $\bi_G$, to emphasize that
we are dealing with an open embedding. 

\medskip

For the same reason, we will use the notation
$\bj^\bullet$ instead of $\bj^\dagger$. The functor $\bj^\bullet$ admits a \emph{right} adjoint, denoted
$\bj_\bullet$, given by the D-module direct image.

\sssec{}

The restriction of the groupoid $\bN_{\CQ^{\on{ext}}_{G,G}}$ to $$\left(\CQ_{G,P}\times \Ran(X)\right){}_{\on{good}}$$ identifies
with $\bN_{\CQ_{G,P}}$, and the character $\chi^{\on{ext}}$ restricts to $\chi_P$. 

\medskip

Hence, the functor $(\bi_P)^\dagger$ gives rise to a functor
$$(\bi_P)^\dagger:\Whit^{\on{ext}}(G,G)\to \Whit(G,P).$$

One shows that the partially defined left adjoint $(\bi_P)_\dagger$ to $(\bi_P)^\dagger$ is well-defined 
on the full subcategory
$$ \Whit(G,P)\subset \Dmod(\CQ_{G,P}).$$

Hence, we obtain a functor
$$(\bi_P)_\dagger:\Whit(G,P)\to \Whit^{\on{ext}}(G,G),$$
which is fully faithful, since $\bi_P$ is a locally closed embedding. 

\sssec{}

In particular, the functor $\on{coeff}_{G,G}^{\on{ext}}$ contains the information of all
the functors $\on{coeff}_{G,P}$:

\begin{equation} \label{e:deg via ext}
\on{coeff}_{G,P}\simeq (\bi_P)^\dagger\circ \on{coeff}^{\on{ext}}_{G,G}.
\end{equation}

Note that we have the following consequence of \conjref{c:Whit ext ff}:

\begin{corconj}  \label{cc:coeff 0}
Let $\CM\in \Dmod(\Bun_G)$ be such that $\on{coeff}_{G,P}(\CM)=0$
for all parabolics $P$ (including $P=G$). Then $\CM=0$.
\end{corconj}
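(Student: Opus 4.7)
The plan is to derive \corref{cc:coeff 0} as a direct consequence of \conjref{c:Whit ext ff} together with the stratification of the target prestack $\CQ^{\on{ext}}_{G,G}$ induced by the decomposition $\fch(G) = \bigsqcup_P \ofc(M)$ of \secref{ss:ch}. The key identity available to me is \eqref{e:deg via ext}, which reads
\[
\on{coeff}_{G,P} \simeq (\bi_P)^\dagger \circ \on{coeff}^{\on{ext}}_{G,G},
\]
so the hypothesis of the corollary translates into saying that the object $\CN := \on{coeff}^{\on{ext}}_{G,G}(\CM) \in \Whit^{\on{ext}}(G,G)$ has $(\bi_P)^\dagger(\CN) = 0$ for every parabolic $P$ (including $P=G$, which corresponds to the open stratum $\bj = \bi_G$).

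First I would reduce the problem to showing $\CN = 0$. This uses only that $\on{coeff}^{\on{ext}}_{G,G}$ is fully faithful by \conjref{c:Whit ext ff}; in particular it is conservative, so $\on{coeff}^{\on{ext}}_{G,G}(\CM)=0$ forces $\CM=0$. Thus the entire content of the corollary reduces to the statement that an object of $\Whit^{\on{ext}}(G,G)$ whose $!$-restriction to each locally closed sub-prestack $\CQ_{G,P} \subset \CQ^{\on{ext}}_{G,G}$ vanishes is itself zero.

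Next I would verify this stratification-conservativity. The embeddings $\bi_P : \CQ_{G,P} \to \CQ^{\on{ext}}_{G,G}$ are the pull-backs of the locally closed embeddings $\ofc(M) \hookrightarrow \fch(G)$ along the data of the component $\gamma$, and the $\ofc(M)$ exhaust $\fch(G)$ as $P$ ranges over the (finitely many) standard parabolics. Consequently, viewing $\CN$ inside $\Dmod(\CQ^{\on{ext}}_{G,G})$ via the fully faithful forgetful functor, its $!$-restriction to the preimage of each stratum vanishes, and by the standard recollement for a finite stratification by locally closed sub-prestacks the collection of functors $\{(\bi_P)^\dagger\}_P$ is jointly conservative. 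Since $\Whit^{\on{ext}}(G,G) \hookrightarrow \Dmod(\CQ^{\on{ext}}_{G,G})$ is fully faithful, this conservativity is inherited, and we conclude $\CN = 0$.

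The only point requiring real verification is the joint conservativity of the $(\bi_P)^\dagger$, which is where the finiteness of the stratification on $\fch(G)$ and the fact that the stratification lifts cleanly to $\CQ^{\on{ext}}_{G,G}$ over the locus of sufficiently generic $U$ are used; everything else is formal. Thus the main (in fact only) non-trivial input is \conjref{c:Whit ext ff} itself, which explains why the statement is labelled a corollary-of-conjecture rather than a theorem.
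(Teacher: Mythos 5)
Your proof is correct and spells out precisely the reasoning the paper leaves implicit: it translates the hypothesis via \eqref{e:deg via ext} into vanishing of $(\bi_P)^\dagger\bigl(\on{coeff}^{\on{ext}}_{G,G}(\CM)\bigr)$ for every $P$, invokes joint conservativity of restriction along the finite stratification $\fch(G)\simeq\sqcup_P\,\ofc(M)$, and then concludes by conservativity of the (by \conjref{c:Whit ext ff}) fully faithful functor $\on{coeff}^{\on{ext}}_{G,G}$. This is the same route; the only cosmetic remark is that the joint conservativity of the $(\bi_P)^\dagger$ is made formal a section later via the identification $\Whit(G,G)^{\on{ext}}\simeq\on{Glue}(G)_{\on{geom}}$ (\lemref{l:glue Whit}), whose evaluation functors are jointly conservative by construction of a lax limit, which is equivalent to your recollement argument.
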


\ssec{Cuspidality}

\sssec{}

We shall call an object $\CM\in \Dmod(\Bun_G)$ \emph{cuspidal}
if it is annihilated by the functors $\on{CT}_P$ for all \emph{proper} parabolics $P$ of $G$.
We let
$$\Dmod(\Bun_G)_{\on{cusp}}\subset \Dmod(\Bun_G)$$
the full subcategory spanned by cuspidal objects. 

\sssec{}

Note that since for a given parabolic $P$, the functor $\imath^\dagger_P$ is conservative,
an object $\CM\in \Dmod(\Bun_G)$ is annihilated by $\on{CT}_P$ if and only if it is
annihilated by $\on{CT}^{\on{enh}}_P$.

\medskip 

From \eqref{e:deg Whit via CT} we obtain that if $\CM$ is cuspidal then all $\on{coeff}_{G,P}(\CM)$
(for $P$ being a proper parabolic) are zero. In particular, we have:

\begin{cor}  \label{c:cusp}
Let $\CM\in \Dmod(\Bun_G)$ be cuspidal. Then the canonical map
$$\on{coeff}^{\on{ext}}_{G,G}(\CM)\to \bj_\bullet\circ \bj^\bullet (\on{coeff}^{\on{ext}}_{G,G}(\CM))$$
is an isomorphism.
\end{cor}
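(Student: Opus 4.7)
The plan is to reduce the assertion to a stratum-by-stratum vanishing along the natural stratification of $\CQ^{\on{ext}}_{G,G}$. Recall that the decomposition $\fch(G) = \bigsqcup_P \ofc(M)$ from \secref{ss:ch} produces a decomposition of $\CQ^{\on{ext}}_{G,G}$ into the finitely many locally closed substacks $\CQ_{G,P}$, with $P = G$ giving the open stratum $\bj : \CQ_{G,G} \hookrightarrow \CQ^{\on{ext}}_{G,G}$ and the proper parabolics contributing the closed complement. The canonical arrow $\CF \to \bj_\bullet \bj^\bullet \CF$ is the unit of the $(\bj^\bullet, \bj_\bullet)$-adjunction, and its cone is (set-theoretically) supported on this closed complement. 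By the standard recollement along the finite stratification, this cone vanishes as soon as $(\bi_P)^\dagger \CF = 0$ for every proper $P$.

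With $\CF := \on{coeff}^{\on{ext}}_{G,G}(\CM)$, the identity \eqref{e:deg via ext} gives $(\bi_P)^\dagger \CF \simeq \on{coeff}_{G,P}(\CM)$, so the problem reduces to proving $\on{coeff}_{G,P}(\CM) = 0$ for every proper $P$. Invoking the factorization \eqref{e:deg Whit via CT}, namely $\on{coeff}_{G,P} \simeq \on{coeff}_{P,M} \circ \on{CT}_P^{\on{enh}}$, it is enough to show $\on{CT}_P^{\on{enh}}(\CM) = 0$. This is immediate from the cuspidality of $\CM$ together with the observation already recorded in the paper: the functor $(\imath_M)^\dagger$ is conservative (a consequence of \lemref{l:restr to strata} via the pull-back square \eqref{e:equiv strata-wise}) and satisfies $\on{CT}_P \simeq (\imath_M)^\dagger \circ \on{CT}_P^{\on{enh}}$, so the vanishing of $\on{CT}_P(\CM)$ for proper $P$ forces that of $\on{CT}_P^{\on{enh}}(\CM)$.

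The main delicate point in this outline is the very first step, namely the recollement along the stratification of the prestack $\CQ^{\on{ext}}_{G,G}$. Because $\CQ^{\on{ext}}_{G,G}$ is only a prestack and not an algebraic stack, one must verify that the six-functor formalism of D-modules on prestacks supports both the open-closed triangle for $\bj$ and the inductive assembly of !-restrictions along the locally closed substacks $\bi_P$ into a vanishing statement on the closed complement. The natural way to secure this is to repeat the analysis of \propref{p:Barlev}: one presents $\CQ^{\on{ext}}_{G,G}$ as the quotient of an algebraic stack by a schematic proper equivalence relation, in a manner compatible with the decomposition indexed by parabolics, so that the required recollement descends from the well-established statement for algebraic stacks. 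Once this is in place, the two-line verification from the preceding paragraph completes the proof.
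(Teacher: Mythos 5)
Your proof is correct and follows the same route as the paper: cuspidality kills $\on{CT}_P^{\on{enh}}(\CM)$ for all proper $P$ (via conservativity of $(\imath_M)^\dagger$), hence $\on{coeff}_{G,P}(\CM)=0$ by \eqref{e:deg Whit via CT}, and the recollement along the stratification of $\CQ^{\on{ext}}_{G,G}$ indexed by parabolics then yields the isomorphism. The paper compresses this last step into an ``in particular,'' while you rightly flag that the open--closed recollement on the prestack $\CQ^{\on{ext}}_{G,G}$ requires justification, which, as you observe, can be secured by the same device as \propref{p:Barlev}, or alternatively read off from the gluing description in \lemref{l:glue Whit}.
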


\sssec{}

Note, however, that from \eqref{e:Whit of CT} and \corref{cc:coeff 0} (applied to proper Levi subgroups of $G$),
we obtain:

\begin{corconj}
If $\CM\in \Dmod(\Bun_G)$ is such that $\on{coeff}_{G,P}(\CM)=0$ for all proper parabolics $P$, then
$\CM$ is cuspidal. 
\end{corconj}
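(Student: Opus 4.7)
The plan is to deduce the statement from Corollary of Conjecture~\ref{cc:coeff 0} applied to proper Levi subgroups $M$ of $G$, combined with the basic identity \eqref{e:Whit of CT}. Since \corref{cc:coeff 0} is itself a consequence of \conjref{c:Whit ext ff}, which we assume throughout for all reductive groups, this tool is available for every proper Levi.

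Fix $\CM$ satisfying the hypothesis, and fix a proper parabolic $P\subset G$ with Levi $M$; I will show $\on{CT}_P(\CM)=0$. By \corref{cc:coeff 0} applied to $M$, it suffices to establish that $\on{coeff}_{M,Q}(\on{CT}_P(\CM))=0$ for every parabolic $Q\subseteq M$, \emph{including} $Q=M$. For each such $Q$, let $P'\subseteq P$ denote the parabolic of $G$ whose image in $M$ under $P\twoheadrightarrow M$ equals $Q$, and let $M'$ be the common Levi of $P'$ and of $Q$. When $Q=M$ we have $P'=P$; when $Q\subsetneq M$ we have that $P'$ is a proper parabolic of $G$, strictly smaller than $P$.

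The case $Q=M$ is immediate from \eqref{e:Whit of CT}:
$$\on{coeff}_{M,M}(\on{CT}_P(\CM)) \simeq ({}'\imath_M)^\dagger(\on{coeff}_{G,P}(\CM)) = 0.$$
For $Q\subsetneq M$, I would combine \eqref{e:Whit of CT} applied to the pair $(M,Q)$ with the same identity applied to $(G,P')$, using the transitivity $\on{CT}^M_Q \circ \on{CT}^G_P \simeq \on{CT}^G_{P'}$ of constant terms. This yields an isomorphism between $({}'\imath_{M'})^\dagger(\on{coeff}_{M,Q}(\on{CT}_P(\CM)))$ and $({}'\imath_{M'})^\dagger(\on{coeff}_{G,P'}(\CM))$, viewed as objects of $\Whit(M',M')$. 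The right-hand side vanishes by hypothesis, since $P'$ is a proper parabolic of $G$.

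The main obstacle, I expect, will be closing the gap between vanishing of this restriction to the $M'$-stratum and vanishing of $\on{coeff}_{M,Q}(\on{CT}_P(\CM))$ itself. The cleanest way to bridge it is to establish conservativity of the restriction-to-stratum functor $({}'\imath_{M'})^\dagger:\Whit(M,Q)\to \Whit(M',M')$, which is the natural Whittaker-category analog of \lemref{l:restr to strata} and should follow from the fact that the underlying map $\imath_{Q}:\Bun_{Q}\to \Bun_M^{Q\on{-gen}}$ is a bijection on groupoids of field-valued points, combined with an unfolding of the nested equivariance conditions. Alternatively, one could upgrade the iterated relation to a direct isomorphism of functors $\on{coeff}_{M,Q}\circ \on{CT}^G_P \simeq \Phi\circ \on{coeff}_{G,P'}$ for a suitable pull-back functor $\Phi$; this is plausible from the function-theoretic analogy, in which both sides amount to integration over $N(P')(\BA)/N(P')(K)$ against the character $\chi_{P'}$.
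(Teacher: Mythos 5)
Your argument is correct and matches the route the paper has in mind: apply \corref{cc:coeff 0} to the Levi $M$, handle the stratum $Q=M$ directly via \eqref{e:Whit of CT}, and handle $Q\subsetneq M$ by iterating \eqref{e:Whit of CT} together with transitivity of constant terms. The conservativity of the stratum-restriction functor $({}'\imath_{M'})^\dagger:\Whit(M,Q)\to \Whit(M',M')$ that you flag as the ``main obstacle'' is not a gap: in the subsection ``A strata-wise description'' of \secref{s:deg Whit}, the functor $({}'\imath_M)^\dagger$ is introduced precisely as ``the resulting (conservative) functor'', and the same statement for the group $M$ in place of $G$ gives exactly the conservativity you need, for essentially the reason you sketch (bijectivity of $\imath_Q$ on field-valued points).
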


And, hence:

\begin{corconj}
Let $\CM\in \Dmod(\Bun_G)$ be such that the map
$$\on{coeff}^{\on{ext}}_{G,G}(\CM)\to \bj_\bullet\circ \bj^\bullet (\on{coeff}^{\on{ext}}_{G,G}(\CM))$$
is an isomorphism. Then $\CM$ is cuspidal.
\end{corconj}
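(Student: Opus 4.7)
The plan is to reduce this statement to the immediately preceding Corollary-of-Conjecture, which asserts that if $\on{coeff}_{G,P}(\CM) = 0$ for all proper parabolics $P$, then $\CM$ is cuspidal. Combining this reduction with the identity $\on{coeff}_{G,P} \simeq (\bi_P)^\dagger \circ \on{coeff}^{\on{ext}}_{G,G}$ from \eqref{e:deg via ext}, it suffices to show that the hypothesis forces $(\bi_P)^\dagger \on{coeff}^{\on{ext}}_{G,G}(\CM) = 0$ for every proper parabolic $P \subsetneq G$.

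First I would set up the closed-open decomposition implicit in the definition of $\CQ^{\on{ext}}_{G,G}$: the open embedding $\bj: \CQ_{G,G} \hookrightarrow \CQ^{\on{ext}}_{G,G}$ corresponds to the open subscheme $\ofc(G) \subset \fch(G)$, whose complement in $\fch(G)$ is the closed subscheme $\bigsqcup_{P \ne G} \ofc(M)$. This induces a closed sub-prestack $i: Z \hookrightarrow \CQ^{\on{ext}}_{G,G}$ whose open complement is $\bj(\CQ_{G,G})$. For any $\F \in \Dmod(\CQ^{\on{ext}}_{G,G})$, the standard recollement triangle
$$i_* i^\dagger \F \to \F \to \bj_\bullet \bj^\bullet \F \xrightarrow{+1}$$
then applies. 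Setting $\F := \on{coeff}^{\on{ext}}_{G,G}(\CM)$, the hypothesis that $\F \to \bj_\bullet \bj^\bullet \F$ is an isomorphism forces $i_* i^\dagger \F = 0$; conservativity of $i_*$ (as a closed embedding) then yields $i^\dagger \F = 0$.

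For each proper parabolic $P$, the locally closed embedding $\bi_P: \CQ_{G,P} \hookrightarrow \CQ^{\on{ext}}_{G,G}$ factors as $\CQ_{G,P} \xrightarrow{\bi'_P} Z \xrightarrow{i} \CQ^{\on{ext}}_{G,G}$, so
$$(\bi_P)^\dagger \F \simeq (\bi'_P)^\dagger \circ i^\dagger \F = 0.$$
Via \eqref{e:deg via ext} this gives $\on{coeff}_{G,P}(\CM) = 0$ for all proper parabolics $P$, and the previous Corollary-of-Conjecture delivers the conclusion that $\CM$ is cuspidal.

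The argument is essentially formal, so there is no genuine obstacle; the only point requiring care is checking that the recollement triangle lifts to the Whittaker-equivariant setting $\Whit^{\on{ext}}(G,G) \subset \Dmod(\CQ^{\on{ext}}_{G,G})$, i.e., that $i_*$, $i^\dagger$, $\bj_\bullet$ and $\bj^\bullet$ preserve the equivariance condition against the groupoid $\bN_{\CQ^{\on{ext}}_{G,G}}$ and the character $\chi^{\on{ext}}$. This is straightforward once one observes that both $i$ and $\bj$ are equivariant for $\bN_{\CQ^{\on{ext}}_{G,G}}$ and that $\chi^{\on{ext}}$ restricts to the characters used in defining $\Whit(G,P)$ and $\Whit(G,G)$ on the respective strata.
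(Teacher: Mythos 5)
Your argument is correct and is the natural deduction intended by the paper's ``And, hence:''. The isomorphism hypothesis kills the $!$-restriction of $\on{coeff}^{\on{ext}}_{G,G}(\CM)$ to the closed complement of $\bj(\CQ_{G,G})$, hence kills $\on{coeff}_{G,P}(\CM) = (\bi_P)^\dagger\,\on{coeff}^{\on{ext}}_{G,G}(\CM)$ for each proper $P$ by \eqref{e:deg via ext}, and the preceding Corollary-of-Conjecture delivers cuspidality.
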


\section{The gluing procedure}  \label{s:gluing}

In this section we will match the category $\Whit^{\on{ext}}(G,G)$ with a category that can be described 
purely in spectral terms. 

\ssec{Gluing of DG categories, a digression}

In subsection we will describe the general paradigm in which one can define the procedure of gluing 
of DG categories.

\sssec{}

Let $A$ be an index category, and let $\bC$ 
$$(a\in A) \mapsto  \bC_a,\quad (a_1\overset{\phi}\to a_2) \mapsto (\bC_{a_1}\overset{\bC_\phi}\longrightarrow \bC_{a_2})$$
be a \emph{lax} diagram of DG categories, parameterized by $A$.

\medskip

Informally, this means that for a pair of composable arrows 
$$a_1\overset{\phi}\longrightarrow a_2 \overset{\psi}\longrightarrow a_3$$
we have a \emph{natural transformation}  (but not necessarily an isomorphism)
\begin{equation} \label{e:gluing nat trans}
\bC_{\psi}\circ \bC_{\phi}\to \bC_{\psi\circ \phi},
\end{equation} 
equipped with a homotopy-coherent system of compatibilities for higher-order compositions.

\medskip

In the $\infty$-categorical language, we should think of $\bC$ as a category $\bC_A$, equipped with a functor
to $A$, which is a \emph{locally co-Cartesian fibration}.

\sssec{}  \label{sss:glue}

To $\bC$ as above we assign its \emph{lax limit} 
$$\on{Glue}(\bC)\in \on{DGCat}_{\on{cont}}.$$

\medskip

In the $\infty$-categorical language, $\on{Glue}(\bC)$ is the category of sections of the functor $\bC_A\to A$. 

\medskip

One can characterize $\on{Glue}(\bC)$ by the following universal property. For $\bD\in  \on{DGCat}_{\on{cont}}$, 
the datum of a continuous functor 
$$F:\bD\to \on{Glue}(\bC)$$
is equivalent to that of a collection of continuous functors 
$$F_a:\bD\to \bC_a,\quad a\in A,$$
equipped with a compatible system of natural transformations
$$\bC_\phi\circ F_{a_1}\overset{F_\phi}\longrightarrow F_{a_2} \text{ for } a_1\overset{\phi}\to a_2.$$

Note, however, that we \emph{do not} require that the natural transformations $F_\phi$ be isomorphisms. 

\medskip

Taking $\bD$ to be $\Vect$, we obtain a description of the $\infty$-groupoid of objects of $\on{Glue}(\bC)$. These are assignments
$$(a\in A) \mapsto \bc_a\in \bC_a, \quad (a_1\overset{\phi}\to a_2) \mapsto (\bC_\phi(\bc_{a_1}) \overset{\bc_\phi}\longrightarrow \bc_{a_2}),$$
equipped with a homotopy-coherent system of compatibilities for higher-order compositions.

\begin{rem}
The category $\on{Glue}(\bC)$ contains a full subcategory, denoted $\on{Glue}(\bC)^{\on{strict}}$, that consists of 
those assigments for which the maps $\bc_\phi$ above are isomorphisms. 

\medskip 

If $\bC$ was itself a strict functor $A\to \on{DGCat}_{\on{cont}}$ (i.e., if the natural transformations \eqref{e:gluing nat trans}
were isomorphisms, or equivalently $\bC_A\to A$ was a \emph{co-Cartesian fibration}), then 
$\on{Glue}(\bC)^{\on{strict}}$ identifies with the limit of $\bC$, 
$$\underset{a\in A}{\on{lim}}\, \bC_a \in \on{DGCat}_{\on{cont}}.$$ 
\end{rem} 

\sssec{}

We have the natural evaluation functors 
$$\on{ev}_{a}: \on{Glue}(\bC) \to \bC_{a},\quad a\in A.$$
These functors admit left adjoints, denoted $\on{ins}_{a}$\footnote{The notation ``$\on{ins}$" is for ``insert".}.

\medskip

Explicitly, the composition 
$$\on{ev}_{a_2}\circ \on{ins}_{a_1}:\bC_{a_1}\to \bC_{a_2}$$
is calculated as the colimit in $\on{Funct}_{\on{cont}}(\bC_{a_1},\bC_{a_2})$ over
the $\infty$-groupoid $\Maps_{A}(a_1,a_2)$ of the functor
$$(\phi\in \Maps_{A}(a_1,a_2))\mapsto (\bC_\phi\in \on{Funct}_{\on{cont}}(\bC_{a_1},\bC_{a_2})).$$

\medskip

In particular, we have:

\begin{lem} \label{l:ins}
Suppose that $a\in A$ is such that $\Maps_{A}(a,a)$ contractible. Then the functor $\on{ins}_a$
is fully faithful.
\end{lem}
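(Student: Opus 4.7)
The plan is to exploit the fact that $\on{ins}_a$ is a left adjoint to $\on{ev}_a$, and that a left adjoint is fully faithful if and only if the unit of the adjunction $\on{id}_{\bC_a} \to \on{ev}_a \circ \on{ins}_a$ is an isomorphism. Thus everything reduces to computing this composite and identifying the unit map.

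The formula recorded just above the statement of the lemma, applied with $a_1=a_2=a$, gives
$$\on{ev}_a \circ \on{ins}_a \;\simeq\; \underset{\phi \in \Maps_A(a,a)}{\on{colim}}\, \fC(\phi),$$
where the colimit is taken in $\on{Funct}_{\on{cont}}(\bC_a,\bC_a)$. The object $\on{id}_a \in \Maps_A(a,a)$ satisfies $\fC(\on{id}_a) = \on{id}_{\bC_a}$, and the point I would check is that the unit of adjunction coincides, under this identification, with the structure map of the colimit at $\phi=\on{id}_a$. This is a formal consequence of the universal property of $\on{ins}_a$: evaluating the adjunction bijection $\Maps(\on{ins}_a(c),\on{ins}_a(c))\simeq \Maps(c,\on{ev}_a\on{ins}_a(c))$ at the identity on the left yields exactly the structure map $\fC(\on{id}_a)(c) = c \to \on{colim}_\phi \fC(\phi)(c)$ on the right.

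It remains to show that this structure map is an isomorphism. Since $A$ is an $\infty$-category and $\Maps_A(a,a)$ is the mapping $\infty$-groupoid, the contractibility hypothesis means that the inclusion $\{\on{id}_a\} \hookrightarrow \Maps_A(a,a)$ is a weak equivalence of $\infty$-groupoids, hence cofinal. Cofinality then implies that the colimit is computed by the value at $\on{id}_a$, giving the desired isomorphism $\on{id}_{\bC_a}\overset{\sim}\to \on{ev}_a\circ \on{ins}_a$.

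The only genuinely delicate point is the identification of the unit with the colimit structure map at $\on{id}_a$; this is purely formal but requires working in a model of the lax limit (for instance via Lurie's straightening/unstraightening or via the cartesian fibration $\int_A \fC \to A$) in which both the adjunction and the colimit formula are constructed compatibly. Once this compatibility is in hand, the rest of the argument is just cofinality for $\infty$-groupoids.
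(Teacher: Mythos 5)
Your proof is correct and takes essentially the same approach the paper intends: the lemma is stated immediately after the displayed formula for $\on{ev}_{a_2}\circ \on{ins}_{a_1}$ and introduced by ``In particular, we have,'' so the paper's (unwritten) proof is precisely the specialization $a_1=a_2=a$ of that formula together with the observation that the colimit over the contractible $\infty$-groupoid $\Maps_A(a,a)$ collapses to $\fC(\on{id}_a)=\on{id}_{\bC_a}$, forcing the unit of the $(\on{ins}_a,\on{ev}_a)$-adjunction to be an isomorphism. You are right to flag the identification of the unit with the colimit structure map at $\on{id}_a$ as the one point deserving a model-level check, but this is formal and your argument is complete.
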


\sssec{}  \label{sss:glue top}

Here are is a typical example of the above situation. Let $\CY$ be a topological space and let
$$\CY=\underset{a\in A} \cup\, \CY_a$$
be its decomposition into locally closed subsets, indexed by a poset $A$, so that 
$$\CY_{a_1}\cap \ol\CY_{a_2}\neq \emptyset \, \Rightarrow\, a_1\geq a_2.$$

For each index $a$ let $\bi_a$ denote the corresponding locally closed embedding, and let
$$(\bi_a)_\dagger:\on{Shv}(\CY_a)\rightleftarrows \on{Shv}(\CY):(\bi_a)^\dagger$$
be the corresponding adjoint pair.

\medskip

We define the diagram $\bC$ by sending
$a\mapsto  \on{Shv}(\CY_a)$ and 
$(a_1\leq a_2)$ to the functor 
$$(\bi_{a_2})^\dagger\circ (\bi_{a_1})_\dagger:\on{Shv}(\CY_{a_1})\to \on{Shv}(\CY_{a_2}).$$

\medskip

Consider the resulting category $\on{Glue}(\bC)$. We have a naturally defined functor
\begin{equation} \label{e:glue sheaves}
\on{Shv}(\CY)\to \on{Glue}(\bC),
\end{equation}
given by sending 
$a\mapsto (\bi_a)^\dagger$ and $(a_1\leq a_2)$ to the natural transformation
$$(\bi_{a_2})^\dagger\circ (\bi_{a_1})_\dagger \circ (\bi_{a_1})^\dagger \to
(\bi_{a_2})^\dagger.$$

It is well known that the functor \eqref{e:glue sheaves} is an equivalence. This is the source of the name ``gluing" for the construction of
\secref{sss:glue}.

\sssec{}

Let now $F:\bC'\to \bC''$ be a \emph{lax} natural transformation. Informally, this means having a collection of functors
$$F_a:\bC'_a\to \bC''_a, \quad a\in A,$$
equipped with natural transformations 
\begin{equation} \label{e:funct nat trans}
\bC''_\phi\circ F_{a_1}\to F_{a_2}\circ \bC'_\phi, \quad a_1\overset{\phi}\longrightarrow a_2,
\end{equation} 
and a homotopy-coherent system of compatibilities for higher-order compositions.

\medskip

In the $\infty$-categorical language, the datum of $F$ amounts to that of a 
functor $F_A:\bC'_A\to \bC''_A$, compatible with the projections to $A$. 

\medskip

We shall say that $F$ is \emph{strict} if the natural transformations \eqref{e:funct nat trans} are isomorphisms.
In the $\infty$-categorical language, this can be formulated as saying that $F_A$ takes co-Cartesian arrows to
co-Cartesian arrows. 

\medskip

Given $F$ as above, we have a naturally defined functor
$$\on{Glue}(F): \on{Glue}(\bC')\to \on{Glue}(\bC'').$$

\medskip

We have:

\begin{lem}  \label{l:glue fully faithful}
Assume that each of the functors $F_a:\bC'_a\to \bC''_a$ is fully faithful and that $F$ is strict. 
Then $\on{Glue}(F)$ is fully faithful. 
\end{lem}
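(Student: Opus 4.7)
The plan is to reduce fully faithfulness of $\on{Glue}(\bT)$ to fully faithfulness of the individual $\bT_a$ by expressing mapping spaces in $\on{Glue}(\fC)$ as a limit of mapping spaces in the $\bC_a$, and then invoking that limits of equivalences are equivalences.

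First I would unpack what a morphism in $\on{Glue}(\fC)$ is. Taking $\bD = \Vect$ in the universal property gives that an object of $\on{Glue}(\fC)$ is a collection $(c_a \in \bC_a)_{a \in A}$ together with structure maps $\alpha_\phi \colon \fC(\phi)(c_{a_1}) \to c_{a_2}$ for each $\phi \colon a_1 \to a_2$, satisfying coherences. A morphism $(c_a,\alpha_\phi) \to (d_a,\beta_\phi)$ is then a compatible family of morphisms $f_a \colon c_a \to d_a$ in $\bC_a$ making the squares
$$
\begin{CD}
\fC(\phi)(c_{a_1}) @>{\alpha_\phi}>> c_{a_2} \\
@V{\fC(\phi)(f_{a_1})}VV @VV{f_{a_2}}V \\
\fC(\phi)(d_{a_1}) @>{\beta_\phi}>> d_{a_2}
\end{CD}
$$
commute, coherently. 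Equivalently, $\Maps_{\on{Glue}(\fC)}((c_a,\alpha),(d_a,\beta))$ is computed as a limit (an end over $A$) of a diagram whose vertices are the spaces $\Maps_{\bC_a}(c_a, d_a)$ and whose edges impose the above compatibility against the structure maps $\alpha_\phi$ and $\beta_\phi$.

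Next, I would observe that the functor $\on{Glue}(\bT)$ sends $(c_a,\alpha_\phi)$ to $(\bT_a(c_a), \bT_{a_2}(\alpha_\phi))$, where the new structure map uses that $\bT$ is a natural transformation (so $\bT_{a_2}\circ \fC'(\phi) = \fC''(\phi)\circ \bT_{a_1}$). Consequently, the induced map on mapping spaces
$$
\Maps_{\on{Glue}(\fC')}\bigl((c_a,\alpha),(d_a,\beta)\bigr)
\longrightarrow
\Maps_{\on{Glue}(\fC'')}\bigl((\bT_a c_a, \bT\alpha),(\bT_a d_a, \bT\beta)\bigr)
$$
is exhibited, under the end description above, as the limit of the maps
$$
\Maps_{\bC'_a}(c_a, d_a) \longrightarrow \Maps_{\bC''_a}(\bT_a(c_a), \bT_a(d_a)),
$$
fitting into a compatible morphism of diagrams indexed by $A$.

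To conclude, by the hypothesis that each $\bT_a$ is fully faithful, every such map of mapping spaces is an equivalence of $\infty$-groupoids. Since limits in $\infty\on{-Grpd}$ preserve objectwise equivalences, the induced map on the end is an equivalence as well, which is precisely the assertion that $\on{Glue}(\bT)$ is fully faithful. The main (essentially bookkeeping) obstacle is writing down the end presentation of $\Maps_{\on{Glue}(\fC)}$ cleanly in the $\infty$-categorical setting — once one accepts the general formula for mapping spaces in a lax limit (cf.\ \cite[Chapter 5]{Lu}), the argument reduces to the slogan ``fully faithful is preserved under lax limits''.
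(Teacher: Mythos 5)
The paper states this lemma without giving a proof, so there is nothing to compare your argument against directly; what you wrote is the standard argument and it is correct. The skeleton is exactly right: unpack the lax limit, note that $\Maps_{\on{Glue}(\fC)}\bigl((c_a,\alpha),(d_a,\beta)\bigr)$ is a limit (an end over $A$, or equivalently over the twisted-arrow category) of mapping spaces taken in the individual $\bC_a$, observe that $\on{Glue}(\bT)$ induces a morphism of the defining diagrams, and conclude since limits of equivalences in $\infty\on{-Grpd}$ are equivalences.

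One small imprecision is worth cleaning up: you describe the induced map on mapping spaces as ``the limit of the maps $\Maps_{\bC'_a}(c_a,d_a) \to \Maps_{\bC''_a}(\bT_a c_a, \bT_a d_a)$,'' but the end diagram computing $\Maps$ in $\on{Glue}(\fC)$ also has vertices of the form $\Maps_{\bC_{a_2}}\bigl(\fC(\phi)(c_{a_1}), d_{a_2}\bigr)$ for each arrow $\phi\colon a_1\to a_2$, which encode compatibility with the structure maps $\alpha_\phi$, $\beta_\phi$. The argument is unaffected: $\bT_{a_2}$ carries each such vertex to the corresponding vertex for $\fC''$ (using the naturality identification $\bT_{a_2}\circ\fC'(\phi)\simeq \fC''(\phi)\circ\bT_{a_1}$), and this map is again an equivalence by full faithfulness of $\bT_{a_2}$. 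So every vertex of the morphism of diagrams is an equivalence, and the conclusion follows as you say. It would strengthen the write-up to state this explicitly rather than only the ``diagonal'' vertices.
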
  

\ssec{The extended Whittaker model as a glued category}

In this subsection we will see that the category $\Whit^{\on{ext}}(G,G)$, introduced in \secref{s:ext Whit}, can be naturally
obtained by a gluing procedure from the categories $\Whit(G,P)$, 

\sssec{}

We let $A$ be the category $\on{Par}(G)$ opposite to the poset of standard parabolics of $G$. For each parabolic we consider the category
$$\Whit(G,P).$$

We extend the assignment $P\mapsto \Whit(G,P)$ to a lax diagram of DG categories, parameterized by $\on{Par}(G)$, 
by sending an inclusion $P_1\subset P_2$ to the functor
$$(\bi_{P_1})^\dagger\circ (\bi_{P_2})_\dagger.$$ 

Let $\on{Glue}(G)_{\on{geom}}$ denote the resulting lax limit category. 

\sssec{}

We have a naturally defined functor
$$\Whit^{\on{ext}}(G,G)\to \on{Glue}(G)_{\on{geom}}$$
corresponding to the collection of functors $(\bi_{P_1})^\dagger$.

\medskip

As in \secref{sss:glue top}, we have:

\begin{lem} \label{l:glue Whit}
The above functor $\Whit^{\on{ext}}(G,G)\to \on{Glue}(G)_{\on{geom}}$ is an equivalence.
\end{lem}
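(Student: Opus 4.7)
The plan is to view \lemref{l:glue Whit} as an instance of the standard recollement principle for sheaves on a stratified space, applied to D-modules on the prestack $\CQ^{\on{ext}}_{G,G}$ and then refined by the Whittaker equivariance condition. The key input is the stratification induced from the canonical decomposition $\fch(G) \simeq \underset{P}\sqcup \ofc(M)$ of \secref{ss:ch}: namely, the locally closed embeddings $\bi_P:\CQ_{G,P}\hookrightarrow \CQ^{\on{ext}}_{G,G}$ define a stratification
$$\CQ^{\on{ext}}_{G,G} \;=\; \underset{P}\sqcup\, \CQ_{G,P},$$
whose closure order corresponds to \emph{reverse} inclusion of standard parabolics, which is precisely the order encoded in the category $\on{Par}(G)$.

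First I would prove the analogous gluing statement at the level of D-modules, without any equivariance condition: the assignment $P\mapsto \Dmod(\CQ_{G,P})$, with transition functors $(\bi_{P_1})^\dagger \circ (\bi_{P_2})_!$ for $P_1\subset P_2$, extends to a functor $\fC_{\Dmod}:\on{Par}(G)\to \on{DGCat}_{\on{cont}}$, and the natural functor
$$\Dmod(\CQ^{\on{ext}}_{G,G})\longrightarrow \on{Glue}(\fC_{\Dmod})$$
given by $!$-restriction is an equivalence. Modelled on the topological paradigm of \secref{sss:glue top}, the proof proceeds by induction on the length of chains in $\on{Par}(G)$, with the base case being the two-stratum open-closed recollement and the inductive step amounting to iteratively adjoining strata. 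Once this is in place, I would observe that the groupoid $\bN_{\CQ^{\on{ext}}_{G,G}}$ with its character $\chi^{\on{ext}}$ restricts along each $\bi_P$ precisely to $(\bN_{\CQ_{G,P}},\chi_P)$, so that an object $\CM\in\Dmod(\CQ^{\on{ext}}_{G,G})$ is $(\bN,\chi^{\on{ext}})$-equivariant if and only if each $(\bi_P)^\dagger(\CM)$ is $(\bN_{\CQ_{G,P}},\chi_P)$-equivariant; the ``only if'' direction is immediate from \eqref{e:deg via ext}, and the converse uses that equivariance can be tested stratum-wise since every object is extended from its restrictions. This matches the full subcategory $\Whit^{\on{ext}}(G,G)\subset \Dmod(\CQ^{\on{ext}}_{G,G})$ with the full subcategory $\on{Glue}(G)_{\on{geom}}\subset \on{Glue}(\fC_{\Dmod})$ (using \lemref{l:glue fully faithful} to track fully faithfulness), yielding the lemma.

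The main obstacle I anticipate is the unconstrained gluing step for D-modules on the prestack $\CQ^{\on{ext}}_{G,G}$: for algebraic stacks such recollement is standard, but on general prestacks the $!$-pushforward and $!$-pullback functors along locally closed embeddings, and their adjunctions and base-change compatibilities, must be set up by hand. The natural workaround is to invoke an analog of \propref{p:Barlev}, realizing each $\CQ_{G,P}$ and $\CQ^{\on{ext}}_{G,G}$ as quotients of algebraic stacks by schematic proper equivalence relations, which reduces the gluing to the already-known case of algebraic stacks. With that technology in place the equivariance step is essentially formal, and the lemma follows.
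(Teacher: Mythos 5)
Your proposal is correct and follows essentially the same route the paper indicates: the lemma is presented as an instance of the recollement paradigm of \secref{sss:glue top} applied to the stratification of $\CQ^{\on{ext}}_{G,G}$ by the $\CQ_{G,P}$, compatibly with the Whittaker equivariance condition. The one step you might expand is the assertion that $(\bN_{\CQ^{\on{ext}}_{G,G}},\chi^{\on{ext}})$-equivariance can be tested stratum-wise; this holds because the groupoid preserves the strata, equivariance for a unipotent groupoid is a property cut out by the averaging adjunction, and the averaging functors commute with restriction to strata, so the embeddings $\Whit(G,P)\hookrightarrow\Dmod(\CQ_{G,P})$ form a natural transformation of $\on{Par}(G)$-diagrams and \lemref{l:glue fully faithful} applies.
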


By definition, the resulting adjoint pair of functors 
$$\on{ins}_P:\Whit(G,P)\rightleftarrows \on{Glue}(G)_{\on{geom}}:\on{ev}_P$$
identifies with the adjoint pair $(\bi_P)_\dagger,(\bi_P)^\dagger$. 

\ssec{The glued category on the spectral side}  \label{ss:gluing spec}

In this subsection we will perform another construction, crucial for our approach to geometric Langlands.

\medskip

We will show that the category $\IndCoh_{\on{Nilp}^{\on{glob}}_\cG}(\LocSys_\cG)$, appearing 
on the spectral side of the correspondence can be embedded into a category, obtained by a gluing 
procedure from the $\QCoh$-categories for the parabolics of $G$. 

\medskip

This gives a precise expression to 
the idea that the difference between the categories $\IndCoh_{\on{Nilp}^{\on{glob}}_\cG}(\LocSys_\cG)$ and
$\QCoh(\LocSys_\cG)$ is captured by the proper parabolics of $G$. 

\sssec{}

Consider again the category $\on{Par}(G)$.  For each parabolic we consider the category
$$\sF_{\cP}\mod(\QCoh(\LocSys_{\cP})).$$

We are now going to upgrade the assignment 
$$P\mapsto \sF_{\cP}\mod(\QCoh(\LocSys_{\cP}))$$
to a lax diagram of DG categories, parameterized by $\on{Par}(G)$. 

\sssec{}

For $P_1\subset P_2$, let $\sfp_{P_1/P_2,\on{spec}}$ denote the corresponding map
$$\LocSys_{\cP_1}\to \LocSys_{\cP_2}.$$

As in \secref{ss:spectral parabolic}, we have an adjoint pair of functors
$$(\sfp_{P_1/P_2,\on{spec}})^{\IndCoh}_*:\IndCoh_{\on{Nilp}^{\on{glob}}_{\cP_1}}(\LocSys_{\cP_1})\rightleftarrows 
\IndCoh_{\on{Nilp}^{\on{glob}}_{\cP_2}}(\LocSys_{\cP_2}):\sfp_{P_1/P_2,\on{spec}}^!,$$
and the same-named pair of functors
\begin{multline*}
(\sfp_{P_1/P_2,\on{spec}})^{\IndCoh}_*:\sF_{P_1}\mod(\IndCoh_{\on{Nilp}^{\on{glob}}_{\cP_1}}(\LocSys_{\cP_1}))
\rightleftarrows  \\
\sF_{P_2}\mod(\IndCoh_{\on{Nilp}^{\on{glob}}_{\cP_2}}(\LocSys_{\cP_2})):\sfp_{P_1/P_2,\on{spec}}^!
\end{multline*}
that commute with the forgetful functors
$$\oblv_{\sF_{P_i}}:
\sF_{P_i}\mod(\IndCoh_{\on{Nilp}^{\on{glob}}_{\cP_i}}(\LocSys_{\cP_i}))\to
\IndCoh_{\on{Nilp}^{\on{glob}}_{\cP_i}}(\LocSys_{\cP_i}).$$

\sssec{}

Recall also the functors
$$\Xi_{\cP_i}:\sF_{P_i}\mod(\QCoh(\LocSys_{\cP_i}))\rightleftarrows
\sF_{P_i}\mod(\IndCoh_{\on{Nilp}^{\on{glob}}_{\cP_i}}(\LocSys_{\cP_i})):\Psi_{\cP_i}.$$

\medskip

We define the functor 
$$\sF_{P_2}\mod(\QCoh(\LocSys_{\cP_2}))\to 
\sF_{P_1}\mod(\QCoh(\LocSys_{\cP_1}))$$
to be the composition
\begin{equation} \label{e:spectral gluing functors}
\Psi_{\cP_1}\circ \sfp_{P_1/P_2,\on{spec}}^!\circ \Xi_{\cP_2}.
\end{equation}

\sssec{}

We denote the resulting lax limit category by 
$$\on{Glue}(\cG)_{\on{spec}}.$$

\medskip

For a parabolic $P$, we let $\on{ev}_{\cP,\on{spec}}$ denote the corresponding evaluation
functor 
$$\on{Glue}(\cG)_{\on{spec}}\to 
\sF_{\cP}\mod(\QCoh(\LocSys_{\cP})),$$
and by $\on{ins}_{\cP,\on{spec}}$ its left adjoint. 

\medskip

By \lemref{l:ins}, the functors $\on{ins}_{\cP,\on{spec}}$ are fully faithful. 

\medskip

Note that since the functors \eqref{e:spectral gluing functors} are compatible with the
action of the monoidal category $\QCoh(\LocSys_\cG)$, the category $\on{Glue}(\cG)_{\on{spec}}$ also
acquires a $\QCoh(\LocSys_\cG)$-action.

\sssec{}

We now claim that there exists a canonically defined functor
$$\on{Glue}(\on{CT}^{\on{enh}}_{\on{spec}}):
\IndCoh_{\on{Nilp}^{\on{glob}}_\cG}(\LocSys_\cG)\to \on{Glue}(\cG)_{\on{spec}}.$$

Namely, it is given by the collection of functors for each parabolic $P$:
\begin{multline*}
\IndCoh_{\on{Nilp}^{\on{glob}}_\cG}(\LocSys_\cG)\overset{\on{CT}^{\on{enh}}_{\cP,\on{spec}}}\longrightarrow \\
\to \sF_{\cP}\mod(\IndCoh_{\on{Nilp}^{\on{glob}}_\cP}(\LocSys_{\cP}))\overset{\Psi_{\cP}}\longrightarrow
\sF_{\cP}\mod(\QCoh(\LocSys_{\cP})).
\end{multline*}

By construction, the functor $\on{Glue}(\on{CT}^{\on{enh}}_{\on{spec}})$ respects the action of the monoidal
category $\QCoh(\LocSys_\cG)$.

\sssec{}

We propose:

\begin{conj}  \label{c:coLoc ext ff}
The functor $\on{Glue}(\on{CT}^{\on{enh}}_{\on{spec}})$ is fully faithful.
\end{conj}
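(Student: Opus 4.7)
The plan is to prove fully faithfulness of $\on{Glue}(\on{CT}^{\on{enh}}_{\on{spec}})$ by reducing to a comparison of Hom spaces on a generating set and then matching via Bruhat-type filtrations on both sides. The key input from elsewhere in the paper is \propref{p:Eis generation}(a), which asserts that $\IndCoh_{\on{Nilp}^{\on{glob}}_\cG}(\LocSys_\cG)$ is compactly generated by the images of the functors $\Eis_{\cP,\on{spec}}\circ\Xi_\cM$ as $P$ ranges over all standard parabolics (including $P=G$). Since these objects factor through $\Eis^{\on{enh}}_{\cP,\on{spec}}\circ\ind_{\sF_\cP}$, fully faithfulness reduces to showing that for generators of the form $\CF_i = \Eis^{\on{enh}}_{\cP_i,\on{spec}}(\widetilde{\CG}_i)$ with $\widetilde{\CG}_i \in \sF_{\cP_i}\mod(\IndCoh_{\on{Nilp}^{\on{glob}}_{\cP_i}}(\LocSys_{\cP_i}))$, the natural map
\[
\Maps(\CF_1,\CF_2) \longrightarrow \Maps_{\on{Glue}(\cG)_{\on{spec}}}\bigl(\on{Glue}(\on{CT}^{\on{enh}}_{\on{spec}})\CF_1,\ \on{Glue}(\on{CT}^{\on{enh}}_{\on{spec}})\CF_2\bigr)
\]
is an equivalence; the legitimacy of the reduction uses continuity of all functors involved.

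On the source side, the adjunction $\Eis^{\on{enh}}_{\cP_1,\on{spec}} \dashv \on{CT}^{\on{enh}}_{\cP_1,\on{spec}}$ rewrites the left-hand side in terms of the composition $\on{CT}^{\on{enh}}_{\cP_1,\on{spec}} \circ \Eis^{\on{enh}}_{\cP_2,\on{spec}}$. This composition admits a canonical filtration indexed by the double coset space $W_{M_1}\backslash W / W_{M_2}$, the spectral counterpart of the filtration appearing in the remark following Quasi-Theorem~\ref{t:CT and Eis}; its subquotients are expressible through Eisenstein-restriction for the ``intersected'' Levi subgroups $\cM_1 \cap w\cM_2 w^{-1}$. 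On the target side, Hom in the lax limit $\on{Glue}(\cG)_{\on{spec}}$ is a totalization indexed by $P\in \on{Par}(G)$ of the componentwise Hom spaces in $\sF_\cP\mod(\QCoh(\LocSys_\cP))$, together with the compatibility data along each parabolic inclusion $P_1 \subset P_2$ provided by the functors \eqref{e:spectral gluing functors}.

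The central technical step is to match these two descriptions. For each double coset $w \in W_{M_1}\backslash W/W_{M_2}$, one identifies a ``minimal'' parabolic $P(w) \in \on{Par}(G)$ whose spectral locus captures the $w$-th Bruhat piece; its contribution appears in the totalization via the evaluation functor $\on{ev}_{P(w),\on{spec}}$, with the gluing data along the inclusions $P(w)\subset P(w')$ (for $w\leq w'$ in the Bruhat order) ensuring that each Bruhat piece is counted exactly once. Properness of $\sfp_{\cP,\on{spec}}$ together with base change in $\IndCoh$ should give the required compatibilities.

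The main obstacle is controlling the information lost by the projections $\Psi_\cP : \sF_\cP\mod(\IndCoh_{\on{Nilp}^{\on{glob}}_\cP}(\LocSys_\cP)) \to \sF_\cP\mod(\QCoh(\LocSys_\cP))$ that occur in each component of $\on{Glue}(\on{CT}^{\on{enh}}_{\on{spec}})$. Without singular-support bookkeeping, the passage to $\QCoh$ at a single stratum would destroy the Bruhat pieces arising from non-open strata, and the entire content of the conjecture is that the gluing data along parabolic inclusions compensates for this loss. Concretely, one must verify that the information at the open stratum $P = G$ (which is only $\Psi_\cG(\CF) \in \QCoh(\LocSys_\cG)$) is correctly refined by the boundary strata via the gluing maps \eqref{e:spectral gluing functors}, so that together they reconstruct the full singular-support data encoded by $\IndCoh_{\on{Nilp}^{\on{glob}}_\cG}(\LocSys_\cG)$. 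This is the delicate interplay between the singular support theory of \secref{s:sing supp} and the recollement implicit in the parabolic stratification, and is the place where the specific definition of $\on{Nilp}^{\on{glob}}_\cG$ must enter essentially.
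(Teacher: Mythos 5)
The statement you are trying to prove is a \emph{conjecture} in this paper, not a theorem. The paper explicitly leaves \conjref{c:coLoc ext ff} open and offers no proof for general $G$; its only status claim is Quasi-Theorem \ref{t:coLoc ext ff}, which asserts the conjecture for $G=GL_n$ (with an actual theorem only for $n=2$), and even that is stated without argument. So there is no paper proof to compare against; whatever you write is necessarily an independent attempt, and you should treat it as such. In particular, a successful argument along the lines you sketch, valid for arbitrary reductive $G$, would resolve a conjecture that the authors deliberately isolated as one of the three outstanding obstructions to \conjref{c:GL} for general $G$ --- this alone should make you suspicious that something essential is being glossed over.

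Turning to the sketch itself: you correctly identify the relevant machinery (reduction to Eisenstein generators via \propref{p:Eis generation}(a), adjunction between $\Eis^{\on{enh}}_{\cP,\on{spec}}$ and $\on{CT}^{\on{enh}}_{\cP,\on{spec}}$, the $W_{M_1}\backslash W/W_{M_2}$ filtration on the composed functor, and the loss of information through $\Psi_\cP$). But your last two paragraphs are not steps of a proof; they are a statement of what a proof would have to accomplish. The ``central technical step'' of matching the Bruhat pieces to contributions in the lax-limit totalization is asserted, not carried out --- you do not specify the identification of each double coset $w$ with a minimal parabolic $P(w)$, nor verify that the gluing maps of \eqref{e:spectral gluing functors} reproduce exactly the associated graded of the Bruhat filtration with no extra or missing terms, nor check that properness-plus-base-change actually yields the required compatibilities in the $\infty$-categorical sense (these are a coherent system of natural transformations, not just objectwise maps). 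The ``main obstacle'' paragraph is the honest heart of the matter: the passage to $\QCoh$ via $\Psi_\cP$ in every component of $\on{Glue}(\on{CT}^{\on{enh}}_{\on{spec}})$ discards singular-support information, and the entire content of the conjecture is whether the boundary strata collectively restore it. You acknowledge that this must be verified but give no argument, so the proposal contains a genuine gap precisely where the difficulty lies.

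One further caution about the reduction step: to deduce fully faithfulness from generators you need $\Maps(\CF_1,-)$ \emph{and} $\Maps(\on{Glue}(\on{CT}^{\on{enh}}_{\on{spec}})\CF_1,\ \on{Glue}(\on{CT}^{\on{enh}}_{\on{spec}})(-))$ to both commute with colimits, which requires $\on{Glue}(\on{CT}^{\on{enh}}_{\on{spec}})$ to send compact objects to compact objects (equivalently, its right adjoint to be continuous). This is not automatic and is not addressed. It is plausible --- the gluing functors \eqref{e:spectral gluing functors} are built from proper pushforward and $!$-pullback, which behave well --- but you should state and justify it rather than fold it into ``continuity of all functors involved,'' which is not the same condition.

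In short: you have written a reasonable plan of attack and correctly located where the difficulty is, but you have not proved the statement, and the paper does not prove it either. If your goal is to match what the paper knows, you should restrict to $GL_n$ (or $GL_2$), where the parabolic poset is linear and the Bruhat combinatorics collapse, and attempt the argument there; that is the only setting where the authors themselves claim the result.
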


The following has been recently proved by D.~Arinkin and the author:

\begin{thm} \label{t:coLoc ext ff}
\conjref{c:coLoc ext ff} holds for all reductive groups $G$.
\end{thm} 

\ssec{Extended Whittaker compatibility}

Having expressed $\Whit^{\on{ext}}(G,G)$ as a glued category, we can now relate it to the spectral side. 
This will be done in the present subsection. 

\sssec{}

We now make the following crucial statement:

\begin{qthm}  \label{t:glued equiv}  \hfill

\smallskip

\noindent{\em(a)}
The assignment that sends a parabolic $P$ to the functor
$$\BL^{\on{Whit}}_{G,P}:\sF_{\cP}\mod(\QCoh(\LocSys_{\cP}))\to \Whit(G,P)$$
extends to a \emph{strict} natural transformation of the corresponding \emph{lax} diagrams.

\smallskip

\noindent{\em(b)}
The resulting functor $\BL^{\on{Whit}^{\on{ext}}}_{G,G}$
$$\on{Glue}(\cG)_{\on{spec}}\to \on{Glue}(G)_{\on{geom}}\simeq \Whit^{\on{ext}}(G,G)$$
is compatible with the actions of $\Rep(\cG)_{\Ran(X)}$.

\end{qthm}

\begin{rem}

In fact, Quasi-Theorem \ref{t:glued equiv} is a theorem modulo Quasi-Theorem \ref{t:principal series}. By definition,
its statement amounts to a compatible family of commutative diagrams
$$
\CD
\sF_{P_2}\mod(\QCoh(\LocSys_{\cP_2}))   @>{\BL^{\on{Whit}}_{G,P_2}}>>  \Whit(G,P_2)  \\
@V{\Psi_{\cP_1}\circ \sfp_{P_1/P_2,\on{spec}}^!\circ \Xi_{\cP_2}}VV     @VV{(\bi_{P_1})^\dagger\circ (\bi_{P_2})_\dagger}V    \\
\sF_{P_1}\mod(\QCoh(\LocSys_{\cP_1}))  @>{\BL^{\on{Whit}}_{G,P_1}}>>    \Whit(G,P_1)
\endCD
$$
for $P_1\subset  P_2$. Thus, the proof of Quasi-Theorem \ref{t:glued equiv} amounts an explicit understanding of the gluing
functors 
$$(\bi_{P_1})^\dagger\circ (\bi_{P_2})_\dagger:\Whit(G,P_2)\to \Whit(G,P_1).$$

\end{rem}

\sssec{}

Combined with Lemmas \ref{l:glue Whit} and \ref{l:glue fully faithful}, Quasi-Theorem \ref{t:glued equiv} implies:

\begin{qthm}  \label{t:glued functor}
The functor 
$$\BL^{\on{Whit}^{\on{ext}}}_{G,G}:\on{Glue}(\cG)_{\on{spec}}\to \Whit^{\on{ext}}(G,G)$$
is fully faithful.
\end{qthm} 

\sssec{}  \label{sss:cond W ext}

We are now ready to state Property $\on{Wh}^{\on{ext}}$ of the geometric Langlands 
functor $\BL_G$ in \conjref{c:GL}:

\medskip

\noindent{\bf Property $\mathbf{Wh}^{\mathbf{ext}}$:} {\it We shall say that the functor
$\BL_G$ satisfies \emph{Property $\on{Wh}^{\on{ext}}$} if the following diagram is commutative:
\begin{equation} \label{e:cond W ext}
\CD
\on{Glue}(\cG)_{\on{spec}}   @>{\BL^{\on{Whit}^{\on{ext}}}_{G,G}}>>   \Whit^{\on{ext}}(G,G)  \\
@A{\on{Glue}(\on{CT}^{\on{enh}}_{\on{spec}})}AA    @AA{\on{coeff}^{\on{ext}}_{G,G}}A    \\
\IndCoh_{\on{Nilp}^{\on{glob}}_\cG}(\LocSys_\cG)   @>{\BL_G}>> \Dmod(\Bun_G).
\endCD
\end{equation}}

Note that Property $\on{Wh}^{\on{ext}}$ contains as a particular case Property $\on{Wh}^{\on{deg}}$, 
by concatenating \eqref{e:cond W ext} with the commutative diagram 
$$
\CD
\sF_{\cP}\mod(\QCoh(\LocSys_{\cP}))   @>{\BL^{\on{Whit}}_{G,P}}>>  \Whit(G,P)  \\  
@A{\on{ev}_P}AA   @AA{(\bi_P)^\dagger}A   \\
\on{Glue}(\cG)_{\on{spec}}   @>{\BL^{\on{Whit}^{\on{ext}}}_{G,G}}>>   \Whit^{\on{ext}}(G,G)
\endCD
$$

\sssec{}

Note that by combining \conjref{c:Whit ext ff}, \thmref{t:coLoc ext ff} and Quasi-Theorem \ref{t:glued functor},
we obtain:

\begin{corconj}  \hfill  \label{cc:when equivalence}

\smallskip

\noindent{\em(a)}
Property $\on{Wh}^{\on{ext}}$ determines the equivalence $\BL_G$ uniquely, and if the latter exists, 
it satisfies property $\on{He}^{\on{naive}}$.

\smallskip

\noindent{\em(b)}
The equivalence $\BL_G$ exists if and only if
the essential images of $$\IndCoh_{\on{Nilp}^{\on{glob}}_\cG}(\LocSys_\cG) \text{ and }\Dmod(\Bun_G)$$ in 
$\Whit^{\on{ext}}(G,G)$ under the functors
$$ \BL^{\on{Whit}^{\on{ext}}}_{G,G}\circ \on{Glue}(\on{CT}^{\on{enh}}_{\on{spec}}) \text{ and } \on{coeff}^{\on{ext}}_{G,G},$$
respectively, coincide.
\end{corconj}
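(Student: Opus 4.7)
The plan is to exploit the fact that in the diagram \eqref{e:cond W ext}, \emph{all three} functors other than $\BL_G$ are fully faithful. Indeed, $\on{coeff}^{\on{ext}}_{G,G}$ is fully faithful by \conjref{c:Whit ext ff}; $\on{Glue}(\on{CT}^{\on{enh}}_{\on{spec}})$ is fully faithful by \conjref{c:coLoc ext ff}; and $\BL^{\on{Whit}^{\on{ext}}}_{G,G}$ is fully faithful by Quasi-Theorem \ref{t:glued functor}. Hence I shall work with the fully faithful functors
$$F_{\on{spec}} := \BL^{\on{Whit}^{\on{ext}}}_{G,G}\circ \on{Glue}(\on{CT}^{\on{enh}}_{\on{spec}}):\IndCoh_{\on{Nilp}^{\on{glob}}_\cG}(\LocSys_\cG)\to \Whit(G,G)^{\on{ext}}$$
and $F_{\on{geom}} := \on{coeff}^{\on{ext}}_{G,G}:\Dmod(\Bun_G)\to \Whit(G,G)^{\on{ext}}$, and reformulate the commutativity of \eqref{e:cond W ext} as $F_{\on{geom}} \circ \BL_G \simeq F_{\on{spec}}$.

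For part (a), uniqueness follows formally from the fully faithfulness of $F_{\on{geom}}$: for each object $\CF$ of the spectral side, $\BL_G(\CF)$ is determined up to canonical isomorphism by the datum of its image $F_{\on{geom}}(\BL_G(\CF)) \simeq F_{\on{spec}}(\CF)$ lying in the essential image of $F_{\on{geom}}$, together with similar unique liftings of morphisms. For the claim that the resulting $\BL_G$ automatically satisfies Property $\on{He}^{\on{naive}}$: both $F_{\on{spec}}$ and $F_{\on{geom}}$ are equipped with natural $\Rep(\cG)_{\Ran(X)}$-equivariance structures. For $F_{\on{spec}}$ this comes from Quasi-Theorem \ref{t:glued equiv}(b) combined with the $\QCoh(\LocSys_\cG)$-linearity of $\on{Glue}(\on{CT}^{\on{enh}}_{\on{spec}})$ built into its construction in \secref{ss:gluing spec}; for $F_{\on{geom}}$ this is \propref{p:Hecke preserves Whit} in its extended variant (\secref{ss:Whit coeff}). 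Since $F_{\on{geom}}$ is fully faithful, the equivariance structure on $F_{\on{spec}}$ uniquely lifts through $F_{\on{geom}}$ to an equivariance structure on $\BL_G$, which is precisely Property $\on{He}^{\on{naive}}$.

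For part (b), the ``only if'' direction is immediate: if $\BL_G$ is an equivalence making \eqref{e:cond W ext} commute, then the essential image of $F_{\on{spec}} = F_{\on{geom}} \circ \BL_G$ equals that of $F_{\on{geom}}$, since $\BL_G$ is essentially surjective. Conversely, assume that the essential images of $F_{\on{spec}}$ and $F_{\on{geom}}$ coincide, call this common full subcategory $\CC \subset \Whit(G,G)^{\on{ext}}$. Then each of $F_{\on{spec}}$ and $F_{\on{geom}}$ factors as a fully faithful functor followed by the inclusion $\CC \hookrightarrow \Whit(G,G)^{\on{ext}}$, and both induce equivalences onto $\CC$. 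Setting
$$\BL_G := \bigl(F_{\on{geom}}|^{\CC}\bigr)^{-1} \circ F_{\on{spec}}|^{\CC},$$
where the superscripts indicate corestriction to $\CC$, produces an equivalence that tautologically fits into \eqref{e:cond W ext}.

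The argument is essentially formal once the three fully faithfulness inputs are in hand, so there is no genuine obstacle at this step; the real content is packaged into Conjectures \ref{c:Whit ext ff} and \ref{c:coLoc ext ff} and Quasi-Theorem \ref{t:glued functor}, and the remaining work of the paper is therefore reduced to verifying the equality of essential images in part (b)---this is exactly what \secref{sss:comparing images} (via \conjref{c:Op gen} and the opers construction of \secref{s:opers}) is designed to accomplish.
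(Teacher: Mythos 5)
Your proposal is correct and is the argument the paper implicitly has in mind: the corconj is presented as a formal consequence of the three fully-faithfulness inputs (Conjectures \ref{c:Whit ext ff}, \ref{c:coLoc ext ff}, Quasi-Theorem \ref{t:glued functor}), and you have filled in exactly that formal argument, including the correct observation that Property $\on{He}^{\on{naive}}$ follows by lifting the $\Rep(\cG)_{\Ran(X)}$-equivariance structures on $F_{\on{spec}}$ and $F_{\on{geom}}$ through the fully faithful $F_{\on{geom}}$.
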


In \secref{ss:proof of main} we will show (assuming Quasi-Theorem \ref{t:glued equiv} and Quasi-Theorem \ref{t:Whit ext and Eis}
below) that the condition of \corref{cc:when equivalence}(b) is satisfied for $G=GL_2$, thereby proving \conjref{c:GL} in this case. 

\sssec{}

Let us for a moment assume the validity of \conjref{c:GL}. We obtain
the following geometric characterization of the full subcategory $$\Dmod(\Bun_G)_{\on{temp}}\subset \Dmod(\Bun_G).$$

\begin{corconj}
An object $\CM\in  \Dmod(\Bun_G)$ belongs to the subcategory $\Dmod(\Bun_G)_{\on{temp}}$ if and only if the canonical map
$$\bj_\dagger(\on{coeff}_{G,G}(\CM))\to \on{coeff}^{\on{ext}}_{G,G}(\CM)$$
is an isomorphism.
\end{corconj}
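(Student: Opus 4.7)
The plan is to translate both conditions to the spectral side via the assumed equivalence $\BL_G$ and reduce everything to a verification inside $\on{Glue}(\cG)_{\on{spec}}$. Set $\CF := \BL_G^{-1}(\CM)\in \IndCoh_{\on{Nilp}^{\on{glob}}_\cG}(\LocSys_\cG)$. By the definition of $\Dmod(\Bun_G)_{\on{temp}}$ via the fully faithful embedding $\Xi_\cG$, the condition $\CM\in \Dmod(\Bun_G)_{\on{temp}}$ is equivalent to $\CF$ belonging to the essential image of $\Xi_\cG$, i.e.\ to the counit $\Xi_\cG(\Psi_\cG(\CF))\to \CF$ being an isomorphism.

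Next I would use Property $\on{Wh}^{\on{ext}}$ to rewrite $\on{coeff}^{\on{ext}}_{G,G}(\CM)\simeq \BL^{\on{Whit}^{\on{ext}}}_{G,G}(\on{Glue}(\on{CT}^{\on{enh}}_{\on{spec}})(\CF))$ and its particular case Property $\on{Wh}$ to rewrite $\on{coeff}_{G,G}(\CM)\simeq \BL^{\on{Whit}}_{G,G}(\Psi_\cG(\CF))$. The $P=G$ instance of the naturality in Quasi-Theorem~\ref{t:glued equiv}(a) gives a canonical isomorphism $\bj_!\circ \BL^{\on{Whit}}_{G,G}\simeq \BL^{\on{Whit}^{\on{ext}}}_{G,G}\circ \on{ins}_{\cG,\on{spec}}$. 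Since $\BL^{\on{Whit}^{\on{ext}}}_{G,G}$ is fully faithful by Quasi-Theorem~\ref{t:glued functor}, the question reduces to proving that the canonical morphism
\begin{equation*}
\on{ins}_{\cG,\on{spec}}(\Psi_\cG(\CF))\to \on{Glue}(\on{CT}^{\on{enh}}_{\on{spec}})(\CF)
\end{equation*}
in $\on{Glue}(\cG)_{\on{spec}}$ is an isomorphism precisely when $\CF$ lies in the essential image of $\Xi_\cG$.

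The central identification to establish is $\on{Glue}(\on{CT}^{\on{enh}}_{\on{spec}})\circ \Xi_\cG\simeq \on{ins}_{\cG,\on{spec}}$, which is essentially a reading of the definitions. Indeed, the spectral gluing functor from the top stratum $P=G$ down to a proper $P$ prescribed by \eqref{e:spectral gluing functors} is $\Psi_\cP\circ \sfp^!_{P/G,\on{spec}}\circ \Xi_\cG$, and the factorization $\oblv_{\sF_\cP}\circ \on{CT}^{\on{enh}}_{\cP,\on{spec}}=\sfp^!_{\cP,\on{spec}}$ identifies this with $\Psi_\cP\circ \on{CT}^{\on{enh}}_{\cP,\on{spec}}\circ \Xi_\cG$, which is precisely the $P$-component of $\on{Glue}(\on{CT}^{\on{enh}}_{\on{spec}})\circ \Xi_\cG$; the $P=G$ component on both sides is $\on{Id}$, using \lemref{l:ins} (since $G$ is terminal in $\on{Par}(G)^{\on{op}}$, so $\on{ev}_{\cG,\on{spec}}\circ \on{ins}_{\cG,\on{spec}}\simeq \on{Id}$). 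Under this identification, the displayed map above is simply $\on{Glue}(\on{CT}^{\on{enh}}_{\on{spec}})$ applied to the counit $\Xi_\cG(\Psi_\cG(\CF))\to \CF$. The ``only if'' direction is then immediate. For the ``if'' direction I invoke \conjref{c:coLoc ext ff}: the functor $\on{Glue}(\on{CT}^{\on{enh}}_{\on{spec}})$ is fully faithful, so if $\on{Glue}(\on{CT}^{\on{enh}}_{\on{spec}})$ carries the counit to an isomorphism, then the counit itself is an isomorphism, placing $\CF$ in the essential image of $\Xi_\cG$.

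The step I expect to be the most delicate — rather than a genuine obstacle — is verifying that the comparison natural transformation $\on{Glue}(\on{CT}^{\on{enh}}_{\on{spec}})\circ \Xi_\cG\to \on{ins}_{\cG,\on{spec}}$ assembles \emph{as morphisms in the lax limit}, i.e.\ that the component-wise identifications respect both the $\sF_\cP$-module structures (one coming from $\on{CT}^{\on{enh}}_{\cP,\on{spec}}$, the other from the groupoid $\LocSys_\cP\underset{\LocSys_\cG}\times \LocSys_\cP$ defining $\sF_\cP$) and the higher coherences for chains $P_1\subset P_2\subset P_3$. Given how $\on{CT}^{\on{enh}}_{\cP,\on{spec}}$ and the gluing functors \eqref{e:spectral gluing functors} are constructed out of the same base-change data on $\LocSys_{\cP_i}$, this is a standard coherence check rather than a source of new difficulty.
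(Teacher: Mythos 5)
Your argument is correct, and since the paper states this as a \emph{Corollary-of-Conjecture} without supplying a proof, what you have written is the implicit argument the author has in mind. The logic is exactly right: transport everything through $\BL_G$ and $\BL^{\Whit^{\on{ext}}}_{G,G}$ (using Properties $\on{Wh}^{\on{ext}}$ and $\on{Wh}$), identify $\on{ins}_{\cG,\on{spec}}\simeq \on{Glue}(\on{CT}^{\on{enh}}_{\on{spec}})\circ\Xi_\cG$, recognize the displayed map as $\on{Glue}(\on{CT}^{\on{enh}}_{\on{spec}})$ applied to the counit of $(\Xi_\cG,\Psi_\cG)$, and then use fully faithfulness of $\Xi_\cG$ for the ``only if'' direction and conservativity of $\on{Glue}(\on{CT}^{\on{enh}}_{\on{spec}})$ (\conjref{c:coLoc ext ff}) for the ``if'' direction.

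Two minor points of precision. First, the identification $\bj_!\circ \BL^{\on{Whit}}_{G,G}\simeq \BL^{\on{Whit}^{\on{ext}}}_{G,G}\circ \on{ins}_{\cG,\on{spec}}$ and the identification $\on{Glue}(\on{CT}^{\on{enh}}_{\on{spec}})\circ\Xi_\cG\simeq\on{ins}_{\cG,\on{spec}}$ both rest on the fact that $G$ is \emph{initial} in $\on{Par}(G)$ (the opposite of the parabolic poset), so that $\on{ins}_G$ is the ``extension by zero from the open stratum'' with identity structure maps; your invocation of \lemref{l:ins} gets at this but it is worth stating the initiality explicitly, since that is what makes $\on{ins}_G$ commute with $\on{Glue}(-)$ of a strict natural transformation. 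Second, the sentence identifying $\sfp^!_{P/G,\on{spec}}$ with $\on{CT}^{\on{enh}}_{\cP,\on{spec}}$ via ``the factorization $\oblv_{\sF_\cP}\circ\on{CT}^{\on{enh}}_{\cP,\on{spec}}=\sfp^!_{\cP,\on{spec}}$'' conflates the plain-$\IndCoh$ and $\sF_\cP$-module-enhanced versions: what you actually need is that the module-level $\sfp^!_{P/G,\on{spec}}$ appearing in the definition of the spectral glue \emph{is} $\on{CT}^{\on{enh}}_{\cP,\on{spec}}$ (which it is, by construction --- both are the lift of $\sfp^!_{\cP,\on{spec}}$ along the monadic forgetful functor $\oblv_{\sF_\cP}$), but the sentence as written argues from the wrong direction. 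Neither point affects the correctness of the proof; your flagged coherence check is the only genuinely non-tautological step, and your assessment that it is routine given that both functors are built from the same base-change data is accurate.
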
 

\ssec{Extended Whittaker coefficients and Eisenstein series compatibility}

Let $P'\subset G$ be another parabolic. In this subsection we will assume that \conjref{c:GL} holds for its
Levi quotient $M'$. 

\sssec{}

By concatenating the commutative diagrams \eqref{e:cond W ext} and \eqref{e:CT spec rat} we obtain the
following commutative diagram
\begin{equation} \label{e:Whit ext and Eis}
\CD
\on{Glue}(\cG)_{\on{spec}}   @>{\BL^{\on{Whit}^{\on{ext}}}_{G,G}}>>   \Whit^{\on{ext}}(G,G)  \\
@A{\on{Glue}(\on{CT}^{\on{enh}}_{\on{spec}})}AA    @AA{\on{coeff}^{\on{ext}}_{G,G}}A   \\
\IndCoh_{\on{Nilp}^{\on{glob}}_\cG}(\LocSys_\cG)  & & \Dmod(\Bun_G) \\
@A{\Eis^{\on{enh}}_{\cP',\on{spec}}}AA    @AA{\Eis_{P'}^{\on{enh}}}A   \\
\sF_{\cP'}\mod(\IndCoh_{\on{Nilp}_{\on{glob},\cP'}}(\LocSys_{\cP'}))  @>{\BL_{P'}}>>  \on{I}(G,P').
\endCD
\end{equation}

\sssec{}

We have:

\begin{qthm} \label{t:Whit ext and Eis}
The diagram \eqref{e:Whit ext and Eis} commutes unconditionally (i.e., without assuming the validity of \conjref{c:GL} for $G$).
\end{qthm}


\section{Compatibility with Kac-Moody localization and opers}   \label{s:opers}

We will now change gears and discuss a very different approach to the construction of objects of $\Dmod(\Bun_G)$. 
This construction has to do with localization of modules over the Kac-Moody algebra, first explored by \cite{BD2}. 

\medskip

As was explained in the introduction, we need this other construction for our approach to the proof of geometric
Langlands: some of the objects of $\Dmod(\Bun_G)$ obtained in this way will provide generators of this category, 
on which the functor $\on{coeff}^{\on{ext}}_{G,G}$ can be calculated explicitly. 

\medskip

The spectral counterpart of the Kac-Moody localization construction has to do with the scheme of $\cG$-opers,
also studied in this section. 

\ssec{The category of Kac-Moody modules}

In this subsection we will define what we mean by the category of Kac-Moody modules.

\medskip

Many of the objects discussed in this subsection do not, unfortunately, admit adequate references in the existing
literature. Hopefully, these gaps will be filled soon. 

\sssec{}

Let $\kappa$ be a \emph{level}, i.e., a $G$-invariant quadratic form on $\fg$.
We consider the corresponding affine Kac-Moody Lie algebra $\hg_\kappa$, which is a central
extension
$$0\to k\to \fL(\fg,\kappa)\to \fg\ppart\to 0,$$
and the category $\fL(\fg,\kappa)\mod$ as defined in \cite[Sect. 23.1]{FG2}.\footnote{In {\it loc.cit.}
it was referred to as the \emph{renormalized} category of Kac-Moody modules.}

\medskip

We consider the group-scheme $\fL^+(G):=G\qqart$, and our primary interest is the category 
$\on{KL}(G,\kappa)$ of $\fL^+(G)$-equivariant objects in $\fL(\fg,\kappa)\mod$. 

\begin{rem}
The eventually coconnective part of $\on{KL}(G,\kappa)$ (the subcategory of objects that are $>-\infty$ with respect to 
the natural t-structure) can be defined by the procedure of \cite[Sect. 20.8]{FG3}. The entire $\on{KL}(G,\kappa)$ is 
defined so that it is compactly generated by the Weyl modules.
\end{rem}  

\sssec{}

For this paper we will need the following generalization of the category $\on{KL}(G,\kappa)$: 

\medskip

For a finite set $I$, we consider the variety $X^I$. Over it there exists a group ind-scheme $\fL(G)_{X^I}$,
equipped with a connection; and a group subscheme $\fL^+(G)_{X^I}$. Let $\fL(\fg)_{X^I}$
denote the corresponding sheaf of topological Lie algebras over $(X^I)_\dr$. 

\medskip

We let $\fL(\fg,\kappa)_{X^I}$
be the central extension of $\fL(\fg)_{X^I}$ corresponding to $\kappa$, and we 
consider the corresponding category
$$\on{KL}(G,\kappa)_{X^I}:=\fL(\fg)_{X^I}\mod^{\fL^+(G)_{X^I}}.$$

\sssec{}

For a surjective map of finite sets $I_2\twoheadrightarrow I_2$ there is a naturally defined functor
\begin{equation} \label{e:restr finite set KL}
\on{KL}(G,\kappa)_{X^{I_1}}\to \on{KL}(G,\kappa)_{X^{I_2}}.
\end{equation}

The assignment $I\mapsto \on{KL}(G,\kappa)_{X^I}$ extends to a functor
$$(\on{fSet}_{\on{surj}})^{\on{op}}\to \on{DGCat}_{\on{cont}}$$
(see \secref{sss:finite sets} for the notation)
and we set
$$\on{KL}(G,\kappa)_{\on{\Ran(X)}}:=\underset{I\in (\on{fSet}_{\on{surj}})^{\on{op}}}
{\underset{\longrightarrow}{colim}}\, \on{KL}(G,\kappa)_{X^I}.$$

\sssec{}

Assume now that $\kappa$ is integral, which means by definition, that the central exetension of Lie algebras
$$0\to k\to \fL(\fg,\kappa)\to \fL(\fg)\to 0$$ 
comes from a central extension of group ind-schemes
$$1\to \BG_m\to \fL(G,\kappa)_{X^I}\to \fL(G)_{X^I}\to 1,$$
functorial in $I\in \on{fSet}_{\on{surj}}$.

\medskip

In this case, for every finite set $I$, there exists a canonically localization functor
$$\on{Loc}_{G,X^I}:\on{KL}(G,\kappa)_{X^I}\to \Dmod(\Bun_G).$$

These functors are compatible with the functors \eqref{e:restr finite set KL}, and hence we obtain
a functor
$$\on{Loc}_G:\on{KL}(G,\kappa)_{\Ran(X)}\to \Dmod(\Bun_G).$$

\medskip

We have the following assertion:

\begin{prop} \label{p:loc surj}
Let $\sU\subset \Bun_G$ be an open substack such that its intersection with every connected
component of $\Bun_G$ is quasi-compact. Then the composed functor
$$\on{KL}(G,\kappa)_{\Ran(X)}\overset{\on{Loc}_G}\longrightarrow \Dmod(\Bun_G)
\overset{\text{restriction}}\longrightarrow \Dmod(\sU)$$
is a localization, i.e., admits a fully faithful right adjoint.
\end{prop}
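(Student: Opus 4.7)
The plan is to show that the right adjoint $R$ to $\on{Loc}_G^\sU := (-)|_\sU \circ \on{Loc}_G$—which exists by the adjoint functor theorem, since $\on{Loc}_G^\sU$ is continuous between presentable DG categories—is fully faithful, i.e. that the counit $\on{Loc}_G^\sU \circ R \to \on{id}_{\Dmod(\sU)}$ is an isomorphism. I would approach this by reducing, via the colimit presentation of $\on{KL}(G,\kappa)_{\Ran(X)}$, to a pointwise statement at a suitable finite index set, and then invoking the Beilinson-Drinfeld Harish-Chandra localization adapted to a sufficiently deep divisor.

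First I would unwind the presentation
$$\on{KL}(G,\kappa)_{\Ran(X)} \simeq \underset{I\in(\on{fSet}_{\on{surj}})^{\on{op}}}{\underset{\longrightarrow}{colim}}\, \on{KL}(G,\kappa)_{X^I}.$$
Given the quasi-compactness of $\sU$, for each connected component I would choose an effective divisor $D \subset X$ of sufficiently large degree, with support $I_D$, so that the forgetful map $\pi_D:\Bun_{G,D}|_\sU \to \sU$ from the moduli of $G$-bundles trivialized along the formal neighborhood of $D$ is schematic, smooth, surjective, and has fibers that are torsors under a finite-type quotient of the arc-group $\fL^+(G)_D$. By descent, $\Dmod(\sU)$ then identifies with the category of (twisted) $\fL^+(G)_D$-equivariant D-modules on $\Bun_{G,D}|_\sU$, and the finite-level functor $\on{Loc}_{G,X^{I_D}}^\sU$ factors through the Harish-Chandra localization associated to the pair $(\fL(\fg,\kappa)_D, \fL^+(G)_D)$ acting on $\Bun_{G,D}|_\sU$. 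Since for $D$ sufficiently deep the Kac-Moody Lie algebra acts with infinitesimally transitive tangent action, the classical localization pattern of \cite{BB} and \cite{BD2} applies and, at the abelian level, presents every equivariant D-module as a quotient of the localization of a Harish-Chandra module. Once $\on{Loc}_{G,X^{I_D}}^\sU$ is shown to be a localization in the categorical sense, the analogous property for $\on{Loc}_G^\sU$ follows by passing to the colimit, because the system of finite-level right adjoints $R_I$ stabilizes compatibly as $I$ grows past $I_D$.

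The hard part will be upgrading the classical Beilinson-Bernstein/Beilinson-Drinfeld surjectivity from an abelian-category statement to the DG-categorical assertion that the counit $\on{Loc}_{G,X^{I_D}}^\sU \circ R_{I_D} \to \on{id}$ is an isomorphism of functors between DG categories. This requires the right-derived global-sections functor attached to the Harish-Chandra pair to be exact on the essential image of the localization, and it must be carried out in the \emph{renormalized} Kac-Moody category $\on{KL}(G,\kappa)_D$, along the lines of the analysis in \cite[Sect. 20.8]{FG3}, with the added complication that $\Bun_{G,D}|_\sU$ is only ind-prosmooth rather than of honest finite type. A secondary technical issue, which I expect to be routine, is the functorial coherence needed so that the choice of divisor $D$ for each connected component of $\sU$ assembles into a single statement at the Ran-space level; this is automatic from the colimit structure on $\on{fSet}_{\on{surj}}^{\on{op}}$ once the pointwise localization has been established.
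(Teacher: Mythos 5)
The paper does not spell out a proof; the only indication is the remark immediately following the proposition, which says the argument amounts to a calculation of the chiral homology of the chiral algebra of differential operators on $G$ introduced in \cite{ArkhG}. This is a substantively different route from yours: rather than reducing to a finite-level Harish-Chandra localization over a fixed divisor and passing to a colimit, the argument indicated in the paper produces the right adjoint directly as a chiral-homology (factorization-homology) functor and checks that the counit is an isomorphism by a cohomological computation of that chiral algebra. No finite-level Beilinson-Bernstein/Beilinson-Drinfeld localization statement is invoked along the way.

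Your plan rests on two assertions that remain unjustified. The first is the one you yourself flag as "the hard part": the claim that $\on{Loc}_{G,X^{I_D}}^{\sU}$ is already a localization in the DG sense. Upgrading the abelian Beilinson-Bernstein pattern to the assertion that the counit is an isomorphism in the renormalized, unbounded Kac-Moody category is not a known result, and there is no off-the-shelf D-affinity argument that controls the derived global sections functor on the infinite-type scheme $\Bun_{G,D}|_{\sU}$. The chiral homology computation of \cite{ArkhG} is precisely the substitute for this missing step; presenting it as a routine "upgrade" understates what is being asked. There is also a mismatch in your geometric picture: the category $\on{KL}(G,\kappa)_{X^{I}}$ is built from the full arc group $\fL^+(G)_{X^I}$, whose torsors are not covered by the "finite-type quotient of the arc-group" description you propose for the fibers of $\pi_D$.

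The second gap is the passage to the colimit. You assert that "the system of finite-level right adjoints $R_I$ stabilizes compatibly as $I$ grows past $I_D$", but $R_{I_2}(\CM)$ is computed using a different Harish-Chandra pair from $R_{I_1}(\CM)$, and there is no a priori reason that the insertion functor $\on{KL}(G,\kappa)_{X^{I_1}}\to \on{KL}(G,\kappa)_{X^{I_2}}$ carries $R_{I_1}(\CM)$ to $R_{I_2}(\CM)$. Without this compatibility, the $R_I$ do not assemble into a right adjoint of the Ran-level functor, and the localization property of the colimit cannot be deduced from that of the finite pieces. One should in fact expect the Ran-level statement to be strictly easier than the fixed-divisor one, since the freedom to move the points is part of what forces the counit to be an isomorphism; your reduction runs this implication in the wrong direction.
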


Note that from \propref{p:loc surj} we obtain:

\begin{cor} \label{c:loc surj}
Let $\sU\subset \Bun_G$ be an open substack such that its intersection with every connected
component of $\Bun_G$ is quasi-compact. Then the essential images of the functors
$$\on{KL}(G,\kappa)_{X^I}\overset{\on{Loc}_{G,X^I}} \longrightarrow \Dmod(\Bun_G)\to \Dmod(\sU),$$
as $I$ runs over $\on{fSet}$, generate $\Dmod(\sU)$.
\end{cor}

\begin{rem}
For any $\kappa$, we have a functor $\on{Loc}_G$ from $\on{KL}(G,\kappa)_{\Ran(X)}$ to the
corresponding category of $\kappa$-twisted D-modules on $\Bun_G$, and the analog of 
\propref{p:loc surj} holds. The proof amounts to a calculation of chiral homology of the chiral algebra
of differential operators on $G$, introduced in \cite{ArkhG}. 
\end{rem}

\sssec{}

From now on we will fix $\kappa$ to be the critical level, i.e., $-\frac{\kappa_{\on{Kil}}}{2}$, where $\kappa_{\on{Kil}}$
is the Killing form. 

\begin{rem}  \label{r:localization tempered}
Using an intrinsic characterization of the subcategory $$\Dmod(\Bun_G)_{\on{temp}}\subset \Dmod(\Bun_G)$$
described in \secref{sss:derived Satake temp}, and using the properties of the category $\on{KL}(G,\crit)$
with respect to the Hecke action (essentially, given by \cite[Theorem 8.22]{FG3}),
one shows that the essential image of the functor 
$$\on{Loc}_G:\on{KL}(G,\crit)_{\Ran(X)}\to \Dmod(\Bun_G)$$
lands inside $\Dmod(\Bun_G)_{\on{temp}}$. For the latter it is crucial that the value of $\kappa$ is critical
(as opposed to arbitrary integral).  
\end{rem}




\ssec{The spaces of local and global opers}

In this subsection we will introduce the scheme of $\cG$-opers. Quasi-coherent sheaves on the scheme of opers will be 
the spectral counterpart of Kac-Moody representations. 

\sssec{}

Let $I$ be again a finite non-empty set, and let $\lambda^I$ be a map from $I$ to the set $\Lambda^+$
of dominant weights of $G$, which are the same as dominant co-weights of $\cG$. 

\medskip

Local $\lambda^I$-opers for the group $\cG$ form a DG scheme mapping to $X^I$, denoted $\on{Op}(\cG)^{\on{loc}}_{\lambda^I}$,
defined as follows.

\medskip

For $S\in \affdgSch$, an $S$-point of $\on{Op}(\cG)^{\on{loc}}_{\lambda^I}$ is the data of 
$$(\ul{x},\CP_\cG,\CP_\cB,\alpha,\nabla),$$
where:

\medskip

\begin{itemize}

\item $\ul{x}$ is an $S$-point of $X^I$; we let $\cD_{\ul{x}}$ be the corresponding
parameterized family of formal discs over $S$.

\medskip

\item $\CP_\cG$ is a $\cG$-bundle over $\cD_{\ul{x}}$.

\medskip

\item $\alpha$ is a datum of reduction of $\CP_\cG$ to a $\cB$-bundle $\CP_\cB$, 
whose induced $\cT$-bundle $\CP_\cT$ is identified with $\rho(\omega_X)\otimes (-\lambda^I\cdot \ul{x})|_{\cD_{\ul{x}}}$,
where we regard $\lambda^I\cdot \ul{x}$ as a $\Lambda$-valued Cartier divisor on $S\times X$. 

\medskip

\item $\nabla$ is a datum of ``vertical" connection on $\CP_\cG$ along the fibers of the map $\cD_{\ul{x}}\to S$, 
i.e., a datum of lift of $\CP_\cG$ from a $\cG$-bundle
on $(\cD_{\ul{x}})_\dr\underset{S_\dr}\times S$.

\end{itemize}

Note that the discrepancy between $\alpha$ and $\nabla$
is given by a section of 
$$\nabla\,\on{mod}\, \cB\in (\cg/\cb)_{\CP_\cB}\otimes \omega_X|_{\cD_{\ul{x}}}.$$

We require that the following compatibility condition be satisfied:

\medskip

\begin{itemize}

\item $\nabla\,\on{mod}\, \cB$ belongs to the sub-bundle $(\cg^{-1}/\cb)_{\CP_\cB}\otimes \omega_X|_{\cD_{\ul{x}}}$;
where $(\cg^{-1}/\cb)\subset (\cg/\cb)$ is the $\cB$-subrepresentation spanned by negative simple roots. 

\medskip

\item For each vertex of the Dynkin diagram $i$, the resulting section of 
$$(\cg^{-1,\check\alpha_i}/\cb)_{\CP_\cB}\otimes \omega_X|_{\cD_{\ul{x}}}\simeq -\check\alpha_i(\CP_\cT)\otimes \omega_X|_{\cD_{\ul{x}}}$$
is the canonical map
$$\CO_{\cD_{\ul{x}}}\to \CO(\langle \lambda^I,\check\alpha_i\rangle \cdot \ul{x})|_{\cD_{\ul{x}}}\simeq 
-\check\alpha_i(\CP_\cT)\otimes \omega_X|_{\cD_{\ul{x}}}.$$

\end{itemize} 

\begin{rem}
As $S$ is a derived scheme, some care is needed to makes sense of the expression $\nabla\,\on{mod}\, \cB$
(see, e.g., \cite[Sect. 10.5]{AG} for how to do this). However, a posteriori, one can show that the DG scheme 
$\on{Op}(\cG)^{\on{loc}}_{\lambda^I}$ is classical, so we could restrict our attention to those $S\in \affdgSch$
that are themselves classical.
\end{rem}

\sssec{}

We define the DG scheme $\on{Op}(\cG)^{\on{glob}}_{\lambda^I}$ similarly, with the only difference 
that instead of the parameterized formal disc $\cD_{\ul{x}}$ we consider the entire scheme $S\times X$.

\medskip

By construction, we have the forgetful maps
$$
\CD
\LocSys_\cG   \\
@A{\sv_{\lambda^I}}AA  \\
\on{Op}(\cG)^{\on{glob}}_{\lambda^I}   @>{\su_{\lambda^I}}>>  \on{Op}(\cG)^{\on{loc}}_{\lambda^I}.
\endCD
$$ 

\begin{rem}
We note that, unlike, $\on{Op}(\cG)^{\on{loc}}_{\lambda^I}$, the DG scheme $\on{Op}(\cG)^{\on{glob}}_{\lambda^I}$
is typically \emph{not} classical.
\end{rem}

\sssec{}

Let $\LocSys_\cG^{\on{irred}}\subset \LocSys_\cG$ be the open substack 
corresponding to irreducible local systems. Let 
$$\on{Op}(\cG)^{\on{glob,irred}}_{\lambda^I}\subset \on{Op}(\cG)^{\on{glob}}_{\lambda^I}$$
be the preimage of $\LocSys_\cG^{\on{irred}}$ under the map $\sv_{\lambda^I}$. 

\medskip

We have:

\begin{lem}  \label{l:opers proper}
The map
$$\sv_{\lambda^I}:\on{Op}(\cG)^{\on{glob,irred}}_{\lambda^I}\to \LocSys_\cG^{\on{irred}}$$
is proper.
\end{lem}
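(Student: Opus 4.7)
My plan is to verify the valuative criterion of properness directly, reducing the essential step to the properness of the Drinfeld compactification $\ol{\Bun}_\cB \to \Bun_\cG$ and using the irreducibility hypothesis to exclude boundary behavior. Let $R$ be a discrete valuation ring with fraction field $K$ and residue field $\kappa$, and consider a commutative diagram
$$
\CD
\Spec(K) @>{f_K}>> \on{Op}(\cG)^{\on{glob,irred}}_{\lambda^I} \\
@VVV @VV{\sv_{\lambda^I}}V \\
\Spec(R) @>{f_R}>> \LocSys^{\on{irred}}_\cG.
\endCD
$$
I must construct a unique lift $\Spec(R) \to \on{Op}(\cG)^{\on{glob,irred}}_{\lambda^I}$ completing the square.

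The first step is to extend the auxiliary data carried by $f_K$, namely the singularity locus $\ul{x}\in X^I$ and the $\cB$-reduction $\CP_{\cB,K}$ of the underlying $\cG$-bundle, to $\Spec(R)$. Since $X^I$ is projective, $\ul{x}$ extends uniquely. For the $\cB$-reduction, fixing the $\cT$-type $\rho(\omega_X)\otimes(-\lambda^I\cdot\ul{x})$ singles out one bi-degree component of the Drinfeld compactification $\ol{\Bun}_\cB$ of $\Bun_\cB$ over $\Bun_\cG$; this component is proper over $\Bun_\cG$. By properness, the generic reduction $\CP_{\cB,K}$ extends uniquely to a point of $\ol{\Bun}_\cB$ over $\Spec(R)$, a priori possibly lying on the boundary $\ol{\Bun}_\cB\setminus\Bun_\cB$.

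The second and crucial step is to rule out a boundary extension using irreducibility. If the extended $\cB$-reduction were boundary-valued, then at the special fiber it would correspond to a generalized Pl\"ucker datum in which the simple-root component along some $\check\alpha_i$ vanishes on a nonzero divisor in $X$. Under the oper compatibility condition (ii) of the excerpt, the $\check\alpha_i$-component of $\nabla\,\on{mod}\,\cB$ is prescribed to be the canonical section $\CO \to \CO(\langle\lambda^I,\check\alpha_i\rangle\cdot\ul{x})$, which is nowhere-zero away from $\ul{x}$. Thus a boundary degeneration at the special fiber would force $\nabla$ to preserve a reduction of $\CP_\cG$ to the sub-minimal parabolic $\cP_i\supset\cB$ away from $\ul{x}$, producing a parallel parabolic reduction of the local system $f_R(\Spec\kappa)$ and contradicting the assumption that $f_R$ factors through $\LocSys^{\on{irred}}_\cG$. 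Hence the extension lies in $\Bun_\cB$.

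Once the $\cB$-reduction has been extended non-degenerately, the remaining oper conditions (i) (membership in the sub-bundle $(\cg^{-1}/\cb)_{\CP_\cB}\otimes\omega_X$) and (ii) (equality with the canonical section) are closed and therefore propagate automatically from the generic to the special fiber; uniqueness of the lift follows from separatedness of $\ol{\Bun}_\cB$ over $\Bun_\cG$. The main obstacle is the argument in the third paragraph above: one has to interpret the oper condition precisely at the boundary of the Drinfeld compactification and confirm that a boundary degeneration along a simple root $\check\alpha_i$ really yields a parallel $\cP_i$-reduction of the local system. This is the rigorous form of the intuition that \emph{opers are maximally transverse to any parabolic reduction}, and translating it cleanly into the derived, $\ul{x}$-parameterized setting of $\on{Op}(\cG)^{\on{glob}}_{\lambda^I}$ (which is not classical in general) is the technical heart of the proof.
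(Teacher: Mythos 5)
Your overall strategy---the valuative criterion, extending the generic reduction into the Drinfeld compactification $\ol{\Bun}_\cB$, and then invoking irreducibility to rule out boundary behavior---is the natural one, and the paper states this lemma without proof, so I cannot compare to a reference. Steps 1, 2 and 4 of your outline are fine. But the pivotal Step 3 has a genuine gap, and the inference you draw in it is not correct.

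You claim that a boundary degeneration at the special fiber ``would force $\nabla$ to preserve a reduction of $\CP_\cG$ to the sub-minimal parabolic $\cP_i\supset\cB$ away from $\ul{x}$.'' This does not follow from the premise that a Pl\"ucker component has zeroes on a divisor $D$. What condition~(ii) actually gives is that the oper invariant $s_i:=(\check\alpha_i$-component of $\nabla\bmod\cB)$ vanishes on $\langle\lambda^I,\check\alpha_i\rangle\cdot\ul{x}$ and nowhere else; since a degeneration of the $i$-th Pl\"ucker datum along $D$ forces $s_i$ to vanish on (roughly) $2D$, one gets $2D\le\langle\lambda^I,\check\alpha_i\rangle\cdot\ul{x}$. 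This does rule out $\supp D\not\subset\supp\ul{x}$, but it does \emph{not} rule out $D>0$ supported at $\ul{x}$ once $\lambda^I\neq 0$, and in neither case does it produce a $\nabla$-parallel $\cP_i$-reduction. In the regular (i.e.\ $\lambda^I=0$) case the canonical section is nowhere vanishing, so boundary degeneration is killed outright; for $\lambda^I\neq 0$ the argument, as written, does not close.

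The place where irreducibility \emph{does} enter cleanly is different from where you put it. After choosing the extension to $\Spec(R)$ of the Pl\"ucker datum inside the ambient space (which is indeed available by properness of $\ol{\Bun}_\cB\to\Bun_\cG\times\Bun_\cT$), one must first normalize: multiply by the minimal power $t^{-m}$ of the uniformizer of $R$ so that the special-fiber restriction is not identically zero. If $m>0$, then the oper invariant $s_R=(\nabla\kappa_R)\wedge\kappa_R$ over $X_R$ restricts to $0$ on the special fiber, i.e.\ $\nabla_\kappa$ preserves the (saturated) sub-line spanned by $\kappa_\kappa$, giving a flat reduction of the special-fiber local system and contradicting the hypothesis that $f_R$ factors through $\LocSys^{\on{irred}}_\cG$. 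This is the correct use of irreducibility, and it shows $m=0$. What then remains---and what your proof does not address---is the possibility that the normalized Pl\"ucker datum vanishes at finitely many points of the special fiber lying on $\supp\ul{x}$; ruling this out is not a formal consequence of condition~(ii) together with irreducibility, and requires a finer argument at those points. You acknowledge this as ``the technical heart'' but the text you have written does not actually supply it.
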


Consider the functor
\begin{equation} \label{e:pull-back to opers}
(\sv_{\lambda^I})^!:\QCoh(\LocSys_\cG^{\on{irred}})\to \QCoh(\on{Op}(\cG)^{\on{glob,irred}}_{\lambda^I})
\end{equation}
right adjoint to 
\begin{equation} \label{e:pushforward from opers}
(\sv_{\lambda^I})_*:\QCoh(\on{Op}(\cG)^{\on{glob,irred}}_{\lambda^I})\to \QCoh(\LocSys_\cG^{\on{irred}}).
\end{equation}

\sssec{}

The next conjecture, along with \conjref{c:Whit ext ff}, is the second element in the proof of geometric Langlands
that still remains mysterious in the case of an arbitrary group $G$: 

\begin{conj} \label{c:Op gen}
Let $\CF\in \QCoh(\LocSys_\cG^{\on{irred}})$ be such that $(\sv_{\lambda^I})^!(\CF)=0$
for all finite sets $I$ and $\lambda^I:I\to \Lambda^+$. Then $\CF=0$.
\end{conj}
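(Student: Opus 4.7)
By \lemref{l:opers proper} each map
$\sv_{\lambda^I}\colon\on{Op}(\cG)^{\on{glob,irred}}_{\lambda^I}\to \LocSys_\cG^{\on{irred}}$
is proper, so $(\sv_{\lambda^I})^!$ is the right adjoint of $(\sv_{\lambda^I})_*$. The plan is to reformulate \conjref{c:Op gen} via this adjunction as a generation statement, to reduce it to a pointwise surjectivity question on $\LocSys_\cG^{\on{irred}}$, and then to invoke the existence of oper structures on irreducible local systems. Concretely, the hypothesis that $(\sv_{\lambda^I})^!(\CF)=0$ for all $(I,\lambda^I)$ is equivalent to $\CF$ being right-orthogonal to the full triangulated subcategory $\CT\subset \QCoh(\LocSys_\cG^{\on{irred}})$ generated by the essential images of the functors $(\sv_{\lambda^I})_*$ as $(I,\lambda^I)$ varies, and the conjecture thus amounts to the assertion that $\CT$ is all of $\QCoh(\LocSys_\cG^{\on{irred}})$.

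I would next reduce this generation statement to a pointwise one. The open substack $\LocSys_\cG^{\on{irred}}$ is smooth---the automorphism group of an irreducible local system is contained in the finite group scheme $Z(\cG)$---so its category $\QCoh$ is generated by the sky-scraper sheaves $k_\sigma$ at closed points $\sigma$ (together with their twists by characters of $Z(\cG)$, absorbed by the action of the center). Since each $\sv_{\lambda^I}$ is proper, if $\sigma$ lies in the set-theoretic image of $\sv_{\lambda^I}$, then $k_\sigma$ occurs as a direct summand of $(\sv_{\lambda^I})_*$ applied to any sky-scraper at a preimage point. Hence it suffices to establish the following purely geometric assertion: \emph{every closed point $\sigma\in \LocSys_\cG^{\on{irred}}$ admits a global $\lambda^I$-oper structure for some finite subset $I\subset X$ and some $\lambda^I\colon I\to \Lambda^+$.}

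This last statement is the geometric heart of the argument. For $\cG=GL_n$ it follows from the classical cyclic-vector theorem: given an irreducible rank-$n$ connection $(E,\nabla)$ on $X$, one produces a meromorphic section of $E$, regular outside a finite subset $I\subset X$, which is cyclic; rewriting $\nabla$ in companion-matrix form with respect to this section exhibits $\sigma$ as a $GL_n$-oper with singularities supported on $I$, whose type $\lambda^I$ is read off from the local indices at the points of $I$. For general reductive $\cG$ one expects the analogous realization via a Drinfeld--Sokolov-type reduction applied to a generic Borel reduction of the local system, with the singular divisor $I$ absorbing the failure of that reduction to match the oper normal form.

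The hard part will be carrying out this oper-realization step uniformly for an arbitrary reductive $\cG$. For $GL_n$ (and especially $GL_2$) the cyclic-vector construction is explicit and sharply controls the singular types, which is why \conjref{c:Op gen}---and hence, in combination with the other inputs, \conjref{c:GL}---is within reach in those cases. For exceptional types one must combine an inductive Levi-based argument with local oper theory in order to locate enough singular types $\lambda^I$ to cover the entire irreducible locus; pinning this down is the principal technical obstruction and is the reason we have stated the general case only as a conjecture.
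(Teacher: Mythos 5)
This statement is a \emph{conjecture} in the paper; no proof is offered. The paper merely records that it is a theorem for $G=GL_n$ and that work of Hrushovski and Schlank handles classical groups, but no argument is given; elsewhere (\secref{ss:opers Ran}) a \emph{stronger} form is proposed, namely \conjref{c:Op ff}, which asks that $\sv_{\Ran(X)}^*$ be fully faithful (equivalently that the fibers of $\sv_{\Ran(X)}$ be $\CO$-contractible), and in \secref{s:proof} the paper only observes that \conjref{c:Op gen} would follow \emph{formally} from \conjref{c:GL} together with \thmref{t:generate BunG}. So there is no proof of this statement in the paper to compare against; what you have written should be judged purely on its own merits as an argument.

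Your first step is fine: by \lemref{l:opers proper}, $(\sv_{\lambda^I})^!$ is the continuous right adjoint of $(\sv_{\lambda^I})_*$, so the hypothesis of the conjecture is exactly right-orthogonality to the subcategory generated by the essential images of $(\sv_{\lambda^I})_*$, and the conjecture is equivalent to the generation statement \corref{cc:Op gen}. The gap occurs at your second step, and it is substantial. You assert that $\QCoh(\LocSys_\cG^{\on{irred}})$ is generated by skyscraper sheaves, on the grounds that the irreducible locus is smooth. Neither half of this is correct. First, skyscrapers do not generate $\QCoh$ of a positive-dimensional variety (or stack): already for $\BA^1=\Spec k[t]$, the object $k(t)$ is nonzero but right-orthogonal to $k[t]/(t-a)$ for every $a$, so the skyscrapers fail to generate. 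The subcategory they generate (closed under colimits) is the category of set-theoretically supported objects, which is a proper subcategory. Second, $\LocSys_\cG^{\on{irred}}$ is quasi-smooth but not smooth in general: $H^{-1}(T^*_\sigma)\simeq H^0_\dr(X,\cg^*_\sigma)$, and when $\cg$ has a nonzero center the trivial isotypic component $\fz(\cg)^*$ survives even for irreducible $\sigma$. (What is true over the irreducible locus is that $\Sing\cap\on{Nilp}$ is the zero section, which is why $\IndCoh_{\on{Nilp}}$ coincides with $\QCoh$ there, but that does not make the stack smooth.) Third, your claim that if $\sigma$ lies in the image of $\sv_{\lambda^I}$ then $k_\sigma$ is a \emph{direct summand} of the pushforward of a skyscraper at a preimage point is unjustified: the pushforward is supported at $\sigma$ but is typically a complicated complex, and no splitting is available.

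As a consequence, the pointwise assertion you reduce to --- that every closed point of $\LocSys_\cG^{\on{irred}}$ admits an oper structure with some singularity type $\lambda^I$ --- is a \emph{necessary} condition for the conjecture but is not, by itself, sufficient, so the reduction goes the wrong way. Even for $GL_n$, where the pointwise statement is indeed secured by the cyclic vector theorem, one still needs a genuine generation (or descent/contractibility) argument; this is precisely what the strengthening \conjref{c:Op ff} addresses, and why the paper formulates the problem in those terms rather than as pointwise surjectivity. Your diagnosis of the $GL_n$ mechanism (cyclic vectors, companion form) is accurate as far as the pointwise statement goes, and your observation that the general case is the real obstruction matches the paper's framing; but the chain of reasoning as written does not establish the conjecture even granting that pointwise statement.
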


However, we have: 

\begin{thm} 
\conjref{c:Op gen} holds for $G=GL_n$.
\end{thm}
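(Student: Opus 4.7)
The plan is to pass to the right adjoint formulation and then invoke the classical realization of irreducible rank-$n$ local systems as monodromies of opers with prescribed regular singularities. By \lemref{l:opers proper}, for every finite set $I$ and every $\lambda^I:I\to\Lambda^+$ the map $\sv_{\lambda^I}:\on{Op}(\cG)^{\on{glob,irred}}_{\lambda^I}\to \LocSys_\cG^{\on{irred}}$ is proper, and hence $(\sv_{\lambda^I})^!$ is the continuous right adjoint of $(\sv_{\lambda^I})_*$. The hypothesis $(\sv_{\lambda^I})^!(\CF)=0$ for all choices of $(I,\lambda^I)$ is therefore equivalent to $\CF$ being right orthogonal to the full cocomplete subcategory $\CT\subset\QCoh(\LocSys_\cG^{\on{irred}})$ generated by the essential images of all the $(\sv_{\lambda^I})_*$, and the theorem is reduced to showing that $\CT$ is all of $\QCoh(\LocSys_\cG^{\on{irred}})$.

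For $G=GL_n$ I would prove this generation statement in two steps. The first, and geometrically essential, step is to show that the images of the $\sv_{\lambda^I}$ cover $\LocSys_\cG^{\on{irred}}$ set-theoretically, i.e., every irreducible rank-$n$ local system $(V,\nabla)$ on $X$ arises as the monodromy of an $n$-th order scalar linear differential operator on $X$ with regular singularities at some finite set of points $I$, with pole orders encoded by some $\lambda^I:I\to\Lambda^+$. This is a global form of the cyclic-vector theorem: irreducibility of $(V,\nabla)$ produces a line subbundle $L\subset V$ whose successive derivatives $L,\nabla L,\dots,\nabla^{n-1}L$ span $V$ at all but finitely many points of $X$, and the oper data $(I,\lambda^I)$ record the locus of failure and the orders of degeneration. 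For $n=2$ this is elementary, and for general $n$ it is essentially contained in the classical work on $GL_n$-opers and scalar differential operators.

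The second step promotes this pointwise surjectivity to the assertion $\CT=\QCoh(\LocSys_\cG^{\on{irred}})$. The main input is that for sufficiently generic $(I,\lambda^I)$ the proper map $\sv_{\lambda^I}$ is in addition smooth over a nonempty open substack containing any prescribed irreducible $\sigma$, as one sees by a tangent computation identifying the relative tangent complex of $\sv_{\lambda^I}$ with a shift of an appropriate Hodge-filtered piece of $\Gamma_\dr(X,\fg_\sigma)$. Over such a smooth open, $(\sv_{\lambda^I})^!$ agrees with $(\sv_{\lambda^I})^*$ up to a line bundle twist and is conservative by proper smooth surjective descent; a Noetherian induction on the support of $\CF$, combined with the fact that opers with more singularities cover the boundary behavior of those with fewer, then propagates the generation from dense opens to all of $\LocSys_\cG^{\on{irred}}$.

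The principal obstacle is the global formulation of the cyclic-vector theorem: while over an affine open the existence of a cyclic vector is elementary, producing a cyclic line subbundle on the complete curve $X$ and controlling the precise singularity data it forces requires an actual argument, resolved for $GL_n$ only by letting $|I|$ and the weights $\lambda^I$ grow without bound. This freedom to enlarge the singularity data at will is what makes type $A$ tractable; no analogous flexibility is known for other reductive $G$, which is precisely why \conjref{c:Op gen} remains open outside of type $A$.
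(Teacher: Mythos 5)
The paper does not supply a proof of this theorem: it is stated without argument and without a reference, so there is no proof of record for your proposal to be compared against. What follows is therefore an evaluation of your sketch on its own merits.

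Your reduction of the conjecture to the generation statement of Corollary \ref{cc:Op gen} is exactly right and uses the properness from \lemref{l:opers proper} correctly. The first geometric step --- realizing every irreducible rank-$n$ local system as the monodromy of a scalar differential operator with regular singularities by a cyclic-vector argument, with the necessary freedom coming from being allowed to enlarge $I$ and the $\lambda^I$ --- is the standard and correct entry point for type $A$, and your remark that this flexibility is precisely what fails for other groups is well taken.

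The gap is in the second step, which is where all the work is. Two claims there are unsubstantiated and one of them is probably false as stated. First, you assert that for generic $(I,\lambda^I)$ the map $\sv_{\lambda^I}$ is smooth over a nonempty open of $\LocSys_\cG^{\on{irred}}$ ``by a tangent computation identifying the relative tangent complex\dots with a shift of an appropriate Hodge-filtered piece of $\Gamma_\dr(X,\fg_\sigma)$.'' You do not carry out this computation, and there is good reason to doubt the conclusion: the oper condition is a rigid/closed condition, the fiber of $\sv_{\lambda^I}$ over $\sigma$ is the moduli of cyclic line subbundles of $V_\sigma$ with exactly the prescribed degeneration data, and neither the nonemptiness nor the smoothness of this fiber over a Zariski-dense open is obvious (nonemptiness generically follows from the cyclic-vector theorem; smoothness does not). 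Second, the ``Noetherian induction on the support of $\CF$, combined with the fact that opers with more singularities cover the boundary behavior of those with fewer'' is where the argument must actually close, and it is left entirely vague. The quotient $\LocSys_\cG^{\on{irred}}$ is a (derived) algebraic stack, and what is needed is a precise statement of the form: every nonempty closed substack contains a point at which some $\sv_{\lambda^I}$ is faithfully flat onto a neighborhood, together with a mechanism for descending the induction through the derived and stacky structure. Without that, the passage from set-theoretic coverage (which your cyclic-vector step does supply) to categorical generation is not established. Since $(\sv_{\lambda^I})_*$ applied to the structure sheaf need not relate to generators of $\QCoh$ in any simple way absent flatness or a trace map, this gap is not cosmetic.

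In short: the framing and the first step are sound, but the theorem is not proved by this sketch. The missing content is a precise smoothness/flatness statement for $\sv_{\lambda^I}$ over a prescribed point of $\LocSys_\cG^{\on{irred}}$ for suitable $(I,\lambda^I)$, plus a correctly formulated descent-and-induction argument on the stack. Note also that the paper's attribution of the classical-groups case to Hrushovski--Schlank hints that the actual arguments in this circle of ideas may be of a rather different (model-theoretic or cohomological-dimension) flavor, so you should not assume the geometric route you propose is the one that has been carried out even in type $A$.
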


We also note that recent progress made by
D.~Kazhdan and T.~Schlank implies that \conjref{c:Op gen} holds also for $G=Sp(2n)$.

\sssec{}

We can reformulate \conjref{c:Op gen} as follows:

\begin{corconj} \label{cc:Op gen}
The union of the essential images of the functors 
$$(\sv_{\lambda^I})_*:\QCoh(\on{Op}(\cG)^{\on{glob,irred}}_{\lambda^I})\to \QCoh(\LocSys_\cG^{\on{irred}})$$
over all finite sets $I$ and $\lambda^I:I\to \Lambda^+$, generates $\QCoh(\LocSys_\cG^{\on{irred}})$.
\end{corconj}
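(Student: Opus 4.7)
The plan is to deduce \corref{cc:Op gen} from \conjref{c:Op gen} by a standard orthogonality argument for continuous right adjoints between cocomplete DG categories. Let $\CD\subset \QCoh(\LocSys_\cG^{\on{irred}})$ denote the smallest full cocomplete subcategory containing the essential images of all functors $(\sv_{\lambda^I})_*$, as $I$ ranges over non-empty finite sets and $\lambda^I:I\to \Lambda^+$. The goal is to show $\CD=\QCoh(\LocSys_\cG^{\on{irred}})$, and the strategy is to verify that its right orthogonal $\CD^\perp$ vanishes, and then to invoke a compact generation argument.

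First I would observe that by \lemref{l:opers proper} the morphism $\sv_{\lambda^I}$ is proper on the irreducible locus, so its direct image functor $(\sv_{\lambda^I})_*$ admits a continuous right adjoint, namely the functor $(\sv_{\lambda^I})^!$ of \eqref{e:pull-back to opers}. For $\CF\in \CD^\perp$ and any $\CG\in \QCoh(\on{Op}(\cG)^{\on{glob,irred}}_{\lambda^I})$, the adjunction
\[
\Maps_{\QCoh(\LocSys_\cG^{\on{irred}})}\bigl((\sv_{\lambda^I})_*(\CG),\,\CF\bigr)\simeq
\Maps_{\QCoh(\on{Op}(\cG)^{\on{glob,irred}}_{\lambda^I})}\bigl(\CG,\,(\sv_{\lambda^I})^!(\CF)\bigr)
\]
together with the vanishing of the left-hand side (which holds by definition of $\CD^\perp$, since $(\sv_{\lambda^I})_*(\CG)\in \CD$) implies, by letting $\CG$ range over a collection of generators, that $(\sv_{\lambda^I})^!(\CF)=0$. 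This holds for every $(I,\lambda^I)$, so \conjref{c:Op gen} forces $\CF=0$. Hence $\CD^\perp=0$.

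Next I would close the argument by passing from $\CD^\perp=0$ to $\CD=\QCoh(\LocSys_\cG^{\on{irred}})$. This step uses the fact that $\QCoh(\LocSys_\cG^{\on{irred}})$ is compactly generated: indeed, $\LocSys_\cG$ is a quasi-smooth derived algebraic stack which is globally a complete intersection (whence the analogous generation property for the larger category $\IndCoh_{\on{Nilp}^{\on{glob}}_\cG}(\LocSys_\cG)$ quoted just before \secref{sss:define spectral side}), and this property is inherited by the open substack $\LocSys_\cG^{\on{irred}}$. For a compactly generated cocomplete DG category, any cocomplete full subcategory with vanishing right orthogonal must be the whole category; this reduces to the standard fact that a compact object with no non-zero map into it from itself is zero, applied after passing to the quotient $\QCoh(\LocSys_\cG^{\on{irred}})/\CD$.

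There is essentially no geometric obstacle in this deduction; it is entirely a formal consequence of the adjunction $((\sv_{\lambda^I})_*,(\sv_{\lambda^I})^!)$ and the compact generation of $\QCoh(\LocSys_\cG^{\on{irred}})$. The sole non-formal ingredient is the input \conjref{c:Op gen} itself, and the only delicate point in the above outline is the compact generation step: if one wished to avoid the complete intersection property, one could instead formulate the conclusion as the identity $\CD^\perp=0$ — which is the precise dual of \conjref{c:Op gen} — and this weaker but equivalent form is all that is used in the applications to \conjref{c:GL} in later sections.
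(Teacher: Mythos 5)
Your argument is correct, and the key observation is exactly the right one: since $\sv_{\lambda^I}$ is proper on the irreducible locus (\lemref{l:opers proper}), the functor $(\sv_{\lambda^I})_*$ has a continuous right adjoint $(\sv_{\lambda^I})^!$, so the vanishing of all $(\sv_{\lambda^I})^!(\CF)$ is tautologically equivalent to the vanishing of $\Maps((\sv_{\lambda^I})_*(\CG),\CF)$ for all $\CG$ and all $(I,\lambda^I)$, i.e.\ to $\CF$ lying in the right orthogonal of the essential images. This is precisely why the paper introduces \corref{cc:Op gen} with the word ``reformulate'' and gives no proof: it is treated as the same assertion, read through the adjunction.

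The one place where you do more than the paper intends is the second half, the ``compact generation'' step deducing $\CD=\QCoh(\LocSys_\cG^{\on{irred}})$ from $\CD^\perp=0$. In this paper's conventions, ``generates'' already means precisely ``the right orthogonal is zero''; this is unambiguous from, e.g., the proof of \thmref{t:generate BunG}, which opens by taking an object right-orthogonal to the essential images and showing it vanishes. So no appeal to compact generation (or to the complete intersection property of $\LocSys_\cG$) is needed; the adjunction observation alone already \emph{is} the reformulation. You evidently sense this yourself — your closing sentence correctly identifies $\CD^\perp=0$ as the precise dual of \conjref{c:Op gen} and as what is actually used — so the extra step is harmless but gratuitous. (A minor quibble: the phrasing about ``a compact object with no non-zero map into it from itself is zero, applied after passing to the quotient'' is not really the cleanest way to phrase the standard argument; what one actually uses is that for $\CD$ a localizing subcategory generated by a set in a presentable $\CC$, the inclusion $\CD\hookrightarrow\CC$ has a right adjoint, so any object splits into a $\CD$-piece and a $\CD^\perp$-piece, and $\CD^\perp=0$ forces everything into $\CD$. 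Compactness of $\CC$ per se is not the operative hypothesis.)
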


\ssec{Compatibility between opers and Kac-Moody localization}

In this subsection we will match the local category on the geometric side, i.e., $\on{KL}(G,\crit)_{X^I}$, with the
local category on the spectral side, i.e., $\QCoh(\on{Op}(\cG)^{\on{loc}}_{\lambda^I})$.  

\sssec{}

The following is an extension of \cite[Proposition 3.5]{FG1}:

\begin{prop} \label{p:from Op to KM}  \hfill

\smallskip

\noindent{\em(a)} For a finite set $I$ and $\lambda^I:I\to \Lambda^+$ there exists a canonically defined functor
$$\BL_G^{\on{Op}_{\lambda^I}}:\QCoh(\on{Op}(\cG)^{\on{loc}}_{\lambda^I})\to \on{KL}(G,\crit)_{X^I}.$$

\smallskip

\noindent{\em(b)} For a fixed finite set $I$, the union of essential images of the functors $\BL_G^{\on{Op}_{\lambda^I}}$
over $\lambda^I:I\to \Lambda^+$ generates $\on{KL}(G,\crit)_{X^I}$.

\end{prop}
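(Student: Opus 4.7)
The plan is to construct the functor in part (a) by tensor product with a factorizable Weyl module, and to deduce part (b) from the fact that Weyl modules compactly generate the target. The construction is a direct upgrade to the factorizable Ran-type setting of the single-point construction of \cite[Prop.~3.5]{FG1}.

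First, for part (a), I would introduce the factorizable Weyl module $\mathbb{V}_{\lambda^I} \in \on{KL}(G,\crit)_{X^I}$ of highest weight $\lambda^I$. Over the open locus of $X^I$ of pairwise distinct points, $\mathbb{V}_{\lambda^I}$ is the external product $\boxtimes_{i \in I} \mathbb{V}_{\lambda^I(i)}$ of the usual critical-level Weyl modules induced up from the finite-dimensional $G$-representation of highest weight $\lambda^I(i)$; across the diagonals it is extended via the factorization structure on $\fL(\fg,\crit)_{X^I}$, so that colliding points fuse Weyl modules into Weyl modules of the added-up weight. By construction $\mathbb{V}_{\lambda^I}$ is a compact object.

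The core input is the factorizable Feigin--Frenkel isomorphism, which identifies the $\fL^+(G)_{X^I}$-equivariant $X^I$-linear endomorphism algebra of $\mathbb{V}_{\lambda^I}$ with the algebra $\Gamma(\on{Op}(\cG)^{\on{loc}}_{\lambda^I},\CO)$. The DG scheme $\on{Op}(\cG)^{\on{loc}}_{\lambda^I}$ is affine over $X^I$, since the oper condition cuts out a closed sub-DG-scheme of a jet scheme, so one has a canonical identification of $\QCoh(\on{Op}(\cG)^{\on{loc}}_{\lambda^I})$ with modules over this algebra of functions. I would then set
$$\BL_G^{\on{Op}_{\lambda^I}}(\CF) := \CF \underset{\Gamma(\on{Op}(\cG)^{\on{loc}}_{\lambda^I},\CO)}\otimes \mathbb{V}_{\lambda^I},$$
with the relative tensor product formed in the category of $X^I$-families of Kac--Moody modules via the natural algebra action on $\mathbb{V}_{\lambda^I}$. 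By design, $\BL_G^{\on{Op}_{\lambda^I}}(\CO_{\on{Op}(\cG)^{\on{loc}}_{\lambda^I}}) \simeq \mathbb{V}_{\lambda^I}$, and the functor is continuous and exact.

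For part (b), the defining feature of the renormalized factorizable category $\on{KL}(G,\crit)_{X^I}$, in the sense of \cite[Sect.~23.1]{FG2} adapted factorizably, is that it is compactly generated by the Weyl modules $\mathbb{V}_{\lambda^I}$ as $\lambda^I$ ranges over maps $I \to \Lambda^+$. Since each such compact generator is the image under $\BL_G^{\on{Op}_{\lambda^I}}$ of the structure sheaf, the union of essential images generates the whole category. The main obstacle is the factorizable Feigin--Frenkel identification on all of $X^I$, including the behavior of both sides across the diagonals: at a single point this is the classical input of \cite{FG1}, and over the locus of disjoint points factorization reduces it to the one-point statement, but extending the identification across the closed diagonals requires matching the fusion of Weyl modules on the geometric side with the degeneration of oper singularities on the spectral side. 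This compatibility, and the resulting $\CD$-algebra structure on the endomorphism algebra, is the technical heart of the argument and is the reason the proposition is stated as an extension rather than a corollary of \cite[Prop.~3.5]{FG1}.
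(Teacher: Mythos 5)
The paper does not actually give a proof of \propref{p:from Op to KM}: it states the proposition as "an extension of \cite[Proposition 3.5]{FG1}" and leaves it at that, so there is no in-paper argument to compare against. Your sketch is the natural way one would carry out that extension, and it correctly isolates the ingredients: (i) a factorizable Weyl module $\mathbb{V}_{\lambda^I}$ over $X^I$; (ii) the Feigin--Frenkel identification of its $\fL^+(G)_{X^I}$-equivariant endomorphism sheaf with functions on $\on{Op}(\cG)^{\on{loc}}_{\lambda^I}$; (iii) the functor defined by relative tensor product, sending $\CO_{\on{Op}}$ to $\mathbb{V}_{\lambda^I}$; and (iv), for part (b), the definitional fact (recorded in the paper's remark on $\on{KL}(G,\kappa)$) that the target is compactly generated by Weyl modules.

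Two points deserve flagging. First, a minor imprecision: $X^I$ is projective, not affine, so $\QCoh(\on{Op}(\cG)^{\on{loc}}_{\lambda^I})$ is not modules over the \emph{global} function algebra; one should work with $\CO_{X^I}$-linear sheaves of algebras, and the relative tensor product in your formula must be taken in the $X^I$-linear sense (equivalently, in the $\Dmod(X^I)$-module category that $\on{KL}(G,\crit)_{X^I}$ lives in). Second, and more seriously, your phrase "extended via the factorization structure ... so that colliding points fuse Weyl modules into Weyl modules of the added-up weight" is a heuristic rather than a construction: the factorization structure on $\fL(\fg,\crit)_{X^I}$ determines the module away from diagonals, but not on them, and over a diagonal the evaluation group $\fL^+(G)_{X^I} \to G^{\text{(points)}}$ degenerates, so $\boxtimes_i V_{\lambda^I(i)}$ is no longer literally a representation of the fiberwise evaluation group. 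Producing the correct $\CO_{X^I}$-flat family $\mathbb{V}_{\lambda^I}$ (e.g., in the Beilinson--Drinfeld chiral-module framework), checking that its fiber over a collision is the Weyl module of the summed weight, and matching this against the Cartier-divisor definition of $\on{Op}(\cG)^{\on{loc}}_{\lambda^I}$ via a factorizable Feigin--Frenkel isomorphism is precisely the content hidden in the word "extension." You correctly identify this as the technical heart, but it remains unproven in your write-up, just as it is unaddressed in the paper.
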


\sssec{}

The next theorem is a moving-point version of \cite[Theorem 5.2.9]{BD2}:

\begin{thm}  \label{t:Hitchin}  \hfill

\smallskip

\noindent{\em(a)} The composed functor
$$\QCoh(\on{Op}(\cG)^{\on{loc}}_{\lambda^I})\overset{\BL_G^{\on{Op}_{\lambda^I}}}\longrightarrow
\on{KL}(G,\crit)_{X^I} \overset{\on{Loc}_{G,X^I}}\longrightarrow \Dmod(\Bun_G)$$
canonically factors as
$$\QCoh(\on{Op}(\cG)^{\on{loc}}_{\lambda^I})\overset{(\su_{\lambda^I})^*}\longrightarrow
\QCoh(\on{Op}(\cG)^{\on{glob}}_{\lambda^I})\overset{\on{q-Hitch}_{\lambda^I}}\longrightarrow
\Dmod(\Bun_G).$$

\smallskip

\noindent{\em(b)} The resulting functor \footnote{The notation ``$\on{q-Hitch}$" stands for ``quantized Hitchin map".}
$$\on{q-Hitch}_{\lambda^I}:\QCoh(\on{Op}(\cG)^{\on{glob}}_{\lambda^I})\to \Dmod(\Bun_G)$$
respects the action of $\Rep(\cG)_{\Ran(X)}$, where the latter acts on 
$\QCoh(\on{Op}(\cG)^{\on{glob}}_{\lambda^I})$ via the composition $(\sv_{\lambda^I})^*\circ \on{Loc}_{\cG,\on{spec}}$
and on $\Dmod(\Bun_G)$ via the functor $\on{Sat}(G)^{\on{naive}}_{\Ran(X)}$. 

\end{thm}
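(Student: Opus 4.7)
The strategy is to reduce part (a) to the fixed-point statement of \cite[Theorem 5.2.9]{BD2} by exploiting the factorization/chiral structure and parameterizing everything over $X^I$. Recall that the functor $\BL_G^{\on{Op}_{\lambda^I}}$ of \propref{p:from Op to KM} is built by intertwining the $\CO_{\on{Op}(\cG)^{\on{loc}}_{\lambda^I}}$-module structure on $\QCoh(\on{Op}(\cG)^{\on{loc}}_{\lambda^I})$ with the action of (a quotient of) the Feigin-Frenkel center on $\on{KL}(G,\crit)_{X^I}$. Thus, to produce the required factorization it suffices to show that after postcomposing with $\on{Loc}_{G,X^I}$ the resulting $\CO_{\on{Op}(\cG)^{\on{loc}}_{\lambda^I}}$-linear structure on $\Dmod(\Bun_G)$ canonically factors through the surjection
$$\su_{\lambda^I}^*:\CO_{\on{Op}(\cG)^{\on{loc}}_{\lambda^I}}\twoheadrightarrow (\su_{\lambda^I})_*\,\CO_{\on{Op}(\cG)^{\on{glob}}_{\lambda^I}}.$$
Given such factorization, the functor $\on{q-Hitch}_{\lambda^I}$ is produced formally since $\QCoh(\on{Op}(\cG)^{\on{glob}}_{\lambda^I})$ is tautologically the base change of $\QCoh(\on{Op}(\cG)^{\on{loc}}_{\lambda^I})$ along $\su_{\lambda^I}$.

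The required factorization of the algebra action is the moving-point version of the main result of \cite[Sect. 5]{BD2}: for a fixed point $x\in X$, it is precisely the statement that horizontal sections of a local $\lambda$-oper over the formal disc around $x$ which survive Kac-Moody localization to $\Bun_G$ must extend to horizontal sections over all of $X$, i.e., to global $\lambda$-opers. First, I would establish the parameterized statement for $S\in \affdgSch$ mapping to $X^I$ at the level of the parameterized family of formal discs $\cD_{\ul{x}}$ — this is essentially a family version of BD and follows by the same argument once the formalism of chiral modules in families is in place. Second, I would extend from $\cD_{\ul{x}}$ to $S\times X$ by invoking the key property that $\on{Loc}_{G,X^I}$ is computed via chiral homology over $X$, so that the constraint ``extends horizontally to $S\times X$" is automatic for any object in the image of $\on{Loc}_{G,X^I}$. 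Compatibility with surjections $I_2\twoheadrightarrow I_1$ then packages the individual statements into a statement at the level of $\Ran(X)$.

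The main obstacle is making this ``moving-point extension from local to global opers" rigorous in the derived setting, since $\on{Op}(\cG)^{\on{glob}}_{\lambda^I}$ is typically non-classical: one must carefully set up the factorization algebra of opers in families and check that the Feigin-Frenkel isomorphism and its BD consequence go through in a derived-enhanced form over $X^I$. I would model this on the treatment of the chiral algebra $\fz(\hg)$ and of $\CO_{\on{Op}}$ as factorization algebras, together with the Kac-Moody localization functor as an instance of chiral homology, along the lines suggested by Frenkel-Gaitsgory.

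For part (b), the Hecke compatibility is proved by a standard disjointness argument. The action of $\Rep(\cG)_{\Ran(X)}$ on $\on{KL}(G,\crit)_{X^I}$ via $\on{Sat}(G)^{\on{naive}}_{\Ran(X)}$ and the Hecke action on $\Dmod(\Bun_G)$ intertwine the localization functor $\on{Loc}_{G,X^I}$, since on $\on{KL}(G,\crit)_{X^I}$ the Hecke modifications take place at points of $\Ran(X)$ disjoint from $\ul{x}$ and hence commute with the chiral-homology calculation of $\on{Loc}_{G,X^I}$. On the other hand, the $\Rep(\cG)_{\Ran(X)}$-action on $\QCoh(\on{Op}(\cG)^{\on{glob}}_{\lambda^I})$ factors through $\QCoh(\LocSys_\cG)$ by the very construction of a global oper as a $\cG$-local system, and matches the one on $\on{KL}(G,\crit)_{X^I}$ after applying $\BL_G^{\on{Op}_{\lambda^I}}$ by the compatibility of the Feigin-Frenkel isomorphism with Hecke actions \`a la \cite{BD2}. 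Combining these two compatibilities yields the claim.
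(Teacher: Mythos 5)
The paper does not supply a proof of \thmref{t:Hitchin}: it is stated as the ``moving-point version'' of \cite[Theorem 5.2.9]{BD2} with no further argument, and the section opens by warning that most objects there lack adequate references. So there is no in-text proof to compare against, but your sketch is consistent with everything the paper signals. You correctly identify BD 5.2.9 as the essential input, and the toolkit you invoke — factorization structure over $X^I$, chiral homology computing $\on{Loc}_{G,X^I}$, and the Feigin–Frenkel isomorphism relating the center of $\hg_\crit$ to $\CO$ of opers — is precisely the toolkit the paper cites for the neighboring \thmref{t:Op and Whit}. The reduction of (a) to factoring the Feigin–Frenkel-center action on the localized module through $\su_*\CO_{\on{Op}^{\on{glob}}_{\lambda^I}}$, and the disjointness-of-supports argument in (b), are the expected outline.

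Two caveats worth sharpening. First, the step ``$\on{q\text{-}Hitch}_{\lambda^I}$ is produced formally since $\QCoh(\on{Op}(\cG)^{\on{glob}}_{\lambda^I})$ is tautologically the base change of $\QCoh(\on{Op}(\cG)^{\on{loc}}_{\lambda^I})$ along $\su_{\lambda^I}$'' is only a tautology once one knows $\su_{\lambda^I}$ is affine, so that $\QCoh$ of the global oper space is the category of $\su_*\CO$-modules in $\QCoh$ of the local one and a $\QCoh(\on{Op}^{\on{loc}})$-linear functor with factored center action descends by Barr–Beck–Lurie. This is expected (oper spaces are affine DG schemes) but it is part of the setup, not automatic; and since $\on{Op}(\cG)^{\on{glob}}_{\lambda^I}$ is genuinely derived, the descent has to be carried out in the $\infty$-categorical setting, not the abelian one of \cite{BD2}. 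Second, your phrasing ``the resulting $\CO_{\on{Op}^{\on{loc}}_{\lambda^I}}$-linear structure on $\Dmod(\Bun_G)$'' is loose: $\Dmod(\Bun_G)$ is not linear over that algebra; rather, the functor $\on{Loc}_{G,X^I}\circ\BL_G^{\on{Op}_{\lambda^I}}$ is $\QCoh(\on{Op}^{\on{loc}}_{\lambda^I})$-linear, and what must factor is the endomorphism of that functor coming from the center. You clearly have the right picture in mind, but a careful write-up should make that distinction explicit, since it is exactly the place where the derived version of BD's single-point theorem has to be proved rather than quoted.
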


\sssec{} \label{sss:cond O prel}

We are finally ready to state Property $\on{Km}^{\on{prel}}$ of the geometric Langlands functor $\BL_G$ in \conjref{c:GL}
(``Km" stands for Kac-Moody): 

\medskip

\noindent{\bf Property $\mathbf{Km}^{\mathbf{prel}}$:} {\it We shall say that the functor $\BL_G$ satisfies
\emph{Property $\on{Km}^{\on{prel}}$} if for every finite set $I$ and $\lambda^I:I\to \Lambda^+$, the following diagram
is commutative:
\begin{equation} \label{e:cond O prel}
\CD
\IndCoh_{\on{Nilp}^{\on{glob}}_\cG}(\LocSys_\cG)   @>{\BL_G}>> \Dmod(\Bun_G)  \\
@A{\Xi_\cG}AA     & &   \\
\QCoh(\LocSys_\cG)  & &  @AA{\on{q-Hitch}_{\lambda^I}}A  \\
@A{(\sv_{\lambda^I})_*}AA    & &  \\
\QCoh(\on{Op}(\cG)^{\on{glob}}_{\lambda^I})  @>{\on{Id}}>> \QCoh(\on{Op}(\cG)^{\on{glob}}_{\lambda^I}).
\endCD
\end{equation}}

\ssec{The oper vs. Whittaker compatibility}

The reason the localization procedure is useful is that one can explicitly control the Whittaker coefficients
of D-modules obtained in this way. How this is done will be explained in the present subsection. 

\sssec{}

For a finite set $I$ and a map $\Lambda^I:I\to \Lambda^+$ let us concatenate the diagrams
\eqref{e:cond O prel} and \eqref{e:cond W}. 

\medskip

Using the fact that $\Psi_\cG\circ \Xi_\cG\simeq \on{Id}$, 
we obtain a commutative diagram
\begin{equation} \label{e:Op and Whit}
\CD
\Rep(\cG)_{\Ran(X)}\underset{\Dmod(\Ran(X))}\otimes \Vect @>{\BL^{\on{Whit}}_{G}}>>  \on{Whit}(G)  \\
@A{\on{co-Loc}^{\on{unital}}_{\cG,\on{spec}}}AA    @AA{\on{coeff}_{G}}A \\
\QCoh(\LocSys_\cG)  & &  \Dmod(\Bun_G)  \\
@A{(\sv_{\lambda^I})_*}AA     @AA{\on{q-Hitch}_{\lambda^I}}A \\
\QCoh(\on{Op}(\cG)^{\on{glob}}_{\lambda^I})  @>{\on{Id}}>> \QCoh(\on{Op}(\cG)^{\on{glob}}_{\lambda^I}).
\endCD
\end{equation}

We claim:

\begin{thm} \label{t:Op and Whit}
The diagram \eqref{e:Op and Whit} commutes unconditionally, i.e., without assuming the validity of
\conjref{c:GL}.
\end{thm}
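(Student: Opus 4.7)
The plan is to reduce commutativity of \eqref{e:Op and Whit} to a comparison at a single distinguished object, via $\Rep(\cG)_{\Ran(X)}$-equivariance and a ``$\QCoh$-linear functor = module over an algebra'' interpretation of the source.

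First I would verify that both composite functors $F_L, F_R \colon \QCoh(\on{Op}(\cG)^{\on{glob}}_{\lambda^I}) \to \Whit(G)$ given respectively by the left and right columns of \eqref{e:Op and Whit} are $\Rep(\cG)_{\Ran(X)}$-equivariant, where the source carries the action induced by the monoidal functor $(\sv_{\lambda^I})^* \circ \on{Loc}_{\cG,\on{spec}}$. On the right, this combines \thmref{t:Hitchin}(b) with the $\Rep(\cG)_{\Ran(X)}$-compatibility of $\on{coeff}_G$ from \secref{ss:Whit coeff}. On the left, it combines the $\QCoh(\LocSys_\cG)$-linearity of $(\sv_{\lambda^I})_*$, the description of $\on{co-Loc}^{\on{unital}}_{\cG,\on{spec}}$ as the right adjoint of a monoidal functor, and \qthmref{t:Cass Shal}(a). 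By \corref{c:generalized vanishing}, both actions factor through $\QCoh(\LocSys_\cG)$, so $F_L$ and $F_R$ are $\QCoh(\LocSys_\cG)$-linear.

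Next I would use that, as a $\QCoh(\LocSys_\cG)$-module category via $\sv_{\lambda^I}^*$, the source is equivalent to $A\mod$ in $\QCoh(\LocSys_\cG)$, where $A := (\sv_{\lambda^I})_*(\CO_{\on{Op}(\cG)^{\on{glob}}_{\lambda^I}})$ (using that $\sv_{\lambda^I}$ is affine by construction). A $\QCoh(\LocSys_\cG)$-linear functor out of $A\mod$ into any $\QCoh(\LocSys_\cG)$-module category is determined by the image of the structure sheaf $\CO_{\on{Op}(\cG)^{\on{glob}}_{\lambda^I}}$ together with the resulting $A$-action. The comparison therefore reduces to identifying $F_L(\CO_{\on{Op}(\cG)^{\on{glob}}_{\lambda^I}})$ and $F_R(\CO_{\on{Op}(\cG)^{\on{glob}}_{\lambda^I}})$ in $\Whit(G)$ as $A$-modules compatibly with the $\QCoh(\LocSys_\cG)$-action. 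On the left, the isomorphism $\BL^{\on{Whit}}_G({\bf 1}_{\Rep(\cG)_{\Ran(X)}^{\on{unital}}}) \simeq \CW_{\on{vac}}$ from \secref{sss:1st Whittaker coefficient and Langlands}, combined with $\QCoh(\LocSys_\cG)$-linearity, identifies $F_L(\CO_{\on{Op}(\cG)^{\on{glob}}_{\lambda^I}})$ canonically with $A \star \CW_{\on{vac}}$, where $\star$ denotes the $\QCoh(\LocSys_\cG)$-action on $\Whit(G)$ transported from the equivalence $\Whit(G) \simeq \Rep(\cG)_{\Ran(X)}^{\on{unital}}$. On the right, \thmref{t:Hitchin}(a) rewrites $\on{q-Hitch}_{\lambda^I}(\CO_{\on{Op}(\cG)^{\on{glob}}_{\lambda^I}})$ as $\on{Loc}_{G,X^I}(\BL_G^{\on{Op}_{\lambda^I}}(\CO_{\on{Op}(\cG)^{\on{loc}}_{\lambda^I}}))$, and the desired match follows from a Ran-parametrized local Feigin--Frenkel identification: the composite $\on{coeff}_G \circ \on{Loc}_{G,X^I} \circ \BL_G^{\on{Op}_{\lambda^I}}$ applied to $\CO_{\on{Op}(\cG)^{\on{loc}}_{\lambda^I}}$ is canonically $A \star \CW_{\on{vac}}$, via local-to-global compatibility of the Whittaker functor with Kac--Moody localization together with the classical Feigin--Frenkel identification of the Whittaker cohomology of the critical vacuum module with functions on local opers.

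The hard part will be exactly this Ran-parametrized local Feigin--Frenkel statement, together with its compatibility with the $\Rep(\cG)_{\Ran(X)}$-action. The pointwise version at a fixed $k$-point $x\in X$ is classical, but promoting it to a functorial identification over $\Ran(X)$ compatible with both the Satake-type $\Rep(\cG)_{\Ran(X)}$-action and the action of the Feigin--Frenkel center, and then bootstrapping it through a local-to-global comparison of $\on{coeff}_G$ with its local counterpart on Kac--Moody modules, is the technical core. Since \thmref{t:Op and Whit} is asserted unconditionally of \conjref{c:GL}, one cannot take shortcuts through Langlands duality; the identification must be traced explicitly through the local Whittaker realization of $\on{KL}(G,\crit)_{\Ran(X)}$, in the spirit of the techniques underlying \qthmref{t:principal series}.
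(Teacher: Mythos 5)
Your proposal correctly identifies the engine of the proof --- the Feigin--Frenkel isomorphism coupled with a Ran/local-to-global chiral-homology comparison --- which is exactly what the paper's remark after the theorem attributes the argument to. However, the module-category reduction you wrap around it has two problems. First, you assert that $\sv_{\lambda^I}$ is affine ``by construction.'' The paper never claims this; it records only properness over the irreducible locus (\lemref{l:opers proper}), and since $\on{Op}(\cG)^{\on{glob}}_{\lambda^I}$ fibers over $X^I$ in a way that $\sv_{\lambda^I}$ forgets, affineness (which together with properness would force finiteness) is not clear and should in general fail. Without monadicity of $(\sv_{\lambda^I})_*$ the identification $\QCoh(\on{Op}(\cG)^{\on{glob}}_{\lambda^I})\simeq A\mod$ inside $\QCoh(\LocSys_\cG)$ is unavailable. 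Second, the Barr--Beck--style reduction to the image of the structure sheaf requires $\Whit(G)$ to be a $\QCoh(\LocSys_\cG)$-module category, but \corref{c:generalized vanishing} factors the Hecke action through $\QCoh(\LocSys_\cG)$ only on $\Dmod(\Bun_G)$: under \qthmref{t:Cass Shal}(a), $\Whit(G)\simeq\Rep(\cG)^{\on{unital}}_{\Ran(X)}$ with $\Rep(\cG)_{\Ran(X)}$ acting by tensoring, and the kernel of $\on{Loc}^{\on{unital}}_{\cG,\on{spec}}$ visibly acts non-trivially there. So $F_L$ and $F_R$ are $\Rep(\cG)_{\Ran(X)}$-equivariant, but the passage to $\QCoh(\LocSys_\cG)$-linearity, which your dictionary between $\QCoh(\LocSys_\cG)$-linear functors out of $A\mod$ and $A$-modules in the target requires, does not go through as stated.

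Even granting these steps, the reduction to a single generator does not shorten the road. What remains to be checked at $\CO_{\on{Op}(\cG)^{\on{glob}}_{\lambda^I}}$ --- the Ran-parametrized compatibility of $\on{coeff}_G$ with Kac--Moody localization, matched via Feigin--Frenkel against the algebra of functions on opers --- \emph{is} the chiral-homology computation that the paper invokes, and that computation already produces the natural isomorphism of functors directly, at the factorization level, without first passing through a single object. The paper's direct route (computing chiral homology of the Feigin--Frenkel center and comparing it with the Whittaker/Drinfeld--Sokolov BRST reduction of the critical Kac--Moody chiral algebra) is thus both the core and, in effect, the whole proof; your generator-reduction wrapper introduces module-structure hypotheses it does not remove.
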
 

\begin{rem}
The proof of \thmref{t:Op and Whit} amounts to a computation of chiral homology of the chiral algebra
responsible for the scheme of opers, and the Feigin-Frenkel isomorphism that identifies it with
the Whittaker BRST reduction of the chiral algebra corresponding to $\fL(\fg,\crit)$.
\end{rem}

\sssec{}

As a corollary of \thmref{t:Op and Whit} we obtain:

\begin{cor} \label{c:Op and Whit}
The following diagram is commutative
$$
\CD
\QCoh(\LocSys_\cG)   @>{\BL^{\on{Whit}}_{G,G}}>>  \Whit(G,G)   \\
@A{\Psi_\cG}AA   @AA{\on{coeff}_{G,G}}A   \\
\IndCoh_{\on{Nilp}^{\on{glob}}_\cG}(\LocSys_\cG) & & \Dmod(\Bun_G)    \\
@A{\Xi_\cG\circ (\sv_{\lambda^I})_*}AA    @AA{\on{q-Hitch}_{\lambda^I}}A   \\
\QCoh(\on{Op}(\cG)^{\on{glob}}_{\lambda^I})  @>{\on{Id}}>> \QCoh(\on{Op}(\cG)^{\on{glob}}_{\lambda^I}).
\endCD
$$
\end{cor}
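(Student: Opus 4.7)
The plan is to deduce the corollary from \thmref{t:Op and Whit} by transferring its content along the fully faithful embedding $\iota:\Whit(G,G)\hookrightarrow \Whit(G)$. First, since $\Psi_\cG\circ \Xi_\cG\simeq \on{Id}$ (as $\Xi_\cG$ is fully faithful), the left vertical composition in the asserted diagram collapses, and the claim reduces to the commutativity of
$$\on{coeff}_{G,G}\circ \on{q-Hitch}_{\lambda^I}\simeq \BL^{\on{Whit}}_{G,G}\circ (\sv_{\lambda^I})_*$$
as functors $\QCoh(\on{Op}(\cG)^{\on{glob}}_{\lambda^I})\to \Whit(G,G)$.

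Next, I would record two compatibility isomorphisms. On the geometric side, the projection $\pi:\CQ_G\to \CQ_{G,G}$ is a $\bMaps(X,Z_G^0)^{\on{gen}}$-torsor, and the groupoid $\bN_{\CQ_G}$ is the pull-back of $\bN_{\CQ_{G,G}}$ along $\pi$ with matching characters; consequently $\pi^\dagger$ commutes with the averaging $\on{Av}^{\bN,\chi}$. Combined with $\fr_G=\fr_{G,G}\circ \pi$, this yields
$$\iota\circ \on{coeff}_{G,G}\simeq \on{coeff}_G.$$
On the spectral side, the factorization \eqref{e:factoring locs 2} of $\on{co-Loc}^{\on{unital}}_{\cG,\on{spec}}$ through $\on{co-Loc}^{\on{unital}_{\cG/[\cG,\cG]}}_{\cG,\on{spec}}$, together with the commutative square of Quasi-Theorem \ref{t:Cass Shal}(c) (whose bottom horizontal arrow is the Casselman--Shalika equivalence out of which $\BL^{\on{Whit}}_{G,G}$ is built by definition), gives
$$\BL^{\on{Whit}}_G\circ \on{co-Loc}^{\on{unital}}_{\cG,\on{spec}}\simeq \iota\circ \BL^{\on{Whit}}_{G,G}.$$

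Postcomposing the sought-for isomorphism with $\iota$ and applying these two compatibilities, the problem reduces to
$$\on{coeff}_G\circ \on{q-Hitch}_{\lambda^I}\simeq \BL^{\on{Whit}}_G\circ \on{co-Loc}^{\on{unital}}_{\cG,\on{spec}}\circ (\sv_{\lambda^I})_*,$$
which is precisely the commutativity of diagram \eqref{e:Op and Whit} furnished by \thmref{t:Op and Whit}. Fully faithfulness of $\iota$ (a consequence of \propref{p:pull-back fully faithful}) then permits cancellation, producing the desired isomorphism.

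The argument is essentially formal once \thmref{t:Op and Whit} is granted. The only point requiring a small verification is the commutation of $\pi^\dagger$ with $\on{Av}^{\bN,\chi}$; this is a standard computation that reduces to the fact that the action of $\bMaps(X,Z_G^0)^{\on{gen}}$ is disjoint from $\bN$, so no substantive obstacle is anticipated.
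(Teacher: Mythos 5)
Your proof is correct and fills in precisely the formal steps the paper suppresses by stating this as an immediate corollary of \thmref{t:Op and Whit}: the collapse $\Psi_\cG\circ\Xi_\cG\simeq\on{Id}$, the transfer along the fully faithful embedding $\Whit(G,G)\hookrightarrow\Whit(G)$ using Quasi-Theorem \ref{t:Cass Shal}(c) together with the factorization of $\on{co-Loc}^{\on{unital}}_{\cG,\on{spec}}$ through $\on{co-Loc}^{\on{unital}_{\cG/[\cG,\cG]}}_{\cG,\on{spec}}$, and cancellation by fully faithfulness of the embedding. The point you flag (that $\pi^\dagger$ commutes with $\on{Av}^{\bN,\chi}$, so that $\iota\circ\on{coeff}_{G,G}\simeq\on{coeff}_G$) is indeed the only non-formal input, and your reason for it -- the groupoid $\bN_{\CQ_G}$ being pulled back from $\bN_{\CQ_{G,G}}$ with matching character -- is the right one.
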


\ssec{Full compatibility with opers}  \label{ss:opers Ran}

The material in this subsection will not be used elsewhere in the paper. We will discuss a stronger version
of Property $\on{Km}^{\on{prel}}$ of the geometric Langlands functor $\BL_G$ that we call Property $\on{Km}$.
As adequate references are not available, we will only indicate the formal structure
of the theory once the appropriate definitions are given.

\sssec{}

For every finite set $I$ one can introduce prestacks $\on{Op}(\cG)^{\on{loc}}_{X^I}$ and 
$\on{Op}(\cG)^{\on{glob}}_{X^I}$, by considering ``opers with singularities but without monodromy", 
instead of $\lambda^I$-opers for a specified $\lambda^I:I\to \Lambda^+$. For $I=\{1\}$ the local
version is defined in \cite[Sect. 2.2]{FG1}. 

\medskip

We have a diagram 
$$
\CD
\LocSys_\cG  \\
@A{\sv_{X^I}}AA  \\
\on{Op}(\cG)^{\on{glob}}_{X^I}  @>{\su_{X^I}}>>  \on{Op}(\cG)^{\on{loc}}_{X^I}.
\endCD
$$

\sssec{}

We have:

\begin{conj}  \label{c:FF}
There exists a canonically defined equivalence
$$\BL_G^{\on{Op}_{X^I}}:\QCoh(\on{Op}(\cG)^{\on{loc}}_{X^I})\to \on{KL}(G,\crit)_{X^I},$$
extending the functors $\BL_G^{\on{Op}_{\lambda^I}}$ of \propref{p:from Op to KM}(a).
\end{conj}

\sssec{}

Passing to the limit over $I\in  (\on{fSet}_{\on{surj}})^{\on{op}}$, we obtain a diagram
\begin{equation} \label{e:arb opers}
\CD
\LocSys_\cG  \\
@A{\sv_{\Ran(X)}}AA  \\
\on{Op}(\cG)^{\on{glob}}_{\Ran(X)}  @>{\su_{\Ran(X)}}>>  \on{Op}(\cG)^{\on{loc}}_{\Ran(X)}
\endCD
\end{equation}
and an equivalence
$$\BL_G^{\on{Op}_{\Ran(X)}}:\QCoh(\on{Op}(\cG)^{\on{loc}}_{\Ran(X)})\to \on{KL}(G,\crit)_{\Ran(X)}.$$

\medskip

Push-pull along \eqref{e:arb opers} defines a functor
$$\Poinc_{\cG,\on{spec}}:\QCoh(\on{Op}(\cG)^{\on{loc}}_{\Ran(X)})\to \QCoh(\LocSys_\cG).$$

\sssec{}

The full Property $\on{Km}$ of the geometric Langlands functor $\BL_G$ reads:

\medskip

\noindent{\bf Property $\mathbf{Km}$:} {\it We shall say that the functor $\BL_G$ satisfies
\emph{Property $\on{Km}$} if the diagram is commutative:
$$
\CD
\IndCoh_{\on{Nilp}^{\on{glob}}_\cG}(\LocSys_\cG)   @>{\BL_G}>> \Dmod(\Bun_G)  \\
@A{\Xi_\cG}AA     & &   \\
\QCoh(\LocSys_\cG)  & &  @AA{\on{Loc}_G}A  \\
@A{\Poinc_{\cG,\on{spec}}}AA    & &  \\
\QCoh(\on{Op}(\cG)^{\on{loc}}_{\Ran(X)})  @>{\BL_G^{\on{Op}_{\Ran(X)}}}>> \on{KL}(G,\crit)_{\Ran(X)}.
\endCD
$$}

\sssec{}

Finally, we propose the following two closely related conjectures: 

\begin{conj}  \label{c:Op ff}  
The fibers of the map $\sv_{\Ran(X)}$ are $\CO$-contractible, i.e., the functor
$$\sv_{\Ran(X)}^*:\QCoh(\LocSys_\cG)\to \QCoh(\on{Op}(\cG)^{\on{glob}}_{\Ran(X)})$$
is fully faithful. 
\end{conj}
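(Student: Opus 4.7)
\textbf{Proof proposal for Conjecture \ref{c:Op ff}.}

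The plan is to reduce the statement of fully faithfulness of $\sv_{\Ran(X)}^*$ to a calculation of the pushforward of the structure sheaf. By the projection formula (which is applicable because $\sv_{\Ran(X)}$ is at least ind-proper in an appropriate factorization sense, after carefully setting up the geometry of $\on{Op}(\cG)^{\on{glob}}_{\Ran(X)}$), fully faithfulness of $\sv_{\Ran(X)}^*$ is equivalent to the assertion that the unit map
\[
\CO_{\LocSys_\cG}\to (\sv_{\Ran(X)})_*\,\CO_{\on{Op}(\cG)^{\on{glob}}_{\Ran(X)}}
\]
is an isomorphism. Since $\LocSys_\cG$ is algebraic, this can be checked after pulling back along every smooth atlas $S\to \LocSys_\cG$, and thus reduces to showing that for every $S$-family $\sigma$ of $\cG$-local systems, the $\CO$-cohomology of the fiber $\on{Op}(\cG)^{\on{glob}}_{\Ran(X),\sigma}$ over $S$ is canonically $\CO_S$. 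In other words, the fibers of $\sv_{\Ran(X)}$ are to be $\CO$-contractible.

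Next, I would analyze the geometry of these fibers. The fiber $\on{Op}(\cG)^{\on{glob}}_{\Ran(X),\sigma}$ parameterizes pairs $(\ul{x},\alpha)$ where $\ul{x}\in\Ran(X)$ and $\alpha$ is an oper structure on $\sigma$ singular (without monodromy) at $\ul{x}$. This fiber admits a presentation as a colimit
\[
\on{Op}(\cG)^{\on{glob}}_{\Ran(X),\sigma}\simeq
\underset{I\in (\on{fSet}_{\on{surj}})^{\on{op}}}{\underset{\longrightarrow}{colim}}\, \on{Op}(\cG)^{\on{glob}}_{X^I,\sigma},
\]
and for each fixed $I$, the space $\on{Op}(\cG)^{\on{glob}}_{X^I,\sigma}$ is a relative scheme over $X^I$ whose fibers (for sufficiently generic $\ul{x}$ of sufficiently large cardinality) are affine spaces, describing gauge transformations into oper form by classical Drinfeld--Sokolov theory for local systems with allowed poles. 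I would then prove $\CO$-contractibility by combining two ingredients: (i) the homological contractibility of $\Ran(X)$ itself, Proposition \ref{p:Ran contractible}, and (ii) the fact that the transition maps in the colimit, corresponding to adding more singularity points, behave like affine-space extensions, and hence are $\CO$-acyclic. Put together via factorization techniques, this should give that the $\CO$-cohomology of the total colimit collapses to $k$, uniformly in $\sigma$.

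Finally, to assemble the global statement over $\LocSys_\cG$, I would exploit the factorization structure of opers: $\on{Op}(\cG)^{\on{glob}}_{\Ran(X)}$ sits over $\Ran(X)\times\LocSys_\cG$ in a way compatible with the semi-group structure $\cup$ on $\Ran(X)$, and the non-monodromic condition ensures that attaching additional singular points does not change the underlying local system. This allows one to upgrade the fiberwise $\CO$-contractibility to $\CO$-contractibility of the map $\sv_{\Ran(X)}$ as a whole, yielding the desired isomorphism of structure sheaves.

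The hard part will be the second step: verifying that the fiberwise structure really is a colimit of $\CO$-acyclic pieces along $\on{fSet}_{\on{surj}}$. The subtle issue is that for a local system $\sigma$ with non-trivial automorphisms (in particular, for reducible $\sigma$), the oper representatives may come in a non-trivial quotient by pro-unipotent stabilizer groupoids, and one must show that these stabilizers remain $\CO$-contractible after taking the colimit. Concretely, one needs a uniform-in-$\sigma$ statement that the difference between two oper forms with the same singular locus is controlled by a pro-unipotent group acting freely modulo the stabilizer of $\sigma$, so that a standard acyclicity argument applies. Establishing this and checking compatibility with the factorization structure across different $\ul{x}$'s is, I expect, the main geometric content behind the conjecture.
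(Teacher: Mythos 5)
The paper does not prove this statement: \conjref{c:Op ff} is one of the \emph{open conjectures} on which the argument of the paper is conditioned. The surrounding remarks only say that it strengthens \conjref{c:Op gen}, that it is equivalent to $(\sv_{\Ran(X)})_*$ being a co-localization, that it is a theorem for $G=GL_n$, and that work of T.~Schlank indicates it holds for classical groups. For general reductive $G$ it is open, so there is no ``paper's own proof'' to compare your attempt against.

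Your outline does not supply the missing argument either. Reducing fully faithfulness of $\sv_{\Ran(X)}^*$ to the unit map $\CO_{\LocSys_\cG}\to (\sv_{\Ran(X)})_*\,\CO_{\on{Op}(\cG)^{\on{glob}}_{\Ran(X)}}$ being an isomorphism is the natural first move (and is essentially the paper's own ``co-localization'' reformulation), but it already presupposes a projection-formula/base-change statement for a map that is only a Ran-colimit of proper schematic maps, and it transfers the entire burden onto the geometry of the fibers. Your treatment of the fibers is where the proposal stops being an argument: the assertion that for $\ul{x}$ of large enough cardinality the space of $\lambda^I$-oper forms of a fixed local system $\sigma$ is an affine space is stated, not proved; even for irreducible $\sigma$ this requires work, and for reducible $\sigma$, where automorphism groupoids and pro-unipotent stabilizers enter (exactly as you yourself flag), it is not clear that such a description exists at all. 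Moreover, $\CO$-acyclicity of the individual pieces $\on{Op}(\cG)^{\on{glob}}_{X^I,\sigma}$ does not by itself yield $\CO$-acyclicity of the colimit: the Ran diagram is not filtered, and gluing cohomology across it is precisely the kind of factorization/contractibility input that underlies \propref{p:Ran contractible}, which would have to be re-proved in the relative setting over $\LocSys_\cG$. In short, the ``hard part'' you correctly isolate in your last paragraph is the entirety of what \conjref{c:Op ff} asserts, and the known case $G=GL_n$ was handled by group-specific arguments rather than by the soft reduction sketched here.
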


\begin{rem}
\conjref{c:Op ff} can be reformulated as saying that the functor
$$(\sv_{\Ran(X)})_*:\QCoh(\on{Op}(\cG)^{\on{glob}}_{\Ran(X)})\to \QCoh(\LocSys_\cG)$$
is a \emph{co-localization}, i.e., it identifies the homotopy category
of $\QCoh(\LocSys_\cG)$ with a Verdier quotient of $\QCoh(\on{Op}(\cG)^{\on{glob}}_{\Ran(X)})$. 
Note that this equivalent to $(\sv_{\Ran(X)})_*$ being a \emph{localization}, i.e., that its 
(not necessarily continuous) \emph{right} adjoint is fully faithful.
\end{rem}

\begin{rem}
Note that \conjref{c:Op ff} is a strengthening of \conjref{c:Op gen}, and it should be within reach
for $G=GL_n$. The recent work of D.~Kazhdan and T.~Schlank indicates that it also holds for $G=Sp(2n)$.
\end{rem}

\begin{conj} \label{c:Op quot}
The functor $\Poinc_{\cG,\on{spec}}$ is a \emph{localization}, i.e., it identifies the homotopy category
of $\QCoh(\LocSys_\cG)$ with a Verdier quotient of $\QCoh(\on{Op}(\cG)^{\on{loc}}_{\Ran(X)})$. 
\end{conj}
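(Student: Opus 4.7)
The plan is to deduce \conjref{c:Op quot} from \conjref{c:Op ff}. Per the remark following \conjref{c:Op ff}, that conjecture is equivalent to the statement that $(\sv_{\Ran(X)})_*$ is a localization: its left adjoint $\sv_{\Ran(X)}^*$ is fully faithful, and equivalently its right adjoint is fully faithful as well. Since $\Poinc_{\cG,\on{spec}} = (\sv_{\Ran(X)})_* \circ \su_{\Ran(X)}^*$, the task is to promote this to the statement that precomposing with $\su_{\Ran(X)}^*$ preserves the property of being a localization.

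First I would establish essential surjectivity. For any $\CF \in \QCoh(\LocSys_\cG)$, the object $\sv_{\Ran(X)}^* \CF$ is a preimage of $\CF$ under $(\sv_{\Ran(X)})_*$ by \conjref{c:Op ff}. It therefore suffices to realize $\sv_{\Ran(X)}^* \CF$, modulo $\ker((\sv_{\Ran(X)})_*)$, as $\su_{\Ran(X)}^*\CG$ for some $\CG \in \QCoh(\on{Op}(\cG)^{\on{loc}}_{\Ran(X)})$. Geometrically, over each stratum $X^I$, the morphism $\su_{\lambda^I}$ exhibits $\on{Op}(\cG)^{\on{glob}}_{\lambda^I}$ as the moduli of horizontal extensions of a local oper to a global one; combined with Corollary-of-Conjecture \ref{cc:Op gen} (the pushforwards $(\sv_{\lambda^I})_*$ generate on the irreducible locus), plus the parabolic-induction machinery of \propref{p:Eis generation} to reach the Eisenstein part, this should yield the desired generation.

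For fully faithfulness of the right adjoint I would invoke base change along the Cartesian square given by the two projections
$$W := \on{Op}(\cG)^{\on{glob}}_{\Ran(X)} \underset{\on{Op}(\cG)^{\on{loc}}_{\Ran(X)}}{\times} \on{Op}(\cG)^{\on{glob}}_{\Ran(X)} \rightrightarrows \on{Op}(\cG)^{\on{glob}}_{\Ran(X)}.$$
The counit $\Poinc_{\cG,\on{spec}} \circ R \to \on{id}$ (with $R$ the right adjoint) is then computed by push-pull around the correspondence $\LocSys_\cG \leftarrow W \to \LocSys_\cG$, and the desired isomorphism reduces to a rigidity claim: two global opers with matching local data contribute identically to $\QCoh(\LocSys_\cG)$ after pushforward along $\sv_{\Ran(X)}$.

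The main obstacle will be this rigidity step. Naively ``same local oper implies same global oper'' is false, since distinct global opers can share local behavior at a finite subset of $X$; one must instead track their discrepancy and show that it lies in $\ker((\sv_{\Ran(X)})_*)$. The cleanest route is via the Feigin-Frenkel identification of Quasi-Theorem \ref{t:FF}: under the equivalence $\QCoh(\on{Op}(\cG)^{\on{loc}}_{\Ran(X)}) \simeq \on{KL}(G,\crit)_{\Ran(X)}$, the question translates into a statement about chiral BRST reduction at the critical level, where the Beilinson-Drinfeld quantum Hitchin construction supplies the necessary rigidity; this is also the channel through which \conjref{c:Op gen} enters, propagating control from the irreducible locus to all of $\LocSys_\cG$. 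A secondary subtlety is that $\sv_{\Ran(X)}^!$ is typically discontinuous, so manipulations of the right adjoint should be carried out via compact generation of $\IndCoh$ rather than by direct limit computations.
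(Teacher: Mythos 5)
This statement is a \emph{conjecture} in the paper, and the paper gives no proof of it — so there is no paper argument for your attempt to be measured against. The subsection in which it appears (\secref{ss:opers Ran}) is explicitly flagged as "not used elsewhere in the paper," and the paper says only that adequate references for the material do not yet exist and that it will "only indicate the formal structure." Moreover, while the paper states that \conjref{c:Op ff} is a theorem for $GL_n$ (and by work of Schlank for classical groups), it makes no such claim for \conjref{c:Op quot}; if your proposed deduction of \conjref{c:Op quot} from \conjref{c:Op ff} were correct, \conjref{c:Op quot} would also be settled for $GL_n$, which the paper conspicuously does not assert. The paper calls them "two closely related conjectures," not one a corollary of the other, and the asymmetry in their statements — \conjref{c:Op ff} concerns $(\sv_{\Ran(X)})_*$ alone, \conjref{c:Op quot} the composite $\Poinc_{\cG,\on{spec}} = (\sv_{\Ran(X)})_* \circ \su_{\Ran(X)}^*$ — is precisely the hint that the interaction with $\su_{\Ran(X)}^*$ is where the extra, unresolved content sits.

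Your outline has genuine gaps exactly where you flag them. For essential surjectivity, you reduce to showing that $\sv_{\Ran(X)}^*\CF$ lies, modulo $\ker\left((\sv_{\Ran(X)})_*\right)$, in the essential image of $\su_{\Ran(X)}^*$; the sentence "this should yield the desired generation" is where an argument would have to go, and neither Corollary-of-Conjecture \ref{cc:Op gen} nor \propref{p:Eis generation} supplies it, since those concern generation of $\QCoh(\LocSys^{\on{irred}}_\cG)$ and $\IndCoh_{\on{Nilp}^{\on{glob}}_\cG}(\LocSys_\cG)$ rather than realizing sheaves on $\on{Op}(\cG)^{\on{glob}}_{\Ran(X)}$ as pullbacks from $\on{Op}(\cG)^{\on{loc}}_{\Ran(X)}$. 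For fully faithfulness of the right adjoint, the base-change computation around $W = \on{Op}(\cG)^{\on{glob}}_{\Ran(X)} \underset{\on{Op}(\cG)^{\on{loc}}_{\Ran(X)}}{\times} \on{Op}(\cG)^{\on{glob}}_{\Ran(X)}$ presupposes that $\su_{\Ran(X)}^*\circ (\su_{\Ran(X)})_*$ is computed by push-pull along $W$, i.e., Tor-independence of the square; nothing in the paper gives this, and $\su_{\Ran(X)}$ is not obviously flat. Finally, the "rigidity" claim — that global opers with matching local data contribute identically to $\QCoh(\LocSys_\cG)$ modulo the kernel — is a statement of comparable depth to the conjecture itself, and appealing to \qthmref{t:FF} and the quantum Hitchin construction to supply it names a hope rather than an argument.
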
 

\section{The proof modulo the conjectures}  \label{s:proof}

In this section we will assemble the ingredients developed in the previous sections to prove
\conjref{c:GL}, assuming Conjectures \ref{c:Whit ext ff} and \ref{c:Op gen} and all the quasi-theorems. 

\ssec{Proof of the vanishing theorem}   \label{ss:proof of vanishing}

One of the steps in the proof of \conjref{c:GL} is \thmref{t:generalized vanishing}. 
The proofs of both \thmref{t:generalized vanishing} of \conjref{c:GL} rely on the following description of generators
of the category $\Dmod(\Bun_G)$:

\begin{thm} \label{t:generate BunG}
The union of the essential images of the functors
$$\on{Loc}_{G,X^I}:\on{KL}(G,\kappa)_{X^I}\to \Dmod(\Bun_G)$$
and
$$\Eis_P:\Dmod(\Bun_M)\to \Dmod(\Bun_G)$$
for \emph{proper} parabolics $P\subset G$, generates $\Dmod(\Bun_G)$.
\end{thm}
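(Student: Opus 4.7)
The plan is to prove that the right orthogonal of the generated subcategory vanishes. Let $\bC \subset \Dmod(\Bun_G)$ be the cocomplete full subcategory generated by the essential images of $\on{Loc}_G$ and of $\Eis_P$ for all proper parabolics $P \subsetneq G$, and let $\CM \in \bC^\perp$; the goal is to show $\CM = 0$.

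First, by the $(\Eis_P,\on{CT}_P)$-adjunction, the orthogonality of $\CM$ to the essential image of $\Eis_P$ is equivalent to $\on{CT}_P(\CM) = 0$. Applied to every proper parabolic $P$, this forces $\CM$ to lie in the cuspidal subcategory $\Dmod(\Bun_G)_{\on{cusp}}$. At this point I invoke the known geometric analogue of Harder's finiteness theorem: every cuspidal object is set-theoretically supported on a quasi-compact open substack $\sU \subset \Bun_G$ meeting each connected component in a quasi-compact open. Denoting the open embedding by $\jmath_\sU:\sU\hookrightarrow \Bun_G$ and the complementary closed embedding by $\imath$, this is the vanishing $\imath^!(\CM) = 0$, equivalently $\CM \simeq (\jmath_\sU)_* \circ \jmath_\sU^\dagger(\CM)$.

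Now I apply \propref{p:loc surj} to this open substack: the composite
$$\jmath_\sU^\dagger \circ \on{Loc}_G:\on{KL}(G,\crit)_{\Ran(X)} \to \Dmod(\sU)$$
admits a fully faithful right adjoint, so the counit of this adjunction is an isomorphism and the functor is essentially surjective. Combining this with the $(\jmath_\sU^\dagger,(\jmath_\sU)_*)$-adjunction and the identification $\CM\simeq (\jmath_\sU)_*\jmath_\sU^\dagger\CM$, the hypothesis that $\CM$ is right-orthogonal to every object of the form $\on{Loc}_G(\CN)$ translates into $\jmath_\sU^\dagger(\CM)$ being right-orthogonal to every object of $\Dmod(\sU)$, whence $\jmath_\sU^\dagger(\CM) = 0$ and therefore $\CM = 0$.

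The main obstacle is the input used in the second paragraph, namely the quasi-compactness of support of cuspidal D-modules; this is a substantial geometric result relying on reduction theory for $\Bun_G$, but is available in the literature (cf.~the Drinfeld--Gaitsgory series on D-modules on $\Bun_G$). Once it is in hand the remaining steps are formal, and the structure of the argument matches the classical decomposition ``automorphic $=$ cuspidal $\oplus$ Eisenstein'': the Eisenstein generators $\Eis_P$ handle the non-cuspidal part by adjunction, while the Kac-Moody localization functor $\on{Loc}_G$ generates the cuspidal part after restricting to its quasi-compact locus of support via \propref{p:loc surj}.
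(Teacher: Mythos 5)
Your argument is correct and follows the paper's own sketch closely: right-orthogonality to the $\Eis_P$ forces $\CM$ cuspidal via the $(\Eis_P,\on{CT}_P)$-adjunction, reduction theory gives $\CM\simeq f_\bullet f^\bullet\CM$ for a per-component quasi-compact open $\sU\overset{f}\hookrightarrow\Bun_G$, and \propref{p:loc surj} plus the $(f^\bullet,f_\bullet)$-adjunction then forces $f^\bullet\CM=0$. You have merely made explicit the adjunction chain that the paper compresses into its final sentence.
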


\begin{proof}[Sketch of proof]

\medskip

Let $\CM\in \Dmod(\Bun_G)$ be right-orthogonal to both the essential image of $\Eis_P$ for 
all proper parabolics $P\subset G$ and $\on{Loc}_{G,X^I}$. We need to show that $\CM=0$. 

\medskip

Reduction theory (see \cite[Proposition 1.4.6]{DrGa3}) implies that there exists an open substack 
$$\sU\overset{f}\hookrightarrow \Bun_G$$
such that its intersection with every connected component of $\Bun_G$ is quasi-compact,
and which has the following property:

\medskip

For every object $\CM'\in  \Dmod(\Bun_G)_{\on{cusp}}$, the canonical arrow
\begin{equation}  \label{e:ext from big}
\CM'\to f_\bullet\circ f^\bullet(\CM')
\end{equation}
is an isomorphism. 

\medskip

On the one hand, the assumption that $\CM$ is right-orthogonal to the essential image of $\Eis_P$ for 
all proper parabolics $P\subset G$, implies that $\CM\in \Dmod(\Bun_G)_{\on{cusp}}$. Now
the fact that $\CM=0$ follows from \eqref{e:ext from big} and \corref{c:loc surj}.

\end{proof}

\sssec{}

We are now ready to prove \thmref{t:generalized vanishing}:

\begin{proof}
Let $\CF\in \Rep(\cG)_{\Ran(X)}$ be an object such that $\on{Loc}_{\cG,\on{spec}}(\CF)=0$.
We need to show that the action of $\CF$ on $\Dmod(\Bun_G)$ is also zero.
For that it is sufficient to show that $\CF$ acts by zero on a subcategory of 
$\Dmod(\Bun_G)$ that generates it. 

\medskip

By \thmref{t:Hitchin}(b), the action of $\CF$ on objects in the essential image
of the functor $\on{q-Hitch}_{\lambda^I}$ (for any finite set $I$ and $\lambda^I:I\to \Lambda^+$)
is zero. Combined with \propref{p:from Op to KM},
this implies that $\CF$ acts by zero on the essential image of 
$\on{Loc}_{G,X^I}$ for any finite set $I$.

\medskip

By \thmref{t:generate BunG}, it remains to show that $\CF$ acts by zero 
on the essential image of $\Eis_P$ for all proper parabolics $P\subset G$.

\medskip

This follows from the next assertion, which is itself a particular case of Quasi-Theorem \ref{t:principal series},
but can be proved in a more elementary way by generalizing  the argument of \cite[Theorem 1.11]{BG}:

\begin{prop}  Assume that \thmref{t:generalized vanishing} holds for the Levi quotient
$M$; in particular we have an action of $\QCoh(\LocSys_\cM)$ on $\Dmod(\Bun_M)$. 

\smallskip

\noindent{\em(a)} The functor
$$\Eis_P:\Dmod(\Bun_M)\to \Dmod(\Bun_G)$$
canonically factors as
\begin{multline*}
\Dmod(\Bun_M)\overset{\sfq_{\cP,\on{spec}}^*\otimes \on{Id}}\longrightarrow
\QCoh(\LocSys_\cP)\underset{ \QCoh(\LocSys_\cM)}\otimes 
\Dmod(\Bun_M) \overset{\Eis_P^{\on{int}}}\longrightarrow \\
\to \Dmod(\Bun_G).
\end{multline*}

\smallskip

\noindent{\em(b)} For $\CF\in \Rep(\cG)_{\Ran(X)}$ and 
$\CM\in \QCoh(\LocSys_\cP)\underset{ \QCoh(\LocSys_\cM)}\otimes 
\Dmod(\Bun_M)$, we have a canonical isomorphism
$$\CF\star \Eis^{\on{int}}_P(\CM)\simeq
\Eis_P^{\on{int}}\left(\sfp_{\cP,\on{spec}}^*(\on{Loc}_{\cG,\on{spec}}(\CF))\otimes \CM\right),$$
where $-\star-$ denotes the monoidal action of $\Rep(\cG)_{\Ran(X)}$ on $\Dmod(\Bun_G)$.
\end{prop}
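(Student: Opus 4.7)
\medskip

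\textbf{Plan of proof.} The whole argument rests on establishing the \emph{Hecke compatibility of geometric Eisenstein series in its Ran-space form}: namely, that for $V \in \Rep(\cP)_{\Ran(X)}$ and $\CM \in \Dmod(\Bun_M)$ there is a canonical isomorphism
\begin{equation} \label{e:plan-Eis-Hecke}
\on{Sat}(G)^{\on{naive}}_{\Ran(X)}(\on{ind}^{\cG}_{\cP}(V))\star \Eis_P(\CM)\ \simeq\ \Eis_P\bigl(\on{Sat}(M)^{\on{naive}}_{\Ran(X)}(\on{Res}^{\cP}_{\cM}(V))\star \CM\bigr),
\end{equation}
where the star denotes the Hecke actions described in \secref{sss:Hecke action on PS} and \secref{sss:Hecke action}. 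This identity is the Ran-space upgrade of the classical geometric Eisenstein Hecke compatibility for $\cP$-representations; it is established by base change along the Cartesian square relating $\on{Hecke}(G)_{\Ran(X)}$, $\on{Hecke}(P)_{\Ran(X)}$ and $\on{Hecke}(M)_{\Ran(X)}$, together with the naive Satake functors. In particular, $\Eis_P$ acquires a canonical $\Rep(\cP)_{\Ran(X)}$-linear structure with respect to the $\Rep(\cG)_{\Ran(X)}$-action on the target (pulled back from $\on{Res}^{\cG}_{\cP}$) and the $\Rep(\cM)_{\Ran(X)}$-action on the source (pulled back from $\on{Res}^{\cP}_{\cM}$).

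Next I would invoke the inductive hypothesis: \thmref{t:generalized vanishing} holds for $M$, so the $\Rep(\cM)_{\Ran(X)}$-action on $\Dmod(\Bun_M)$ descends to a $\QCoh(\LocSys_\cM)$-action via $\on{Loc}_{\cM,\on{spec}}$. Combining this with the natural compatibilities
\[
\on{Loc}_{\cM,\on{spec}}\circ \on{Res}^{\cP}_{\cM}\ \simeq\ (\sfq_{\cP,\on{spec}})_*\circ \on{Loc}_{\cP,\on{spec}},
\qquad
\on{Loc}_{\cP,\on{spec}}\circ \on{Res}^{\cG}_{\cP}\ \simeq\ \sfp_{\cP,\on{spec}}^*\circ \on{Loc}_{\cG,\on{spec}},
\]
the identity \eqref{e:plan-Eis-Hecke} upgrades to the assertion that $\Eis_P$ is linear over the action of $\QCoh(\LocSys_\cP)$, where $\QCoh(\LocSys_\cP)$ acts on $\Dmod(\Bun_M)$ through $\sfq_{\cP,\on{spec}}^*: \QCoh(\LocSys_\cM)\to \QCoh(\LocSys_\cP)$ (making $\Dmod(\Bun_M)$ into an object of $\QCoh(\LocSys_\cP)\mod$) and acts on $\Dmod(\Bun_G)$ through $\sfp_{\cP,\on{spec}}^*\circ \on{Loc}_{\cG,\on{spec}}$-style Hecke operators. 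By the universal property of relative tensor products of DG categories over a symmetric monoidal base, such linearity yields a canonical factorization
\[
\Eis_P\ \simeq\ \Eis_P^{\on{int}}\circ(\sfq_{\cP,\on{spec}}^*\otimes \on{Id}),
\qquad
\Eis_P^{\on{int}}:\ \QCoh(\LocSys_{\cP})\underset{\QCoh(\LocSys_{\cM})}\otimes \Dmod(\Bun_M)\ \longrightarrow\ \Dmod(\Bun_G),
\]
which proves part (a).

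For part (b), the formula is now a direct consequence of the way $\Eis_P^{\on{int}}$ was constructed: the $\Rep(\cG)_{\Ran(X)}$-linear structure on $\Eis_P$ established above, when written through $\on{Loc}_{\cG,\on{spec}}$, says precisely that the Hecke action of $\CF\in \Rep(\cG)_{\Ran(X)}$ on an object $\Eis_P^{\on{int}}(\CM)$ corresponds to tensoring $\CM$ inside the relative tensor product by $\sfp_{\cP,\on{spec}}^*(\on{Loc}_{\cG,\on{spec}}(\CF))\in \QCoh(\LocSys_\cP)$. Concretely, writing an object $\CM$ of the tensor product as a colimit of objects of the form $\sfq_{\cP,\on{spec}}^*\otimes \on{Id}$ applied to $\CM_0\in\Dmod(\Bun_M)$ (which is possible because such objects generate), one reduces to verifying the formula on generators, where it follows from \eqref{e:plan-Eis-Hecke} and the spectral compatibilities of $\on{Loc}$ quoted above.

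\textbf{Main obstacle.} The principal technical difficulty will be establishing \eqref{e:plan-Eis-Hecke} with full compatibility across the Ran space, and in particular showing that the resulting $\Rep(\cP)_{\Ran(X)}$-linearity structure on $\Eis_P$ is coherent to all orders so that it yields a genuine $\QCoh(\LocSys_\cP)$-module map rather than merely a compatible family of isomorphisms at the level of objects. Said differently, once one fixes the Hecke compatibility over a single point, its spreading over $\Ran(X)$ and the higher-coherence data required to invoke the universal property of the relative tensor product is the only nontrivial point; the purely geometric input needed to prove the fixed-point version of \eqref{e:plan-Eis-Hecke} is classical (as in \cite[Theorem 1.11]{BG}).
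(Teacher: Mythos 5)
Your key identity \eqref{e:plan-Eis-Hecke} is not correct, and with it the scaffolding of the argument collapses. The Hecke compatibility for Eisenstein series that \cite[Theorem 1.11]{BG} provides is for a $\cG$-representation $W$: roughly, $W\star\Eis_P(\CM)$ carries a filtration whose associated graded is $\Eis_P$ applied to Hecke modifications of $\CM$ by the $\cM$-isotypic pieces of $W|_\cM$ — it relates $W$ and $\on{Res}^{\cG}_{\cM}(W)$, not $\on{ind}^{\cG}_{\cP}(V)$ and $\on{Res}^{\cP}_{\cM}(V)$. Your formula fails already for $V=\lambda$ a dominant character of $\cP$: then $\on{ind}^{\cG}_{\cP}(V)=V_\lambda$ is the full irreducible $\cG$-representation while $\on{Res}^{\cP}_{\cM}(V)$ is a single character, and convolution by $V_\lambda$ spreads $\Eis_P(\CM)$ over many connected components of $\Bun_G$ (indexed by the weights of $V_\lambda$), whereas $\Eis_P$ of a single translate lands in only one. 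Moreover, the linearity structures you invoke are not well-defined: $\on{Res}^{\cG}_{\cP}$ goes $\Rep(\cG)\to\Rep(\cP)$, so it cannot be used to make $\Rep(\cP)_{\Ran(X)}$ act on $\Dmod(\Bun_G)$; similarly $\sfq_{\cP,\on{spec}}^*$ goes $\QCoh(\LocSys_\cM)\to\QCoh(\LocSys_\cP)$, so it cannot make $\QCoh(\LocSys_\cP)$ act on $\Dmod(\Bun_M)$. The compatibility $\on{Loc}_{\cM,\on{spec}}\circ\on{Res}^{\cP}_{\cM}\simeq(\sfq_{\cP,\on{spec}})_*\circ\on{Loc}_{\cP,\on{spec}}$ is likewise false: restriction along the Levi inclusion $\cM\hookrightarrow\cP$ corresponds to pullback along the section $\LocSys_\cM\to\LocSys_\cP$, not to $\sfq_*$ (which computes derived $\check{N}(P)$-cohomology of the fiber, a very different functor).

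More structurally, even with the correct Hecke compatibility in hand, the universal property of the relative tensor product does not give you part (a) for free. To produce $\Eis_P^{\on{int}}$ you must construct a $\QCoh(\LocSys_\cM)$-balanced bifunctor $\QCoh(\LocSys_\cP)\times\Dmod(\Bun_M)\to\Dmod(\Bun_G)$ extending $\Eis_P$, and since $\Dmod(\Bun_G)$ carries no $\QCoh(\LocSys_\cP)$-module structure, the values $\Eis_P^{\on{int}}(\CE\otimes\CM)$ for $\CE$ not pulled back from $\LocSys_\cM$ are genuinely new data that must be produced geometrically — this is exactly what the argument of \cite{BG} supplies, via the Zastava/Drinfeld-compactification geometry which endows $\Eis_P(\CM)$ with an action of the relevant $\Sym(\check{\fn})$-type algebra object. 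Relatedly, your check-on-generators strategy for (b) is circular: objects of the form $\CO_{\LocSys_\cP}\otimes\CM_0$ generate the tensor product only as a $\QCoh(\LocSys_\cP)$-module category, so reducing to them already presupposes the $\QCoh(\LocSys_\cP)$-action compatibility that (a) and (b) jointly assert.
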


\end{proof}

\begin{rem}

The functor $\Eis_P^{\on{int}}$ can also be interpreted as a composition of 
$\Eis_P^{\on{enh}}$ with a canonically defined functor
$$\QCoh(\LocSys_\cP)\underset{ \QCoh(\LocSys_\cM)}\otimes 
\IndCoh_{\on{Nilp}^{\on{glob}}_\cM}(\LocSys_\cM) \to \on{I}(G,P),$$
which in terms of the equivalence $\BL_P$ corresponds to 
\begin{multline*}
\QCoh(\LocSys_\cP)\underset{ \QCoh(\LocSys_\cM)}\otimes 
\IndCoh_{\on{Nilp}^{\on{glob}}_\cM}(\LocSys_\cM)\overset{\sfq^*_{\sP,\on{spec}}} \longrightarrow \\
\to \IndCoh_{\on{Nilp}^{\on{glob}}_\cP}(\LocSys_\cP)  \overset{\ind_{\sF_{\cP}}}\longrightarrow
\ind_{\sF_{\cP}}\mod(\IndCoh_{\on{Nilp}^{\on{glob}}_\cP}(\LocSys_\cP)),
\end{multline*}
up to an auto-equivalence of $\Dmod(\Bun_M)$.
\end{rem}




\ssec{Construction of the functor} 

From now on, we shall assume the validity of Conjectures \ref{c:Whit ext ff} and \ref{c:Op gen}
(which are theorems for $GL_n$) and all the Quasi-Theorems, and deduce \conjref{c:GL}. 

\medskip

By induction on the rank, we can assume the validity of \conjref{c:GL} for proper Levi subgroups of $G$.

\sssec{}  \label{sss:1st containment}

By \conjref{c:Whit ext ff}, the existence of the functor $\BL_G$ amounts to showing that
the essential image of the functor
$$
\CD
\on{Glue}(\cG)_{\on{spec}}   @>{\BL^{\on{Whit}^{\on{ext}}}_{G,G}}>>   \Whit^{\on{ext}}(G,G)  \\
@A{\on{Glue}(\on{CT}^{\on{enh}}_{\on{spec}})}AA     \\
\IndCoh_{\on{Nilp}^{\on{glob}}_\cG}(\LocSys_\cG)  
\endCD
$$
is contained in the essential image of the functor
$$
\CD
\Whit^{\on{ext}}(G,G)  \\
@AA{\on{coeff}^{\on{ext}}_{G,G}}A    \\
\Dmod(\Bun_G).
\endCD
$$

The latter is enough to check on the generators of $\IndCoh_{\on{Nilp}^{\on{glob}}_\cG}(\LocSys_\cG)$.

\medskip

By \conjref{cc:Op gen} and \propref{p:Eis generation}(b),
the category $\IndCoh_{\on{Nilp}^{\on{glob}}_\cG}(\LocSys_\cG)$ is generated by the union of the
essential images of the following functors:

\medskip

\begin{itemize}

\item $\Eis_{\cP,\on{spec}}:\IndCoh_{\on{Nilp}^{\on{glob}}_\cM}(\LocSys_\cM)\to \IndCoh_{\on{Nilp}^{\on{glob}}_\cG}(\LocSys_\cG)$
for all \emph{proper} parabolics $P\subset G$.

\medskip

\item $\Xi_\cG\circ \jmath_* \circ (\sv_{\lambda^I})_*:\QCoh(\on{Op}(\cG)^{\on{glob,irred}}_{\lambda^I})\to 
\IndCoh_{\on{Nilp}^{\on{glob}}_\cG}(\LocSys_\cG)$
for all finite sets $I$ and $\lambda^I:I\to \Lambda^+$, where $\jmath$ denotes the open embedding
$\LocSys^{\on{irred}}_\cG\hookrightarrow \LocSys_\cG$. 

\end{itemize} 

\sssec{}

The containment of \secref{sss:1st containment} for the functors 
$$\Eis_{\cP,\on{spec}}:\IndCoh_{\on{Nilp}^{\on{glob}}_\cM}(\LocSys_\cM)\to \IndCoh_{\on{Nilp}^{\on{glob}}_\cG}(\LocSys_\cG)$$
is equivalent to that for the functors 
$$\Eis^{\on{enh}}_{\cP,\on{spec}}:\sF_{\cP}\mod(\IndCoh_{\on{Nilp}^{\on{glob}}_\cP}(\LocSys_{\cP}))  
\to \IndCoh_{\on{Nilp}^{\on{glob}}_\cG}(\LocSys_\cG).$$

\medskip

For the latter, it follows from Quasi-Theorem \ref{t:Whit ext and Eis}.

\sssec{}

It remains to show the containment of \secref{sss:1st containment} for the functor 
$$\Xi_\cG\circ \jmath_* \circ (\sv_{\lambda^I})_*:\QCoh(\on{Op}(\cG)^{\on{glob,irred}}_{\lambda^I})\to 
\IndCoh_{\on{Nilp}^{\on{glob}}_\cG}(\LocSys_\cG)$$
for a fixed finite set $I$ and $\lambda^I:I\to \Lambda^+$.
 
\medskip 

Let $\CF$ be an object of 
$\QCoh(\on{Op}(\cG)^{\on{glob,irred}}_{\lambda^I})$. We claim that 
\begin{equation} \label{e:1st containment on loc}
\BL^{\on{Whit}^{\on{ext}}}_{G,G}\circ \on{Glue}(\on{CT}^{\on{enh}}_{\on{spec}})\circ \Xi_\cG\circ \jmath_* \circ (\sv_{\lambda^I})_*(\CF)\simeq 
\on{coeff}^{\on{ext}}_{G,G}\circ \on{q-Hitch}_{\lambda^I}(\CF).
\end{equation}

Clearly, \eqref{e:1st containment on loc} would imply the required assertion.

\sssec{}

First, we note that the isomorphsim
$$\bj^\bullet\left(\BL^{\on{Whit}^{\on{ext}}}_{G,G}\circ 
\on{Glue}(\on{CT}^{\on{enh}}_{\on{spec}})\circ \Xi_\cG \circ \jmath_* \circ (\sv_{\lambda^I})_*(\CF)\right)
\simeq \bj^\bullet\left(\on{coeff}^{\on{ext}}_{G,G}\circ \on{q-Hitch}_{\lambda^I}(\CF)\right)$$
follows from \corref{c:Op and Whit}.

\medskip

Second, we note that, by definition, 
$$\BL^{\on{Whit}^{\on{ext}}}_{G,G}\circ \on{Glue}(\on{CT}^{\on{enh}}_{\on{spec}})\circ \Xi_\cG\circ (\sv_{\lambda^I})_*(\CF)\simeq
\bj_\dagger\circ \bj^\bullet\left(\BL^{\on{Whit}^{\on{ext}}}_{G,G}\circ 
\on{Glue}(\on{CT}^{\on{enh}}_{\on{spec}})\circ \Xi_\cG \circ (\sv_{\lambda^I})_*(\CF)\right).$$

Hence, it is enough to show that the canonical map
\begin{equation} \label{e:irred locus}
\bj_\dagger\circ \bj^\bullet\left(\on{coeff}^{\on{ext}}_{G,G}\circ \on{q-Hitch}_{\lambda^I}(\CF)\right)\to
\on{coeff}^{\on{ext}}_{G,G}\circ \on{q-Hitch}_{\lambda^I}(\CF)
\end{equation}
is an isomorphism. 

\sssec{}  \label{sss:irred 1}

We consider the category $\Whit^{\on{ext}}(G,G)$ as acted on by the monoidal category $\Rep(\cG)_{\Ran(X)}$,
and recall that the functor $\on{coeff}^{\on{ext}}_{G,G}$ respects this action.

\medskip

Consider the cone of the map \eqref{e:irred locus}; denote it by $\CM'$.
On the one hand, by \thmref{t:Hitchin}(b), the action of $\Rep(\cG)_{\Ran(X)}$ on $\CM'$ factors through
$$\Rep(\cG)_{\Ran(X)}\overset{\on{Loc}_{\cG,\on{spec}}}\longrightarrow \QCoh(\LocSys_\cG)\overset{\jmath^*}\longrightarrow
\QCoh(\LocSys^{\on{irred}}_\cG).$$

\medskip

On the other hand, we claim that for any $\CM\in \Dmod(\Bun_G)$, the action of $\Rep(\cG)_{\Ran(X)}$
on
$$\CM':=\on{Cone}\left(\bj_\dagger\circ \bj^\bullet(\on{coeff}^{\on{ext}}_{G,G}(\CM))\to \on{coeff}^{\on{ext}}_{G,G}(\CM)\right)$$
factors through 
$$\Rep(\cG)_{\Ran(X)}\overset{\on{Loc}_{\cG,\on{spec}}}\longrightarrow \QCoh(\LocSys_\cG)\to
\QCoh(\LocSys_\cG)_{\LocSys^{\on{red}}_\cG},$$
where $\QCoh(\LocSys_\cG)_{\LocSys^{\on{red}}_\cG}$ is quotient of $\QCoh(\LocSys_\cG)$ by the monoidal
ideal given by the embedding $\QCoh(\LocSys^{\on{irred}}_\cG)\overset{\jmath_*}\hookrightarrow \QCoh(\LocSys_\cG)$. 

\medskip

This would imply that $\CM'=0$.

\sssec{}  \label{sss:irred 2}

The object $\CM'$ admits a filtration with subquotients of the form
$$(\bi_P)_\dagger\circ (\bi_P)^\dagger(\on{coeff}^{\on{ext}}_{G,G}(\CM))\simeq 
(\bi_P)_\dagger\circ  \on{coeff}_{G,P}(\CM)$$
for the proper parabolics $P$ of $G$.

\medskip

Hence, it suffices to check that for any proper parabolic $P\subset G$, the action of $\Rep(\cG)_{\Ran(X)}$
on 
$$\on{coeff}_{G,P}(\CM)\in \Whit(G,P)$$
factors through 
$$\Rep(\cG)_{\Ran(X)}\overset{\on{Loc}_{\cG,\on{spec}}}\longrightarrow \QCoh(\LocSys_\cG)
\overset{\sfp^*_{\cP,\on{spec}}}\longrightarrow \QCoh(\LocSys_\cP).$$

By \eqref{e:deg Whit via CT}, it suffices to establish the said factorization for the object
$$\on{CT}_P^{\on{enh}}(\CM)\in \on{I}(G,P).$$

Now the required assertion follows from Quasi-Theorem \ref{t:principal series}(b). 

\ssec{Proof of the equivalence}    \label{ss:proof of main}
 
We will now show that the functor $\BL_G$, whose existence was proved above, is an equivalence. 

\sssec{}

First, we claim that $\BL_G$ is fully faithful. This follows from \thmref{t:coLoc ext ff} and \conjref{c:Whit ext ff}.

\medskip

To prove that $\BL_G$ is essentially surjective,  it is enough to show that the generators of $\Dmod(\Bun_G)$
belong to the essential image of $\BL_G$.

\medskip

By \thmref{t:generate BunG} and \propref{p:from Op to KM}(b), the category $\Dmod(\Bun_G)$ 
is generated by the union of the essential images of the following functors:

\medskip

\begin{itemize}

\item $\Eis_P:\Dmod(\Bun_M)\to \Dmod(\Bun_G)$ for all proper parabolics $P\subset M$.

\medskip

\item $\on{q-Hitch}_{\lambda^I}:\QCoh(\on{Op}(\cG)^{\on{glob}}_{\lambda^I})\to \Dmod(\Bun_G)$
for all finite sets $I$ and $\lambda^I:I\to \Lambda^+$.

\end{itemize}

\sssec{}

First, we claim that the essential image of $\Eis_P$ is contained in the essential image of $\BL_G$.
This is equivalent for the corresponding assertion for the functor $\Eis^{\on{enh}}_P$.

\medskip

For the latter it suffices to show that  the essential image of the
functor
$$
\CD
\Whit^{\on{ext}}(G,G)  \\
@AA{\on{coeff}^{\on{ext}}_{G,G}}A    \\
\Dmod(\Bun_G) \\
@A{\Eis^{\on{enh}}_P}AA  \\
\on{I}(G,P)
\endCD
$$
is contained in the essential image of the functor
$$
\CD
\on{Glue}(\cG)_{\on{spec}}   @>{\BL^{\on{Whit}^{\on{ext}}}_{G,G}}>>   \Whit^{\on{ext}}(G,G)  \\
@A{\on{Glue}(\on{CT}^{\on{enh}}_{\on{spec}})}AA     \\
\IndCoh_{\on{Nilp}^{\on{glob}}_\cG}(\LocSys_\cG). 
\endCD
$$

However, this follows from from Quasi-Theorem \ref{t:Whit ext and Eis}.  

\sssec{A digression}

Let $\Dmod(\Bun_G)_{\Eis}$ denote the full subcategory of $\Dmod(\Bun_G)$ generated by
the essential images of the functors $\Eis_P$, for all proper parabolics $P\subset G$. By
the above, the subcategory $\Dmod(\Bun_G)_{\Eis}$ is contained in the essential image 
of the functor $\BL_G$. 

\medskip

This, every $\CM\in \Dmod(\Bun_G)$ canonically fits in an exact triangle
$$\CM_{\Eis}\to \CM\to \CM_{\on{cusp}},$$
where $\CM_{\Eis}\in \Dmod(\Bun_G)_{\Eis}$ and $\CM_{\on{cusp}}\in \Dmod(\Bun_G)_{\on{cusp}}$.

\sssec{}

It remains to show that for a finite set $I$, $\lambda^I:I\to \Lambda^+$, and 
$\CF\in \QCoh(\on{Op}(\cG)^{\on{glob}}_{\lambda^I})$, the object
$\on{q-Hitch}_{\lambda^I}(\CF)$
belongs to the essential image of $\BL_G$. 

\medskip

By the above, it is sufficient to show that the object
$$\left(\on{q-Hitch}_{\lambda^I}(\CF)\right)_{\on{cusp}}$$
belongs to the essential image of $\BL_G$. 

\medskip

We will construct an isomorphism
$$\left(\BL_G(\Xi_\cG\circ (\sv_{\lambda^I})_*(\CF))\right)_{\on{cusp}}\simeq 
\left(\on{q-Hitch}_{\lambda^I}(\CF)\right)_{\on{cusp}}.$$

\begin{rem}
It will follow \emph{a posteriori} that we actually have an isomorphism
$$\BL_G(\Xi_\cG\circ (\sv_{\lambda^I})_*(\CF))\simeq \on{q-Hitch}_{\lambda^I}(\CF),$$
which amounts to Property $\on{Km}^{\on{prel}}$ in \conjref{c:GL}.
\end{rem}

\sssec{}

Let us construct a map
\begin{equation} \label{e:map for loc}
\BL_G(\Xi_\cG\circ (\sv_{\lambda^I})_*(\CF))\to 
\on{q-Hitch}_{\lambda^I}(\CF).
\end{equation}

By \conjref{c:Whit ext ff}, this amounts to a map
\begin{equation} \label{e:map for loc Whit}
\on{coeff}^{\on{ext}}_{G,G}\circ \BL_G(\Xi_\cG\circ (\sv_{\lambda^I})_*(\CF))\to
\on{coeff}^{\on{ext}}_{G,G}(\on{q-Hitch}_{\lambda^I}(\CF)).
\end{equation}

Note that by \corref{c:Op and Whit}, we have an isomorphism
$$\on{coeff}_{G,G}\circ \BL_G(\Xi_\cG\circ (\sv_{\lambda^I})_*(\CF))\overset{\sim}\to
\on{coeff}_{G,G}(\on{q-Hitch}_{\lambda^I}(\CF)).$$

Furthermore, by construction, the map
$$\bj_\dagger\circ \bj^\bullet\left(\on{coeff}^{\on{ext}}_{G,G}\circ \BL_G(\Xi_\cG\circ (\sv_{\lambda^I})_*(\CF))\right)\to
\on{coeff}^{\on{ext}}_{G,G}\circ \BL_G(\Xi_\cG\circ (\sv_{\lambda^I})_*(\CF))$$
is an isomorphism.

\medskip

This gives rise to the desired map in \eqref{e:map for loc Whit}.

\sssec{}

Let $\CM$ denote the cone of the map \eqref{e:map for loc}. By construction, 
$$\bj^\bullet\circ \on{coeff}^{\on{ext}}_{G,G}(\CM)=\on{coeff}_{G,G}(\CM)=0.$$

We wish to show that $\CM_{\on{cusp}}=0$, which is equivalent to showing that
the canonical map
$$\CM\to \CM_{\on{cusp}}$$
vanishes.

\medskip

By \conjref{c:Whit ext ff}, it suffices to show that the map
$$\on{coeff}^{\on{ext}}_{G,G}(\CM)\to \on{coeff}^{\on{ext}}_{G,G}(\CM_{\on{cusp}})$$
vanishes. 

\medskip

However, since $\CM_{\on{cusp}}\in \Dmod(\Bun_G)_{\on{cusp}}$, the canonical map
$$\on{coeff}^{\on{ext}}_{G,G}(\CM_{\on{cusp}})\to
\bj_\bullet\circ \bj^\bullet(\on{coeff}^{\on{ext}}_{G,G}(\CM_{\on{cusp}}))$$
is an isomorphism (see \corref{c:cusp}). 

\medskip

Hence, the required vanishing holds by the $(\bj^\bullet,\bj_\bullet)$-adjunction.

\ssec{Proof of the properties and further remarks}  

\sssec{}

Thus, the equivalence $\BL_G$, satisfying Property $\on{Wh}^{\on{ext}}$, claimed in \conjref{c:GL}(a) has been constructed. 
Let us now prove the properties claimed in \conjref{c:GL}(b). 

\medskip

Property $\on{He}^{\on{naive}}$ follows from \corref{cc:when equivalence}. 

\medskip

Property $\on{Ei}^{\on{enh}}$ follows from Property $\on{Wh}^{\on{ext}}$ and Quasi-Theorem \ref{e:Whit ext and Eis}.  

\medskip

Property $\on{Km}^{\on{prel}}$ follows from Property $\on{Wh}^{\on{ext}}$ and \thmref{c:Op and Whit},
combined with the fact that the essential image of the functor $\on{Loc}_G$ belongs to
$\Dmod(\Bun_G)_{\on{temp}}$, using the intrinsic characterization of the latter
given in \secref{sss:derived Satake temp} (see Remark \ref{r:localization tempered}).

\sssec{Interdependence of the conjectures}

Recall, however, that the above proof of \conjref{c:GL} was conditional on the validity of
Conjectures \ref{c:Whit ext ff} and \ref{c:Op gen}.

\medskip

Let us now assume \conjref{c:GL} and comment on the above supporting conjectures.

\medskip

First, we note that \conjref{c:Op gen} follows formally from 
\thmref{t:generate BunG} modulo \conjref{c:GL}.

\medskip

Second, we note that \conjref{c:Whit ext ff} is equivalent to \thmref{t:coLoc ext ff} modulo \conjref{c:GL}. 

\medskip

I.e., we obtain that Conjectures \ref{c:Whit ext ff} and \ref{c:Op gen} are forced by \conjref{c:GL}. 

\sssec{Implications for the cuspidal category}

Let
$$\IndCoh_{\on{Nilp}^{\on{glob}}_\cG}(\LocSys_\cG)_{\on{cusp}}\subset \IndCoh_{\on{Nilp}^{\on{glob}}_\cG}(\LocSys_\cG)$$
be the full subcategory equal to the right orthogonal of the essential images of the functors
$$\Eis_{\cP,\on{spec}}:\IndCoh_{\on{Nilp}^{\on{glob}}_\cM}(\LocSys_\cM)\to
\IndCoh_{\on{Nilp}^{\on{glob}}_\cG}(\LocSys_\cG)$$
for proper parabolics $P\subset G$.

\medskip

The following results, e.g., from \propref{p:Eis generation}:

\begin{cor}
The subcategory $\IndCoh_{\on{Nilp}^{\on{glob}}_\cG}(\LocSys_\cG)_{\on{cusp}}$ equals
the image of
$$\QCoh(\LocSys_\cG^{\on{irred}})\overset{\jmath_*}\longrightarrow \QCoh(\LocSys_\cG)\overset{\Xi_\cG}\to
\IndCoh_{\on{Nilp}^{\on{glob}}_\cG}(\LocSys_\cG)_{\on{cusp}}.$$
\end{cor}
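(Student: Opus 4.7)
The plan is to reduce this to the combination of \propref{p:Eis generation}(b), and a standard semi-orthogonal-decomposition argument, so no new input is needed.

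First I would replace the defining condition of the cuspidal subcategory with a cleaner one in terms of the open embedding $\jmath:\LocSys_\cG^{\on{irred}}\hookrightarrow \LocSys_\cG$. Let $\rho:\IndCoh_{\on{Nilp}^{\on{glob}}_\cG}(\LocSys_\cG)\to \QCoh(\LocSys_\cG^{\on{irred}})$ denote the restriction functor; recall that over the irreducible locus one has $\IndCoh_{\on{Nilp}^{\on{glob}}_\cG}(\LocSys^{\on{irred}}_\cG)=\QCoh(\LocSys^{\on{irred}}_\cG)$. I claim
$$\IndCoh_{\on{Nilp}^{\on{glob}}_\cG}(\LocSys_\cG)_{\on{cusp}}=\ker(\rho)^{\perp}.$$
Indeed, for any \emph{proper} parabolic $P$, the map $\sfp_{\cP,\on{spec}}$ does not hit any irreducible local system, so $\rho\circ\Eis_{\cP,\on{spec}}=0$; this gives one inclusion. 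For the reverse inclusion, \propref{p:Eis generation}(b) says that the essential images of $\Eis_{\cP,\on{spec}}\circ \Xi_\cM$ (for proper $P$) generate $\ker(\rho)$; since these are contained in the images of $\Eis_{\cP,\on{spec}}$, any object right-orthogonal to all $\Eis_{\cP,\on{spec}}$'s (for proper $P$) is automatically right-orthogonal to $\ker(\rho)$.

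Next I would identify the right adjoint of $\rho$ with $\Xi_\cG\circ \jmath_*$. For $\CE\in \IndCoh_{\on{Nilp}^{\on{glob}}_\cG}(\LocSys_\cG)$ and $\CF\in \QCoh(\LocSys^{\on{irred}}_\cG)$, I would compute
$$\Hom(\CE,\Xi_\cG\circ \jmath_*(\CF))\simeq \Hom(\Psi_\cG(\CE),\jmath_*(\CF))\simeq \Hom(\jmath^*\Psi_\cG(\CE),\CF),$$
using the $(\Xi_\cG,\Psi_\cG)$-adjunction and the $(\jmath^*,\jmath_*)$-adjunction in $\QCoh$. The crucial point is that on the irreducible locus $\Psi$ is the identity ($\QCoh=\IndCoh_{\on{Nilp}}$ there), so $\jmath^*\circ \Psi_\cG=\rho$ as functors out of $\IndCoh_{\on{Nilp}^{\on{glob}}_\cG}(\LocSys_\cG)$. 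Hence $\Xi_\cG\circ \jmath_*$ is right adjoint to $\rho$.

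To finish, I would observe that both $\jmath_*$ (because $\jmath$ is an open embedding, so $\jmath^*\jmath_*\simeq \id$) and $\Xi_\cG$ are fully faithful, so $\Xi_\cG\circ \jmath_*$ is fully faithful. By the general semi-orthogonal decomposition formalism, when a functor admits a fully faithful right adjoint, the right orthogonal of its kernel is exactly the essential image of that right adjoint. Applied to $\rho$, this gives
$$\ker(\rho)^{\perp}=\mathrm{ess.im}(\Xi_\cG\circ \jmath_*),$$
which combined with the first step yields the corollary.

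The only point requiring actual care (as opposed to formal manipulation) is the second step, verifying that $\Xi_\cG\circ \jmath_*$ is the correct right adjoint to the restriction from the category with singular-support condition; the rest is a routine application of \propref{p:Eis generation}(b) together with the standard orthogonal-image duality for Bousfield localizations. I do not expect a genuine obstacle here.
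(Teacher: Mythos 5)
Your proof is correct and fills in precisely the reasoning the paper leaves implicit in its remark that the corollary ``results, e.g., from \propref{p:Eis generation}'': identify $\IndCoh_{\on{Nilp}^{\on{glob}}_\cG}(\LocSys_\cG)_{\on{cusp}}$ with $\ker(\rho)^\perp$ via \propref{p:Eis generation}(b), verify that $\Xi_\cG\circ\jmath_*$ is the (fully faithful, continuous) right adjoint of $\rho$ using $\jmath^*\circ\Psi_\cG\simeq\rho$, and invoke the standard fact that for a Bousfield localization the right orthogonal of the kernel is the essential image of the right adjoint. This is the same route the paper has in mind.
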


Hence, assuming \conjref{c:GL}, we obtain:

\begin{corconj}  \hfill

\smallskip

\noindent{\em(a)}
We have an inclusion $\Dmod(\Bun_G)_{\on{cusp}}\subset \Dmod(\Bun_G)_{\on{temp}}$. 

\smallskip

\noindent{\em(b)} We have:
$$\Dmod(\Bun_G)_{\on{cusp}}=\QCoh(\LocSys_\cG^{\on{irred}})\underset{ \QCoh(\LocSys_\cG)}\otimes
\Dmod(\Bun_G)$$
as subcategories of $\Dmod(\Bun_G)$.

\end{corconj}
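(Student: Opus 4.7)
The plan is to transport both assertions through the equivalence $\BL_G$ supplied by \conjref{c:GL} to the spectral side, where they reduce to the preceding corollary together with module-theoretic facts about the open embedding $\jmath:\LocSys^{\on{irred}}_\cG \hookrightarrow \LocSys_\cG$. Throughout I would freely use that \conjref{c:GL} is assumed, and in particular that $\BL_G$ satisfies Properties $\on{He}^{\on{naive}}$ and $\on{Ei}^{\on{enh}}$ (whence also $\on{Ei}$), together with the inductive validity of \conjref{c:GL} for proper Levi subgroups.

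First I would verify the matching of cuspidal subcategories
$$\BL_G\left(\IndCoh_{\on{Nilp}^{\on{glob}}_\cG}(\LocSys_\cG)_{\on{cusp}}\right)=\Dmod(\Bun_G)_{\on{cusp}}.$$
By $(\Eis,\on{CT})$-adjunction the spectral cuspidal subcategory equals $\underset{P}\bigcap \ker(\on{CT}_{\cP,\on{spec}})$, where $P$ ranges over proper parabolics, while the geometric cuspidal subcategory equals $\underset{P}\bigcap \ker(\on{CT}_P)$ by definition. The commutative diagram \eqref{e:cond CT} furnished by Property $\on{Ei}$, applied with the equivalence $\BL_M$ (which exists by the inductive hypothesis), carries one system of kernels to the other under $\BL_G$. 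Granting this, part (a) is immediate: the preceding corollary identifies $\IndCoh_{\on{Nilp}^{\on{glob}}_\cG}(\LocSys_\cG)_{\on{cusp}}$ with the essential image of $\Xi_\cG\circ \jmath_*$, which sits inside the essential image of $\Xi_\cG$; since $\Dmod(\Bun_G)_{\on{temp}}$ is by definition the full subcategory corresponding under $\BL_G$ to the image of $\Xi_\cG$, the containment $\Dmod(\Bun_G)_{\on{cusp}}\subset \Dmod(\Bun_G)_{\on{temp}}$ follows.

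For part (b), I would invoke Property $\on{He}^{\on{naive}}$ to view $\BL_G$ as an equivalence of $\QCoh(\LocSys_\cG)$-module categories. The relative tensor product
$$\QCoh(\LocSys^{\on{irred}}_\cG)\underset{\QCoh(\LocSys_\cG)}\otimes \Dmod(\Bun_G),$$
regarded as a full subcategory of $\Dmod(\Bun_G)$ via the right adjoint of the localization, coincides with the subcategory of objects on which the monoidal ideal $\ker(\jmath^*)\subset \QCoh(\LocSys_\cG)$ acts by zero. Transported to the spectral side via $\BL_G$, this becomes the full subcategory of $\IndCoh_{\on{Nilp}^{\on{glob}}_\cG}(\LocSys_\cG)$ whose objects are set-theoretically supported over $\LocSys^{\on{irred}}_\cG$. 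But over $\LocSys^{\on{irred}}_\cG$ the restricted singular-support stratum is the zero section, so $\Xi_\cG$ becomes an equivalence there, and every such supported object lies uniquely in the essential image of $\Xi_\cG\circ \jmath_*$. Combining this with the preceding corollary yields the equality asserted in (b).

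The main obstacle will be the identification of the relative tensor product with the ``support on $\LocSys^{\on{irred}}_\cG$'' subcategory. This is a purely formal module-theoretic assertion, resting on the fact that $\jmath^*:\QCoh(\LocSys_\cG)\to \QCoh(\LocSys^{\on{irred}}_\cG)$ is a symmetric monoidal Verdier quotient with fully faithful right adjoint $\jmath_*$, whose kernel is exactly the monoidal ideal of objects set-theoretically supported on $\LocSys_\cG-\LocSys^{\on{irred}}_\cG$; tensoring by $\Dmod(\Bun_G)$ over $\QCoh(\LocSys_\cG)$ then produces the analogous localization of $\QCoh(\LocSys_\cG)$-module categories, whose right adjoint embedding has image the support-on-open subcategory. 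Once this module-theoretic point is in place, both (a) and (b) are formal consequences of the preceding corollary together with Properties $\on{He}^{\on{naive}}$ and $\on{Ei}$ of $\BL_G$.
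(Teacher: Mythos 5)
Your proposal is correct and fills in precisely the argument the paper leaves implicit behind the word ``Hence'': the preceding corollary gives the spectral description of the cuspidal subcategory, Property $\on{Ei}$ (via diagram \eqref{e:cond CT}) matches the cuspidal subcategories across $\BL_G$, and the identification of the image of $\Xi_\cG\circ\jmath_*$ with the relative tensor product uses that $\Xi$ is an equivalence over $\LocSys_\cG^{\on{irred}}$ together with Property $\on{He}^{\on{naive}}$ making $\BL_G$ a $\QCoh(\LocSys_\cG)$-module equivalence. This is essentially the same route the paper intends; you have simply written out the module-theoretic and adjunction bookkeeping explicitly.
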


\sssec{Generation by Kac-Moody representations}

Finally, we note that if we accept \conjref{c:Op quot}, by combining with \conjref{c:GL}, we obtain:

\begin{corconj}
The functor
$$\on{Loc}_G:\on{KL}(G,\kappa)_{\Ran(X)}\to \Dmod(\Bun_G)_{\on{temp}}$$
is a \emph{localization}, i.e., identifies the homotopy category of the target with a Verdier quotient
of the source.
\end{corconj}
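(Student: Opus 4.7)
The plan is to deduce the statement by chasing the commutative diagram furnished by Property $\on{Km}$ and transporting the localization property across the equivalences that appear in it. First, I would note that the essential image of $\on{Loc}_G$ is in fact contained in $\Dmod(\Bun_G)_{\on{temp}}$, as indicated in Remark \ref{r:localization tempered}: at the critical level, the image of every object of $\on{KL}(G,\crit)_{\Ran(X)}$ is acted on through the $\QCoh$-quotient of $\IndCoh$ on the local spectral Hecke stack, via the intrinsic characterization of $\Dmod(\Bun_G)_{\on{temp}}$ recalled in \secref{sss:derived Satake temp}. So it makes sense to regard $\on{Loc}_G$ as a functor with target $\Dmod(\Bun_G)_{\on{temp}}$.

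Next I would invoke the commutative diagram of Property $\on{Km}$, which (granting \conjref{c:GL} and the constructions of \secref{s:proof}) yields an isomorphism
$$\on{Loc}_G\circ \BL_G^{\on{Op}_{\Ran(X)}}\;\simeq\; \BL_G\circ \Xi_\cG\circ \Poinc_{\cG,\on{spec}}.$$
Now $\BL_G^{\on{Op}_{\Ran(X)}}$ is an equivalence by Quasi-Theorem \ref{t:FF}, and $\BL_G$ is an equivalence by \conjref{c:GL}; moreover, by the very definition of $\Dmod(\Bun_G)_{\on{temp}}$, the fully faithful functor $\BL_G\circ \Xi_\cG$ identifies $\QCoh(\LocSys_\cG)$ with $\Dmod(\Bun_G)_{\on{temp}}$ inside $\Dmod(\Bun_G)$.

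The third step is to bring in \conjref{c:Op quot}: this says precisely that $\Poinc_{\cG,\on{spec}}$ presents $\QCoh(\LocSys_\cG)$ as a Verdier quotient of $\QCoh(\on{Op}(\cG)^{\on{loc}}_{\Ran(X)})$. Composing with the equivalence $\BL_G\circ \Xi_\cG$ preserves this property, so $\BL_G\circ \Xi_\cG\circ \Poinc_{\cG,\on{spec}}$ presents $\Dmod(\Bun_G)_{\on{temp}}$ as a Verdier quotient of $\QCoh(\on{Op}(\cG)^{\on{loc}}_{\Ran(X)})$. Pre-composing with the inverse of the equivalence $\BL_G^{\on{Op}_{\Ran(X)}}$ then shows that $\on{Loc}_G$ presents $\Dmod(\Bun_G)_{\on{temp}}$ as a Verdier quotient of $\on{KL}(G,\crit)_{\Ran(X)}$, which is the desired localization statement.

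The only nontrivial point is the first step, i.e.\ verifying that $\on{Loc}_G$ indeed lands in the tempered subcategory; everything after that is a formal diagram chase using equivalences. Granting Property $\on{Km}$ (which is built into the proof of \conjref{c:GL} carried out in \secref{s:proof}, just as Property $\on{Km}^{\on{prel}}$ was), the argument reduces to the observation that composing a localization with equivalences on either side remains a localization.
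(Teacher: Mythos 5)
Your proposal is correct and spells out the argument the paper leaves entirely implicit (the text merely says to ``combine'' \conjref{c:Op quot} with \conjref{c:GL}). The factorization of $\on{Loc}_G$ as $(\BL_G\circ\Xi_\cG)\circ\Poinc_{\cG,\on{spec}}\circ(\BL_G^{\on{Op}_{\Ran(X)}})^{-1}$ --- an equivalence onto $\Dmod(\Bun_G)_{\on{temp}}$, preceded by a localization, preceded by an equivalence --- is exactly the intended diagram chase, and localizations are stable under pre- and post-composition with equivalences.

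One small caveat is worth flagging. You assert that Property $\on{Km}$ is ``built into the proof of \conjref{c:GL} carried out in \secref{s:proof}, just as Property $\on{Km}^{\on{prel}}$ was,'' but that is not quite what the paper does: \secref{s:proof} verifies only $\on{Km}^{\on{prel}}$, and the full Property $\on{Km}$ is introduced in \secref{ss:opers Ran}, which the paper explicitly marks as unreferenced and not used elsewhere. Your deduction requires Property $\on{Km}$ together with Quasi-Theorem \ref{t:FF} as standing assumptions alongside Conjectures \ref{c:GL} and \ref{c:Op quot}; this is surely what the paper intends by ``combining,'' but it uses somewhat more than the two conjectures alone. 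Since the statement is phrased as a corollary-of-conjecture sitting inside that conjectural framework, this does not affect the validity of your argument.
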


\end{document}